\documentclass[10pt]{article}

\usepackage{amsthm}
\usepackage{thmtools}
\usepackage[colorlinks=true,linkcolor=blue]{hyperref}
\usepackage{enumitem}
\usepackage{mathrsfs}
\usepackage{bbm}
\usepackage[margin=1.6in]{geometry}
\usepackage{marvosym}
\usepackage{graphicx}

\usepackage{tikz}
\usepackage{url}

\newcommand\spiral{%
  \begin{tikzpicture}[y=.009ex,x=.009ex,yscale=-1]
    \path[fill] (74.2500,379.5469) -- (62.4375,373.3594) .. controls
      (83.4375,365.2031) and (99.6563,356.1563) .. (111.0938,346.2188) ..
controls
      (122.5313,336.2813) and (131.0625,324.9375) .. (136.6875,312.1875) ..
controls
      (142.3125,299.4375) and (145.1250,284.4375) .. (145.1250,267.1875) ..
controls
      (145.1250,256.6875) and (141.9375,247.5000) .. (135.5625,239.6250) ..
controls
      (129.1875,231.7500) and (121.7813,226.3125) .. (113.3438,223.3125) ..
controls
      (104.9063,220.3125) and (95.6250,218.8125) .. (85.5000,218.8125) ..
controls
      (75.3750,218.8125) and (66.0937,220.3125) .. (57.6562,223.3125) ..
controls
      (49.2187,226.3125) and (41.8125,231.7500) .. (35.4375,239.6250) ..
controls
      (29.0625,247.5000) and (25.8750,256.6875) .. (25.8750,267.1875) ..
controls
      (25.8750,277.6875) and (27.3750,286.9687) .. (30.3750,295.0312) ..
controls
      (33.3750,303.0937) and (38.1563,308.9063) .. (44.7188,312.4688) ..
controls
      (51.2813,316.0313) and (59.2500,317.8125) .. (68.6250,317.8125) ..
controls
      (73.6875,317.8125) and (77.5782,316.7813) .. (80.2969,314.7188) ..
controls
      (83.0156,312.6563) and (85.2187,309.7500) .. (86.9062,306.0000) ..
controls
      (88.5937,302.2500) and (89.4375,297.3750) .. (89.4375,291.3750) ..
controls
      (89.0625,288.3750) and (88.4063,285.6563) .. (87.4688,283.2188) ..
controls
      (86.5313,280.7813) and (85.0313,278.9063) .. (82.9688,277.5938) ..
controls
      (80.9063,276.2813) and (78.3750,275.6250) .. (75.3750,275.6250) ..
controls
      (72.9375,275.6250) and (70.5469,277.1250) .. (68.2031,280.1250) ..
controls
      (65.8594,283.1250) and (63.7500,288.4219) .. (61.8750,296.0156) --
      (57.6562,295.8750) .. controls (55.5937,295.8750) and (54.0937,295.4063)
..
      (53.1562,294.4688) .. controls (52.2187,293.5313) and (51.7500,292.5000)
..
      (51.7500,291.3750) .. controls (51.7500,286.5000) and (52.6875,281.7187)
..
      (54.5625,277.0312) .. controls (56.4375,272.3437) and (59.2500,268.5000)
..
      (63.0000,265.5000) .. controls (66.7500,262.5000) and (70.8750,261.0000)
..
      (75.3750,261.0000) .. controls (80.6250,261.0000) and (85.4063,262.1250)
..
      (89.7188,264.3750) .. controls (94.0313,266.6250) and (97.4063,270.0937)
..
      (99.8438,274.7812) .. controls (102.2813,279.4687) and (103.5000,285.0000)
..
      (103.5000,291.3750) .. controls (103.5000,299.6250) and
(102.3750,306.5625) ..
      (100.1250,312.1875) .. controls (97.8750,317.8125) and (94.0313,322.5937)
..
      (88.5938,326.5312) .. controls (83.1563,330.4687) and (76.5000,332.4375)
..
      (68.6250,332.4375) .. controls (57.3750,332.4375) and (47.0625,329.8125)
..
      (37.6875,324.5625) .. controls (28.3125,319.3125) and (21.5625,311.5313)
..
      (17.4375,301.2188) .. controls (13.3125,290.9063) and (11.2500,279.5625)
..
      (11.2500,267.1875) .. controls (11.2500,254.0625) and (14.2500,241.9687)
..
      (20.2500,230.9062) .. controls (26.2500,219.8437) and (35.1563,211.3125)
..
      (46.9688,205.3125) .. controls (58.7813,199.3125) and (71.6250,196.3125)
..
      (85.5000,196.3125) .. controls (99.3750,196.3125) and (112.2187,199.3125)
..
      (124.0312,205.3125) .. controls (135.8437,211.3125) and
(144.7500,219.8437) ..
      (150.7500,230.9062) .. controls (156.7500,241.9687) and
(159.7500,254.0625) ..
      (159.7500,267.1875) .. controls (159.7500,287.4375) and
(156.5625,304.7813) ..
      (150.1875,319.2188) .. controls (143.8125,333.6563) and
(134.0625,346.1250) ..
      (120.9375,356.6250) .. controls (107.8125,367.1250) and (92.2500,374.7656)
..
      (74.2500,379.5469) -- cycle;
  \end{tikzpicture}%
}

\usepackage{amsmath}
\usepackage{amsxtra}
\usepackage{amssymb}
\usepackage{lineno}
\usepackage{turnstile}
\usepackage{verbatim}
\usepackage{enumitem}
\usepackage[retainorgcmds]{IEEEtrantools}
\usepackage{accents}
\usepackage[all]{xy}

\declaretheoremstyle[bodyfont=\sl]{slanted}

\swapnumbers
\declaretheorem[name=Definition,style=definition,qed=$\dashv$,
numberwithin=section]{dfn}
\declaretheorem[name=Definition,style=definition,numbered=no,qed=$\dashv$]{dfn*}
\declaretheorem[name=Definition,style=definition,numbered=no]{dfnnoqed*}

\declaretheorem[name=Theorem,style=slanted,numbered=no]{tm*}

\declaretheorem[name=Proposition,style=slanted,sibling=dfn]{prop}

\declaretheorem[name=Corollary,style=slanted,numbered=no]{cor*}
\declaretheorem[name=Remark,style=definition,sibling=dfn]{rem}

\swapnumbers
\declaretheoremstyle[headfont=\scshape]{claimstyle}

\declaretheorem[name=Claim,style=claimstyle,numbered=no]{clm*}

\declaretheorem[name=Subclaim,style=claimstyle,numbered=no]{sclm*}

\declaretheorem[name=Subsubclaim,style=claimstyle,numberwithin=sclmtwo]{ssclmtwo
}
\declaretheorem[name=Subsubclaim,style=claimstyle,numberwithin=sclmthree]{
ssclmthree}
\declaretheorem[name=Subsubclaim,style=claimstyle,numberwithin=sclmfour]{
ssclmfour}
\declaretheorem[name=Subsubclaim,style=claimstyle,numberwithin=sclmfive]{
ssclmfive}
\declaretheorem[name=Subsubclaim,style=claimstyle,numberwithin=sclmsix]{ssclmsix
}
\declaretheorem[name=Subsubclaim,style=claimstyle,numberwithin=sclmseven]{
ssclmseven}
\declaretheorem[name=Subsubclaim,style=claimstyle,numberwithin=sclmeight]{
ssclmeight}
\declaretheorem[name=Subsubclaim,style=claimstyle,numberwithin=sclmnine]{
ssclmnine}
\declaretheorem[name=Subsubclaim,style=claimstyle,numberwithin=sclmten]{ssclmten
}
\declaretheorem[name=Subsubclaim,style=claimstyle,numbered=no]{ssclm*}

\declaretheoremstyle[headfont=\scshape]{casestyle}

\declaretheorem[name=Case,style=casestyle]{case}

\declaretheorem[name=Subcase,style=casestyle,numberwithin=case]{scase}

\title{$\star$-translation for Varsovian models}
\author{Farmer Schlutzenberg\footnote{afirstname dot alastname at tuwien dot ac dot at, afirstname dot alastname at gmail dot com,
\url{\myurl}}
}

\newcommand{\passive}{\mathrm{pv}}
\newcommand{\moon}{{\text{\tiny\spiral}}}
\newcommand{\bh}{\moon}
\renewcommand{\Moon}{{\text{\footnotesize\spiral}}}
\newcommand{\BH}{\Moon}
\newcommand{\Coll}{\mathrm{Coll}}

\newcommand{\BB}{\mathbb B}
\newcommand{\sub}{\subseteq}

\newcommand{\om}{\omega}
\newcommand{\pow}{\mathcal{P}}
\newcommand{\OR}{\mathrm{OR}}
\newcommand{\Hull}{\mathrm{Hull}}
\newcommand{\cut}{\backslash}
\newcommand{\Tt}{\mathcal{T}}
\newcommand{\Ss}{\mathcal{S}}
\newcommand{\Uu}{\mathcal{U}}
\newcommand{\Vv}{\mathcal{V}}

\newcommand{\rg}{\mathrm{rg}}
\newcommand{\ins}{\trianglelefteq}

\newcommand{\pins}{\triangleleft}

\newcommand{\crit}{\mathrm{cr}}
\newcommand{\rest}{\!\upharpoonright\!}
\newcommand{\com}{\circ}

\newcommand{\lh}{\mathrm{lh}}
\newcommand{\Ult}{\mathrm{Ult}}
\newcommand{\sats}{\models}
\newcommand{\J}{\mathcal{J}}
\newcommand{\HC}{\mathrm{HC}}
\newcommand{\ZFC}{\mathrm{ZFC}}
\newcommand{\es}{\mathbb{E}}

\newcommand{\pred}{\mathrm{pred}}
\newcommand{\id}{\mathrm{id}}
\newcommand{\sq}{\mathrm{sq}}

\newcommand{\conc}{\ \widehat{\ }\ }

\newcommand{\rSigma}{\mathrm{r}\Sigma}

\newcommand{\rPi}{\mathrm{r}\Pi}

\newcommand{\rDelta}{\mathrm{r}\Delta}
\DeclareMathOperator{\Th}{Th}
\newcommand{\card}{\mathrm{card}}
\newcommand{\bfrSigma}{\undertilde{\rSigma}}

\newcommand{\psub}{\subsetneq}

\newcommand{\cHull}{\mathrm{cHull}}

\newcommand{\lpole}{\left\lfloor}
\newcommand{\rpole}{\right\rfloor}
\newcommand{\univ}[1]{\lpole #1\rpole}
\newcommand{\tu}{\textup}
\newcommand{\eqdef}{=_{\mathrm{def}}}
\newcommand{\pvec}{\vec{p}}
\newcommand{\myurl}{https://sites.google.com/site/schlutzenberg/home-1}

\begin{document}

\maketitle

\begin{abstract} The technique of \emph{$\star$-translation} is important in arguments calibrating the strengths of determinacy theories against large cardinals, for example in \cite{dmatm} and \cite{closson}. It has also been used in analysing the internal theory of mice, for example in \cite{odle_v2}, \cite{vm2}, \cite{vmom}.

We give a detailed development of $\star$-translation,
slightly strengthening the large cardinal level in the development of \cite{closson}.
We then develop a variant of $\star$-translation for Varsovian models, in which certain extenders overlapping the Woodin cardinal are total, and used to directly induce extenders on the sequence of the $\star$-translation.\footnote{This is a draft of the final paper. In this draft, the detailed discussion of this variant of $\star$-translation is not yet included. It will be added in due course.}
This variant was used in \cite{vm2} and \cite{vmom}, where it was outlined but not developed in detail.
We use the material to
verify the star-translation hypothesis of \cite{odle_v2} and deduce some self-iterability facts.
\end{abstract}

\tableofcontents

\section{Introduction}\label{sec:intro}

Any proper class $1$-small mouse can compute its own iteration strategy restricted to its universe, and hence satisfies ``I am fully iterable''. On the other hand, proper class mice with Woodin cardinals satisfy ``I am not fully iterable''.
But mice can typically compute significant \emph{fragments} of their own iteration strategy, irrespective of their large cardinals.

In \cite{sile}, Schindler and Steel established a general result along these lines for tame mice $M$ -- in which the extenders $E\in\es_+^M$ do not overlap local Woodin cardinals. A key method employed there was that of \emph{P-construction} , which is a kind of $L[\es]$-contruction, but which uses \emph{all} extenders on the sequence (including partial ones) in a certain interval, and which can only be formed under special circumstances. A typical such situation is when have a limit length $\om$-maximal tree $\Tt$ on some $N\pins M$ such that $\rho_\om^N=\om$, and $\Tt\in\HC^M$, and letting $\delta=\delta(\Tt)$,
then $\delta$ is a strong cutpoint of $M$,
$\OR^N<\delta$, $\Tt$ is definable from parameters over $M|\delta$, and $M|\delta$ is generic for the $\delta$-generator extender algebra of $M(\Tt)$ at $\delta$ (a class forcing of $M(\Tt)$). In this situation, we may want to compute, inside $M$, the correct branch $b$ through $\Tt$ (according to the unique strategy for $N$). If $M$ is tame, then we can, and
in fact, the Q-structure $Q=Q(\Tt,b)$ (which determines $b$) is given by the P-construction of $M$ over $M(\Tt)$. That is, we define $Q$ by setting $M(\Tt)\ins Q$, and then letting $\es_+^Q\rest(\delta,\OR^Q]$ be given by recursively restricting the extenders in $\es^M\rest(\delta,\OR^Q]$ to the relevant segment of the model under construction. This makes sense
up to a certain ordinal height, at which we get a premouse $Q$, which is the correct Q-structure.

In non-tame mice, things are more intricate.
The full results for tame mice established in \cite{sile} have not been generalized to the non-tame level; however, one can still prove significant self-iterability for non-tame mice, using an adaptation of P-construction,
developed by Steel, Neeman and Closson;
see \cite[\S7]{dmatm}, \cite{closson}.
The methods are important in Steel's analysis of derived models of (pre)mice in \cite{dmatm}, and Closson's analogous analysis in \cite{closson}.
(As we will describe shortly, we work basically more generally than \cite{dmatm} and \cite{closson},
though in one minor regard, less generally. However, the more general nature only requires some small tweaks to the analysis.)
We refer to the adaptation of P-construction (for arbitrary short extender mice) as \emph{black-hole-construction}, denoted  $\BH$, as it is
  the inverse of \emph{$\star$-translation} (``star-translation''). Thus, black-hole-construction
is formed by restricting all extenders from $\es^M$ above $\delta$, like P-construction, but there are also other steps involved, which are related to coring.
In fact, in some steps -- such as when $\delta$ is the projectum of the current model and $\delta$ is the critical point of the core embedding --
we do take the core.\footnote{In our context, this will correspond to $\delta$ being a local measurable Woodin cardinal -- something which was assumed to not be the case in \cite{dmatm} and \cite{closson}, but which we allow.}
In other kinds of steps,
one does not core, but cuts the current model $N$ down to a certain segment of itself,
and adds an extender $E$ with $\crit(E)<\delta$, derived from a core embedding of $N$.

Although black-hole-construction is what is used to build the desired Q-structure,
we will primarily focus on its inverse operation of $\star$-translation. We will start with the given Q-structure $Q=Q(\Tt,b)$ in $V$,
apply the $\star$-translation to produce $Q^\star$, and show that $Q^\star\pins M$.
That is our main task.
The black-hole-construction of $Q^\star$ is just $Q$, and because $M$ can compute its black-hole-constructions,
it follows that we have $Q\in M$, and $M$ can identify it correctly (and hence use it to  compute $b$).

The basic defintions with which we work, including in particular $\star$-translation and $\bh$-construction, are laid out in  \cite[\S8]{odle_v2}, and we assume the reader is familiar with that material.
(\S8 of \cite{odle_v2} is independent of the rest of that paper, and can be read in isolation.) As alluded to above, we work basically more generally than in \cite{dmatm} and \cite{closson}, except in one minor regard.
That is, firstly, in \cite{dmatm} and \cite{closson}, it is roughly assumed that we are working below local measurable Woodin cardinals.
More precisely, it is assumed that there is no extender $E$ with $\crit(E)=\delta$ and either (i) $E\in\es_+^Q$, or (ii) there is $F$ such that $F\in\es_+^Q$, $\crit(F)<\delta<\lh(F)$ and
$E\in\es_+(\Ult(Q,F))$.
Secondly, in \cite{dmatm} and \cite{closson}, it is not actually assumed that $\Tt$ is definable from parameters over $M|\delta$ and $M|\delta$ is extender algebra generic over $M(\Tt)$, but something a little more general -- that there is $\xi\leq\delta$
and $g\sub\Coll(\om,\delta)$
which is $M^\Tt_b$-generic,
and such that $\Phi_\delta(\Tt)$ is an element of the least admissible set over $(M(\Tt),g)$, where $\Phi_\delta(\Tt)$
is a natural phalanx associated to $\Tt$ (see \cite[\S7]{dmatm}). On the other hand, what we  assume in this regard is that there is  $P\pins M$ such that $\Tt$ is $P$-optimal for $M$; see \cite[Definition 8.2]{odle_v2}. In particular, $M|\delta\ins P\pins M$,
$\rho_\om^P\leq\delta$,
$\Tt$ is definable from parameters over $P$,
and the natural theory determining $P$ is
extender algebra generic over $M(\Tt)$.
So although we start with a precise correspondence between $\Tt$ and some segment $P$ of $M$ (whereas this is more like ``modulo the next admissible'' in \cite{dmatm} and \cite{closson}),
we do not demand that $P=M|\delta$.

\subsection{Conventions and Notation}\label{subsec:terminology}

For a summary of terminology  see
\cite{fsfni_online_first}.  We just mention a few non-standard and key points here.
We deal with premice $M$ with Mitchell-Steel indexing and fine structure,
except that we allow superstrong extenders on the extender sequence $\es_+^M$
and use the modifications to the fine structure explained in
\cite[\S5]{V=HODX_pub}.

An \emph{$\om$-premouse}
 is a sound premouse $N$ with $\rho_\om^N=\om$;
an \emph{$\om$-mouse} is an
$(\om,\om_1+1)$-iterable $\om$-premouse.
If $N$ is an $\om$-mouse, we write $\Sigma_N$ for the unique
$(\om,\om_1+1)$-strategy for $N$.

For a limit length iteration tree $\Tt$
on an $\om$-premouse and  a $\Tt$-cofinal branch $b$,
$Q(\Tt,b)$ denotes
the Q-structure $Q\ins M^\Tt_b$ for $\delta(\Tt)$,
if this exists, and otherwise
$Q=M^\Tt_b$.

\section{Fine structural correspondence between $R$ and $R^\star$}\label{sec:*-translation}

\begin{dfn}
Let $Q$ be $\star$-valid.
If there is a $Q$-$\delta$-measure,
let $q$ be such that $\rho_{q+1}^Q\leq\delta<\rho_q^Q$,
and otherwise let $q=0$.
We say that $Q$ is \emph{$\star$-iterable}
iff all putative $q$-maximal trees on $\Phi(\Tt)\conc(Q,q)$
with $\crit(E^\Tt_\alpha)\leq\delta$ for all $\alpha+1<\lh(\Tt)$ have finite length and wellfounded models.
\end{dfn}
\begin{rem}\label{rem:correction}
Of course the $\star$-iterability
of $Q$ depends on $\Tt$, although this dependence is not made explicit in the terminology.
We will only be considering forming $Q^\star$ when $Q$ is $\star$-iterable.
Note that  if $\Phi(\Tt)\conc(Q,q)$ is $(q,\om+1)$-iterable then $Q$ is $\star$-iterable, and if $Q$ is $\star$-iterable then $Q^\star$ is defined along
a wellfounded recursion.

In the case that $Q=Q(\Tt,b)$ where $b=\Sigma_N(\Tt)$,
$Q^\star$ will be a well-defined premouse,
with fine structure corresponding to that of $Q$ above $\delta$. In this section we will go through various fine structural calculations which will help us demonstrate this. But in this section there will be no actual iterability assumptions other than $\star$-iterability,
along with first-order assumptions which are consequences of iterability and soundness.
In the end we will prove that $Q^\star$ is well-behaved for the (correct) $Q$ just mentioned, as is $R^\star$ for every $\star$-iterable $R\ins Q$; this will be by induction on $R$, using the various lemmas and iterability.

The fine structural calculations
are essentially as
in \cite{closson}, although we organize things a little differently.
However, there are some small issues ignored in \cite{closson} which we will clarify here.
We will point out these details out when we come to them.

Aside from this, we must also incorporate
the more general features of out setup
mentioned in \S\ref{sec:intro},
but this only requires minor adaptations
of \cite{closson}.

Our (main) task will involve an analysis of the correspondence of fine structure and definability between
$R$ and $R^\star$, for $\star$-suitable segments $R\ins Q$. Toward this, we will consider a sequence of structures intermediate between $R$ and $R^\star$,
and establish a fine structural correspondence between each consecutive pair in the sequence (modulo certain assumptions). We first describe this sequence.
\end{rem}

\begin{dfn}\label{dfn:R_i}
Let $Q$ be $\star$-suitable.
Say that $Q$ is \emph{$\star$-expanding}
iff either $Q$ is active with $\crit(F^Q)<\delta$ or there is a $Q$-$\delta$-measure.  Say that $Q$ is \emph{admissibly buffered}
if for every $\star$-expanding (hence $\star$-suitable) segment
$N\ins Q$, there is an admissible $N'\ins Q$
such that $N\pins N'$.

Now suppose that $Q$ is $\star$-iterable.
Let $m<\om$ and $(\left<Q_i\right>_{i\leq m},\left<N_i,\alpha_i\right>_{i< m})$
be as follows:
\begin{enumerate}
\item $Q_0=Q$.
\item If $i<n$ then $Q_i$ is not admissibly buffered,
then $N_i\ins Q_i$ be the largest $\star$-expanding segment of $Q_i$, then $\alpha_i\in\OR$ and $Q_i=\J_{\alpha_i}(N_i)$ (possibly $\alpha_i=0$), and either:
\begin{enumerate}[label=(\roman*)]
\item there is an $N_i$-$\delta$-measure
and $Q_{i+1}=\Ult_n(N_i,\mu_\delta^{N_i})$
where $n$ is such that $\rho_{n+1}^{N_i}\leq\delta<\rho_n^{N_i}$, or
\item otherwise, and  $Q_{i+1}=\Ult_{n_\kappa}(P_\kappa,F^{N_i})$ where $\kappa=\crit(F^{N_i})$.\footnote{Recall that if there is an $N_i$-$\delta$-measure and $N_i$ is active type 3 then $\delta<\nu(F^{N_i})$. Thus, if $N_i$ is active type 3 and $\delta=\nu(F^{N_i})$,
then $Q_{i+1}=\Ult_{n_\kappa}(P_\kappa,F^{N_i})$
where $\kappa=\crit(F^{N_i})$,
and there could be a $Q_{i+1}$-$\delta$-measure.}
\end{enumerate}
\item $Q_m$ is admissibly buffered.
\end{enumerate}
Let $R_0=Q_0$ and for $i\in(0,m]$,
let
\[ R_i=\J_{\alpha_{i-1}+\alpha_{i-2}+\ldots+\alpha_0}(Q_i)=\J_{\alpha_0}(\J_{\alpha_1}(\ldots\J_{\alpha_{i-1}}(Q_i)\ldots)).\qedhere\]
\end{dfn}

Note that such a sequence exists (and is uniquely determined), by the $\star$-iterability of $Q$.
Although $R_i$ need not be a premouse
(in fact it is a premouse iff $R_i=Q_i$,
since $Q_i$ is not sound),
we will still define $(R_i)^\star=\J_{\alpha_{i-1}+\ldots+\alpha_0}((Q_i)^\star)$.
Note that $Q^\star=(R_i)^\star$ for all $i\leq m$.

\begin{dfn}
For a type 3 premouse $S$ such that $\Ult(S,F^S)$ is wellfounded,
define $S_J$ to be the structure equivalent to $S$, but with its active extender indexed according to $\lambda$-indexing. (So $(S_J)^\passive$ is a Mitchell-Steel indexed premouse, but $\OR^{S_J}=j(\kappa)^{+\Ult(S_J,F^{S_J})}$ where $j:S_J\to \Ult(S_J,F^{S_J})$ is the ultrapower map and $\kappa=\crit(F^J)$,
and $F^{S_J}$ is coded via the standard amenable predicate used for $\lambda$-indexing.)

For $i\leq m$, let $(Q_i)'=(Q_i)_J$ if  $Q_i$ is type 3 with $\crit(F^S)<\delta$, and $(Q_i)'=Q_i$ otherwise,
and let $(R_i)'=\J_{\alpha_{i-1}+\ldots+\alpha_0}((Q_i)')$.
\end{dfn}

 Note that with $Q_m$ as defined above, we have $(Q_m)'=Q_m$.

\subsection*{Summary of fine structural correspondence}

We will analyse the correspondence of fine structure between the following pairs of models:

\begin{enumerate}
\item\label{item:summary_Q_i_to_Q_i'} $Q_i$ and $(Q_i)'$ (see \S\ref{sec:between_R_and_R_J}),
\item\label{item:summary_Q_i'_to_Q_i'[t]} $(Q_i)'$ and $(Q_i)'[t]$\footnote{
The same analysis gives a correspondence between $R$ and $R[t]$, for type 3 $R$
with $\crit(F^R)<\delta$),
but  the correspondence between  $R_J$ and $R_J[t]$ will be the important one in this case.} (see \S\ref{sec:between_R_and_R[t]}),
\item $(N_i)'[t]$ and $(Q_{i+1})'[t]$, for $i<m$ (see \S\ref{sec:R[t]_and_S^R[t]}),
\item $(Q_m)'[t]$ and $(Q_m)^\star$ (recall that $(Q_m)'=Q_m$) (see \S\ref{sec:R[t]_and_R^*_admissibly_buffered}),
\item\label{item:summary_R_i'[t]_R_i+1'[t]} $(R_i)'[t]$ and $(R_{i+1})'[t]$, for $i<m$ (see \S\ref{sec:append_ordinals}),
\item\label{item:summary_R_m'[t]_R_m^star}$(R_m)'[t]$ and $(R_m)^\star$
(recall that $(R_m)'=R_m$ and $(R_m)^\star=Q^\star$) (see \S\ref{sec:append_ordinals}).
\end{enumerate}

Chaining  together items \ref{item:summary_Q_i_to_Q_i'}, \ref{item:summary_Q_i'_to_Q_i'[t]}, \ref{item:summary_R_i'[t]_R_i+1'[t]} for $i<m$, and \ref{item:summary_R_m'[t]_R_m^star}, we will obtain
the desired fine structural correspondence between $Q=Q_0=R_0$ and $Q^\star$.

\subsection{Between $R$ and $R_J$, for type 3 $R$}\label{sec:between_R_and_R_J}

Let $R$ be a type 3 premouse such that $\Ult(R,F^R)$ is wellfounded.

\begin{prop}\label{prop:R_J_1-sound}
$\rho_1^{R_J}=\nu(F^R)$, $p_1^{R_J}=\emptyset$, and $R_J$ is $1$-sound.
\end{prop}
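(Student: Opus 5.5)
We work with the $\lambda$-indexed structure $R_J$, writing $F=F^R$, $\kappa=\crit(F)$, $\nu=\nu(F)$, $j:R_J\to U=\Ult(R_J,F)$ for the ultrapower map, and $\lambda=j(\kappa)$, so that $\OR^{R_J}=\lambda^{+U}$. The plan has three steps: show $\rho_1^{R_J}\leq\nu$ via a new $\Sigma_1$ subset of $\nu$ defined without parameters, show $\rho_1^{R_J}\geq\nu$ via a hull/collapse argument, and then read off $p_1^{R_J}=\emptyset$ and $1$-solidity/soundness.

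\textbf{Upper bound.} Since $R_J$ has the amenable predicate coding $F$ in $\lambda$-indexed form, the generators below $\nu$ are $\Sigma_1$-accessible. Consider the $\Sigma_1^{R_J}$ (parameter-free) set
\[ A=\{(a,\xi):a\in[\nu]^{<\om},\ \xi<\kappa^{+R},\ \xi \text{ codes } X\sub[\kappa]^{|a|} \text{ and } X\in F_a\}. \]
This codes the whole extender $F\rest\nu$, which is equivalent to $F$. If $A\in R_J$, then $F\in R_J$, and then $R_J$ could compute $U$ up to $\lambda$ and see $\lambda$ is not a cardinal below its height, contradicting $\OR^{R_J}=\lambda^{+U}>\lambda$ together with coherence, i.e. $R_J|\lambda^{+U}=U|\lambda^{+U}$ passive part. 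Because $A$ is essentially a subset of $\nu$ (using that $\kappa^{+R}<\nu$ and the pairing of $[\nu]^{<\om}\times\kappa^{+R}$ into $\nu$), this gives $\rho_1^{R_J}\leq\nu$ with the parameter $\emptyset$.

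\textbf{Lower bound and the hull.} Let $H=\Hull_1^{R_J}(\nu)$, with transitive collapse $\pi:\bar H\to R_J$. Every element of $R_J$, being below $\lambda^{+U}$, is of the form $j(f)(a)$ for some $a\in[\nu]^{<\om}$ and $f\in R|\kappa^{+R}$. The key point is that the map $(f,a)\mapsto j(f)(a)\rest$ (computed inside $R_J$ from the predicate) is $\Sigma_1$: the relevant restriction of $j$ to $R|\kappa^{+R}$ is $\Sigma_1$-definable from $F$ via the predicate, and $\kappa^{+R}\sub H$ since $\kappa^{+R}<\nu$. Therefore $H=R_J$ and $\pi=\id$. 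Thus every element of $R_J$ is $\Sigma_1$-definable from ordinals $<\nu$ with no further parameter. This yields $\rho_1^{R_J}\geq\nu$: if some $\Sigma_1(\{q\})$ subset $B$ of $\gamma<\nu$ were not in $R_J$, replace $q$ by ordinals below $\nu$ and use that $R_J|\nu$ contains the $\Sigma_1$ theory restricted to a bounded segment. Here I expect the main obstacle: showing that $\Th_1^{R_J}(\gamma\cup\{q\})\in R_J$ for $\gamma<\nu$. I would argue this by noting that such a theory is computed from $F\rest\gamma'$ for some $\gamma'<\nu$, and $F\rest\gamma'\in R$ by the initial segment condition when $\gamma'$ is taken as a suitable cardinal of the ultrapower below $\nu$, or when $\nu$ is a limit of generators. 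In the type 3 case $\nu$ is a limit cardinal of $U$, so such $\gamma'$ are cofinal.

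\textbf{Parameter and soundness.} Since $\rho_1=\nu$ and $R_J=\Hull_1^{R_J}(\nu)$, the standard parameter is $\emptyset$: the empty parameter is trivially $1$-solid, and the hull computation gives $1$-soundness directly, since $R_J=\Hull_1(\rho_1\cup\emptyset)$.
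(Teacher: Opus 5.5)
Your outline has the same architecture as the paper's proof: first $R_J=\Hull_1^{R_J}(\nu)$, then reduce $\rho_1^{R_J}\geq\nu$ to the claim that $\Th_1^{R_J}(\alpha)\in R_J$ for every $\alpha<\nu$. The gap is that this last claim is the entire content of the proposition. You only assert that such a theory ``is computed from $F\rest\gamma'$'', with no mechanism, so as written the lower bound is not proved.

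The missing idea is an explicit identification of the transitive collapse of $\Hull_1^{R_J}(\alpha)$. Write $j:R|\kappa^{+R}\to U$ for (the restriction of) your ultrapower map, and fix $\alpha$ with $\kappa^{+R}<\alpha<\nu$. Set up:
\begin{itemize}
\item $j_\alpha:R|\kappa^{+R}\to U_\alpha=\Ult(R|\kappa^{+R},F\rest\alpha)$, the ultrapower map;
\item $E_\alpha=\{j_\alpha\rest R|\xi\mid\xi<\kappa^{+R}\}$ and $R_{J,\alpha}=(U_\alpha,E_\alpha)$;
\item $\pi_\alpha:U_\alpha\to U$, the factor map $[b,f]\mapsto[b,f]$.
\end{itemize}
Then:
\begin{enumerate}
\item Since $\pi_\alpha\circ j_\alpha=j$ and $\pi_\alpha$ is the identity on $R|\kappa^{+R}$, $\pi_\alpha$ sends $E_\alpha$ into the predicate of $R_J$.
\item Its range contains $j``\kappa^{+R}$, so it is cofinal. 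Hence $\pi_\alpha:R_{J,\alpha}\to R_J$ is $\Sigma_1$-elementary.
\item Its range is $\{j(f)(b)\mid b\in[\alpha]^{<\om},\ f\in R|\kappa^{+R}\}$, which is exactly $\Hull_1^{R_J}(\alpha)$. So $R_{J,\alpha}$ is the transitive collapse of that hull.
\item Also $\pi_\alpha\rest\alpha=\id$, so $\Th_1^{R_J}(\alpha)=\Th_1^{R_{J,\alpha}}(\alpha)$.
\item Since $F\rest\alpha\in R$, we have $R_{J,\alpha}\in R_J$, and therefore this theory lies in $R_J$.
\end{enumerate}
With that in hand, your reduction (absorbing the parameter $q$ into finitely many ordinals below $\nu$) finishes the lower bound.

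Two smaller remarks. First, $F\rest\alpha\in R$ holds for \emph{every} $\alpha<\nu$, simply because the active extender of a type 3 premouse is coded amenably by $\{F\rest\alpha\mid\alpha<\nu\}$. No appeal to the initial segment condition at specially chosen $\gamma'$, or to $\nu$ being a limit cardinal of $U$, is needed. Second, your separate upper bound via the code $A$ of $F$ is dispensable, and the reason you give for why $F\in R_J$ would be contradictory does not hold up as written. Once $R_J=\Hull_1^{R_J}(\nu)$, the usual diagonal argument gives $\Th_1^{R_J}(\nu)\notin R_J$, hence $\rho_1^{R_J}\leq\nu$. Together with the lower bound, this yields $p_1^{R_J}=\emptyset$ and $1$-soundness exactly as you conclude.
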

\begin{proof}
Clearly $R_J=\Hull_1^{R_J}(\nu(F^R))$. So it suffices to see that $\rho_1^{R_J}\geq\nu(F^R)$.
Let $\alpha<\nu(F^R)$
with $\kappa^{+R}<\alpha$.
Let
$H=\Hull_1^{R_J}(\alpha)$ and $U_\alpha=\Ult(R|\kappa^{+R},F^R\rest\alpha)$ and
$j_\alpha:R|\kappa^{+R}\to U_\alpha$ the ultrapower map,
and $E_\alpha$ the set of retrictions $j_\alpha\rest R|\xi$
for $\xi<\kappa^{+R}$.
Let $R_{J,\alpha}=(U_\alpha,E_\alpha)$.
Let $U=\Ult(R|\kappa^{+R},F^R)$, as above.
Let $\pi_\alpha:U_\alpha\to U$ be the factor map. Then note that $\rg(\pi_\alpha)=H$ and $\pi_\alpha:R_{J,\alpha}\to
R_J$ is $\Sigma_1$-elementary,
so $R_{J,\alpha}$ is the transitive collapse of $H$.
But since $F^R\rest\alpha\in R\sub R_J$, $R_{J,\alpha}\in R_J$, which suffices.
\end{proof}

In the situation above,
also note that (with $j:R|\kappa^{+R}\to U$ the ultrapower map),
\begin{equation}\label{eqn:char_H=rg(pi_alpha)}\begin{split} H=\rg(\pi_\alpha)&=\big\{[b,f]^{R|\kappa^{+R}}_{F^R}\bigm|b\in[\alpha]^{<\om}\wedge f\in R|\kappa^{+R}\big\}\\
&=\big\{j(f)(b)\bigm|b\in[\alpha]^{<\om}\wedge f\in R|\kappa^{+R}\}.\end{split}\end{equation}

\begin{prop}
For each $\kappa<\nu(F^R)$
and $f:[\kappa]^{<\om}\to R|\nu(F^R)$,
we have
\[\begin{split} f\in R^\sq\iff& f\text{ is }\bfrSigma_1^{R_J}\text{ and bounded in }R|\nu(F^R)\\
\iff& f\in R_J\text{ and bounded in }R|\nu(F^R).\end{split}\]
\end{prop}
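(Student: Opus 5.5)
The plan is to prove the cycle of implications
\[
f\in R^\sq\ \Rightarrow\ \big(f\in R_J\text{ and bounded}\big)\ \Rightarrow\ \big(f\text{ is }\bfrSigma_1^{R_J}\text{ and bounded}\big)\ \Rightarrow\ f\in R^\sq .
\]
Here ``bounded'' means that $\rg(f)\sub R|\gamma$ for some $\gamma<\nu(F^R)$. Write $\nu=\nu(F^R)$. We use that the universe of $R^\sq$ is $R|\nu=U|\nu$, where $U=\Ult(R|\kappa^{+R},F^R)$ with $\kappa$ now denoting $\crit(F^R)$. Throughout we rely on the standard fact that for type 3 $R$, $\nu$ is a limit cardinal of $U$, hence of $R^\sq$.

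The first two implications are easy. If $f\in R^\sq$, then $f\in R|\gamma$ for some $\gamma<\nu$, so $\rg(f)\sub R|\gamma$ and $f$ is bounded. Moreover $f\in R_J$, since $R|\nu\sub R_J$. Any element of $R_J$ is trivially $\bfrSigma_1^{R_J}$, being definable from itself as a parameter.

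The substance is the last implication. Suppose $f:[\kappa']^{<\om}\to R|\gamma$, with $\kappa',\gamma<\nu$, and
\[
f(x)=y\iff R_J\sats\varphi(x,y,p)
\]
for some $\Sigma_1$ formula $\varphi$ and parameter $p\in R_J$. By Proposition \ref{prop:R_J_1-sound}, $R_J=\Hull_1^{R_J}(\nu)$, so $p$ is $\Sigma_1$-definable from some finite $b\subseteq\nu$. We then choose $\alpha<\nu$ with the following properties: $\alpha$ is a cardinal of $U$; $\alpha>\max(\kappa^{+R},\kappa',\gamma)$; and $b\subseteq\alpha$. Such an $\alpha$ exists because $\nu$ is a limit cardinal of $U$.

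With this $\alpha$, form $H=\Hull_1^{R_J}(\alpha)$, the collapse $R_{J,\alpha}=(U_\alpha,E_\alpha)$, and the $\Sigma_1$-elementary map $\pi_\alpha:R_{J,\alpha}\to R_J$, exactly as in the proof of Proposition \ref{prop:R_J_1-sound}. We have $p\in H$, and we set $\bar p=\pi_\alpha^{-1}(p)$. The key claim is that $\pi_\alpha\rest(U_\alpha|\alpha)=\id$, so that $U_\alpha|\alpha=U|\alpha=R|\alpha$. For this, use the characterization (\ref{eqn:char_H=rg(pi_alpha)}): $H\supseteq\alpha$. Since $\alpha$ is a $U$-cardinal and every element of $U|\alpha$ has size $<\alpha$ in $U$, $H$ contains the surjections witnessing this, and these surjections have domains below $\alpha$. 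Hence $U|\alpha\sub H$, and $\crit(\pi_\alpha)\geq\alpha$.

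Now, for $x\in[\kappa']^{<\om}$ and $y\in R|\gamma$, both $x,y$ lie in $R|\alpha$, so
\[
f(x)=y\iff R_{J,\alpha}\sats\varphi(x,y,\bar p),
\]
by $\Sigma_1$-elementarity of $\pi_\alpha$ together with $\pi_\alpha(x)=x$, $\pi_\alpha(y)=y$. Thus $f$ is definable over $R_{J,\alpha}$. Finally, $F^R\rest\alpha$ is a proper fragment of $F^R$, so by the initial segment condition it lies in $R|\nu$. Therefore $R_{J,\alpha}$, which is computed from $R|\kappa^{+R}$ and $F^R\rest\alpha$, is in $R|\nu$. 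Since $\nu$ is a limit cardinal of $R|\nu$, this level is closed under first-order definability over its members, so $f\in R|\nu$, that is, $f\in R^\sq$.

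I expect the main obstacle to be the choice of $\alpha$: we need simultaneously that $\crit(\pi_\alpha)\geq\alpha$, so that $U_\alpha$ agrees with $U$ (and hence with $R$) below $\alpha$, and that $F^R\rest\alpha$, and with it $R_{J,\alpha}$, already lies in $R|\nu$ rather than merely in $R$. Both requirements should follow from taking $\alpha$ to be a $U$-cardinal below the limit cardinal $\nu$. The delicate point is to check the initial segment condition with the Mitchell–Steel indexing, possibly allowing superstrong extenders, so that the relevant fragment is indexed below $\nu$.
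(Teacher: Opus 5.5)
\textbf{Verdict.} Your cycle of implications is the right shape, and the first two implications are fine. The third is an unpacking of the proof of Proposition~\ref{prop:R_J_1-sound}, whereas the paper just quotes its conclusion $\rho_1^{R_J}=\nu(F^R)$ together with $R_J|\nu(F^R)=R|\nu(F^R)$. So the route is the same. However, your choice of $\alpha$ rests on a false claim: $\nu=\nu(F^R)$ need not be a \emph{limit} cardinal of $U$.

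\textbf{Where the argument breaks.} It is true that $\nu$ is a cardinal of $U$. If $i(g)(a)$ mapped some $\mu<\nu$ onto $\nu$, then any generator $\xi\in(\max(a\cup\{\mu\}),\nu)$ would equal $i(g')(a\cup\{\eta\})$ for some $\eta<\mu$, so it would not be a generator. But $\nu$ can be a successor cardinal of $U$. For example, take $E\in\es^R$ with critical point $\kappa$ that is strong well past $\kappa^{++}$. The trivial completion of $E\rest\kappa^{++}$ is a type 3 extender on the sequence with $\nu=\kappa^{++}$. The reason is that its ultrapower contains all $\kappa^{++}$ many subsets of $\kappa^{+}$, and boundedly many generators could only produce $\kappa^+$ of them. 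If $\nu=\mu^{+U}$ and $\gamma\geq\mu$ (or $\max(b)\geq\mu$), then no $U$-cardinal $\alpha$ of the kind you need exists. As written, the third implication is therefore not established in that case.

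\textbf{Repair.} The cardinal requirement is not actually needed.
\begin{itemize}
\item \emph{Agreement below $\alpha$.} Take any $\alpha<\nu$ above $\kappa^{+R}$, $\kappa'$, $\gamma$ and $b$. Since $\alpha\subseteq H$, we have $\pi_\alpha\rest\alpha=\id$, so $\mu=\crit(\pi_\alpha)\geq\alpha$.
\item \emph{$\mu$ is a cardinal of $U_\alpha$.} Suppose $g\in U_\alpha$ mapped some $\beta<\mu$ onto $\mu$. Then $\pi_\alpha(g)(\eta)=\pi_\alpha(g(\eta))=g(\eta)$ for all $\eta<\beta$. So $\pi_\alpha(g)=g$ would have range $\mu\neq\pi_\alpha(\mu)$, a contradiction.
\item \emph{$\pi_\alpha$ fixes $U_\alpha|\alpha$.} Every $x\in U_\alpha|\mu$ is therefore a surjective image in $U_\alpha$ of an ordinal below $\mu$. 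This gives $\pi_\alpha(x)=\pi_\alpha``x$, and by $\in$-induction $\pi_\alpha$ is the identity on $U_\alpha|\mu\supseteq U_\alpha|\alpha$. Hence $U_\alpha|\alpha=U|\alpha$, as you wanted.
\item \emph{Final step.} Here too no limit cardinal is needed. $R|\nu$ is rudimentarily closed simply because $\nu$ is a limit ordinal. Also $R_{J,\alpha}\in R_J$ has hereditary cardinality at most $\card(\alpha)<\nu$ there, and $\nu$ is a cardinal of $R_J$ (whose passive part is an initial segment of $U$). So $R_{J,\alpha}\in R_J|\nu=R|\nu$.
\end{itemize}

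\textbf{Shorter alternative (the paper's proof).} Code the graph of $f$ as a bounded $\bfrSigma_1^{R_J}$ subset of an ordinal below $\nu=\rho_1^{R_J}$. It therefore lies in $R_J$, and hence in $R_J|\nu=R|\nu$, because $\nu$ is an $R_J$-cardinal.
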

\begin{proof}
Since $R|\nu(F^R)=R_J|\nu(F^R)$,
this follows immediately from the fact that $\rho_1^{R_J}=\nu(F^R)$ (Proposition \ref{prop:R_J_1-sound}).
\end{proof}
\begin{prop}
Let $E$ be an $R_J$-extender
with $\crit(E)<\nu(F^R)=\rho_1^{R_J}$. Let $U_0=\Ult_0(R_J,E)$
and $U_1=\Ult_1(R_J,E)$
and $\sigma:U_0\to U_1$ the factor map, and $i_0:R_J\to U_0$ be the ultrapower map. Let $U=\Ult_0(R^\sq,E)$ and $i:R^\sq\to U$ the ultrapower map. Then:
\begin{enumerate}[label=--]
\item $U_0=U_1$ and $\sigma=\id$,
\item $\nu(F^{U_0})=\sup  i_0``\nu(F^R)$,
\item $F^U=F^{U_0}\rest\nu(F^{U_0})$,
\item $i=i_0\rest (R|\nu(F^R))$.
\end{enumerate}
\end{prop}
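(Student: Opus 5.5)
The main claim is $U_0=U_1$, and I would get it by showing that every function used to form $\Ult_1(R_J,E)$ is equivalent, modulo $E$, to a function lying in $R_J$. The other three items should then follow by comparing the ultrapower constructions directly.

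Let $\kappa'=\crit(E)<\nu(F^R)$ and let $f:[\kappa']^{<\om}\to R_J$ be $\rSigma_1^{R_J}$ in a parameter $q$. I would use the hulls $H_\alpha=\Hull_1^{R_J}(\alpha)=\rg(\pi_\alpha)$ from the proof of Proposition~\ref{prop:R_J_1-sound}. Their union is all of $R_J$, since $R_J=\Hull_1^{R_J}(\nu(F^R))$.
\begin{enumerate}[label=(\arabic*)]
\item Fix $\alpha_0<\nu(F^R)$ with $q\in H_{\alpha_0}$.
\item The map $u\mapsto$ the least $\alpha$ such that $f(u)$ and a $\Sigma_1$ witness for it lie in $H_\alpha$ is $\rSigma_1^{R_J}$. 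Because $\rho_1^{R_J}=\nu(F^R)>\kappa'$, this map is bounded below $\nu(F^R)$; otherwise we would get a new $\rSigma_1$ subset of $\kappa'$, or a cofinal $\Sigma_1$ map from $[\kappa']^{<\om}$ into $\nu(F^R)$, either of which forces $\rho_1^{R_J}\leq\kappa'$.
\item Fix $\beta<\nu(F^R)$ bounding this map. Then $f=\pi_\beta\circ\bar f$, where $\bar f$ is the corresponding $\rSigma_1$ function over the collapse $R_{J,\beta}$.
\item Since $R_{J,\beta}\in R_J$, we get $\bar f\in R_J$.
\item The relevant bounded part of $\pi_\beta$ is computed from $F^R\rest\beta$ via $(\ref{eqn:char_H=rg(pi_alpha)})$, and $F^R\rest\beta\in R\sub R_J$. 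Hence $f\in R_J$.
\end{enumerate}
So every function used for $\Ult_1$ is already used for $\Ult_0$. Therefore $\sigma$, which sends $[a,f]^{R_J}_{E}$ to $[a,f]^{R_J,1}_{E}$, is surjective, hence the identity, and $U_0=U_1$.

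For $\nu(F^{U_0})=\sup i_0``\nu(F^R)$, I would argue as follows. Since $U_0=U_1$ and $\Ult_1$ preserves $\rSigma_1$ facts, the statement ``$R_J=\Hull_1(\nu(F^R))$ with $p_1=\emptyset$'' transfers. Hence $U_0=\Hull_1^{U_0}(\sup i_0``\nu(F^R))$.
\begin{itemize}
\item[($\leq$)] The bound $\nu(F^{U_0})\leq\sup i_0``\nu(F^R)$ follows because the generators of $F^{U_0}$ are bounded by the hull parameter set.
\item[($\geq$)] Every ordinal $i_0(\xi)$ with $\xi<\nu(F^R)$ lies below a generator, since ordinals below $\nu(F^R)$ lie below generators of $F^R$ and this is $\Sigma_0$ in $R_J$.
\end{itemize}

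For the last two items, I would compare $\Ult_0(R^\sq,E)$ with $U_0$. Both agree below their respective $\nu$: $R^\sq$ and $R_J$ share $R|\nu(F^R)$ and are amenably coded by the same fragments $F^R\rest\alpha$. Each function $g\in R^\sq$ bounded in $R|\nu(F^R)$ is in $R_J$, by the preceding proposition. Conversely, each function in $R_J$ with range bounded in $R|\nu(F^R)$ is in $R^\sq$. So $[a,g]\mapsto[a,g]$ identifies $\Ult_0(R^\sq,E)$ with the part of $U_0$ below $\nu(F^{U_0})$, together with the predicate. This gives $i=i_0\rest(R|\nu(F^R))$. The identification of predicates, using the agreement of the amenable codes $F^R\rest\alpha\mapsto F^{U}\rest i(\alpha)$, gives $F^U=F^{U_0}\rest\nu(F^{U_0})$.

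The main obstacle is step (2) of the first part. The boundedness argument has to be phrased with care using $\rho_1^{R_J}=\nu(F^R)$. In particular, if $\nu(F^R)$ has cofinality $\leq\kappa'$ in $R_J$, one must check that a $\Sigma_1$ cofinal map still yields a new $\rSigma_1$ subset of $\kappa'$.
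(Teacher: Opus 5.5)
\textbf{There is a genuine gap, at step (5).} Your overall target is the right one: each function used for $\Ult_1(R_J,E)$ should agree $E$-a.e.\ with one in $R_J$. But steps (1)--(5) claim something stronger, namely that every $\rSigma_1^{R_J}$ function $f:[\kappa']^{<\om}\to R_J$ lies in $R_J$, and that is false.

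Here is a counterexample. Let $\kappa=\crit(F^R)$ and $j:R|\kappa^{+R}\to U$ be as in the proof of Proposition~\ref{prop:R_J_1-sound}, and suppose $\kappa^{+R}<\kappa'=\crit(E)<\nu(F^R)$. This happens, e.g., when $E$ is a measure from $\es^R$ on a measurable of $R|\nu(F^R)$ above $\kappa$. Define $g$ on $\kappa'$ by $g(\xi)=j(\xi)$ for $\xi<\kappa^{+R}$ and $g(\xi)=0$ otherwise. Using the amenable predicate of $R_J$, whose elements are the maps $j\rest R|\xi$, $g$ is $\rSigma_1^{R_J}(\{\kappa^{+R}\})$. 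But $\rg(g)$ is cofinal in $\OR^{R_J}=\sup j``\kappa^{+R}$, so $g\notin R_J$.

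In your notation, $\bar g(\xi)=j_\beta(\xi)$, so $\rg(\bar g)$ is cofinal in $U_\beta$ and no ``bounded part of $\pi_\beta$'' suffices. Moreover, $\pi_\beta$ maps into $U$, so it cannot be computed from $F^R\rest\beta$ at all; it can only be computed piecewise from fragments $j\rest R|\xi$ of the predicate of $R_J$. So ``every function used for $\Ult_1$ is already used for $\Ult_0$'' fails, and nothing in your argument produces the needed $X\in E_a$.

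By contrast, step (2), which you flag as the main obstacle, is trivial. Once $\beta\geq\max(\alpha_0,\kappa')$, every $f(u)$ lies in $H_\beta$ because $\Sigma_1$ hulls are closed under $\rSigma_1$-definable functions; $\rho_1^{R_J}$ plays no role there.

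\textbf{What is missing is an argument modulo $E$.} One way to repair your route is as follows.
\begin{itemize}
\item Send each $u$ to the least $h_u\in R|\kappa^{+R}$ with $\bar f(u)=j_\beta(h_u)(b)$ for some $b\in[\beta]^{<\om}$. This map lies in $R_J$.
\item If $\kappa'\leq\kappa$, the $h_u$ are bounded in $R|\kappa^{+R}$, since $\kappa^{+R}$ is regular in $R_J$.
\item If $\kappa'>\kappa^{+R}$, the map $u\mapsto h_u$ is constant on some $X\in E_a$, by $\kappa'$-completeness of $E$.
\item Either way, on such an $X$, $f=\pi_\beta\circ\bar f$ is computed from $\bar f$ and a single fragment $j\rest R|\zeta$, so $f\rest X\in R_J$.
\end{itemize}

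The paper's proof is leaner: it only treats those $[a,g]^{R_J,1}_E$ that are generators of $F^{U_1}$. Take $\alpha>\crit(E)$ with the parameters of $g$ in $H_\alpha$. For $E_a$-a.e.\ $u$, $g(u)$ is an $F^R$-generator lying in $H_\alpha$, so $g(u)<\alpha$ by (\ref{eqn:char_H=rg(pi_alpha)}). Hence $g$ is a.e.\ bounded below $\alpha<\rho_1^{R_J}$, and so a.e.\ equal to a function in $R|\nu(F^R)$. Since $U_1$ is generated by $\rg(i_1)$ together with the $F^{U_1}$-generators, this makes $\sigma$ onto. It also gives $\nu(F^{U_1})\leq\sup i_1``\nu(F^R)$ directly, so your separate hull-transfer argument for the second item is not needed.
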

\begin{proof}
Let $\kappa=\crit(E)<\nu(F^R)$.
Let $\vec{x}\in R_J$ and $g:[\kappa]^{<\om}\to\nu(F^R)$ be $\rSigma_1^{R_J}(\vec{x})$. Let $a\in[\nu(E)]^{<\om}$. Suppose $[a,g]^{R_J,1}_E$
is a generator of $F^{U_1}$.
It is enough to see that there is some $X\in E_a$
such that $g\rest X$ is bounded in $\nu(F^R)$ (and hence by Proposition \ref{prop:R_J_1-sound}, $g\rest X\in R|\nu(F^R)$).
But
 we can fix $\alpha$
such that $\vec{x}\in\rg(\pi_\alpha)$, with notation as in the proof of Proposition \ref{prop:R_J_1-sound}
and line (\ref{eqn:char_H=rg(pi_alpha)}).
But then by line (\ref{eqn:char_H=rg(pi_alpha)}),
if $g(u)$ is an $F^R$-generator
then $g(u)<\alpha$.
But then if $[a,g]^{R_J,1}_{E}$ is an $F^R$-generator,
there must be $X\in E_a$ such that $g(u)<\alpha$ for all $u\in X$,
which suffices.
\end{proof}
\begin{prop}
$\rSigma_1^R$ is recursively equivalent to $\rSigma_2^{R_J}$, for parameters in $R|\nu(F^R)$.
That is, there are recursive functions $\varphi\mapsto\psi_\varphi$ and $\varphi\mapsto\varrho_\varphi$
such that:
\begin{enumerate}
\item for each $\rSigma_1$ formula $\varphi$, $\psi_\varphi$
is $\rSigma_2$, and for all $\vec{x}\in R|\nu(F^R)$, we have
\[ R^\sq\sats\varphi(\vec{x})\iff R_J\sats\psi_\varphi(\vec{x}),\]
\item for each $\rSigma_2$ formula $\varphi$, $\varrho_\varphi$ is $\rSigma_1$, and  for all $\vec{x}\in R|\nu(F^R)$, we have
\[ R_J\sats\varphi(\vec{x})\iff R^\sq\sats\varrho_\varphi(\vec{x}).\]
\end{enumerate}
\end{prop}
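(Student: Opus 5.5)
The plan is to compare the two structures directly. $R^\sq$ and $R_J$ share the same passive part below $\nu(F^R)$. In $R^\sq$ the active predicate codes $F^R\rest\nu(F^R)$. In $R_J$ it codes restrictions $j\rest R|\xi$, for $\xi<\kappa^{+R}$, of the ultrapower map $j:R|\kappa^{+R}\to U$. Proposition \ref{prop:R_J_1-sound} gives $\rho_1^{R_J}=\nu(F^R)$ and $R_J=\Hull_1^{R_J}(\nu(F^R))$. It also shows that $\Sigma_1$-hulls of $R_J$ with parameters bounded below some $\alpha<\nu(F^R)$ collapse to $R_{J,\alpha}=(U_\alpha,E_\alpha)$. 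By construction each such $R_{J,\alpha}$ is computed in $R$, and uniformly so, from $F^R\rest\alpha$.

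For direction 1, take an $\rSigma_1$ formula $\varphi$ over $R^\sq$. It asserts the existence of a witness $w\in R|\nu(F^R)$, together with a finite piece of $F^R\rest\nu(F^R)$, satisfying a $\Sigma_0$ condition. Every piece of the extender can be read off from the predicate of $R_J$: $(a,X)\in F^R$ iff $a\in j(X)$, where $j(X)$ is obtained from some $j\rest R|\xi$ in the predicate. So $\varphi$ translates to a statement of the form ``there is $w\in R_J|\nu(F^R)$ such that ($\Sigma_0$ in $w$ and the extender fragments)''. To say that the relevant fragments of $F^R$ are exactly as required, I would use the $\rSigma_1^{R_J}$ set of the universe $U$ together with the standard $\rSigma_2$ coding of the restriction to $\nu(F^R)$: the condition ``$w\in R|\nu(F^R)$'' is $\Pi_1$ over $R_J$, since $\nu(F^R)=\rho_1$ and boundedness is expressible via $\Th_1$. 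This yields an $\rSigma_2$ formula $\psi_\varphi$, with the recursive map read off from the translation.

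For direction 2, take an $\rSigma_2$ formula over $R_J$. By the definition of $\rSigma_2$, it is $\Sigma_1$ over $R_J$ in the $\rSigma_1$ theory $T=\Th_1^{R_J}(\nu(F^R)\cup\{\vec x\})$, with $p_1=\emptyset$. The key observation is that $T$ is the union over $\alpha<\nu(F^R)$ of the $\Sigma_1$ theories $\Th_1^{R_{J,\alpha}}(\alpha)$. This holds because each $\pi_\alpha$ is $\Sigma_1$-elementary and has range $H=\Hull_1^{R_J}(\alpha)$ by (\ref{eqn:char_H=rg(pi_alpha)}). Each $R_{J,\alpha}$ belongs to $R|\nu(F^R)$, or at least is uniformly $\Delta_1$-computable there from $F^R\rest\alpha$, and $F^R\rest\alpha$ is read from the $R^\sq$ predicate. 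Hence ``$\vartheta\in T$'' is $\rSigma_1^{R^\sq}$: there exist $\alpha$ and $F^R\rest\alpha$ with $R_{J,\alpha}\sats\vartheta$. For a $\Sigma_1$ assertion over $R_J$ in the predicate $T$, both the witness and the finitely many facts from $T$ that it uses lie in some $R_{J,\alpha}$, and so in $\Hull_1^{R_J}(\alpha)$. The whole statement therefore becomes ``there exists $\alpha<\nu(F^R)$ such that $(R_{J,\alpha}, T\restriction\alpha)$ satisfies the corresponding statement''. I still need to check that negative facts in $T$, namely $\Pi_1$ facts of $R_J$ with parameters in $\nu(F^R)$, are handled correctly. $\rSigma_2$ uses the whole theory as a predicate, including its complement, so I need the negative part to be $\Sigma_1^{R^\sq}$ as well. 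Here I would use that $\nu(F^R)$ is the projectum and the amenability of the $R^\sq$ predicate: $\neg\vartheta\in T$ iff for all $\alpha$, $R_{J,\alpha}\not\sats\vartheta$. This is $\Pi_1^{R^\sq}$. The standard $\rSigma$ formalism codes the theory by its restrictions $T\cap R|\beta$, for $\beta<\rho_1$, which are elements, so ``$t=T\cap\beta$'' is what has to be expressed. Since $T\cap\beta\in R|\nu(F^R)$ and $R^\sq$ can compute it as a limit, I expect to express it as ``$t=\bigcup_{\alpha<\gamma}\Th_1^{R_{J,\alpha}}\cap\beta$ for some $\gamma$ past which nothing new appears''. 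That stabilization needs a bound, which I would get from the $\rSigma_1$ definability over $R^\sq$ of the sequence $\langle R_{J,\alpha}\rangle$ and from the fact that $\rho_1^{R^\sq}\ge\nu(F^R)$ for type 3 $R$.

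The main obstacle is exactly this correct handling of the $\Pi_1$ component of the $\rSigma_2$ predicate. I would settle it by mimicking the usual proof that $\rSigma_{n+1}$ over a structure is $\rSigma_1$ over its $n$th reduct. Concretely, I would verify that the reduct $(R_J|\nu(F^R),T)$ is $\Delta_1$-equivalent to $R^\sq$, using the hull characterization above.
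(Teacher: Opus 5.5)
Your overall plan matches the paper's. To reduce $\rSigma_2^{R_J}$ to $\rSigma_1^{R^{\sq}}$ you use the collapses $R_{J,\alpha}$ of $\Hull_1^{R_J}(\alpha)$, which are computed from $F^R\rest\alpha$. For the converse you read $F^R$ off the $\Sigma_1$ facts ``$a\in j(X)$'' of $R_J$. In the converse, the clean way to make ``the fragments are exactly as required'' expressible is this: for $\alpha\in[\kappa^{+R},\nu(F^R))$, the single theory $\Th_1^{R_J}(\alpha)$ decides all such facts, both positive and negative. It therefore computes $F^R\rest\alpha$ outright, and $\rSigma_2$ may existentially quantify over such theories because $\alpha<\rho_1^{R_J}$. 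You gesture at this, but the remark that ``$w\in R|\nu(F^R)$ is $\Pi_1$'' is beside the point, since the restriction to $R_J|\rho_1^{R_J}=R|\nu(F^R)$ is supplied by the $\rSigma_2$ formalism itself.

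The genuine gap is in direction 2, at the step you flag as the main obstacle. You treat the negative part of $T$ as $\Pi_1^{R^{\sq}}$ (``$\neg\vartheta\in T$ iff for all $\alpha$, $R_{J,\alpha}\not\sats\vartheta$''). You then propose to pin down $T\cap\beta$ by ``some $\gamma$ past which nothing new appears''. That clause is itself universal over all later $\alpha$, so it reintroduces the problem rather than solving it. The bound you hope to get from ``$\rho_1^{R^{\sq}}\geq\nu(F^R)$'' is also not available. That inequality holds for $R_J$ (Proposition \ref{prop:R_J_1-sound}), but $\rho_1^{R^{\sq}}=\rho_1^R$ can lie far below $\nu(F^R)$, for instance it can equal $\omega$.

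The missing observation is that no stabilization is needed. The map $\pi_\alpha:R_{J,\alpha}\to R_J$ is $\Sigma_1$-elementary, and $\pi_\alpha\rest\alpha=\id$ because $\alpha\subseteq\rg(\pi_\alpha)$. Hence $\Th_1^{R_J}(\alpha)=\Th_1^{R_{J,\alpha}}(\alpha)$ \emph{exactly}, negative facts included. So $\neg\vartheta\in T$ is witnessed by any single $\alpha$ above the parameters of $\vartheta$. Likewise ``$t=\Th_1^{R_J}(\alpha)$'' becomes an $\rSigma_1^{R^{\sq}}$ statement: there is $E$ in the active predicate of $R^{\sq}$ with $E=F^R\rest\alpha$, and $t$ is the $\Sigma_1$ theory, in parameters ${<}\alpha$, of the structure $R_{J,\alpha}$ built from $E$ as in that proof.

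This is exactly the reduction the paper extracts from the proof of Proposition \ref{prop:R_J_1-sound}. With it, your closing plan of showing the reduct $(R|\nu(F^R),T)$ is $\Delta_1$-equivalent to $R^{\sq}$ goes through. As written, however, direction 2 is not established.
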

\begin{proof}
Since the active extender of $R^\sq$
is coded as $\{F^R\rest\alpha\bigm|\alpha<\nu(F^R)\}$, the proof of Proposition \ref{prop:R_J_1-sound} shows how to recursively reduce $\rSigma_2^{R_J}(\{\vec{x}\})$ to $\rSigma_1^{R^\sq}(\{\vec{x}\})$ (for $\vec{x}\in R|\nu(F^R)$). Conversely, note that
we can express the quantifier ``there is $E$
such that $E=F^R\rest\alpha$
for some $\alpha<\rho_1^{R_J}=\nu(F^R)$''
with $\rSigma_2^{R_J}$,
since given $\alpha\in[\kappa^{+R},\nu(F^R))$, $\Th_1^{R_J}(\alpha)$
 easily codes $F^R\rest\alpha$.
\end{proof}

It immediately follows that:
\begin{prop}
For each $n<\om$,
$R$ is $n$-sound iff $R_J$ is $(n+1)$-sound. Suppose $R$ is $n$-sound
and $\om<\rho_n^R$ \tu{(}so $R_J$ is $(n+1)$-sound\tu{)}. Then:
\begin{enumerate}
\item $\om<\rho_n^R=\rho_{n+1}^{R_J}$,
\item $\rSigma_{n+1}^R$ is recursively equivalent to $\rSigma_{n+2}^{R_J}$, for parameters in $R|\nu(F^R)$,
\item $\rho_{k}^{R}=\rho_{k+1}^{R_J}$ and $p_k^R=p_{k+1}^{R_J}$  for each $k\leq n+1$,
\item $R$ is $(n+1)$-solid iff $R_J$ is $(n+2)$-solid,
\item $R$ is $(n+1)$-sound iff $R_J$ is $(n+2)$-sound,
\item $\Hull_{n+1}^{R}(X)=\Hull_{n+2}^{R_J}(X)\cap(R|\nu(F^R))$ for all $X\sub R|\nu(F^R)$,
\item For each $\kappa<\rho_{n+1}^{R}$
and $f:[\kappa]^{<\om}\to R|\nu(F^R)$,
$f$ is $\bfrSigma_{n+1}^{R}$
iff $f$ is $\bfrSigma_{n+2}^{R_J}$.
Therefore if $E$ is an $R$-extender
with $\crit(E)<\rho_{n+1}^R$,
and $\Ult_{n+1}(R,E)$
and $\Ult_{n+2}(R_J,E)$ are wellfounded, then
\[ \Ult_{n+1}(R,E)_J=\Ult_{n+2}(R_J,E)\]
and $i\sub j$
where $i:R^\sq\to(\Ult_{n+1}(R,E))^\sq$
and $j:R_J\to\Ult_{n+2}(R_J,E)$
are the ultrapower maps.
\end{enumerate}

\end{prop}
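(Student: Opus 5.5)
We argue by induction on $n$, using as the base the preceding proposition ($\rSigma_1^R$ recursively equivalent to $\rSigma_2^{R_J}$ for parameters in $R|\nu(F^R)$) together with Proposition \ref{prop:R_J_1-sound}. These give $\rho_0^R=\OR^R$ and $\rho_1^{R_J}=\nu(F^R)$. They also give $p_1^{R_J}=\emptyset$ and the $1$-soundness of $R_J$, so at level $0$ of $R$ the two structures agree with the index shifted by one. We identify $R$ with $R^\sq$ throughout, as is standard for type 3 premice, since the fine structure of $R$ is computed over $R^\sq$, whose universe is $R|\nu(F^R)$.

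The inductive step runs as follows. Suppose $\rSigma_{k+1}^R$ is recursively equivalent to $\rSigma_{k+2}^{R_J}$ for parameters in $R|\nu(F^R)$. Then the $\rSigma_{k+1}$-definable subsets of $R|\nu(F^R)$ over $R$ coincide with the $\rSigma_{k+2}$-definable ones over $R_J$. Computing projecta and standard parameters by the usual definitions gives $\rho_{k+1}^R=\rho_{k+2}^{R_J}$ and $p_{k+1}^R=p_{k+2}^{R_J}$. The minimisation for the standard parameter only involves parameters below $\rho_{k+1}\leq\nu(F^R)$ and generalized solidity witnesses, which are $\rSigma_{k+1}$ theories, respectively $\rSigma_{k+2}$ theories, of the same parameters; so these are literally the same sets. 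Solidity and soundness then transfer immediately, since they are statements about these theories and hulls; soundness uses that $R_J=\Hull_1^{R_J}(\nu(F^R))$. The hull equation $\Hull_{k+1}^R(X)=\Hull_{k+2}^{R_J}(X)\cap R|\nu(F^R)$ follows from the same translation applied to the $\rSigma_{k+1}$ witness-existence formulas.

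To go up one level, we use the equivalence at level $k+1$ together with $\rho_{k+1}^R=\rho_{k+2}^{R_J}$ and $p_{k+1}^R=p_{k+2}^{R_J}$. The $(k+1)$-st master codes of $R$ and the $(k+2)$-nd master codes of $R_J$ are then the same theories, so the codes $\rSigma_{k+2}^R$ and $\rSigma_{k+3}^{R_J}$ are recursively equivalent. The ``iff'' statements about $n$-soundness are proved in the same induction: as long as the structures are sound up to the current level, the next-level definitions make sense on both sides simultaneously. The hypothesis $\om<\rho_n^R$ guarantees that the codes are nontrivial, so the equivalences continue up to level $n+1$.

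For the last item, the $\bfrSigma$ functions $f:[\kappa]^{<\om}\to R|\nu(F^R)$ coincide by the level-$(n+1)$ equivalence. Hence $\Ult_{n+1}(R,E)$ and $\Ult_{n+2}(R_J,E)$ are formed from the same functions restricted to values in $R|\nu(F^R)$. Every element of $R_J$ is $\Sigma_1$-definable from elements of $R|\nu(F^R)$, so functions into $R_J$ reduce to these. We then check that the active extender of the $R_J$-ultrapower is the $\lambda$-indexed version of that of $\Ult_{n+1}(R,E)$, arguing as in the previous proposition on $\Ult_0$ versus $\Ult_1$: generators are bounded by images of $\nu(F^R)$. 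The main obstacle is this bookkeeping at the bottom level. We must make sure that the $\lambda$-indexed predicate of the ultrapower equals the ultrapower of the predicate, and that $i\sub j$ holds, which requires Łoś's theorem at $\rSigma_{n+2}$ for $R_J$ to match $\rSigma_{n+1}$ for $R$.
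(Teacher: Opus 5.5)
Your proposal is correct and is essentially the paper's argument. The paper gives no separate proof: it states the proposition as an immediate consequence of Proposition~\ref{prop:R_J_1-sound}, the $\rSigma_1^R$/$\rSigma_2^{R_J}$ equivalence and the $\Ult_0$/$\Ult_1$ comparison, and these are exactly the base case you propagate level by level through the master codes. One point deserves to be made explicit. Candidate parameters for $p_{k+2}^{R_J}$ a priori range over $[\OR^{R_J}]^{<\om}$, so you need $p_{k+2}^{R_J}\sub\rho_{k+1}^{R_J}\leq\nu(F^R)$ (rather than ``below $\rho_{k+1}$''). This follows from minimality, using $R_J=\Hull_1^{R_J}(\nu(F^R))$. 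Also, the corresponding theories on the two sides are recursively inter-translatable rather than literally equal, and that is all the argument needs.
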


\begin{dfn}
A \emph{recalibrated-premouse}
(\emph{recal-premouse})
is a structure $R$
where for some premouse $\widetilde{R}$, either:
\begin{enumerate}
\item $\widetilde{R}$ is non-type 3 and $R=\widetilde{R}$, or
\item $\widetilde{R}$ is type 3 and $R=\widetilde{R}_J$.\qedhere
\end{enumerate}
\end{dfn}

\subsection{Between $R$ and $R[t]$ (or $R_J$ and $R_J[t]$)}\label{sec:between_R_and_R[t]}

The translation in this section is just  the usual translation of fine structure between a premouse and a generic extension thereof. (But note that $\delta$ need not be a cutpoint here.)
In this section, fix $R$ which is either $\star$-valid,
or $R=S_J$ for some $\star$-valid $S$.

\begin{dfn}
Let $\OR^R=\delta+\xi$ and
$t=t_P$.
Then  $R[t]$ denotes the structure
with universe $\univ{\J_\xi^{\es^R}(R|\delta,t)}$, and if $\xi>0$, with constants $R|\delta$ and $t$, and predicates $\es^R$ and $F^R$,  and if $\xi=0$, with universe $\univ{R|\delta}$  and predicates $R|\delta$ and $t$.
If $\xi>0$ then $R[t]$ is amenable
with $t\in R[t]$.
\end{dfn}

\begin{prop}
$R$ and $R[t]$ have the same cardinals and cofinalities $\geq\delta$; in particular,  $\delta$ is regular in $R[t]$.
\end{prop}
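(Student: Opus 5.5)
The plan is to exhibit $R[t]$ as a small set-generic extension of $R$, and then apply the standard preservation results for small forcing. Here $t=t_P$ is the theory coding $P$ (from the $P$-optimality setup of \cite[\S8]{odle_v2}). The relevant facts are:
\begin{enumerate}[label=--]
\item $t$ is generic over $M(\Tt)=R|\delta$ for the $\delta$-generator extender algebra $\BB$ of $M(\Tt)$ at $\delta$;
\item $\BB$ is $\delta$-cc in $M(\Tt)$, since $\delta$ is Woodin there;
\item $\BB\sub R|\delta$ is definable over $R|\delta$ from $\Tt$.
\end{enumerate}

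The first step is to verify that $\BB$ is in fact $\delta$-cc in $R$ itself, not merely in $R|\delta$. We have $\pow(\delta)\cap R\sub$ the sets which are definable over $R|\delta$, or more generally $\pow(\delta)\cap R=\pow(\delta)\cap Q$, because $R\ins Q$ (or $R=S_J$ with $S\ins Q$) and $Q$ is the Q-structure. Any antichain of $\BB$ lying in $R$ is a subset of $R|\delta$, so it is in $R|\delta$'s sense a bounded or unbounded subset of $\delta$. The $\delta$-cc argument then goes through using that $\delta$ is Woodin with respect to functions in $R$, up to the point where $R$ defines the failure of Woodinness. Here I would use instead that $\delta$ is regular in $R$ and that antichains are coded by subsets of $\delta$. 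Each such antichain $A\in R$ can be reflected: if $A$ had size $\delta$, take a club in $R$ of $\alpha<\delta$ with $A\cap V_\alpha$ a maximal antichain in $\BB\cap R|\alpha$. Then, using the Woodinness of $\delta$ in the sense relevant to the extender algebra axioms (that is, extenders in $\es^{R|\delta}$ with the needed generators), some axiom forces $A$ to be bounded. This is the standard proof, and it only uses that $A$ and the club are in a structure in which $\delta$ is regular and the extenders from $\es^{R|\delta}$ witness Woodinness for the relevant function. This is the step I expect to be the main obstacle, because $\delta$ need not be Woodin in $R$. What I would try first is to run the argument inside the segment of $R$ (or of $\J(R)$) where the function $\alpha\mapsto$ (witness) still lives below the Q-structure level, noting that $\rho_\om^R\ge\delta$ for $\star$-valid $R$.

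Given $\delta$-cc of $\BB$ in $R$ and $R$-genericity of $t$, the remaining steps are routine. Genericity over $R$, and not just over $R|\delta$, follows because every maximal antichain in $R$ lies in $R|\delta$ and is maximal there, and $t$ meets all such. Then $R[t]$, having universe $\J_\xi^{\es^R}(R|\delta,t)$, is exactly the forcing extension of $R$ by $t$; this is checked by induction on $\xi$, computing $\J$-levels via names. The $\delta$-cc then preserves cardinals and cofinalities $\ge\delta$, and in particular the regularity of $\delta$, which holds in $R$ by $\star$-validity. No cardinals above $\delta$ are affected, since $|\BB|=\delta$.

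For the case $R=S_J$, the same argument applies, since $S_J$ and $S$ have the same subsets of $\delta$ (by Proposition \ref{prop:R_J_1-sound} and the equivalence of $\rSigma_1^S$ with $\rSigma_2^{S_J}$), together with the same cardinals and cofinalities.
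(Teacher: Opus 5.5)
Your route is the paper's. Its proof is the single line that $\card^R(\BB)=\delta$ and $R\sats$ ``$\BB$ has the $\delta$-cc''; genericity over $R$ and preservation of cardinals and cofinalities $\geq\delta$ are then routine, as you say.

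The gap is that your proposal does not establish the one nontrivial input, the $\delta$-cc in $R$. You leave it as the ``main obstacle'', on the grounds that $\delta$ need not be Woodin in $R$, and offer only a vague plan to work in a segment ``below the Q-structure level''. That worry is unfounded. The missing observation is that $\delta$ is Woodin in $R$ with respect to every set which is an \emph{element} of $R$:
\begin{enumerate}[label=--]
\item For $R\ins Q$, any $A\in R$ lies in some proper segment of $Q$. Since $Q$ is the \emph{least} level of $M^\Tt_b$ which defines a failure of Woodinness of $\delta$, no element of $Q$ witnesses such a failure; the failure is only definable over $Q$.
\item For the other structures covered in \S\ref{sec:between_R_and_R[t]} (the ultrapowers $Q_{i+1}$ of Definition \ref{dfn:R_i}, and $S_J$), $\pow(\delta)$ is the same as for the corresponding segment. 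In the $\delta$-measure case the critical point is $\delta$. In the overlapping case, $\OR^{N_i}$ (respectively $\OR^S$) is a cardinal above $\delta$ of the ultrapower (respectively of $\Ult(S,F^S)$), so its subsets of $\delta$ already lie in $N_i^\passive$ (respectively $S^\passive$).
\end{enumerate}
Given this, the standard argument runs inside $R$ itself. Suppose $A=\{\varphi_\alpha\}_{\alpha<\delta}\in R$ were an antichain. Take $E\in\es^{R|\delta}$ with $\crit(E)=\kappa$ reflecting (a code for) $\vec{\varphi}$, so that $i_E(\vec{\varphi}\rest\kappa)_\kappa=\varphi_\kappa$. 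The extender-algebra axiom for $E$ then gives $\varphi_\kappa\leq\bigvee_{\alpha<\kappa}\varphi_\alpha$, contradicting that $A$ is an antichain. You actually state this criterion yourself (extenders in $\es^{R|\delta}$ witnessing Woodinness for the relevant $A$) and then doubt that it holds.

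Several of your auxiliary claims are also false, though none is needed:
\begin{enumerate}[label=--]
\item $\pow(\delta)\cap R$ is not in general contained in the sets definable over $R|\delta$, since new subsets of $\delta$ appear at higher levels of $R$.
\item $\pow(\delta)\cap R=\pow(\delta)\cap Q$ fails in general; only $\sub$ holds for $R\ins Q$.
\item The club of closure points $\alpha$ with $A\cap R|\alpha$ maximal in $\BB\cap R|\alpha$ does not by itself bound $A$. What does the work is a reflecting extender, that is, Woodinness relative to $A$.
\item $S_J$ and $S$ need not have the same height, let alone the same cardinals. But only their agreement on $\pow(\delta)$ is needed to transfer the $\delta$-cc.
\end{enumerate}
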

\begin{proof}
Because $\card^R(\BB)=\delta$
and $R\sats$ ``$\BB$ has the $\delta$-cc''.
\end{proof}
\begin{prop}\label{prop:inter_Sigma_1_R,R[t]} We have:
\begin{enumerate}
\item
$R\sub R[t]$ and $\univ{R}$,
$\es^R$ and $F^R$ are each $\Delta_1^{R[t]}$.
\item There is an $\rSigma_1$ formula $\psi$ of the ``$R[t]$''-language such that for all
$\rSigma_1$ formulas $\varphi$ of the premouse language and all $v\in R[t]$,
\[ v\in R\wedge R\sats\varphi(v)\iff R[t]\sats\psi(\varphi,v).\]
\item There is an $\rSigma_1$ formula $\varrho$ of the premouse language such that for all $\rSigma_1$ formulas $\varphi$ of the ``$R[t]$''-language and all $\BB$-names $\tau\in R$, we have
\[ R[t]\sats\varphi(\tau_{G_t})\iff\exists p\in G_t\ \big[R\sats\varrho(\alpha_{\Tt},\delta,p,\varphi,\tau)\big].\]
\end{enumerate}
Moreover, the formulas $\psi,\varrho$
can be chosen uniformly in $R$ of the same (premouse) type.
\end{prop}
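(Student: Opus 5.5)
This is the standard forcing-theorem argument for the generic extension $R[t]$ of $R$ by $t=t_P$. The forcing is $\BB$, the $\delta$-generator extender algebra, with $\card^R(\BB)=\delta$ and $\BB\in R$ (or $\BB$ definable over $R|\delta$ when $\xi=0$). The generic is $G_t$, the filter determined by $t$. The plan is to prove the three items in order, working level by level along the $\J$-hierarchy $\J_\xi^{\es^R}(R|\delta,t)$ and comparing it with the hierarchy of $R$ above $\delta$.

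\emph{Item 1.} By induction on $\zeta\le\xi$, I show $\J_\zeta^{\es^R}(R|\delta)\sub\J_\zeta^{\es^R}(R|\delta,t)$. I also show that the map $\zeta\mapsto\J_\zeta^{\es^R}(R|\delta)$, restricted to each proper initial segment, lies in $R[t]$ and is defined by a single $\Sigma_1$ recursion. Then $v\in\univ{R}$ holds iff $\exists\zeta\,\exists S\,(S=\J_\zeta^{\es^R}(R|\delta)\wedge v\in S)$, which is $\Sigma_1$. Its negation is also $\Sigma_1$: there is a level $S$ of this hierarchy and a level of $R[t]$ containing $v$ with $v\notin S$ (this uses that the two hierarchies have the same ordinal heights). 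This gives $\Delta_1$. The predicates $\es^R$ and $F^R$ are literally predicates of $R[t]$, so they are trivially $\Delta_1$. The same also holds in the case $R=S_J$, since the $\lambda$-indexed amenable coding of $F$ is a predicate too.

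\emph{Item 2.} For $\psi(\varphi,v)$ I take: ``$v\in R$, and there is a level $S$ as above, or $S=R$ itself via the amenable predicates, witnessing the $\Sigma_1$ matrix of $\varphi$ relativized to $R$''. Since $\rSigma_1$ truth in $R$ is witnessed inside initial segments, which $R[t]$ recovers via item 1, this works. The point to check is that for active $R$ the predicate $F^R$ used in $R$ coincides with the one $R[t]$ carries, which is true by definition. Universality in $\varphi$ is obtained by using the $\rSigma_1$ satisfaction relation rather than a separate formula for each $\varphi$.

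\emph{Item 3.} This is the forcing theorem for $\rSigma_1$ formulas. I define $p\Vdash^*\varphi(\tau)$ by a $\Sigma_1$ recursion in $R$: there is $\zeta$ and a level $S$ of $R$ such that $p$ forces, over the structure $S$ (a set-size forcing relation computed inside $R$), that $\J_\zeta^{\es^R}(R|\delta,\dot t)\sats\varphi_0(\tau)$, where $\varphi_0$ is the $\Sigma_0$ matrix. This relation is $\Delta_1$-definable over any $R$-level containing the relevant names. The relevant parameters are $\alpha_{\Tt}$ (coding $\Tt$, hence $\BB$ and the name $\dot t$) and $\delta$, which is why they appear in $\varrho$. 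The truth lemma then gives $R[t]\sats\varphi(\tau_{G_t})$ iff $\exists p\in G_t$ with $R\sats\varrho(\ldots,p,\varphi,\tau)$. To prove it I need: (a) every element of $R[t]$ is $\tau_{G_t}$ for some $\tau\in R$, by induction on levels; and (b) genericity of $G_t$ over $R$ for dense sets in $R$. For (b), $t$ is $\BB$-generic over $M(\Tt)$ and $R$ agrees with $M(\Tt)$ up to $\delta$. Because $\BB$ is $\delta$-cc and has size $\delta$ in $R$, every maximal antichain of $\BB$ in $R$ lies in $R|\delta$, so it lies in $M(\Tt)$ and is met by $G_t$.

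The main obstacle is the truth lemma direction for the active predicates $\es^R$ and $F^R$ when $\delta$ is not a cutpoint, i.e.\ when extenders overlap $\delta$. The issue is that $R[t]$ uses the ground-model extenders unchanged as predicates rather than lifting them, so I must verify that forcing atomic statements involving $F^R$ or $\es^R$ about $\tau_{G_t}$ reduces to ground statements. I plan to handle this by evaluating $F^R(\tau_{G_t})$ as ``$\exists x\in R\,(x=\tau_{G_t}\wedge F^R(x))$'', which is $\Sigma_1$ via item 1, and then forcing ``$\tau=\check x$''. Uniformity of $\psi$ and $\varrho$ in the premouse type is then immediate, since all definitions refer only to the language and the parameters $\alpha_{\Tt},\delta$.
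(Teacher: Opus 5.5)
Your proposal is correct and follows the paper's route. The paper's proof addresses only item 3: it takes $\varrho$ to assert that some $\BB$-name $\tau'$ satisfies $p\Vdash\varphi'(\tau',\tau)$ for the $\Sigma_0$ matrix $\varphi'$ of $\varphi$, which is your existential strong forcing with a witness name in place of your witness level $\J_\zeta^{\es^R}(R|\delta,\dot t)$, and it leaves items 1--2 and the genericity of $G_t$ over $R$ (from $\card^R(\BB)=\delta$ and the $\delta$-cc) implicit, which you supply.

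The one step you justify too quickly is the $\Pi_1$ half of item 1. It needs that elements of $R$ lying in $\J_\zeta^{\es^R}(R|\delta,t)$ already lie in $\J_\zeta^{\es^R}(R|\delta)$, or in a level boundedly above it. That follows from the same forcing analysis (some $p\in G_t$ forces $\tau=\check{x}$), not from the two hierarchies merely having equal ordinal heights.
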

\begin{proof}
The formula $\varrho$ should assert that there is some $\BB$-name $\tau'$ such that $p$ forces $\varphi'(\tau',\tau)$, where $\varphi(v)=\exists w\varphi'(w,v)$ and $\varphi'$ is $\Sigma_0$.\end{proof}

\begin{dfn}\label{dfn:strongly_forces}
Say $p$ \emph{strongly forces $\varphi(\tau)$}
iff $\varrho(\delta,p,\varphi,\tau)$,
where $\varrho$ is as above.
\end{dfn}

As a corollary of the preceding two propositions, we have:
\begin{prop}\label{prop:matching_1-hulls}
Let $\delta+1\sub X\sub\OR^R$.
Then
$\univ{\Hull_1^R(X)}=\univ{\Hull_1^{R[t]}}\cap R$.
\end{prop}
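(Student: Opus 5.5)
We read the statement as $\univ{\Hull_1^R(X)}=\univ{\Hull_1^{R[t]}(X)}\cap R$, where $\delta+1\sub X$. We prove the two inclusions separately, using the two directions of Proposition \ref{prop:inter_Sigma_1_R,R[t]}. Throughout we rely on the fact that $\delta\sub X$, so every element of $R|\delta$ is available in both hulls. In particular, $\BB$ (coded in $R|\delta$), every condition $p\in\BB$, and $\delta$ itself all lie in $X$ or are $\Sigma_1$-definable from elements of $X$. (We also need the parameter $\alpha_\Tt$ appearing in $\varrho$; we take it to be below $\delta$, or at least definable from $\delta$ and $R|\delta$.)

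For ``$\sub$'': let $y\in\Hull_1^R(X)$. Then $y$ is the unique witness to some $\rSigma_1$ formula $\varphi(y,\vec{x})$ in $R$ with $\vec{x}\in X^{<\om}$. We take the $\rSigma_1$ formula $\psi$ from part 2 of Proposition \ref{prop:inter_Sigma_1_R,R[t]}, which translates truth in $R$ into truth in $R[t]$, together with the fact from part 1 that $\univ{R}$ is $\Delta_1^{R[t]}$. These show that the formula ``$v\in R\wedge R\sats\varphi(v,\vec{x})$'' is $\rSigma_1^{R[t]}$, and that it still singles out $y$ uniquely in $R[t]$. Hence $y\in\Hull_1^{R[t]}(X)\cap R$. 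To make this honest we should use the standard definition of hull elements as $\Sigma_1$ Skolem terms applied to parameters, and observe that the least-witness computation for a fixed $\varphi$ transfers. This is routine.

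For ``$\supseteq$'': let $y\in R$ with $y\in\Hull_1^{R[t]}(X)$. Then $y$ is the unique $v$ such that $R[t]\sats\varphi(v,\vec{x})$ for some $\rSigma_1$ formula $\varphi$ and $\vec{x}\in X^{<\om}$. Ordinals and elements of $R$ have canonical names $\check{x}$ in $R$, definable in $R$ in a $\Sigma_1$ way from $x$. By part 3 of Proposition \ref{prop:inter_Sigma_1_R,R[t]}, there is $p\in G_t$ that strongly forces $\varphi(\check{y},\check{\vec{x}})$ in the sense of Definition \ref{dfn:strongly_forces}. This $p$ lies in $R|\delta\sub X$, so $p$ is available as a parameter.

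The main obstacle is uniqueness: in $R$, the condition $p$ might strongly force $\varphi(\check z,\check{\vec x})$ for several $z$, and we cannot refer to $G_t$ inside $R$. To handle this, we strengthen $p$ inside $G_t$ so that it forces ``$\varphi$ has a unique witness, and that witness lies in $\check{R}$''. This uses the full forcing theorem, not just strong forcing, but uniqueness is only needed to select $y$. Then $y$ is the unique $z\in R$ such that $p$ strongly forces $\varphi(\check z,\check{\vec x})$. Indeed, if $p$ strongly forced $\varphi(\check z,\check{\vec x})$, then $R[t]\sats\varphi(z,\vec{x})$ because $p\in G_t$, so $z=y$. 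Because strong forcing is $\rSigma_1^R$, the set $\{y\}$ is $\rSigma_1^R$-definable from $p,\delta,\vec{x}\in X$. Therefore $y\in\Hull_1^R(X)$.

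If one prefers to avoid the full forcing theorem, there is an alternative: take the $R$-least pair $(z,\text{witness})$, in the order of construction, such that $p$ strongly forces $\varphi(\check z,\check{\vec x})$ via that witness. This $z$ makes $\varphi$ true in $R[t]$, so by uniqueness of $y$ it equals $y$. This version needs only the $\rSigma_1$ characterization in part 3.

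Finally, the uniformity clause of Proposition \ref{prop:inter_Sigma_1_R,R[t]} means no issues arise from the choice of premouse type, so both inclusions hold.
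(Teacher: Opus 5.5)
Your argument is correct and is essentially the paper's. The paper simply records the statement as a corollary of the $\rSigma_1$ translation in Proposition \ref{prop:inter_Sigma_1_R,R[t]}, and you unpack exactly that: part 2 gives $\sub$ and part 3 gives $\supseteq$, with the right-hand side read as $\Hull_1^{R[t]}(X)$, as intended.

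One simplification: the uniqueness ``obstacle'' is not real, and your own ``Indeed'' argument already shows why. For your fixed $p\in G_t$, any $z\in R$ such that $p$ strongly forces $\varphi(\check z,\check{\vec x})$ satisfies $R[t]\sats\varphi(z,\vec x)$, so $z=y$. Hence you need neither the strengthening of $p$ via the full forcing theorem (which is not among the established facts for a possibly weak $R$) nor the least-witness variant.
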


The fine structure of $R[t]$
is defined above $(R|\delta,t,\alpha_{\Tt})$; so we put all ordinals $\xi<\delta$ into all hulls, etc.
For $\xi\in[\delta,\OR^R]$,
$R[t]|\xi=(R|\xi)[t]$
and $R[t]||\xi=(R||\xi)[t]$
(so $R[t]||\delta$ still has its predicates there).
We always have $\rho_0^{R[t]|\xi}=\xi$ by definition. For $\xi$ such that $R|\xi$ is type 3, we will compare the $d$-fine structure of $R|\xi$ with the fine structure of $R[t]|\xi$;
we use the ``type 2-like'' amenable code for $F^{R|\xi}$ and $F^{R[t]|\xi}$ in this case.
Having $\rho_n^{R[t]|\xi}=\delta$
just means the same thing (by definition) as $\rho_n^{R[t]|\xi}\leq\delta$.

\begin{prop}\label{prop:R,R[t]_fs_match_Sigma_1}
Suppose that $R$ is $1$-universal and $1$-solid and
$\delta\leq\rho_1^R$. Then:
\begin{enumerate}[label=--]
\item $\rho_1^{R[t]}=\rho_1^R$,
\item if $\delta\notin p_1^R$
then $p_1^{R[t]}=p_1^{R}$,
\item if $\delta\in p_1^R$ then
$\rho_1^R=\rho_1^{R[t]}=\delta$
and  $p_1^{R[t]}=p_1^R\cut\{\delta\}$,
\item $R[t]$ is $1$-universal and $1$-solid, and
\item if $R$ is $1$-sound then so is $R[t]$.
\end{enumerate}
\end{prop}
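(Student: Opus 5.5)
The plan is to transfer $\rSigma_1$ definability and $1$-hulls between $R$ and $R[t]$ using Propositions \ref{prop:inter_Sigma_1_R,R[t]} and \ref{prop:matching_1-hulls}. Recall that the fine structure of $R[t]$ is computed above $(R|\delta,t,\alpha_\Tt)$, so every ordinal $<\delta$ sits in every hull for free. Fix $\rho=\rho_1^R\geq\delta$ and $p=p_1^R$.

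\textbf{Step 1: $\rho_1^{R[t]}\leq\rho_1^R$.} By $1$-universality of $R$, there is a subset of $\rho$ that is $\rSigma_1^R(\{p\})$ and not in $R$. By Proposition \ref{prop:inter_Sigma_1_R,R[t]}(2), this set is also $\rSigma_1^{R[t]}(\{p\})$. It is not in $R[t]$ either. For $\rho>\delta$, this follows from the chain condition: $\card^R(\BB)=\delta$ and the $\delta$-cc give that every subset of an ordinal $\gamma\geq\delta$ lying in $R[t]$ has a name in $R|\gamma^{+R}$, hence is determined by an element of $R$ together with $t$. A cleaner route is to use that $\rho_1^R$ is defined via the $\rSigma_1$ theory $\Th_1^R(\rho\cup\{p\})$, and that this theory is not in $R[t]$ because $R[t]$ and $R$ agree on $\pow(\gamma)\cap\rSigma_1$-hulls above $\delta$. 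When $\rho=\delta$, the meaning of $\rho_1^{R[t]}=\delta$ is by convention just $\leq\delta$, so there is nothing to check.

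\textbf{Step 2: $\rho_1^{R[t]}\geq\rho_1^R$.} Conversely, let $A\sub\gamma$ with $\delta\leq\gamma<\rho$ be $\rSigma_1^{R[t]}(\{\tau_{G_t}\})$ for some name $\tau\in R$. By Proposition \ref{prop:inter_Sigma_1_R,R[t]}(3), for $\xi<\gamma$ we have $\xi\in A$ iff $\exists q\in G_t$ such that $q$ strongly forces $\varphi(\xi,\tau)$. The relation ``$q$ strongly forces $\varphi(\xi,\tau)$'' is $\rSigma_1^R$ in the parameters $\tau,\delta,\alpha_\Tt$, and restricted to $\gamma\times\BB$ (a set of size $\gamma<\rho_1^R$) it therefore lies in $R$. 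Hence $A\in R[t]$. This gives the equality of projecta.

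\textbf{Step 3: parameters, solidity, soundness.} Using Proposition \ref{prop:matching_1-hulls}, for $\delta+1\sub X$ the $1$-hulls of $R$ and of $R[t]$ agree on $R$. The key point is that $\delta$ is in every $R[t]$-hull, since $\delta$ is a constant built from $R|\delta$ (its height). So $\delta$ is redundant in $R[t]$-parameters. If $\delta\notin p$, the least parameter generating the same hull above $\rho$ is $p$ itself. If $\delta\in p$, it becomes $p\cut\{\delta\}$. In that case, solidity of $R$ at $\delta$ gives that $\Th_1^R(\delta\cup\{p\cut\{\delta\}\})\in R$, whereas adding $\delta$ yields a new subset of $\delta$, which forces $\rho=\delta$. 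Solidity witnesses for $R[t]$ are taken to be those of $R$ (for elements $\neq\delta$); these transfer because the witnesses are theories computed from hulls, which match by Proposition \ref{prop:matching_1-hulls} together with (2) and (3). Universality and soundness transfer the same way: $\Hull_1^{R[t]}(\rho\cup\{p^{R[t]}\})\supseteq R$ when $R=\Hull_1^R(\rho\cup\{p\})$, and this hull contains $t$, so it is all of $R[t]$.

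\textbf{Main obstacle.} The delicate step is the case $\delta\in p_1^R$, in particular verifying that $R[t]$ is $1$-solid and universal once $\delta$ is dropped. The solidity witness for $\delta$ in $R$ has to be reconciled with the convention that $\delta$ is automatically in the hulls of $R[t]$. I would first try to show directly that $\Th_1^{R[t]}(\delta\cup\{p\cut\{\delta\}\})$ is computed from $\Th_1^R(\delta\cup\{p\cut\{\delta\}\})$ together with $t$, using the $\delta$-cc to bound the names that are needed.
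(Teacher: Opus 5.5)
Your Step 2 is fine. It is essentially the paper's argument for $\rho_1^{R[t]}\geq\rho_1^R$; you use an arbitrary name where the paper uses the ordinal parameter $p_1^{R[t]}$.

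\textbf{The gap in Step 1.} The real gap is in Step 1, and everything after it depends on that step. From $t^R=\Th_1^R(\rho\cup\{p\})\notin R$ you cannot infer $t^R\notin R[t]$, because $R[t]$ does contain subsets of $\delta$ that are not in $R$ (e.g.\ $t$ and $G_t$). The fact that every set in $R[t]$ has a $\BB$-name in $R$ says nothing about whether a particular set missing from $R$ is the value of some name. The remark about ``agreeing on $\rSigma_1$-hulls'' does not supply this either.

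Your treatment of $\rho=\delta$ also reads the convention backwards. What is automatic is $\rho_1^{R[t]}\geq\delta$. Getting $\rho_1^{R[t]}\leq\delta$ still requires showing $\Th_1^{R[t]}(\delta\cup\{p\})\notin R[t]$, and this is exactly the case in which $\delta\in p$ can occur.

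The missing idea is where $1$-universality of $R$ is really used. You invoke it only to get a new $\rSigma_1^R(\{p\})$ subset of $\rho$, but that holds by the definition of $p_1^R$. The correct use goes as follows.
\begin{enumerate}
\item Suppose $t^R\in R[t]$. Since $\pow(\rho)\cap R\sub\cHull_1^R(\rho\cup\{p\})$, one can define from $t^R$, inside $R[t]$, a surjection $\rho\to\pow(\rho)\cap R$.
\item Since $\card^R(\BB)=\delta\leq\rho$ and $\BB$ is $\delta$-cc, nice names for subsets of $\rho$ are coded by elements of $\pow(\rho)\cap R$.
\item Evaluating by $G_t$ therefore gives a surjection $\rho\to\pow(\rho)\cap R[t]$ lying in $R[t]$, which contradicts Cantor.
\end{enumerate}
Combined with Proposition \ref{prop:inter_Sigma_1_R,R[t]}, this gives $\Th_1^{R[t]}(\rho\cup\{p\})\notin R[t]$. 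Hence $\rho_1^{R[t]}\leq\rho$ and $p_1^{R[t]}\leq p$.

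\textbf{The parameter and the ``main obstacle''.} Step 3 is not yet an argument: $p_1$ is not ``the least parameter generating the same hull.'' The reverse inequality $p_1^{R[t]}\geq p$ (resp.\ $\geq p\cut\{\delta\}$) comes from $1$-solidity of $R$.
\begin{enumerate}
\item Suppose $p'=p_1^{R[t]}$ were smaller, and let $\alpha$ be the largest element of the symmetric difference. Then $\alpha\in p$ and $\alpha>\delta$.
\item So the witness $\Th_1^R(\alpha\cup\{p\cut(\alpha+1)\})\in R$ has $\delta$ available as a parameter.
\item Via Proposition \ref{prop:inter_Sigma_1_R,R[t]}(3) and $G_t$, it then computes $\Th_1^{R[t]}(\rho\cup\{p'\})$ inside $R[t]$, which is a contradiction.
\end{enumerate}
In the case $\delta\in p$, one has $\rho=\delta$ simply because $\min(p)\geq\rho$. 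The key point there is that $\delta$ is a constant of $R[t]$. So $\Th_1^{R[t]}(\delta\cup\{p\cut\{\delta\}\})$ is interdefinable with $\Th_1^{R[t]}(\delta\cup\{p\})$, which is not in $R[t]$ by the corrected Step 1.

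Consequently, your plan for the ``main obstacle'' aims at something false. $\Th_1^{R[t]}(\delta\cup\{p\cut\{\delta\}\})$ must \emph{not} be in $R[t]$, so it cannot be computed from $\Th_1^R(\delta\cup\{p\cut\{\delta\}\})$ together with $t$. The reduction in Proposition \ref{prop:inter_Sigma_1_R,R[t]}(3) needs $\delta$ as a parameter on the $R$ side, and adding it turns that witness back into $\Th_1^R(\delta\cup\{p\})\notin R$. The solidity witness of $R$ at $\delta$ plays no role for $R[t]$.

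\textbf{Universality.} Your universality argument only covers $R=\Hull_1^R(\rho\cup\{p\})$, i.e.\ the sound case. For $1$-universality in general you must compare $C=\cHull_1^R(\rho\cup\{p\})$ with $C'=\cHull_1^{R[t]}(\rho\cup\{p_1^{R[t]}\})$.
\begin{enumerate}
\item If $\delta$ lies in the $R$-hull, Proposition \ref{prop:matching_1-hulls} gives $C'=C[t]$. Then $R||\rho^{+R}=C||\rho^{+C}$ transfers to $R[t]||\rho^{+R[t]}=C'||\rho^{+C'}$.
\item If $\delta$ does not lie in the $R$-hull, then $\rho=\delta\notin p$. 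The $R[t]$-hull contains the $R$-hull together with $\delta$, so $\delta^{+C}=\delta^{+C'}$, which gives universality.
\end{enumerate}
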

\begin{proof}
If $\delta=\OR^R$ then this is immediate, so assume $\delta<\OR^R$.
Let $t^R=\Th_1^R(\rho_1^R\cup\{p_1^R\})$ and $t^{R[t]}=\Th_1^{R[t]}(\rho_1^R\cup\{p_1^R\})$.
We claim that $t^{R[t]}\notin R[t]$;
since $G_t\in R_t$ and
by Proposition \ref{prop:inter_Sigma_1_R,R[t]}, it suffices to see that $t^R\notin R[t]$.
If $\rho_1^R=\OR^R$ this is trivial, so suppose $\rho_1^R<\OR^R$,
so $\delta\leq\rho_1^R<\OR^R$.
But then if $t^R\in R[t]$
then
by the $1$-universality of $R$,
there is a surjection $\pi:\rho_1^R\to R\cap\pow(\rho_1^R)$ with $\pi\in R[t]$. But note that this yields a surjection $\pi':\rho_1^R\to R[t]\cap\pow(\rho_1^R)$ with $\pi'\in R[t]$, which is impossible.

So $\rho_1^{R[t]}\leq\rho_1^R$.
Suppose $\rho_1^{R[t]}<\rho_1^R$.
So $\delta\leq\rho_1^{R[t]}<\rho_1^R$. Let $u=\Th_1^{R}(\rho_1^{R[t]}\cup\{p_1^{R[t]},\delta\})$. So $u\in R\sub R[t]$.
But  by Proposition \ref{prop:inter_Sigma_1_R,R[t]},
it follows that $\Th_1^{R[t]}(\rho_1^{R[t]}\cup\{p_1^{R[t]}\})\in R[t]$, a contradiction.

So $\rho_1^{R[t]}=\rho_1^R$
and $p_1^{R[t]}\leq p_1^R$.
Suppose $\delta\notin p_1^R$
but $p_1^{R[t]}< p_1^R$
(and in particular, $p_1^R\neq\emptyset$).
Since $\rho_1^R\geq\delta$, therefore
$p_1^R\cap(\delta+1)=\emptyset$.
But then using $1$-solidity in $R$
and as in the previous paragraph, we get a contradiction.
Finally suppose $\delta\in p_1^R$.
So $\rho_1^{R[t]}=\rho_1^R=\delta$
and $\delta=\min(p_1^R)$.
Then $t^{R[t]}$ is equivalent to $(t^{R[t]})'=\Th_1^{R[t]}(\delta\cup\{p_1^R\cut(\delta+1)\})$,
since the language for $R[t]$
can refer to the parameter $\delta$
via a constant. So $(t^{R[t]})'\notin R[t]$, so $p_1^{R[t]}\leq p_1^R\cut(\delta+1)$. Now it follows as before, using
$1$-solidity, that $p_1^{R[t]}=p_1^R\cut(\delta+1)=p_1^R\cut\{\delta\}$.

To see that $R[t]$ is $1$-solid,
note that the relevant solidity witnesses can be computed from the corresponding ones of $R$, together with $G_t$.

For $1$-universality,
let $\rho=\rho_1^R=\rho_1^{R[t]}$
and $p=p_1^R$; so $p_1^{R[t]}=p\cut\{\delta\}$.
Let $C=\cHull_1^R(\rho\cup\{p\})$ and let $\pi:C\to R$ be the uncollapse map.
Let $C'=\cHull_1^{R[t]}(\rho\cup\{p\cut\{\delta\}\})$
and $\pi':C'\to R[t]$ the uncollapse.
Suppose $\delta\in\rg(\pi)$.
Then by Proposition \ref{prop:matching_1-hulls},
$\rg(\pi)=\rg(\pi')\cap R$,
so $C\sub C'$ and $\pi\sub\pi'$,
and by $\Sigma_1$-elementarity,
$C'=C[t]$.
We have $R||\rho^{+R}=C||\rho^{+C}$,
and note then that $R[t]||\rho^{+R[t]}=(C')||\rho^{+C'}$,
as desired. Now suppose instead that $\delta\notin \rg(\pi)$.
We do have $\delta\in C'$,
so $\rg(\pi)\psub\rg(\pi')$.
Then $\delta=\rho_1^R=\rho_1^{R[t]}$
and $\delta\notin p_1^R$.
By the $1$-universality of $R$, $R||\delta^{+R}=C||\delta^{+C}$,
and note then that since $\rg(\pi)\cup\{\delta\}\sub\rg(\pi')$,
$\delta^{+C}=\delta^{+C'}\sub\rg(\pi')$, which gives $1$-universality for $R[t]$.

Finally if $R$ is $1$-sound
then by the preceding parts
and Claim \ref{prop:matching_1-hulls},
so is $R[t]$.
\end{proof}

\begin{prop}\label{prop:R,R[t]_fs_match_Sigma_1_rho_1<delta}
Suppose that $\rho_1^R<\delta$ and $R$ is $1$-solid above $\delta$ and has the $1$-hull property at $\delta$. Then:
\begin{enumerate}[label=--]
\item $\rho_1^{R[t]}=\delta$,
\item $p_1^{R[t]}=p_1^R\cut(\delta+1)$,
\item $R[t]$ is $1$-universal
and $1$-solid,
\item if $R$ is $\delta$-sound
then $R$ is $1$-sound.
\end{enumerate}
\end{prop}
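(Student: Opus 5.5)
I will follow the pattern of the proof of Proposition \ref{prop:R,R[t]_fs_match_Sigma_1}, with one adjustment. Since the fine structure of $R[t]$ is computed above $(R|\delta,t,\alpha_\Tt)$, every ordinal $<\delta$ lies in every hull. Hence $\rho_1^{R[t]}\geq\delta$ holds by convention, and the real work is to show that $\rho_1^{R[t]}\leq\delta$. Throughout, let $q=p_1^R\cut(\delta+1)$. I will also read the last clause as ``if $R$ is $\delta$-sound then $R[t]$ is $1$-sound''.

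\emph{Step 1: the projectum.} Consider the theory $t'=\Th_1^{R[t]}(\delta\cup\{q\})$. I will use the $1$-hull property at $\delta$: the hull $\Hull_1^R(\delta\cup\{p_1^R\})$ captures $\pow(\delta)\cap R$, so $\Th_1^R(\rho_1^R\cup\{p_1^R\})$ is not in $R$. Since $\rho_1^R<\delta$, this theory is computable from $\Th_1^R(\delta\cup\{p_1^R\})$. The parameters $p_1^R\cap(\delta+1)$ are ordinals $\leq\delta$, and these are available in $R[t]$, since $\delta$ is a constant and ordinals $<\delta$ are in every hull. So by Proposition \ref{prop:inter_Sigma_1_R,R[t]}, $t'$ computes $\Th_1^R(\delta\cup\{p_1^R\})$. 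Suppose $t'\in R[t]$. Then $R[t]$ contains a new subset of $\delta$ coding $\pow(\delta)\cap R$-information. I will convert this into a failure of $\delta$-cc/cardinal preservation: by the $1$-hull property, $R[t]$ would get a surjection of $\delta$ onto $\pow(\rho_1^R)\cap R$ modulo a name, and hence onto $\pow(\delta)\cap R[t]$, a contradiction in the style of the first paragraph of the previous proof. This gives $\rho_1^{R[t]}=\delta$ and $p_1^{R[t]}\leq q$.

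\emph{Step 2: the parameter and solidity.} Suppose $p_1^{R[t]}<q$. Then some $\alpha\in q$ has the solidity witness for $\alpha$ relative to $q$ lying in $R[t]$ but being, by $1$-solidity of $R$ above $\delta$, in $R$. I need to derive that $\Th_1^{R[t]}(\delta\cup\{p_1^{R[t]}\})\in R[t]$ fails, or conversely. I will argue as in the previous proof: the $R[t]$-theory with parameter $p_1^{R[t]}$ translates via $\varrho$ and $G_t$ into an $R$-theory with parameters $p_1^{R[t]}\cup\{\delta\}$ and ordinals $<\delta$. That $R$-theory is in $R$ by solidity above $\delta$, which is a contradiction. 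For $1$-solidity of $R[t]$, I will compute the solidity witnesses from those of $R$ together with $G_t$.

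\emph{Step 3: universality and soundness.} Let $\pi:C\to R$ be the uncollapse of $\Hull_1^R(\delta\cup\{p_1^R\})$ and $\pi'$ that of $\Hull_1^{R[t]}(\delta\cup\{q\})$. Since $\delta+1\sub$ both hulls, Proposition \ref{prop:matching_1-hulls} gives $\rg(\pi)=\rg(\pi')\cap R$, and hence $C'=C[t]$. The $1$-hull property at $\delta$ gives $\pow(\delta)\cap R\sub C$, and so $\pow(\delta)\cap R[t]\sub C'$ via names. This yields $1$-universality. If $R$ is $\delta$-sound, then $\rg(\pi)=R$, so $\rg(\pi')\supseteq R\cup\{t\}$ and thus equals $R[t]$, which gives $1$-soundness.

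I expect Step 1 to be the main obstacle. The issue is that $\rho_1^R<\delta$ means the usual universality argument of the preceding proof is unavailable, and only the hull property at $\delta$ is available. I will first try to show directly that $t'\in R[t]$ would make $\Th_1^R(\delta\cup\{p_1^R\})$ a set in $R[t]$ that $R$ cannot contain. I will then derive a contradiction from $\delta$ being a cardinal of both $R$ and $R[t]$ with the same power set below $\delta$ up to names.
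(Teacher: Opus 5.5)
Your strategy is the paper's. Its proof of this proposition is just ``as for Proposition~\ref{prop:R,R[t]_fs_match_Sigma_1}, with the $1$-hull property at $\delta$ playing the role of $1$-universality'', and your Steps 1 and 2 carry this out correctly in outline. Two small corrections there:
\begin{enumerate}
\item In Step 1, $\Th_1^R(\delta\cup\{p_1^R\})$ together with the hull property at $\delta$ yields a surjection of $\delta$ onto $\pow(\delta)\cap R$, not onto $\pow(\rho_1^R)\cap R$. Only the former gives, via names, a surjection onto $\pow(\delta)\cap R[t]$, since $\rho_1^R<\delta$.
\item In Step 2, make the witness explicit. Let $\alpha$ be the largest element of the symmetric difference of $p_1^{R[t]}$ and $q$. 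Then $\alpha\in q$, so $\alpha>\delta$ and $p_1^{R[t]}\cup\{\delta\}\sub\alpha\cup(p_1^R\cut(\alpha+1))$. Hence the $R$-solidity witness at $\alpha$, which exists by solidity above $\delta$, computes the $R$-theory you need.
\end{enumerate}

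The genuine gap is in Step 3. You assert $\delta+1\sub\Hull_1^R(\delta\cup\{p_1^R\})$ and conclude $\rg(\pi)=\rg(\pi')\cap R$ and $C'=C[t]$. Nothing in the hypotheses puts $\delta$ into that hull; that is essentially $\delta$-soundness, which is assumed only in the last clause. It fails in exactly the situation the proposition is needed for.

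Take $R_0$ $\delta$-sound with $\rho_1^{R_0}<\delta$ and an $R_0$-$\delta$-measure, and let $R=\Ult_0(R_0,\mu_\delta^{R_0})$ with ultrapower map $j$ (as for $Q_{i+1}$ in Definition~\ref{dfn:R_i}). Then $R$ has the $1$-hull property at $\delta$ and is solid above $\delta$. But $\Hull_1^R(\delta\cup\{p_1^R\})=\rg(j)\not\ni\delta=\crit(j)$. Meanwhile $\rg(\pi')$ contains $\delta$ and $\rg(j)$, hence all of $R$. So $\rg(\pi)\psub\rg(\pi')\cap R$ and $C=R_0$, and the identity $C'=C[t]$ fails. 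This is why the proof of Proposition~\ref{prop:R,R[t]_fs_match_Sigma_1} treats $\delta\notin\rg(\pi)$ as a separate case.

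The repair is short:
\begin{enumerate}
\item Apply Proposition~\ref{prop:matching_1-hulls} to $X=(\delta+1)\cup p_1^R$. This gives only $\rg(\pi)\sub\rg(\pi')\cap R$.
\item For $A\in\pow(\delta)\cap R$, the hull property gives $A\in C$, so $\pi(A)\in\rg(\pi')$.
\item Since $\delta\in\rg(\pi')$, also $A=\pi(A)\cap\delta\in\rg(\pi')$. As $\pi'\rest(\delta+1)=\id$, we get $A\in C'$.
\item Since $G_t\in\rg(\pi')$ as well, evaluating names gives $\pow(\delta)\cap R[t]\sub C'$, which is $1$-universality.
\end{enumerate}
Drop the claim $C'=C[t]$. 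Your soundness argument survives unchanged, since there $\delta$-soundness gives $\rg(\pi)=R$ outright.
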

\begin{proof}
This is much like the proof of Proposition \ref{prop:R,R[t]_fs_match_Sigma_1}, but using the $1$-hull property at $\delta$
in the role that $1$-universality played in the proof of Proposition \ref{prop:R,R[t]_fs_match_Sigma_1}.
\end{proof}

We now want to adapt Propositions \ref{prop:inter_Sigma_1_R,R[t]}--\ref{prop:R,R[t]_fs_match_Sigma_1_rho_1<delta} to the case of $\rSigma_{n+1}^R$ and $\rSigma_{n+1}^{R[t]}$  replacing $\rSigma_1^R$ and $\rSigma_{n+1}^R$,
and $\rho_{n+1}$ replacing $\rho_1$, etc, where $n>0$,
assuming that $\rho_n^R>\delta$
and $R$ is $n$-sound.

\begin{prop}\label{prop:inter_Sigma_1_R,R[t]_n+1}
Let $n<\om$. Then there is an $\rSigma_{n+1}$ formula $\psi_{n+1}$ of the ``$R[t]$''-language and an $\rSigma_{n+1}$ formula $\varrho_{n+1}$ of the premouse language, chosen uniformly in the type of $R$ and the integer $n$, such that
if $R$ is $n$-sound with
$\delta<\rho_n^R$, then:
\begin{enumerate}
\item\label{item:formula_conversions_n+1} We have:
\begin{enumerate}[label=\tu{(}\alph*\tu{)}]
\item\label{item:R_truth_in_R[t]_n+1} For all
$\rSigma_{n+1}$ formulas $\varphi$ of the premouse language and all $v\in R[t]$,
we have
\[ v\in R\wedge R\sats\varphi(v)\iff R[t]\sats\psi_{n+1}(\varphi,v).\]
\item\label{item:R[t]_truth_forced_in_R_n+1}  For all $\rSigma_{n+1}$ formulas $\varphi$ of the ``$R[t]$''-language and all $\BB$-names $\tau\in R$, we have
\[ R[t]\sats\varphi(\tau_{G_t})\iff\exists p\in G_t\ \big[R\sats\varrho_{n+1}(\delta,p,\varphi,\tau)\big].\]
\end{enumerate}
\item\label{item:matching_1-hulls_n+1}
Let $\delta+1\sub X\sub\OR^R$.
Then
$\univ{\Hull_{\rSigma_{n+1}}^R(X)}=\univ{\Hull_{\rSigma_{n+1}}^{R[t]}}\cap R$.
\item\label{item:R,R[t],rho,p_n+1_match}
Suppose that $R$ is $(n+1)$-universal and $(n+1)$-solid and
$\delta\leq\rho_{n+1}^R$. Then:
\begin{enumerate}[label=\tu{(}\alph*\tu{)}]
\item $\rho_{n+1}^{R[t]}=\rho_{n+1}^R$,
\item\label{item:R,R[t]_p_n+1_match_no_delta} if $\delta\notin p_{n+1}^R$
then $p_{n+1}^{R[t]}=p_{n+1}^{R}$,
\item\label{item:R,R[t]_p_n+1_match_with_delta} if $\delta\in p_{n+1}^R$ then
$\rho_{n+1}^R=\rho_{n+1}^{R[t]}=\delta$
and  $p_{n+1}^{R[t]}=p_{n+1}^R\cut\{\delta\}$,
\item $R[t]$ is $(n+1)$-universal and $(n+1)$-solid, and
\item if $R$ is $(n+1)$-sound then so is $R[t]$.
\end{enumerate}
\item Suppose  $\rho_{n+1}^R<\delta$ and $R$ is $(n+1)$-solid above $\delta$ and has the $(n+1)$-hull property at $\delta$. Then:
\begin{enumerate}[label=\tu{(}\alph*\tu{)}]
\item $\rho_{n+1}^{R[t]}=\delta$,
\item $p_{n+1}^{R[t]}=p_{n+1}^R\cut(\delta+1)$,
\item $R[t]$ is $(n+1)$-universal
and $(n+1)$-solid,
\item if $R$ is $\delta$-sound
then $R$ is $(n+1)$-sound.
\end{enumerate}
\end{enumerate}
\end{prop}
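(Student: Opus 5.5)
I would prove all four parts simultaneously by induction on $n$. The case $n=0$ is Propositions \ref{prop:inter_Sigma_1_R,R[t]}--\ref{prop:R,R[t]_fs_match_Sigma_1_rho_1<delta}. So assume the statement holds at $n$, take $R$ which is $(n+1)$-sound with $\delta<\rho_{n+1}^R$, and prove it at $n+1$.

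\textbf{Part 1 (formula translation).} Since $R$ is $(n+1)$-sound and $\delta<\rho_{n+1}^R$, part 3 at level $n$ (in the case $\delta\notin p_{n+1}^R$, which holds automatically because $\rho_{n+1}^R>\delta$) gives the following:
\begin{itemize}
\item $\rho_{n+1}^{R[t]}=\rho_{n+1}^R$ and $p_{n+1}^{R[t]}=p_{n+1}^R$;
\item the $(n+1)$-st standard parameters agree;
\item the $(n+1)$-st master code $T_{n+1}^R=\Th_{\rSigma_{n+1}}^R(\rho_{n+1}^R\cup\{p_{n+1}^R\})$ and the corresponding code $T_{n+1}^{R[t]}$ are mutually computable, using $G_t$ and part 1 at level $n$.
\end{itemize}
More precisely, each element of $T_{n+1}^{R[t]}$ is decided by some $p\in G_t$ strongly forcing it in the sense of level $n$. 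Since $\card^R(\BB)=\delta<\rho_{n+1}^R$, the relation ``$p$ strongly forces $\varphi(\tau)$'' for $\rSigma_{n+1}$ formulas $\varphi$ and names $\tau\in R||\rho_{n+1}^R$ is itself amenable to $R||\rho_{n+1}^R$ relative to $T_{n+1}^R$. Then $\rSigma_{n+2}$ formulas, which are $\rSigma_1$ over the reduct $(R||\rho_{n+1}^R,T_{n+1}^R)$, translate as in Proposition \ref{prop:inter_Sigma_1_R,R[t]}. Thus $\psi_{n+2}$ and $\varrho_{n+2}$ are obtained by applying the level-$0$ construction to the reducts, with the usual uniform rewriting of $\rSigma_{n+2}$ into $\Sigma_1$ over the reduct.

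I expect the main obstacle to be exactly this check: that the forcing relation for $\rSigma_{n+1}$ formulas is coded into $T_{n+1}^R$ in a uniformly $\rSigma_{n+1}$ way, so that the reduct of $R[t]$ is literally the generic extension of the reduct of $R$. The $\delta$-cc together with $\card(\BB)=\delta<\rho_{n+1}$ should handle it: each name for a bounded subset of $\rho_{n+1}$ is determined by $\delta$-many bits.

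\textbf{Part 2 (hulls).} This follows from part 1 just as Proposition \ref{prop:matching_1-hulls} follows from Proposition \ref{prop:inter_Sigma_1_R,R[t]}. Since $\delta+1\subseteq X$, every $\BB$-name witnessing a $\rSigma_{n+2}$ existential in $R[t]$ over parameters in $X$ can be found in the $R$-hull, and hence its evaluation is already in the $R[t]$-hull.

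\textbf{Parts 3 and 4 (projecta, parameters, solidity, universality, soundness).} I would copy the proofs of Propositions \ref{prop:R,R[t]_fs_match_Sigma_1} and \ref{prop:R,R[t]_fs_match_Sigma_1_rho_1<delta} verbatim, with $\Th_{\rSigma_{n+2}}$ in place of $\Th_1$ and parts 1 and 2 in place of the level-$0$ facts:
\begin{itemize}
\item non-membership of theories passes between $R$ and $R[t]$ via $G_t$;
\item $\delta\in p$ is absorbed by the constant $\delta$;
\item solidity witnesses are computed from those of $R$ plus $G_t$;
\item universality is handled via the collapsed hulls, using $C'=C[t]$;
\item in part 4, the $(n+2)$-hull property at $\delta$ plays the role universality played before.
\end{itemize}
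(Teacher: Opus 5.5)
Your proposal is correct and follows essentially the same route as the paper. Both argue by induction on $n$, with the $n=0$ propositions as the base case. In both, the witnessing master-code segments are translated between $R$ and $R[t]$ via the inductive hypothesis, using $\rho_{n+1}^{R[t]}=\rho_{n+1}^R$ and $\vec{p}_{n+1}^{R[t]}=\vec{p}_{n+1}^R$ from the level below, and the final $\rSigma_1$ layer is handled by the level-$0$ forcing translation. Part 2 is then immediate, and parts 3--4 repeat the $n=0$ arguments. The only difference is cosmetic: you index the inductive step as $n\to n+1$, where the paper does $n-1\to n$.
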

\begin{proof}
The proof is by induction on $n$. We have already established it for $n=0$, so assume $n\geq 1$.

Part \ref{item:formula_conversions_n+1}\ref{item:R_truth_in_R[t]_n+1}: The main witness to an $\rSigma_{n+1}^R$ assertion is an element of $T_n^R$.
But since $\delta<\rho_n^R=\rho_n^{R[t]}$
 and by part \ref{item:formula_conversions_n+1}\ref{item:R_truth_in_R[t]_n+1} for $n-1$,
 we can compute elements of $T_n^R$
 from corresponding elements of $T_n^{R[t]}$, which yields
an appropriate formula $\psi_{n+1}$.
(Recall that the $\Sigma_1$ fragment of an $\rSigma_{n+1}$ formula is already dealt with by Proposition \ref{prop:inter_Sigma_1_R,R[t]}.)

Part \ref{item:formula_conversions_n+1}\ref{item:R[t]_truth_forced_in_R_n+1}:
This is much like part \ref{item:formula_conversions_n+1}\ref{item:R_truth_in_R[t]_n+1}, applying the inductive hypothesis
to part \ref{item:formula_conversions_n+1}\ref{item:R[t]_truth_forced_in_R_n+1}
for $n-1$ to get canonical names
in $R$
for elements of $T_n^{R[t]}$,
and using Proposition \ref{prop:inter_Sigma_1_R,R[t]}
for the final $\rSigma_1$ assertion.

Part \ref{item:matching_1-hulls_n+1}:
This is an immediate consequence of part \ref{item:formula_conversions_n+1}.

Part \ref{item:R,R[t],rho,p_n+1_match}: Let $\vec{q}=\vec{p}_n^R$.
We have $\vec{q}=\vec{p}_n^{R[t]}$ by induction (and since $\delta<\rho_n^R$, so $\delta\notin p_n^R$).
Let $t^R=\Th_{{n+1}}^R(\rho_{n+1}^R\cup\{p_{n+1}^R,\vec{q}\})$ and $t^{R[t]}=\Th_{{n+1}}^{R[t]}(\rho_{n+1}^R\cup\{\vec{q},p_{n+1}^R\})$.
We claim that $t^{R[t]}\notin R[t]$;
since $G_t\in R_t$ and
by part \ref{item:formula_conversions_n+1}, it suffices to see that $t^R\notin R[t]$.
But this follows from the $(n+1)$-universality of $R$ like in the proof of Proposition \ref{prop:R,R[t]_fs_match_Sigma_1}.
And the fact that $\rho_{n+1}^R\leq\rho_{n+1}^{R[t]}$
is also as in the proof of that claim.
So $\rho_{n+1}^{R[t]}=\rho_{n+1}^{R}$.

The rest of the proof is also as in the proof of Propositions \ref{prop:R,R[t]_fs_match_Sigma_1}
and \ref{prop:R,R[t]_fs_match_Sigma_1_rho_1<delta},
 using that $\vec{p}_n^R=\vec{p}_n^{R[t]}$ when verifying the correspondence of $p_{n+1}^R$ and $p_{n+1}^{R[t]}$.
\end{proof}

\begin{prop}Let $n<\om$ and suppose that $R$ is $n$-sound and $\delta<\rho_n^R$.
Let $\mu<\rho_n^R$ and $f:[\mu]^{<\om}\to R$. Let $E$ be an extender which is close to $R$ and $a\in[\nu(E)]^{<\om}$. Let $E^+$ be the standard fattening of $E$ to an $R[t]$-extender. Then:
\begin{enumerate}
\item There is $X\in (E^+)_a$ with $f\rest X\in R[t]$ iff there is $X\in E_a$ with $f\rest X\in R$.
\item There is $X\in (E^+)_a$ such that $f\rest X$ is $\bfrSigma_n^{R[t]}$ iff there is $X\in E_a$ such that $f\rest X$ is $\bfrSigma_n^{R}$.
\end{enumerate}
\end{prop}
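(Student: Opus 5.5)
The plan is to reduce both parts to the forcing relation over $R$, using that $\BB\in R|\delta^{+}$, has cardinality $\delta$ in $R$ and has the $\delta$-cc there, and that $E$ is close to $R$. First I fix what $E^+$ is: for $a\in[\nu(E)]^{<\om}$, $(E^+)_a$ is the filter on $[\mu]^{|a|}$ (or on $[\crit(E)]^{|a|}$) in $R[t]$ generated by $E_a$. The key fact to verify first is that every $Y\in R[t]$ with $Y\sub[\crit(E)]^{<\om}$ and $Y\in(E^+)_a$ contains some $X\in E_a$. I would argue as follows. Write $Y=\tau_{G_t}$ for a name $\tau\in R$ which is $\bfrSigma_0^R$ (or $\bfrSigma_n^R$ in the definable case). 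For each condition $p$, let $Y_p=\{u\mid p\Vdash\check u\in\tau\}\in R$. Then $Y\supseteq Y_p$ for every $p\in G_t$, and $Y=\bigcup_{p\in G_t}Y_p$. Because $\card^R(\BB)=\delta$ and $\delta$ is below or equal to $\crit(E)$ in the relevant cases (otherwise $\delta<\crit(E)$ and we use $\crit(E)$-completeness of $E_a$ against a partition of size $\delta$), closeness of $E$ to $R$ together with countable completeness gives some $p\in G_t$ with $Y_p\in E_a$. I expect this step to be the main obstacle. In particular I need to check the case $\crit(E)\le\delta$, where completeness does not directly absorb a $\delta$-sized union. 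There I would instead use that the fattening is defined precisely so that $Y\in(E^+)_a$ iff some $Y_p$, $p\in G_t$, lies in $E_a$, and check that this is the standard definition.

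For part 1, the direction ($\Leftarrow$) is trivial, since $R\sub R[t]$ and $E_a\sub(E^+)_a$. For ($\Rightarrow$), suppose $X'\in(E^+)_a$ and $f\rest X'\in R[t]$. Take a name $\sigma\in R$ with $\sigma_{G_t}=f\rest X'$. Then $X''=\{u\mid\exists p\in G_t\,[p\Vdash \sigma(\check u)=\check{f(u)}]\}$ is a subset of $X'$. Refining via the key fact, fix $p\in G_t$ and $X\in E_a$ such that $p\Vdash\sigma(\check u)=\check{f(u)}$ for all $u\in X$. Then $f\rest X$ is computed in $R$ as $u\mapsto$ the unique $y$ with $p\Vdash\sigma(\check u)=\check y$, restricted to $X$. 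That restriction lies in $R$, since $X$ and the forcing relation for $\Sigma_0$ statements are in $R$ and $\mu<\rho_n^R\le\OR^R$.

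For part 2, the argument is the same, but it now uses Proposition~\ref{prop:inter_Sigma_1_R,R[t]_n+1} (with $n-1$ in place of $n$), part~\ref{item:formula_conversions_n+1}. For ($\Leftarrow$), part~\ref{item:formula_conversions_n+1}\ref{item:R_truth_in_R[t]_n+1} converts an $\rSigma_n^R$ definition of $f\rest X$ into an $\rSigma_n^{R[t]}$ one, with the added parameter handled by the constants of $R[t]$. For ($\Rightarrow$), part~\ref{item:formula_conversions_n+1}\ref{item:R[t]_truth_forced_in_R_n+1} turns the $\rSigma_n^{R[t]}$ definition $\varphi(u,y,\tau_{G_t})$ of $f\rest X'$ into the statement ``$\exists p\in G_t$ with $R\sats\varrho_n(\delta,p,\varphi(\check u,\check y),\tau)$''. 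For fixed $p$, the relation $f_p(u)=y\iff\varrho_n(\delta,p,\ldots)$ is $\rSigma_n^R$ in parameters. It agrees with $f$ on the set $X'_p$ of $u$ for which $p$ is the witness; since $p$ is in the generic, the strong forcing gives correctness. Again I would pick $p\in G_t$ with $X'_p$ containing some $X\in E_a$, obtained by the same key fact applied to the partition $u\mapsto$ (least witnessing $p$ in a fixed wellorder of $\BB$ in $R$). Then $f\rest X$ is $\bfrSigma_n^R$ in parameters $X,p,\tau,\delta$.
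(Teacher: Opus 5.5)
The paper gives no proof of this proposition; it is treated as the routine transfer of the standard facts about generic extensions. Your strategy is the expected one: reduce to the forcing relation over $R$, and use $\card^R(\BB)=\delta<\crit(E)$ to find a single condition in $G_t$ that works on an $E_a$-large set. Two points need repair.

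\textbf{The case $\crit(E)\le\delta$.} This case does not arise, and the step you flag as the main obstacle is misdiagnosed. The standard fattening presupposes $\delta<\crit(E)=\mu$, which is how the proposition is used in \S\ref{sec:iterability_*-trans}, where $\crit(E^\Uu_\alpha)>\delta$. Your fallback definition would in fact fail there. If $\crit(E)<\delta$, then $t$ can add subsets of $\crit(E)$ that your filter need not decide, so it need not be an ultrafilter on $\pow([\crit(E)]^{|a|})\cap R[t]$.

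In the real case, what both parts need is not a statement about a single set $Y$ but the following partition lemma. If $X\in E_a$, $\langle D_p\rangle_{p\in\BB}\in R$, and $X\sub\bigcup_{p\in G_t}D_p$, then $D_p\in E_a$ for some $p\in G_t$. To prove it, note that $B=\{p\in\BB\mid D_p\in E_a\}\in R$, either by closeness or because subsets of $R|\delta$ are the same in $R$ and in $\Ult(R,E)$. So $\crit(E)$-completeness for $\delta$-sequences in $R$ gives $X^*\in E_a$ with $X^*\sub D_p$ for $p\in B$ and $X^*\cap D_p=\emptyset$ for $p\notin B$. Any $u\in X\cap X^*$ lies in some $D_p$ with $p\in G_t$, and then $p\in B$.

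In part 1, first take $X\in E_a$ with $X\sub X'$. Then apply the lemma with $D_p=\{u\in X\mid\exists y\ (p\Vdash\sigma(\check u)=\check y)\}$. Do not use your $X''$: it mentions $f$, which need not be in $R$, so the corresponding family need not be in $R$.

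\textbf{Part 2.} Here the analogous sets are $D_p=\{u\mid\exists y\ R\sats\varrho_n(\delta,p,\varphi,(\check u,\check y,\tau))\}$. These are only $\bfrSigma_n^R$, and $E_a$ measures only sets in $R$. You assert ``$Y_p\in R$'' in the definable case, and treat your least-witness partition as lying in $R[t]$, without saying why. This is exactly where the hypothesis $\mu<\rho_n^R$ does its work. The set $\{(p,u)\mid u\in D_p\}$ is a $\bfrSigma_n^R$ subset of $\BB\times[\mu]^{<\om}$, which codes a subset of $\mu$ because $\delta<\mu$. Hence it lies in $R$.

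The lemma then gives $p\in G_t$ and $X\in E_a$ with $X\sub D_p\cap X'$. Strong forcing by a condition in $G_t$ implies truth in $R[t]$, so the $\bfrSigma_n^R$ relation defining $f_p$ is contained in the graph of $f\rest X'$. Therefore $f\rest X=f_p\rest X$ is $\bfrSigma_n^R$. With these two repairs, your argument is complete.
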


\subsection{Between $R[t]$ and $S^R[t]$, etc ($R$ not admissibly buffered)}
\label{sec:R[t]_and_S^R[t]}

\subsubsection{When there is an $R$-$\delta$-measure}
\label{sec:exists_R-delta-measure}
Suppose there is an $R$-$\delta$-measure.
Since $R$ is $\star$-suitable,
we can fix $r<\om$ such that $\rho_{r+1}^R\leq\delta<\rho_r^R$, and $R$ is $\delta$-sound.
Thus, $S=\Ult_r(R,\mu_\delta^R)$ is $(\delta+1)$-sound,
and $\rho_{r+1}^{S}\leq\delta<\rho_r^S$.
Letting $j:R\to S$ be the ultrapower map, we have $\pvec_{r+1}^S\cut\delta=j(\pvec_{r+1}^R\cut\delta)$. Fix $\alpha_\delta<\delta$ and  $\rSigma_{r+1}$ terms $t_\delta$ and $t_{\delta^+}$
such that $\delta=t^{R}(\pvec_{r+1}^R\cut\delta,\alpha_\delta)$
and if $\delta^{+R}<\rho_0^R$
then
 $\delta^{+R}=t_{\delta^+}^R(\pvec_{r+1}^R\cut\delta,\alpha_\delta)$.

\begin{prop}\label{prop:R-delta-measure_rSigma_r+1_equiv_over_mice}
$\rSigma_{n+1}(\{\pvec_{r+1}^R\cut\delta,\alpha_\delta\})$
statements about parameters in $R|\delta$ are recursively equivalent to
$\rSigma_{n+1}(\{\pvec_{r+1}^{S}\cut\delta,\delta,\alpha_\delta\})$ statements
about those parameters. That is, there is an $\rSigma_{r+1}$ formula $\psi_{r+1}$ such that for all $\rSigma_{r+1}$ formulas $\varphi$ and all $v\in R|\delta$, we have
\[ R\sats\varphi(\pvec_{r+1}^R\cut\delta,\alpha_\delta,v)\iff S\sats\psi_{r+1}(\varphi,\pvec_{r+1}^S\cut\delta,\alpha_\delta,\delta,v),\]
and there is an $\rSigma_{r+1}$ formula $\tau_{r+1}$ such that for all $\rSigma_{r+1}$ formulas $\varphi$ and all $v\in R|\delta$, we have
\[ S\sats\varphi(\pvec_{r+1}^S\cut\delta,\alpha_\delta,\delta,v)\iff R\sats\tau_{r+1}(\varphi,\pvec_{r+1}^R\cut\delta,\alpha_\delta,v).\]
\end{prop}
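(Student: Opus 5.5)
The plan is to use the ultrapower $j:R\to S=\Ult_r(R,\mu_\delta^R)$ and the fact that $\crit(j)=\delta$, so $j\rest(R|\delta)=\id$. The forward direction then comes from elementarity of $j$. For the converse, every element of $S$ is represented by a function in $R$, and this representation should be expressible at the $\rSigma_{r+1}$ level.

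\emph{Forward direction.} Since $\mu=\mu_\delta^R$ is an $R$-total measure on $\delta$ and $\delta<\rho_r^R$, $j$ is $r$-embedding. By the standard \L o\'s theorem for $\Ult_r$ it is $\rSigma_{r+1}$-elementary, at least for the relevant parameters; it is at least $\rSigma_{r+1}$-upward preserving. We also have $j(\pvec_{r+1}^R\cut\delta)=\pvec_{r+1}^S\cut\delta$ and $j(\alpha_\delta)=\alpha_\delta$. So $R\sats\varphi(\pvec_{r+1}^R\cut\delta,\alpha_\delta,v)$ iff $S\sats\varphi(\pvec_{r+1}^S\cut\delta,\alpha_\delta,v)$, and we may take $\psi_{r+1}(\varphi,\ldots)=\varphi$ with a dummy $\delta$. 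If only upward preservation is available, I would argue downward using the \L o\'s characterization described next. Note that $\delta$ appears in $S$ as a genuinely new parameter, namely $\delta=[\{\delta\},\id]$. It is not $t_\delta^S(\pvec_{r+1}^S\cut\delta,\alpha_\delta)$, which equals $j(\delta)$.

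\emph{Converse direction.} This is the main point. By \L o\'s for $\Ult_r$,
\[S\sats\varphi(\pvec_{r+1}^S\cut\delta,\alpha_\delta,\delta,v)\iff \{\xi<\delta : R\sats\varphi(\pvec_{r+1}^R\cut\delta,\alpha_\delta,\xi,v)\}\in\mu.\]
I would verify this carefully for $\rSigma_{r+1}$ formulas, since $\rSigma_{r+1}$ is the top level of definability over $\Ult_r$. The witness to an $\rSigma_{r+1}$ statement is an element of $T_r^S$ together with a $\Sigma_1$ witness. Each such witness has the form $[\{\delta\},f]$ with $f$ being $\bfrSigma_r^R$-definable, and by $r$-soundness-type considerations it can be taken from $\Hull_r$ of parameters below $\rho_r^R$.

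It remains to express ``$A_\varphi\in\mu$'' in $\rSigma_{r+1}^R$, where $A_\varphi=\{\xi<\delta:R\sats\varphi(\ldots,\xi,v)\}$. The key facts are the following.
\begin{enumerate}[label=(\roman*)]
\item $\mu\in R$. It is an $R$-$\delta$-measure on the sequence, so it is coded by an element of $R$ indexed at $\delta^{+R}<\rho_0^R$, and hence by the parameter $\delta^{+R}=t_{\delta^+}^R(\pvec_{r+1}^R\cut\delta,\alpha_\delta)$. Alternatively, if $\delta^{+R}=\rho_0^R$, then $\mu$ is $F^R$ itself or amenably coded.
\item $\delta<\rho_r^R$, so every $\rSigma_{r+1}$-definable subset of $\delta$ is a union over an $\rSigma_{r+1}$ witness of pieces lying in $R$.
\end{enumerate}
Then ``$A_\varphi\in\mu$'' is equivalent to ``there is $X\in\mu$ such that $R\sats\varphi(\ldots,\xi,v)$ for all $\xi\in X$''. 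The forward implication holds by taking $X=A_\varphi$; here $A_\varphi\in R$ is needed, which holds because $\rho_{r+1}^R\leq\delta$ only bounds $\rSigma_{r+1}$ subsets modulo parameters. I would instead take $X=A_\varphi\cap Y$ for $Y$ determined by a single $T_r^R$ witness. The universal quantifier over $\xi\in X$ is bounded, by $\delta<\rho_r^R$. So it can be absorbed into one $T_r$-witness: take a $\Sigma_r$ collection up to a level of $\rho_r^R$ above $\delta$.

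\emph{Main obstacle.} The expected main obstacle is this last collection step, pushing the bounded quantifier $\forall\xi\in X$ inside an $\rSigma_{r+1}$ formula. I would handle it using $\delta<\rho_r^R$ and the standard fact that $\rSigma_{r+1}$ is closed under bounded quantification below $\rho_r$. The same fact also handles the case of nonprincipal witness choice. Combining the two, $\tau_{r+1}$ asserts: ``$\exists X\in\mu$ (with $\mu$ defined from $t_{\delta^+}$ or $t_\delta$) such that $\forall\xi\in X\ \varphi(\ldots,\xi,v)$''. Finally, I would check uniformity in $\varphi$, which is immediate from the construction.
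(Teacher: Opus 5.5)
Your forward direction matches the paper: you take $\psi_{r+1}$ to be $\varphi$ itself, using the $\rSigma_{r+1}$-elementarity of $j$, the equality $j(\pvec_{r+1}^R\cut\delta)=\pvec_{r+1}^S\cut\delta$, and $j\rest(R|\delta)=\mathrm{id}$. Your way of referring to $\mu_\delta^R$ through $t_{\delta^+}$ is also the paper's. The converse direction, however, has a genuine gap. It rests on two steps. The first is the \L o\'s equivalence $S\sats\varphi(\ldots,\delta,v)\iff\{\xi<\delta:R\sats\varphi(\ldots,\xi,v)\}\in\mu$. The second is absorbing the bounded quantifier $\forall\xi\in X$ into a single $T_r$-witness. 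For $\rSigma_{r+1}$ formulas over $\Ult_r$, the first step holds in general only from left to right. The second is an $\rSigma_{r+1}$-collection principle, which premice need not satisfy. For instance, suppose $r=0$ and $\OR^R=\gamma+\delta$. Then the $\Sigma_1$ statement ``$\gamma+\xi$ exists'' holds in $R$ for every $\xi<\delta$, with witnesses cofinal in $\OR^R$. But $\OR^S=\sup j``\OR^R=j(\gamma)+\delta$, so ``$j(\gamma)+\delta$ exists'' fails in $S$. Nothing in your argument uses the specific hypotheses to exclude this behaviour. Your final formula $\exists X\in\mu\,\forall\xi\in X\,\varphi(\ldots,\xi,v)$ is not shown to be $\rSigma_{r+1}$, and in such a situation it gives the wrong truth value.

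The missing idea is to obtain the uniform witness bound from the ultrapower, not from collection inside $R$. Suppose $S\sats\varphi(\pvec_{r+1}^S\cut\delta,\alpha_\delta,\delta,v)$. This is witnessed by some $\Th_{\rSigma_r}^S(\beta'\cup\{\ldots\})$ with $\beta'<\rho_r^S$. Since $j``\rho_r^R$ is cofinal in $\rho_r^S$, you may take $\beta'=j(\beta)$ for some $\beta<\rho_r^R$. The witnessing theory is then $j(t_\beta)$, where $t_\beta=\Th_{\rSigma_r}^R(\beta\cup\{\ldots\})\in R$. The statement ``$j(t_\beta)$ codes a witness to $\varphi(\ldots,\delta,v)$'' is a $\Sigma_0$ fact about $j(t_\beta)$ and $\delta=[\{\delta\},\mathrm{id}]$, and \L o\'s genuinely applies to it. This yields the paper's $\tau_{r+1}$: there are $X\in\mu_\delta^R$ and $\beta<\rho_r^R$ such that for all $\delta'\in X$, $\Th_{\rSigma_r}^R(\beta\cup\{\pvec_{r+1}^R\cut\delta\})$ codes a witness to $\varphi(\pvec_{r+1}^R\cut\delta,\alpha_\delta,\delta',v)$. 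Here $\exists\beta$ sits outside $\forall\delta'\in X$, so the formula is visibly $\rSigma_{r+1}$ with no collection needed. In general it is strictly stronger than ``$A_\varphi$ contains some $X\in\mu$''. Your idea of intersecting with a set $Y$ determined by a single $T_r^R$-witness has the right shape. But the existence of such a $Y\in\mu$ must come from $S\sats\varphi(\ldots,\delta,v)$ via the cofinality of $j$ in $\rho_r^S$, not from $A_\varphi\in\mu$.
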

\begin{proof}
The formula $\psi_{r+1}$ need not actually refer to $\delta$; it can just asserts ``$\varphi(\pvec_{r+1}^S\cut\delta,\alpha_\delta,v)$'',
and since $j(\pvec_{r+1}^R\cut\delta)=\pvec_{r+1}^S\cut\delta$,
 $j(\alpha_\delta)=\alpha_\delta$
and $j(v)=v$, this works.
For $\tau_{r+1}$, we use the statement ``there is $X\in\mu_\delta^R$
and there is $\beta<\rho_r^R$ such that
for all $\delta'\in X$, the theory $\Th_{\rSigma_r}^R(\beta\cup\{\pvec_{r+1}^R\cut\delta\})$ codes a witness to $\varphi(\pvec_{r+1}^R\cut\delta,\alpha_\delta,\delta',v)$''.
Now if $\delta^{+R}<\rho_0^R$ then
we can indeed refer to $\mu_\delta^R$ as needed, since $\delta^{+R}=t_{\delta^+}^R(\pvec_{r+1}^R\cut\delta,\alpha_\delta)$.
Suppose $\delta^{+R}=\rho_0^R$.
Then either $R$ is active type 2 with largest cardinal $\delta$,
or $R$ is active type 3 with largest cardinal $\delta^{+R}$.
In either case, it is easy to express the quantifier ``there is $X\in\mu^R_\delta$'' in a $\Sigma_1$ fashion, which suffices.
\end{proof}

\begin{prop}\label{prop:exists_R-delta-measure_R[t]_S[t]_rSigma_r+1_equiv}
$\rSigma_{r+1}^{R[t]}(\{\pvec_{r+1}^{R[t]},\alpha_\delta\})$ statements about parameters in $R|\delta$ are recursively equivalent to $\rSigma_{r+1}^{S[t]}(\{\pvec_{r+1}^{S[t]},\alpha_\delta\})$ statements about those parameters. That is, there is an $\rSigma_{r+1}$ formula $\psi_{r+1}$ such that for all $\rSigma_{r+1}$ formulas $\varphi$ and all $v\in R[t]|\delta$, we have
\[ R[t]\sats\varphi(\pvec_{r+1}^{R[t]},\alpha_\delta,v)\iff S[t]\sats\psi_{r+1}(\varphi,\pvec_{r+1}^{S[t]},\alpha_\delta,v),\]
and there is an $\rSigma_{r+1}$ formula $\tau_{r+1}$ such that for all $\rSigma_{r+1}$ formulas $\varphi$ and all $v\in R[t]|\delta$, we have
\[ S[t]\sats\varphi(\pvec_{r+1}^{S[t]},\alpha_\delta,v)\iff R[t]\sats\tau_{r+1}(\varphi,\pvec_{r+1}^{R[t]},\alpha_\delta,v).\]
\end{prop}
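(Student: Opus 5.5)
The plan is to chain three translations already available: from $R[t]$ down to $R$ (forcing), from $R$ across to $S$ (Proposition \ref{prop:R-delta-measure_rSigma_r+1_equiv_over_mice}), and from $S$ back up to $S[t]$ (again forcing). The key preliminary is to identify the parameters on both sides.

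Since $R$ is $\delta$-sound with $\rho_{r+1}^R\leq\delta<\rho_r^R$, Proposition \ref{prop:inter_Sigma_1_R,R[t]_n+1} (the case $\rho_{r+1}^R\leq\delta$, using the hull property at $\delta$ and solidity above $\delta$) gives $\rho_{r+1}^{R[t]}=\delta$ and $p_{r+1}^{R[t]}=p_{r+1}^R\cut(\delta+1)$. Because $\delta<\rho_r^R$, the lower parameters agree, so $\pvec_{r+1}^{R[t]}=\pvec_{r+1}^R\cut(\delta+1)$. Similarly, $S$ is $(\delta+1)$-sound with $\rho_{r+1}^S\leq\delta<\rho_r^S$, so $\pvec_{r+1}^{S[t]}=\pvec_{r+1}^S\cut(\delta+1)$. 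Here $\delta\in p_{r+1}^S$ is possible, and it is simply absorbed by the constant $\delta$ in the $R[t]$-language. In $R$, $\delta\notin p_{r+1}^R$ is not guaranteed a priori, but $\delta=t_\delta^R(\pvec_{r+1}^R\cut\delta,\alpha_\delta)$. So in either structure, $\rSigma_{r+1}(\{\pvec_{r+1}^{\cdot[t]},\alpha_\delta\})$ over the forcing extension has exactly the same parameter information as $\rSigma_{r+1}(\{\pvec_{r+1}^{\cdot}\cut\delta,\alpha_\delta,\delta\})$ over the ground. This uses that $\delta$ and $t$ are constants of the $[t]$-language, and that $\delta$ is $\rSigma_{r+1}$-definable from $\pvec_{r+1}^R\cut\delta$ and $\alpha_\delta$ in $R$.

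For $\psi_{r+1}$, the chain runs as follows. Take $\varphi$ holding in $R[t]$ of $(\pvec_{r+1}^{R[t]},\alpha_\delta,v)$ with $v\in R[t]|\delta$, and pick a $\BB$-name $\tau\in R|\delta$ for $v$. By part \ref{item:formula_conversions_n+1}\ref{item:R[t]_truth_forced_in_R_n+1}, this is equivalent to: there is $p\in G_t$ with $R\sats\varrho_{r+1}(\delta,p,\varphi,\tau,\ldots)$. Since $p,\tau\in R|\delta$ and all parameters are coded by $\pvec_{r+1}^R\cut\delta,\alpha_\delta$ (with $\delta$ defined via $t_\delta$), Proposition \ref{prop:R-delta-measure_rSigma_r+1_equiv_over_mice} converts this $R$-statement into an $\rSigma_{r+1}$ statement over $S$ in $\pvec_{r+1}^S\cut\delta,\alpha_\delta,\delta,p,\tau$. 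Then part \ref{item:formula_conversions_n+1}\ref{item:R_truth_in_R[t]_n+1}, applied to $S$ (valid since $S$ is $r$-sound with $\delta<\rho_r^S$), lifts it to $S[t]$. Finally, ``there is $p\in G_t$ and a name $\tau$ with $\tau_{G_t}=v$'' is $\Sigma_1$ in $S[t]$, since $G_t$ is computable from the constant $t$ and $\BB\in S|\delta=R|\delta$. Wrapping the existential over $p,\tau$ around the $\rSigma_{r+1}$ formula keeps it $\rSigma_{r+1}$, uniformly. The formula $\tau_{r+1}$ is obtained symmetrically, using the reverse direction of each of the three equivalences.

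The main obstacle is the parameter bookkeeping. First, I must check that $S[t]$ (and $R[t]$) really satisfy the hypotheses for the $\rho_{r+1}\leq\delta$ clause of Proposition \ref{prop:inter_Sigma_1_R,R[t]_n+1}, namely solidity above $\delta$ and the hull property at $\delta$; these should follow from $\delta$-soundness of $R$ and $(\delta+1)$-soundness of $S$. Second, I must confirm that the relevant forcing names and conditions lie in $R|\delta=S|\delta$, so that the middle step, which only speaks of parameters in $R|\delta$, applies. Finally, I must handle the degenerate cases $\delta^{+R}=\rho_0^R$ as in the proof of Proposition \ref{prop:R-delta-measure_rSigma_r+1_equiv_over_mice}. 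That case is already covered there, so no new work should be needed.
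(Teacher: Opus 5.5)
Your proposal is correct and is the paper's own argument. The paper composes Proposition \ref{prop:R-delta-measure_rSigma_r+1_equiv_over_mice} with the uniform forcing translations of Proposition \ref{prop:inter_Sigma_1_R,R[t]_n+1} on each side, noting that $R[t]$ and $S[t]$ can refer to $\delta$ directly via their language; your chain $R[t]\to R\to S\to S[t]$ and your parameter bookkeeping spell out what the paper leaves implicit.
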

\begin{proof}
Both $R[t]$ and $S[t]$ can refer directly to the parameter $\delta$ via their language.
So the proposition is just a consequence of Proposition \ref{prop:R-delta-measure_rSigma_r+1_equiv_over_mice}
combined with Proposition \ref{prop:inter_Sigma_1_R,R[t]_n+1} (including the uniformity mentioned in its statement).
\end{proof}

Since $\rho_{r+1}^R\leq\delta$ and likewise for $R[t]$, $S$ and $S[t]$, and $R$ is $\delta$-sound and $S$ is $(\delta+1)$-sound, and hence $R[t]$, $S[t]$ are sound,
the facts above suffice for our purposes in this case.
(We won't ever take ultrapowers of $R$ or $R[t]$ via extenders $E$ with $\crit(E)>\delta$,
and so we don't need to consider functions mapping into $R$ or $R[t]$.)

\subsubsection{When $R$ is type 2, $\crit(F^R)<\delta$ and $R$ has no $\delta$-measure}\label{sec:type_2_translate_step}

Fix a type 2 $\star$-suitable  $R$,
with $\kappa=\crit(F^R)<\delta$,
and suppose there is no $R$-$\delta$-measure.
If $M_\kappa$ is type 3 with $\crit(F^{M_\kappa})<\kappa$ then let
 $(P,k)=((M_\kappa)_J,n_\kappa+1)$,
and otherwise let $(P,k)=(M_\kappa,n_\kappa)$.
Let $S=\Ult_k(P,F^R)$
and $j:P\to S$ the ultrapower map.
So $\delta^{+S}<\OR^S$, and there is no $S$-total $E\in\es_+^S$ with $\crit(E)=\delta$.
Moreover, if $U=\Ult_{n_\kappa}(M_\kappa,F^R)$ is type 3 with $\crit(F^U)<\delta$,
then $S=U_J$; otherwise $S=U$.

In this section we will translate between the fine structure of $R[t]$
and that of $S[t]$.

\begin{prop}\label{prop:tp_2_overlap_no_delta-meas_R[t]_S[t]}  Set
$z = (\pvec_k^{S[t]},p_{k+1}^{S[t]}\cut j(\kappa),\kappa,s_0,\alpha_0)$
where
 $s_0\in[\OR]^{<\om}$ is the least $s$ such that
$F_\downarrow^R=i_{F^R}(f)(s)$
for some $f\in R$,
 $f_0$ is the least function
 witnessing the choice of $s_0$,
and $\alpha_0$ be the position of $f_0$ in $R$-order. \tu{(}So $\alpha_0<\kappa^{+R}<\delta$.\tu{)} There is an $\rSigma_{k+1}$  formula $\psi$
and an $\rSigma_1$ formula $\tau$ such that:
\begin{enumerate}
\item\label{item:tp_2_R[t]_to_S[t]_R[t]_sub_S[t]} $R[t]\sub S[t]$
and $\univ{R}$, $\es^{R}$
are $\Sigma_1^{S[t]}(\{\nu(F^R)\})$,
and $\OR^R=\nu(F^R)^{+S[t]}$.
\item\label{item:truth_conversion_one-step_star_active_overlapping}
$\rSigma_{k+1}^{S[t]}(\{z,\alpha_\Tt\})$
statements about parameters in $R[t]$
are recursively equvalent to $\rSigma_1^{R[t]}(\{\alpha_\Tt\})$ statements about those parameters, as witnessed by $\psi,\tau$. In fact, there is some $m<\om$ such that
for all $\rSigma_1$ formulas $\varphi$
and $v\in R[t]$, we have
\[ R[t]\sats\varphi(v,\alpha_\Tt)\iff S[t]\sats\psi(\varphi,v,z,\alpha_\Tt) \]
and for all $\rSigma_{k+1}$ formulas $\varphi$ and $v\in R[t]$, we have
\[ S[t]\sats\varphi(v,z,\alpha_\Tt)\iff R[t]\sats\tau(m,\varphi,v,\alpha_\Tt).\]
The choice of $\psi,\tau$ are moreover uniform in $R$. \tu{(}We could have alternatively replaced $m$ with the parameter $\alpha_0$,
making things fully uniform modulo that extra parameter.\tu{)}

\item\label{item:type_2_R_and_S^R_correspond} With $z$ as in part \ref{item:truth_conversion_one-step_star_active_overlapping} and $X\sub R[t]$, we have
\[ R[t]\cap\Hull^{S[t]}_{\rSigma_{k+1}}(X\cup\{z,\alpha_\Tt\})=\Hull_{\rSigma_1}^{R[t]}(X\cup\{\alpha_\Tt\}).\]
\item\label{item:type_2_R_and_S^R_functions_into_R[t]_and_S[t]} For $\kappa<\OR^{R[t]}$
and $f:[\kappa]^{<\om}\to R[t]$, we have that
$f$ is $\bfrSigma_k^{S[t]}$ iff  $f\in R[t]$.
\end{enumerate}
\end{prop}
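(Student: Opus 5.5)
The plan is to first establish the correspondence between $R$ and $S$ at the level of premice (without $t$), and then lift it to $R[t]$ and $S[t]$ using Proposition \ref{prop:inter_Sigma_1_R,R[t]_n+1}, exactly as Proposition \ref{prop:exists_R-delta-measure_R[t]_S[t]_rSigma_r+1_equiv} was derived from Proposition \ref{prop:R-delta-measure_rSigma_r+1_equiv_over_mice}.

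\textbf{Part \ref{item:tp_2_R[t]_to_S[t]_R[t]_sub_S[t]}.} The starting observation is that, since $R$ is type 2 with $\crit(F^R)=\kappa$ and $P$ is the appropriate segment/recalibration of $M_\kappa$ with $\rho_{k}^P>\kappa\geq\rho_{k+1}^P$-type behaviour (by choice of $n_\kappa$), coherence gives $S|\OR^R$ passive-agreeing with $R$, i.e.\ $R^\passive\pins S$ or at least $R^\passive=S||\OR^R$. Moreover $\nu(F^R)$ is a cardinal of $S$ and $\OR^R=\nu(F^R)^{+S}$, since $\OR^R=\lh(F^R)$ is the successor of $\nu(F^R)$ in $\Ult(R,F^R)$, and $S$ agrees with $\Ult(R,F^R)$ below $j(\kappa)$. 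The active predicate of $R$ is recovered in $S$ from $j\rest(P|\kappa^{+P})$, which is $\Sigma_1$-definable over $S$ from the generators: $F^R$ is the extender of $j$ restricted to $\nu(F^R)$. This gives $\univ{R}$ and $\es^R$ as $\Sigma_1^{S}(\{\nu(F^R)\})$, and with $t$ adjoined we obtain the same over $S[t]$, using Proposition \ref{prop:inter_Sigma_1_R,R[t]_n+1}.

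\textbf{Part \ref{item:truth_conversion_one-step_star_active_overlapping}.} The direction from $R[t]$ to $S[t]$ is the easy one: an $\rSigma_1^{R[t]}$ statement is witnessed inside $R[t]$, which is a $\Sigma_1^{S[t]}$-definable set by part \ref{item:tp_2_R[t]_to_S[t]_R[t]_sub_S[t]}. For the converse, I would use the standard representation of $\rSigma_{k+1}^{S}$ truth in an ultrapower: $S\sats\varphi(j(f)(a),\ldots)$ iff the set of $u$ with $P\sats\varphi(f(u),\ldots)$ lies in $(F^R)_a$ (\L o\'s for $\rSigma_{k+1}$ with $\Ult_k$, using $\bfrSigma_k^P$ functions). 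The parameter $z$ is chosen precisely so that the $\Sigma_{k+1}$-hull of $\nu(F^R)\cup\{z\}$ in $S$ is all of $S$: the components $\pvec_k^{S}$ and $p_{k+1}^{S}\cut j(\kappa)$ are $j$ of the corresponding parameters of $P$ (with $P$ sound above $\kappa$), while $s_0$ and $\alpha_0$ recover $F^R_\downarrow$, hence $P$ itself inside $S$. The translation $\tau$ then says: ``there is $\alpha<\nu(F^R)$ such that $F^R\rest\alpha$ together with the relevant $\rSigma_k^P$ theory, which is an element of $R|\kappa^{+R}$ determined by $m$, namely the index of the function coding the theory of $P$, witnesses $\varphi$''. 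This is $\rSigma_1$ over $R$, because $P|\kappa^{+P}\pins R$ and the $\rSigma_k$ theory of $P$ is coded inside $R$ below $\kappa^{+R}$ (this is where $m$ or $\alpha_0$ enters). I would then lift to $[t]$ via Proposition \ref{prop:inter_Sigma_1_R,R[t]_n+1}(\ref{item:formula_conversions_n+1}), noting that $\crit(F^R)=\kappa<\delta$ and that names for $t$ lie below $\delta$, so the ultrapower fixes them.

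\textbf{Parts \ref{item:type_2_R_and_S^R_correspond} and \ref{item:type_2_R_and_S^R_functions_into_R[t]_and_S[t]}.} Part \ref{item:type_2_R_and_S^R_correspond} follows immediately from part \ref{item:truth_conversion_one-step_star_active_overlapping}. For part \ref{item:type_2_R_and_S^R_functions_into_R[t]_and_S[t]}, in one direction the graph of $f\in R[t]$ is $\Sigma_1^{S[t]}$ in parameters. Conversely, a $\bfrSigma_k^{S[t]}$ function into $R[t]$ with bounded domain is computed, by part \ref{item:truth_conversion_one-step_star_active_overlapping} applied one quantifier level down, from a bounded piece of $F^R$, so it lies in $R[t]$; equivalently, $R[t]$ is closed under such functions because $\OR^R=\nu(F^R)^{+S[t]}$ and $\rho_k^{S[t]}>\nu(F^R)$.

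I expect the main obstacle to be the converse translation $\tau$. The difficulty is showing that $\rSigma_{k+1}$ truth over $S$, with the parameter $z$, is genuinely reducible to $\Sigma_1$ over $R$, which requires that $\Th_{\rSigma_k}^P$ restricted to $\kappa$ be available in $R$ uniformly. This in turn needs the fine-structural facts about $(M_\kappa,n_\kappa)$ (that $\rho_{n_\kappa+1}^{M_\kappa}\leq\kappa$ and $M_\kappa$ is $\kappa$-sound), and, in the type 3 case, the recalibration results of \S\ref{sec:between_R_and_R_J} relating $M_\kappa$ to $(M_\kappa)_J$.
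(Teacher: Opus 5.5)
\textbf{There is a genuine gap: the direction of part 2 that you call easy is where the proof actually lives.}

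An $\rSigma_1$ formula over $R[t]$ may mention the amenable predicate for $F^R$ and the constant $F^R_\downarrow$. Part 1 only makes $\univ{R}$ and the passive sequence $\es^R$ $\Sigma_1^{S[t]}(\{\nu(F^R)\})$, so ``$R[t]$ is $\Sigma_1^{S[t]}$-definable'' does not give you $\psi$.

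Your justification for the active predicate, namely that $j\rest(P|\kappa^{+P})$ is $\Sigma_1$-definable over $S$ from the generators, is unsupported. It also fails in general. If $k\geq 1$ then $\OR^R<j(\kappa)<\rho_k^S\leq\rho_1^S$. A $\Sigma_1^S$-in-parameters code for $F^R$ would then be an element of $S=\Ult_k(P,F^R)$, which is impossible.

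What is definable is $\rg(j)=\Hull_{k+1}^S(\kappa\cup\{\pvec_{k+1}^S\cut j(\kappa)\})$. This is $\rSigma_{k+1}^{S[t]}$ in the components $\pvec_k^S$, $p_{k+1}^S\cut j(\kappa)$, $\kappa$ of $z$. The paper's $\psi$ is built from it as follows:
\begin{itemize}
\item $j(\kappa)=u^S(\alpha,\pvec_{k+1}^S\cut j(\kappa))$, where $(u,\alpha)$ is least defining $\kappa$ in $P$.
\item $E\in R$ is a fragment of $F^R$ iff $E$ is the extender of length $\nu(F^R)=\max(s_0)+1$ derived from an elementary $\bar j:S|\bar\xi\to S|\xi$ with $\xi$ in that hull, $\rho_\om^{S|\xi}=j(\kappa)$ and $\rho_\om^{S|\bar\xi}=\kappa$.
\item Then $F^R_\downarrow=i_E(f_0)(s_0)$, with $f_0$ read off from $\alpha_0$.
\end{itemize}
This is exactly why $s_0$ and $\alpha_0$ are in $z$, and why $\psi$ is $\rSigma_{k+1}$ rather than $\Sigma_1$. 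Your proposal gives no way to recover $F^R$ or $F^R_\downarrow$ inside $S[t]$.

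\textbf{For $\tau$ your approach is right, but you misplace the difficulty.} \L o\'s for $\rSigma_{k+1}$ via ``$\exists X\in(F^R)_a$'', combined with strong forcing, is the correct method.
\begin{itemize}
\item The theory of $P$ is not an issue. $P=M_\kappa$ and $k$ are identified inside $R[t]$ from $\kappa=\crit(F^R)$, $t$ and $\alpha_\Tt$. One simply asserts that there is $X\in(F^R)_a$ such that $P\sats$ ``there is $\beta<\rho_k^P$ such that for all $u\in X$, $\Th^P_{\rSigma_k}(\beta\cup\{\ldots\})$ codes a witness''.
\item What does need care is representing each component of $z$ as some $[a,f]$ in a $\Sigma_1^{R[t]}$ way. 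For $\pvec_k^S$ and $p^S_{k+1}\cut j(\kappa)$ (images under $j$ of parameters of $P$) and $\kappa=[\{\kappa\},\id]$ this is immediate.
\item But $(s_0,f_0)$ are defined by minimality, ranging over all $f\in R|\kappa^{+R}$ and involving the top generator $\nu(F^R)-1\in s_0$, hence over all of $F^R$. This is not $\rSigma_1$ as it stands.
\end{itemize}
That is the role of $m$. Let $(t_0,g_0)$ be least with $F^R_\downarrow=[t_0,g_0]$, then $(t_1,g_1)$ least with $t_1<t_0$ and $F^R_\downarrow=[t_1,g_1]$, and so on. This finite descending sequence is $\rSigma_1^{R[t]}(\{F^R_\downarrow\})$, and $(s_0,f_0)=(t_m,g_m)$ for its last index $m$. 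Your reading of $m$ as an index of a function coding a theory of $P$ does not do this job.

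Also, the ultrapower does not fix the $\BB$-names, since its critical point is $\kappa<\delta$. They are simply elements of $R\sub S$, handled through representatives $[a,f]$. The lift from $S$ to $S[t]$ is valid because $\delta<\rho_k^S$, not because the names are fixed.
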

Before commencing with the proof, let us point out that Proposition \ref{prop:tp_3_overlap_no_delta-meas_R_J[t]_S[t]} below
is a variant of \ref{prop:tp_2_overlap_no_delta-meas_R[t]_S[t]} in which the type 2 $R$
above is replaced by $R_J$, for a type 3 premouse $R$. The statement and proof of \ref{prop:tp_3_overlap_no_delta-meas_R_J[t]_S[t]} is just a simplification of \ref{prop:tp_2_overlap_no_delta-meas_R[t]_S[t]}, so while we give its full statement, we will omit its proof completely.
\begin{proof}
Let us first give the point how how we can compose the formula $\tau(m,\varphi,v,\alpha_\Tt)$ (in the parameter $\alpha_\Tt$).
Working in $R[t]$,
we can first identify, in a $\Sigma_1^{R[t]}$ fashion, the appropriate representative of the parameter $z$.
For using $F^{R}$, we can identify $\kappa=\crit(F^{R})$,
and from $\kappa$ and $(t,\alpha_\Tt)$ (recall that $t$ is supplied by the predicates of $R[t]$) we can identify $P=M_\kappa$ and $k=n_\kappa$,
and hence a coded version of $S=\Ult_k(P,F^{R})$.
We also have a coded version of $j:P\to S$.
We know that $\pvec_k^{S[t]}=\pvec_k^S=j(\pvec_k^P)$
and $p_{k+1}^{S[t]}\cut j(\kappa)=p_{k+1}^{S}\cut j(\kappa)=j(p_{k+1}^P\cut\kappa)$,
yielding codes for these parameters.
And of course $\kappa$
is represented by $[\{\kappa\},\id]$.

Note also that from any parameter $v\in R[t]$ we can identify, in a $\Sigma_1^{R[t]}$ fashion, a
$\BB_\delta$-name $\dot{v}$
and a standard representative $[a_{\dot{v}},f_{\dot{v}}]$
for $\dot{v}$ (recall we have direct access to $G_t$ in $R'[t]$). In particular, for  $p\in\BB_\delta$,
we have a standard representative $[a_p,f_p]$ for $p$,
and $[a_\delta,f_\delta]$ for $\delta$.

To identify $(s,\alpha_0)$,
we define the sequence $(t_0,g_0),(t_1,g_1),\ldots$,
where $g_0$ is the least function such that for some $t\in[\nu(F^R)]^{<\om}$,
we have $F_\downarrow^R=[t,g_0]$,
and $t_0$ is the least witness, and then $g_1$ is the least function such that for some $t\in[\nu(F^R)]^{<\om}$
with $t<t_0$, we have $[t,g_1]=F_\downarrow^R$,
and $t_1$ is the least witness, etc. Letting $m<\om$ be largest such that $t_m$ is defined, note that $s=t_m$, $f_0=g_m$
and $\alpha_0=\alpha_{g_m}$ is the position of $g_m$ in the order of construction of $R$. The sequence $\left<(t_n,g_n,\alpha_{g_n})\right>_{n\leq m}$ is, moreover, $\rSigma_1^{R[t]}(\{F^R_\downarrow\})$,
and we have $F^R_\downarrow$ available via the language for premice. Given $n\leq m$,
let $z_{n}$ be the version of $z$ where we use $t_n,\alpha_{g_n}$ in place of $s,\alpha_0=\alpha_{f_0}$.

Now since $\delta<\rho_k^S$,
by \S\ref{sec:between_R_and_R[t]},
for $\rSigma_{k+1}$ formulas $\varphi$,
we have $S[t]\sats\varphi(v,z,\alpha_\Tt)$
iff there is $p\in G_t$
such that $p$ strongly forces $\varphi(\dot{v},\check{z},\check{\alpha}_\Tt)$.
So
we take $\tau(n,\varphi,v,\alpha_\Tt)$
to be the formula asserting
\begin{enumerate}[label=--]
\item ``$(t_n,g_n)$ is defined'' and
\item ``there is some $p\in G_t$
such that
\begin{equation}\label{eqn:Ult(P)_thinks_p_strongly_forces_varphi} \Ult_k(P,F^R)\sats\text{`} p\text{ strongly forces }\varphi(\dot{v},\check{z}_{n},\check{\alpha}_\Tt)\text{ with respect to }\BB_\delta\text{' }\text{''},\end{equation}
\end{enumerate}
where we use our code for the ultrapower and our representatives to refer to these objects appropriately. (We will just apply it with $n=m$.)
This statement is $\rSigma_1^{R[t]}$
because we have already identified the relevant parameters,
and given $p\in\BB_\delta^R$, we
 we have identified some $a\in[\nu(F^R)]^{<\om}$
and $f_p,f_{\dot{v}},f_{\check{z}_n},f_{\check{\alpha}_\Tt},f_{\BB_\delta}$ (for $p\in\BB_\delta^R$),
such that $p=[a,f_p]$, $\dot{v}=[a,f_{\dot{v}}]$, etc,
and since the statement modelled by $\Ult_k(P,F^R)$ in   line (\ref{eqn:Ult(P)_thinks_p_strongly_forces_varphi})
is  $\rSigma_{k+1}$, it is equivalent to the existence
of a set $X\in (F^R)_a$
such that $P\sats$ ``there is $\beta<\rho_k^P$
such that for all $u\in X$,
the theory $\Th^P_{\rSigma_k}(\beta\cup\{\pvec_k^P,f(u),q\})$ codes a witness to the statement
\[ \text{`}f_p(u)\text{ strongly forces }\varphi(f_{\dot{v}}(u),f_{\check{z}_n}(u),f_{\check{\alpha}_\Tt}(u))\text{ with respect to }f_{\BB_\delta}(u)\text{' } \text{''}.\]

Conversely, we want to compose $\psi(\varphi,v,z,\alpha_\Tt)$.
For this, we have already computed $\univ{R[t]}$ appropriately,
and $R^\passive[t]=S[t]||\OR^{R}$
and $\OR^{R}=\nu(F^R)^{+S[t]}$.
For this, we need to identify
the elements of $F^{R}$ appropriately,
and also identify $F_\downarrow^R$ (since this is identified via a constant interpreted in $R$).
We will recover the elements of $F^R$ by considering
\begin{equation}\label{eqn:range_of_j_Hull} \Hull_{k+1}^{S}(\kappa\cup\{\pvec_{k+1}^{S}\cut j(\kappa)\})=\rg(j) \end{equation}
where $j:P\to S$ is the ultrapower map. That is, since we have $\kappa$ available (in $z$),
we can identify $P,k$,
and hence identify the least $\alpha<\kappa$
and $\rSigma_{k+1}$ term
$u$ such that $\kappa=u^P(\alpha,\pvec_{k+1}^P\cut\kappa)$.
Thus, $j(\kappa)=u^S(\alpha,\pvec_{k+1}^S\cut\OR^R)$.
Now since $\rSigma_{k+1}^S$
is uniformly $\rSigma_{k+1}^{S[t]}$,
and we have the parameter $z$ available,
we have that, given $E\in R$, $E$ is a fragment of $F^R$ represented in the amenable code for $F^R$ iff
 there are $\lambda,\xi,f$ in the hull in line (\ref{eqn:range_of_j_Hull})
such that $\lambda=u^S(\alpha,\pvec_{k+1}^S\cut\OR^R)$
and $\lambda<\xi$
and $\rho_\om^{S|\xi}=\lambda$
and $f:\lambda\to\pow([\lambda]^{<\om})\cap S|\xi$ is the canonical enumeration
definable over $S|\xi$,
and letting $\bar{\xi}$
be such that $\rho_\om^{S|\bar{\xi}}=\kappa$
and there is an elementary embedding $\bar{j}:S|\bar{\xi}\to S|\xi$ (note this requires that $j(\bar{\xi})=\xi$
and $\bar{j}\sub j$),
then
$E$ is the partial extender measuring
$\pow([\kappa]^{<\om})\cap S|\bar{\xi}$
derived from $\bar{j}$,
of length $\nu(F^R)$.
(Note that $\max(s)+1=\nu(F^R)$, and we have $s$ available in $z$, so can refer to $\nu(F^R)$.)
Now using such fragments $E$ and the parameters $(\alpha_0,s)$, we can recover $F_\downarrow^R$
(we recover $f_0$ from $\alpha_0$, then letting $E$ be as above with $\bar{\xi}$ large enough, $i_E(f_0)(s)=F_\downarrow^R$). Finally, we then have that $\varphi(v)$ (which may refer to $F_\downarrow^R$ via the premouse language) holds in $R[t]$
iff there is some $E$ as above which witnesses  $\varphi(v)$, which leads to an appropriate formulation of $\psi(\varphi,v,z,\alpha_\Tt)$.

Part \ref{item:type_2_R_and_S^R_correspond} is an easy consequence of the earlier parts.

Part \ref{item:type_2_R_and_S^R_functions_into_R[t]_and_S[t]} holds because $R[t]^{\passive}=S[t]|\nu(F^R)^{+S[t]}$ and $\OR^{R[t]}<\rho_k^{S[t]}$.
\end{proof}

\newcommand{\dfs}{\mathrm{d}}
\begin{rem}\label{rem:tp_2_R_charac_t^R}
Suppose $R$ is type 2, $\crit(F^R)<\delta\leq\rho_1^R$, and $1$-sound,
hence Dodd-sound.
By \cite{fsfni_online_first},
then $\rho_1^R=\tau^R$
and letting $s_0$ be least such that $F_\downarrow^R=i_{F^R}(f)(s_0,p_1^R)$ for some $f\in R$, we have $t^R=p_1^R\cup (s_0\cut\tau^R)$.
\end{rem}

\begin{dfn}
Suppose $R$ is $\star$-suitable, type 2, $\crit(F^R)<\delta$, and $R$ has no $\delta$-measure.
We say that $R[t]$ is \emph{Dodd-sound $>\delta$} iff either
\begin{enumerate}[label=--]
\item $\rho_1^R\geq\delta$
and $R$ is Dodd-sound, or
\item  $F^R$ is generated by $\rho_1^{R[t]}\cup\{\delta,s\}$ for some $s\in[\OR]^{<\om}$,
(so $t^R\cut(\delta+1)$ is least such),
and the Dodd-solidity witnesses for $t^R\cut(\delta+1)$ are in $R[t]$ (not necessarily in $R$!); that is,
\[ F^R\rest(\alpha\cup\{t^R\cut(\alpha+1)\})\in R[t] \]
for each $\alpha\in t^R\cut(\delta+1)$.\qedhere
\end{enumerate}
\end{dfn}

So if $R$ is Dodd-sound then $R[t]$ is Dodd-sound $>\delta$, but the converse need not hold.

\begin{prop}\label{prop:tp_2_no_delta-meas_Dodd-soundness}
Suppose $R$ is $\star$-suitable, type 2, $\crit(F^R)<\delta$, and $R$ has no $\delta$-measure. Suppose $R[t]$ is $1$-sound and $>\delta$-Dodd-sound.
Then:
\begin{enumerate}
\item\label{item:R_tp_2_no_delta-meas_S[t]_sound} $S[t]$ is $(k+1)$-sound,
\item\label{item:R_tp_2_no_delta-meas_rho_k+1^S[t]} $\rho_{k+1}^{S[t]}=\rho_1^{R[t]}$
and
\item\label{item:R_tp_2_no_delta-meas_p_k+1^S[t]} $p_{k+1}^{S[t]}=j(p_{k+1}^{P}\cut\kappa)\conc (t^R\cut(\delta+1))$.\footnote{The presence of the fragment $t^R\cut(\delta+1)$  of the Dodd parameter of $F^R$ within $p_{k+1}^{S[t]}$ is ignored in
 \cite[Theorem 1.2.9(d''), with $j=1$]{closson}. Adopting the notation used at that point of \cite{closson}, assuming
$\mathcal{P}$ is $1$-sound and Dodd-sound, it should be
$p_{n+1}(\mathcal{P}[g]^*)=j(p_{n+1}^R\cut\kappa)\conc q$
where $j:R\to\Ult_n(A,F^\mathcal{P})$ is the ultrapower map
for the relevant $A$
and $\kappa=\crit(j)$, and $q=t^\mathcal{P}\cut\delta$
where $t^{\mathcal{P}}$ is the Dodd parameter of $\mathcal{P}$.}
\end{enumerate}
Suppose further that
$P=M_\kappa$ is active type 2 and $\crit(F^P)<\kappa=\crit(F^R)$. Then:
\begin{enumerate}[resume*]
 \item\label{item:R_tp_2_no_delta-meas_P_tp_2_S_tp_2} $S$ is active type 2, $\crit(F^S)<\delta$ and $S[t]$ is $>\delta$-Dodd-sound,
and
\item\label{item:R_tp_2_no_delta-meas_cases_with_rho}
either:
\begin{enumerate}[resume*]
\item $\kappa<\rho_1^P$
and $j(\kappa)<\rho_1^S=\rho_1^{S[t]}$
and $S$ is Dodd-sound, or
\item $\rho_1^P\leq\kappa$
and $\rho_1^{S[t]}=\max(\rho_1^R,\delta)$
and $F^S$ is generated by \[ \{\delta\}\cup \rho_1^{S[t]}\cup j(t^P)\cup (t^R\cut(\delta+1)) \]
and the Dodd-solidity witnesses for $j(t^P)\cup (t^R\cut(\delta+1))$ are in $S[t]$.
\end{enumerate}
\end{enumerate}
\end{prop}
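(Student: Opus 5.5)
The plan is to work throughout with the correspondence given by Proposition \ref{prop:tp_2_overlap_no_delta-meas_R[t]_S[t]}, using the parameter $z$ defined there. Part \ref{item:truth_conversion_one-step_star_active_overlapping} of that proposition, together with part \ref{item:type_2_R_and_S^R_correspond}, reduces $\rSigma_{k+1}^{S[t]}(\{z,\alpha_\Tt\})$ truth and hulls to $\rSigma_1^{R[t]}$ truth and hulls.

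\textbf{Projectum.} First I will show $\rho_{k+1}^{S[t]}\leq\rho_1^{R[t]}$. Any $\rSigma_1^{R[t]}$ theory over $\rho_1^{R[t]}\cup\{p_1^{R[t]}\}$ that is missing from $R[t]$ translates into an $\rSigma_{k+1}^{S[t]}$ theory in parameters $z$, $\alpha_\Tt$ and $p_1^{R[t]}$. This theory is not in $S[t]$. Otherwise, by part \ref{item:type_2_R_and_S^R_functions_into_R[t]_and_S[t]} and the fact that $R[t]^\passive=S[t]|\OR^R$, where $\OR^R=\nu(F^R)^{+S[t]}$, it would be in $R[t]$. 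For the reverse inequality I will use that $\Hull_{k+1}^{S[t]}(\rho_1^{R[t]}\cup\{\vec p\})$ contains $\rg(j)$, $\kappa$, the parameter $s$ and the Dodd-solidity fragments. Hence its transitive collapse is all of $S[t]$, which requires $S[t]$ to be generated this way.

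\textbf{Soundness and the parameter.} Both parts \ref{item:R_tp_2_no_delta-meas_S[t]_sound} and \ref{item:R_tp_2_no_delta-meas_p_k+1^S[t]} rest on one generation claim:
\[ S[t]=\Hull_{k+1}^{S[t]}\big(\rho_1^{R[t]}\cup\{j(p_{k+1}^P\cut\kappa),\,t^R\cut(\delta+1)\}\big). \]
The argument for this claim runs as follows. By $1$-soundness of $R[t]$ and Remark \ref{rem:tp_2_R_charac_t^R}, adapted via $>\delta$-Dodd-soundness, $F^R$ is generated by $\rho_1^{R[t]}\cup\{\delta\}\cup(t^R\cut(\delta+1))$. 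Moreover, $\delta$ is definable in $S[t]$ from a constant. Every element of $S$ has the form $j(f)(a)$ with $a\in[\nu(F^R)]^{<\om}$, and $j(f)$ lies in $\Hull_{k+1}^{S}(\kappa\cup\{j(p_{k+1}^P\cut\kappa)\})$ by soundness of $P=M_\kappa$. The component $\vec p_k$ is handled by induction, since $\vec p_k^{S[t]}=j(\vec p_k^P)$.

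Minimality of this parameter comes from two solidity arguments. For the $j(p_{k+1}^P\cut\kappa)$ part, I will use solidity witnesses of $P$ mapped up by $j$. For the $t^R\cut(\delta+1)$ part, I will use the Dodd-solidity witnesses $F^R\rest(\alpha\cup\{t^R\cut(\alpha+1)\})\in R[t]\subseteq S[t]$. Each such witness computes the corresponding $\rSigma_{k+1}^{S[t]}$ solidity witness, via the conversion $\tau$ applied to the corresponding $R[t]$-hull. I expect this step to be the main obstacle. The Dodd witnesses must be converted into genuine $(k+1)$-solidity witnesses of $S[t]$. This needs a careful comparison of $\Hull_{k+1}^{S[t]}(\alpha\cup\ldots)$ with $\Ult$ by the fragment $F^R\rest\alpha\cup\ldots$, with the parameter $s$ absorbed into $t^R\cut(\delta+1)$. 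Universality then follows because $R[t]$ itself is $1$-sound, and $R[t]^\passive$ is an initial segment of $S[t]$ whose height exceeds $(\rho_1^{R[t]})^{+}$.

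\textbf{Parts \ref{item:R_tp_2_no_delta-meas_P_tp_2_S_tp_2} and \ref{item:R_tp_2_no_delta-meas_cases_with_rho}.} When $P$ is type 2 with $\crit(F^P)<\kappa$, $S$ is type 2 with $F^S$ obtained by shifting $F^P$ along $j$, so $\crit(F^S)=\crit(F^P)<\delta$. The two cases then split as follows.
\begin{enumerate}[label=(\alph*)]
\item If $\kappa<\rho_1^P$, then $k=1$ works through the standard preservation of Dodd-soundness under $\Ult_1$ with $\crit<\rho_1$. This gives $j(\kappa)<\rho_1^S$, and $\rho_1^S=\rho_1^{S[t]}$ follows from Proposition \ref{prop:R,R[t]_fs_match_Sigma_1}.
\item If $\rho_1^P\leq\kappa$, then $F^S$ is generated by $j(t^P)$ together with the generators of $F^R$. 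The generation statement and the Dodd witnesses follow by composing the witnesses of $F^P$, mapped by $j$, with those for $t^R\cut(\delta+1)$. The value $\max(\rho_1^R,\delta)$ arises because, from the viewpoint of $R[t]$, every ordinal below $\delta$ counts as a parameter.
\end{enumerate}
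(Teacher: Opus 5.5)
Your overall architecture matches the paper's. You generate $S[t]$ from $\rg(j)$ together with $\rho_1^{R[t]}\cup\{\delta\}\cup(t^R\cut(\delta+1))$. You get the solidity witnesses for $t^R\cut(\delta+1)$ from the Dodd-solidity fragments of $F^R$ (using $P\in S[t]$), and those for $j(p_{k+1}^P\cut\kappa)$ from the witnesses of $P$. You then split the last two parts according to whether $\kappa<\rho_1^P$ or $\rho_1^P\leq\kappa$. Your upper bound $\rho_{k+1}^{S[t]}\leq\rho_1^{R[t]}$ is also fine: it uses the $\psi$-direction of Proposition \ref{prop:tp_2_overlap_no_delta-meas_R[t]_S[t]} and the agreement of $S[t]$ with $R[t]$ below $\OR^R=\nu(F^R)^{+S[t]}$.

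The gap is the reverse inequality $\rho_{k+1}^{S[t]}\geq\rho_1^{R[t]}$. You argue it from the fact that $\Hull_{k+1}^{S[t]}(\rho_1^{R[t]}\cup\{\vec p\})$ is all of $S[t]$. But generation gives no lower bound on a projectum: a $(k+1)$-sound model projecting to some $\rho'<\rho$ is equally the hull of $\rho\cup\{p_{k+1}\}$. Without the lower bound, your generation and solidity facts do not yield $(k+1)$-soundness with projectum $\rho_1^{R[t]}$, so the first three parts are not yet proved.

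If $\rho_1^{R[t]}=\delta$, the bound is automatic, by the convention that $\rho\leq\delta$ means $\rho=\delta$ here. So suppose $\rho_1^{R[t]}>\delta$. What you need is that $\Th_{\rSigma_{k+1}}^{S[t]}(\alpha\cup\{q\})\in S[t]$ for every $\alpha<\rho_1^{R[t]}$ and every $q\in S[t]$.
\begin{itemize}
\item The paper gets this from $F^R\rest\alpha\in R\sub S$ for each $\alpha<\tau^R=\rho_1^R$. Together with $P$, these fragments let $S$ compute the required theories.
\item Alternatively, you can use what you already have. By your generation claim, $q$ is $\rSigma_{k+1}^{S[t]}$-definable from $z$ and some $q'\in R[t]$. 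So the theory is recursive in $\Th_{\rSigma_{k+1}}^{S[t]}(\alpha\cup\{q',z,\alpha_\Tt\})$. By the $\tau$-direction of part \ref{item:truth_conversion_one-step_star_active_overlapping} of Proposition \ref{prop:tp_2_overlap_no_delta-meas_R[t]_S[t]}, that is recursive in $\Th_{\rSigma_1}^{R[t]}(\alpha\cup\{q',\alpha_\Tt\})$. This lies in $R[t]\sub S[t]$ because $\alpha<\rho_1^{R[t]}$.
\end{itemize}

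A minor slip: in case (a), $k=n_\kappa\geq 1$ need not equal $1$. The preservation of soundness and Dodd-soundness you invoke is for $\Ult_k$.
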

\begin{proof}
Parts \ref{item:R_tp_2_no_delta-meas_S[t]_sound}--\ref{item:R_tp_2_no_delta-meas_p_k+1^S[t]}:
We have $S=\Hull_{k+1}^S(\tau^R\cup\{j(\pvec_{k+1}^P\cut\kappa),t^R\})$,
since this hull contains every element of $\rg(j)$,
and every element of $\tau^R\cup\{t^R\}$, and hence every element of $\OR^R$. Therefore it suffices
to see that $\rho_{k+1}^{S[t]}=\max(\delta,\tau^R)$ and $S[t]$ has the relevant solidity witnesses corresponding to $t^R$.
But it has the corresponding fragments of $F^R$ witnessing Dodd-solidity, since they are in $R\sub S$,
and it has $P$,
and from these it computes the relevant solidity witnesses.
Similarly, if $\delta<\tau^R$ then since $S$ has the fragments of $F^R$ corresponding to each $\alpha<\tau^R$, it has the relevant theories to therefore establish that $\rho_{k+1}^{S[t]}=\tau^R$.

Parts \ref{item:R_tp_2_no_delta-meas_P_tp_2_S_tp_2}, \ref{item:R_tp_2_no_delta-meas_cases_with_rho}: If $\kappa<\rho_1^P$ this is immediate.
So suppose $\rho_1^P\leq\kappa$. So $k=0$ and we already know that $\rho_1^{S[t]}=\rho_1^{R[t]}=\max(\rho_1^R,\delta)$. Since $P$ is $\kappa$-sound and $\kappa$-Dodd-sound, $F^P$ is
generated by $\kappa\cup t^P$.
And $F^R$ is generated by $\{\delta\}\cup\rho_1^{R[t]}\cup(t^R\cut(\delta+1))$, so
 $F^S$ is  generated by $\{\delta\}\cup\rho_1^{S[t]}\cup j(t^P)\cup t^R\cut(\delta+1)$.
And
 the $F^P$-Dodd-solidity witnesses for $t^P\cut\kappa$ are in $P$,
which yield the $F^S$-Dodd-solidity witnesses for $j(t^P)\cut\kappa$ in $S$, hence in $S[t]$, and since the $F^R$-Dodd-solidity witnesses for $t^R\cut(\delta+1)$ are in $R[t]$, these are also in $S[t]$,
and since $P$ is also in $S[t]$,
these can be used there to recover
the $F^S$-Dodd-solidity witnesses
for $t^R\cut(\delta+1)$.
\end{proof}

\begin{prop}\label{prop:tp_2_1-sound_R[t]_translate_rSigma_1_about_p_1}
Suppose  $\delta\leq\rho_1^R$ and $R[t]$ is $1$-sound and $>\delta$-Dodd-sound \tu{(}so $S[t]$ is $(k+1)$-sound\tu{)}. Let $s_0,\alpha_0$ be as in Proposition \ref{prop:tp_2_overlap_no_delta-meas_R[t]_S[t]}
and $s'_0=s_0\cap\tau^R$ \tu{(}so $\alpha_0<\delta$ and $s'_0\in[\tau^R]^{<\om}$\tu{)}.
\begin{enumerate}
\item $\rho_{k+1}^{S[t]}=\rho_1^{R[t]}=\rho_1^R=\tau^R$ and $S[t]$ is $(k+1)$-sound.
\item If $S$ is type 2 then $S[t]$  is $>\delta$-Dodd-sound.
\item\label{item:tp_2_rSigma_1^R[t]_and_rSigma_k+1^S[t]_in_nice_params} $\rSigma_{k+1}^{S[t]}(\{\pvec_{k+1}^{S[t]},s'_0,\alpha_0,\alpha_\Tt\})$ statements about parameters in $R[t]$
are recursively equivalent to
$\rSigma_1^{R[t]}(\{p_1^{R[t]},\alpha_\Tt\})$ statements about those parameters.
That is, there is an $\rSigma_{k+1}$ formula
$\psi_{k+1}$  such that for all $\rSigma_1$ formulas $\varphi$ and $v\in R[t]$, we have
\[ R[t]\models\varphi(v,p_1^{R[t]},\alpha_\Tt)\iff S[t]\models\psi_{k+1}(\varphi,v,\pvec_{k+1}^{S[t]},s'_0,\alpha_0,\alpha_\Tt),\]
and there is an $\rSigma_1$
formula $\tau_1$ such that for all $\rSigma_{k+1}$ formulas $\varphi$ and $v\in R[t]$, we have
\[ S[t]\models\varphi(v,\pvec_{k+1}^{S[t]},s_0',\alpha_0,\alpha_\Tt)\iff R[t]\models\tau_{n+1}(\varphi,v,p_1^{R[t]},\alpha_\Tt).\]
\item\label{item:tp_2_F_downarrow_def_at_rSigma_k+2^S[t]} $\{F_\downarrow^R\}$ is $\rSigma_{k+2}^{S[t]}(\{\pvec_{k+1}^{S[t]},\kappa\})$.
\item\label{item:tp_2_rSigma_2^R[t]_and_rSigma_k+2^S[t]} $\rSigma_{k+2}^{S[t]}(\{\pvec_{k+1}^{S[t]},\kappa,\alpha_\Tt\})$ statements about parameters in $R[t]$
are recursively equivalent to $\rSigma_2^{R[t]}(\{p_1^{R[t]},\alpha_\Tt\})$ statements about parameters in $R[t]$.
 That is, there is an $\rSigma_{k+2}$ formula $\psi_{k+2}$ such that for all $v\in R[t]$ and $\rSigma_2$ formulas $\varphi$, we have
 \[ R[t]\sats\varphi(v,p_1^{R[t]},\alpha_\Tt)\iff S[t]\sats\psi_{k+2}(\varphi,v,\pvec_{k+1}^{S[t]},\kappa,\alpha_\Tt),\]
 and there is an $\rSigma_2$ formula
 $\tau_2$ such that for all $v\in R[t]$
 and $\rSigma_{k+2}$ formulas $\varphi$, we have
 \[ S[t]\sats\varphi(v,\pvec_{k+1}^{R[t]},\kappa,\alpha_\Tt)\iff R[t]\sats\tau_2(\varphi,v,p_1^{R[t]},\alpha_\Tt).\qedhere\]
\end{enumerate}
\end{prop}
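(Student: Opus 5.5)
The plan is to reduce everything to Proposition \ref{prop:tp_2_overlap_no_delta-meas_R[t]_S[t]} and Proposition \ref{prop:tp_2_no_delta-meas_Dodd-soundness}, together with the Dodd-fine structure facts recalled in Remark \ref{rem:tp_2_R_charac_t^R}.

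\textbf{Parts 1 and 2.} Since $\delta\leq\rho_1^R$ and $R[t]$ is $1$-sound and $>\delta$-Dodd-sound, the definition puts us in its first case, so $R$ itself is Dodd-sound. By Remark \ref{rem:tp_2_R_charac_t^R}, $\rho_1^R=\tau^R$ and $t^R=p_1^R\cup(s_0\cut\tau^R)$. By Proposition \ref{prop:R,R[t]_fs_match_Sigma_1}, $\rho_1^{R[t]}=\rho_1^R$. Then Proposition \ref{prop:tp_2_no_delta-meas_Dodd-soundness} gives that $S[t]$ is $(k+1)$-sound with $\rho_{k+1}^{S[t]}=\rho_1^{R[t]}$, which is part 1. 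Part 2 is part \ref{item:R_tp_2_no_delta-meas_P_tp_2_S_tp_2} of that proposition.

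\textbf{Part 3.} The idea is to convert the parameter $z$ of Proposition \ref{prop:tp_2_overlap_no_delta-meas_R[t]_S[t]} into the nicer parameters $(\pvec_{k+1}^{S[t]},s_0',\alpha_0)$.
\begin{enumerate}[label=(\roman*)]
\item By Proposition \ref{prop:tp_2_no_delta-meas_Dodd-soundness}, $p_{k+1}^{S[t]}=j(p_{k+1}^P\cut\kappa)\conc(t^R\cut(\delta+1))$. Hence from $\pvec_{k+1}^{S[t]}$ we read off $\pvec_k^{S[t]}$ and $p_{k+1}^{S[t]}\cut j(\kappa)$.
\item We can also read off $t^R\cut(\delta+1)$, which equals $p_1^R$ up to whether $\delta\in p_1^R$; the constant $\delta$ is available, so this ambiguity is harmless.
\item $\kappa$ is $\rSigma_{k+1}^S$-definable from $p_{k+1}^{S}\cut j(\kappa)$ and some $\alpha<\kappa$. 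I plan to fold that $\alpha$ into $\alpha_0$-type ordinal parameters below $\delta$, or handle it via the uniformity remarks.
\item Finally, $s_0=s_0'\cup(s_0\cut\tau^R)$, and $s_0\cut\tau^R\sub t^R$ is recoverable from $p_1^R$.
\end{enumerate}
So $z$ is $\Sigma_1$-computable from the new parameters, and part 3 follows from Proposition \ref{prop:tp_2_overlap_no_delta-meas_R[t]_S[t]}\ref{item:truth_conversion_one-step_star_active_overlapping}.

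For the converse direction, $p_1^{R[t]}$ must be $\rSigma_{k+1}^{S[t]}$-identifiable in $R[t]$. It is a piece of $p_{k+1}^{S[t]}$, so this holds. Likewise, $\tau_1$ can compute $s_0',\alpha_0$ in $R[t]$ via the finite descending search from the proof of Proposition \ref{prop:tp_2_overlap_no_delta-meas_R[t]_S[t]}. That search is $\rSigma_1^{R[t]}(\{F_\downarrow^R\})$, and its terminal stage is the least witness, so no integer $m$ is needed.

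\textbf{Parts 4 and 5.} For part 4, I define $F_\downarrow^R$ as $i_E(f_0)(s_0)$ for suitable fragments $E$ of $F^R$, as in the proof of Proposition \ref{prop:tp_2_overlap_no_delta-meas_R[t]_S[t]}. The parameters $s_0',\alpha_0$ are then removed by existential quantification plus a $\Pi_{k+1}$ minimality clause: ``they are the least pair such that the resulting object codes the correct value.'' Here minimality is expressed as the failure of $\rSigma_{k+1}$ assertions about smaller candidates, which keeps the definition at $\rSigma_{k+2}$.

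Part 5 then follows by applying part 3 to the $\rSigma_1$ matrix of an $\rSigma_2$ formula. The key is that $\rho_{k+1}^{S[t]}=\rho_1^{R[t]}$, and that $\rSigma_{k+1}^{S[t]}$-theories of $\beta\cup\{\ldots\}$ for $\beta<\rho$ correspond to $\rSigma_1^{R[t]}$-theories. Thus the $T_{k+1}$-type witnesses match, with the parameters $s_0',\alpha_0$ now eliminated using part 4.

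\textbf{Main obstacle.} I expect the main obstacle to be this parameter bookkeeping: showing that the minimality characterization of $(s_0',\alpha_0)$ really is $\rPi_{k+1}$ over $S[t]$. It relies on the $\rSigma_{k+1}$ translation being uniform in those parameters. The second delicate point is the case $\delta\in p_1^R$ versus $\delta\notin p_1^R$ when matching $p_1^{R[t]}$ with a segment of $p_{k+1}^{S[t]}$.
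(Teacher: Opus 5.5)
Your proposal has a genuine gap in Part 4. The clause ``$(s_0',\alpha_0)$ is the least pair such that the resulting object codes the correct value'' only has content if ``correct'' is specified without reference to $F_\downarrow^R$. But $F_\downarrow^R$ is the very object being defined. Every pair $(s',\alpha)$ produces some candidate $i_E(f_\alpha)(s)$, and minimizing over pairs does not single out $F_\downarrow^R$. Without an intrinsic characterization of $F_\downarrow^R$, the existential quantification over $(s_0',\alpha_0)$ cannot be made to work.

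The missing idea, which is what the paper uses, is the top generator $\gamma$ of $F^R$:
\begin{enumerate}
\item $\gamma$ lies in $t^R$, so it is either $\delta$ (available through the signature of $S[t]$) or an element of $p_{k+1}^{S[t]}$.
\item With $\gamma$ available (which also gives $\nu(F^R)=\gamma+1$), the fragments of $F^R$ are identified in an $\rSigma_{k+1}^{S[t]}(\{\pvec_{k+1}^{S[t]},\kappa\})$ fashion.
\item $F_\downarrow^R$ is then the unique extender in $R[t]$ satisfying an \emph{intrinsic} $\rPi_{k+1}^{S[t]}(\{\pvec_{k+1}^{S[t]},\kappa\})$ condition relating it to those fragments below $\gamma$.
\end{enumerate}
This gives part 4 with no quantification over $(s_0',\alpha_0)$ at all. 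Your part 5 then goes through essentially as you describe.

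There are also problems in Part 3.
\begin{itemize}
\item \textbf{Step (ii) is false as stated.} By Remark \ref{rem:tp_2_R_charac_t^R}, which you yourself invoke in Part 1, $t^R=p_1^R\cup(s_0\cut\tau^R)$. These extra generators, needed to produce $F_\downarrow^R$, need not lie in $p_1^R$. So $p_1^{R[t]}$ is not simply ``the lower piece of $p_{k+1}^{S[t]}$''; you must separate $p_1^R$ from $s_0\cut\tau^R$ inside $t^R\cut(\delta+1)$.
\item \textbf{Step (iv) does not fill this in.} Saying $s_0\cut\tau^R$ is ``recoverable from $p_1^R$'' does not say which subset of $t^R$ it is. Knowing the true $p_1^R$ already presupposes that separation, so the step is circular as written.
\item \textbf{Step (iii) needs no extra ordinal parameter.} Adding one would change the statement. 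Since $\kappa\leq\alpha_0<\kappa^{+R}$, $\kappa$ is recovered directly from $\alpha_0$.
\item \textbf{The claim that $\tau_1$ needs no integer $m$ is unjustified.} Asserting that the descending search has terminated is a universal statement. This is harmless, though: the statement does not require uniformity in $R$, so $m$ can simply be hardcoded into $\tau_1$, as in Proposition \ref{prop:tp_2_overlap_no_delta-meas_R[t]_S[t]}.
\end{itemize}
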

\begin{proof}
Part \ref{item:tp_2_rSigma_1^R[t]_and_rSigma_k+1^S[t]_in_nice_params}: This is almost by Propositions \ref{prop:tp_2_overlap_no_delta-meas_R[t]_S[t]} and \ref{prop:tp_2_no_delta-meas_Dodd-soundness}, except that the parameters don't quite match up. Let us just examine that aspect.

First consider formulating $\psi_{k+1}$. For this we basically use the method of proof of Proposition \ref{prop:tp_2_overlap_no_delta-meas_R[t]_S[t]}, but we need to identify $p_1^{R[t]}$ and $F_\downarrow^R$ in an
 $\rSigma_{k+1}(\{\pvec_{k+1}^{S[t]},s_0',\alpha_0\})$ fashion. (Note that we can identify $\kappa$ from $\alpha_0$, since $\kappa\leq\alpha_0<\kappa^{+R}$.) But
 by
 Proposition \ref{prop:tp_2_no_delta-meas_Dodd-soundness} part \ref{item:R_tp_2_no_delta-meas_p_k+1^S[t]},
we have $p_{k+1}^{S[t]}=j(p_{k+1}^{P}\cut\kappa)\conc (t^R\cut(\delta+1))$.
By Remark \ref{rem:tp_2_R_charac_t^R},
$t^R=p_1^R\cup s'_0$ and
$F_\downarrow^R=i_{F^R}(f_{\alpha_0}^R)(s_0)$. So from $p_{k+1}^{S[t]}$ (and since the signature of $S[t]$ provides the point $\delta$) we can recover $p_1^R$ and hence $p_1^{R[t]}\sub p_1^R$, and from $p_{k+1}^{S[t]}$ and $s_0'$ (and possibly the point $\delta$)
we can recover the full $s_0$,
and then using $\alpha_0$
and the method of proof of Proposition \ref{prop:tp_2_overlap_no_delta-meas_R[t]_S[t]}, hence recover $F_\downarrow^R$.

In the converse direction,
just use the methods of the proof of Proposition \ref{prop:tp_2_overlap_no_delta-meas_R[t]_S[t]} to identify $\alpha_0$ and $s'_0$.

Part \ref{item:tp_2_F_downarrow_def_at_rSigma_k+2^S[t]}: By the proof of Proposition \ref{prop:tp_2_overlap_no_delta-meas_R[t]_S[t]}, we can identify the  fragments of $F^R$ in the amenable extender predicate of $R$
in an $\rSigma_{k+1}^{S[t]}(\{\pvec_{k+1}^{S[t]},\kappa\})$ fashion
(recall that the top generator $\gamma$ of $F^R$ is in $t^R$, hence either $=\delta$ or is in $p_{k+1}^{S[t]}$).
But then  $F_\downarrow^R$ is the unique extender in $R[t]$ satisfying the right $\rPi_{k+1}^{S[t]}(\{\pvec_{k+1}^{S[t]},\kappa\})$ statement
(again, we have access to the parameter $\gamma$ here), so it can be identified appropriately.

Part \ref{item:tp_2_rSigma_2^R[t]_and_rSigma_k+2^S[t]}: To formulate $\psi_{k+2}$: Using part of the proof of part \ref{item:tp_2_rSigma_1^R[t]_and_rSigma_k+1^S[t]_in_nice_params}, and using part \ref{item:tp_2_F_downarrow_def_at_rSigma_k+2^S[t]}, we can identify $p_1^{R[t]}$ and $F_\downarrow^R$ appropriately. But then using the methods of the proof of Proposition \ref{prop:tp_2_overlap_no_delta-meas_R[t]_S[t]}, we can translate between the relevant $\bfrSigma_1^{R[t]}$ statements
and $\bfrSigma_{k+1}^{S[t]}$ statements,
in order to convert between theories witnessing the relevant
$\bfrSigma_2^{R[t]}$ statements
and the relevant $\bfrSigma_{k+2}^{S[t]}$ statements. The formulation of $\tau_2$ is similar.
\end{proof}

\begin{prop}\label{prop:tp_2_overlap_no_delta-meas_R[t]_S[t]_n} Let $0<n<\om$. Suppose that $R[t]$
is $n$-sound and $>\delta$-Dodd-sound and $\delta<\rho_n^{R[t]}$.
Then:
\begin{enumerate}
\item $\rho_i^{R[t]}=\rho_{k+i}^{S[t]}$ and $S[t]$ is $(k+i)$-sound,
for all $i\in[1,n]$.
\item $p_i^{R[t]}=p_{k+i}^{S[t]}$ for all $i\in[2,n]$.\footnote{We needn't have $p_1^{R[t]}=p_{k+1}^{S[t]}$,
but see Proposition \ref{prop:tp_2_1-sound_R[t]_translate_rSigma_1_about_p_1} in this regard.}
\item\label{item:truth_conversion_one-step_star_active_overlapping_n}
$\rSigma_{k+n+1}^{S[t]}(\{\pvec_{k+1}^{S[t]},\kappa,\alpha_\Tt\})$
statements about parameters in $R[t]$
are recursively equivalent to $\rSigma_{n+1}^{R[t]}(\{p_1^{R[t]},\alpha_\Tt\})$ statements about those parameters.
That is, there is an $\rSigma_{k+n+1}$ formula $\psi_{k+n+1}$ such that
for all $\rSigma_{n+1}$ formulas $\varphi$
and all $v\in R[t]$, we have
\[ R[t]\sats\varphi(v,p_1^{R[t]},\alpha_\Tt)\iff S[t]\sats\psi_{k+n+1}(\varphi,v,\pvec_{k+1}^{S[t]},\kappa,\alpha_\Tt) \]
and there is an $\rSigma_{n+1}$ formula $\tau_{n+1}$ such that for all $\rSigma_{k+n+1}$ formulas $\varphi$ and all $v\in R[t]$, we have
\[ S[t]\sats\varphi(v,\pvec_{k+1}^{S[t]},\kappa,\alpha_\Tt)\iff R[t]\sats\tau_{n+1}(\varphi,v,p_1^{R[t]},\alpha_\Tt).\]
\tu{(}Note here that the standard parameters to which we refer remain just $p_1^{R[t]}$ and $\pvec_{k+1}^{S[t]}$,
independent of $n$.\tu{)}
\item
For  $X\sub R[t]$, we have
\[ R[t]\cap \Hull^{S[t]}_{\rSigma_{k+n+1}}(X\cup\{\pvec_{k+1}^{S[t]},\kappa,\alpha_\Tt\})=\Hull_{\rSigma_{n+1}}^{R[t]}(X\cup\{p_1^{R[t]},\alpha_\Tt\}).\]
\item For $\mu<\rho_n^{R[t]}=\rho_{n+k}^{S[t]}$
and $f:[\mu]^{<\om}\to R[t]$, we have
\[ f\text{ is }\bfrSigma_{k+n}^{S[t]} \iff f\text{ is }\bfrSigma_n^{R[t]}.\qedhere\]
\end{enumerate}
\end{prop}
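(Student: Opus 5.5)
The plan is to argue by induction on $n\geq 1$, proving all five parts simultaneously. For $n=1$ the base case is essentially Proposition \ref{prop:tp_2_1-sound_R[t]_translate_rSigma_1_about_p_1}. Part 1 for $i=1$ is its first item. Part 3 for $n=1$ is its part \ref{item:tp_2_rSigma_2^R[t]_and_rSigma_k+2^S[t]}, which already uses exactly the parameters $\pvec_{k+1}^{S[t]},\kappa,\alpha_\Tt$ on the $S[t]$ side and $p_1^{R[t]},\alpha_\Tt$ on the $R[t]$ side. Part 4 for $n=1$ follows immediately from part 3. Part 5 for $n=1$ says $\bfrSigma_{k+1}^{S[t]}$ functions into $R[t]$ with domain $[\mu]^{<\om}$, $\mu<\rho_1^{R[t]}$, are exactly the $\bfrSigma_1^{R[t]}$ ones. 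This follows from Proposition \ref{prop:tp_2_overlap_no_delta-meas_R[t]_S[t]} part \ref{item:truth_conversion_one-step_star_active_overlapping} together with part \ref{item:tp_2_rSigma_1^R[t]_and_rSigma_k+1^S[t]_in_nice_params} of the preceding proposition: the graph of such an $f$ is $\rSigma_1$ (respectively $\rSigma_{k+1}$) in some parameter, and we translate. The extra parameters $s_0',\alpha_0$ and $z$ are harmless because we allow arbitrary parameters in the boldface classes.

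For the inductive step, from $n$ to $n+1$ (assuming $R[t]$ is $(n+1)$-sound with $\delta<\rho_{n+1}^{R[t]}$), the key is the usual mechanism: an $\rSigma_{n+2}$ statement is witnessed by an element of $T_{n+1}$. Over $R[t]$, this is the $\rSigma_{n+1}$ theory of some $\beta<\rho_{n+1}^{R[t]}$ together with $\pvec_{n+1}^{R[t]}$. Over $S[t]$, it is the $\rSigma_{k+n+1}$ theory of $\beta$ together with $\pvec_{k+n+1}^{S[t]}$.
\begin{enumerate}
\item Using the part-3 translation at level $n$, with $\beta\subseteq R[t]$, each such theory of $R[t]$ is recursively interconvertible with the corresponding theory of $S[t]$. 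The relevant parameters ($p_1^{R[t]}$ versus $\pvec_{k+1}^{S[t]},\kappa$) are absorbed into the fixed parameters.
\item We also need the higher standard parameters $p_i^{R[t]}$ and $p_{k+i}^{S[t]}$ for $2\le i\le n+1$ to be added as needed. Since part 2 inductively gives $p_i^{R[t]}=p_{k+i}^{S[t]}$, and these are ordinals in $R[t]$, they are just parameters $v\in R[t]$ in the translation.
\end{enumerate}
This gives the formulas $\psi_{k+n+2},\tau_{n+2}$, uniformly in $n$, as in Proposition \ref{prop:inter_Sigma_1_R,R[t]_n+1}. Part 4 at level $n+1$ follows from part 3.

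For parts 1 and 2 at level $i=n+1$, I argue as in Proposition \ref{prop:inter_Sigma_1_R,R[t]_n+1}(\ref{item:R,R[t],rho,p_n+1_match}). The translation of theories shows that for $\beta\le\OR^{R[t]}$, the theory $\Th_{\rSigma_{n+1}}^{R[t]}(\beta\cup\{\ldots\})$ is in $R[t]$ iff the corresponding $S[t]$ theory is in $S[t]$. This uses part 5 at level $n$ and the fact that $R[t]^\passive=S[t]|\nu(F^R)^{+S[t]}$, so that bounded subsets of $\rho_n^{R[t]}$ in the two models agree. From this we get $\rho_{n+1}^{R[t]}=\rho_{k+n+1}^{S[t]}$. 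Minimality of parameters then transfers, giving $p_{n+1}^{R[t]}=p_{k+n+1}^{S[t]}$. Note that $\rho_{n+1}^{R[t]}>\delta$ is not assumed here, only $\rho_n^{R[t]}>\delta$, so the parameter could contain $\delta$. However, both signatures contain $\delta$, so this is handled as in Proposition \ref{prop:R,R[t]_fs_match_Sigma_1}.

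Solidity and soundness of $S[t]$ at level $k+n+1$ then follow. Solidity witnesses are hulls, which by part 4 correspond to $R[t]$ hulls intersected with $R[t]$, and these are in $S[t]$ once their $R[t]$ counterparts are. Soundness holds because $S[t]$ is the $\rSigma_{k+1}$ hull of $\rho_1^{R[t]}\cup\{\pvec_{k+1}^{S[t]},\kappa\}$ and $R[t]$ is the $(n+1)$-hull of $\rho_{n+1}^{R[t]}\cup\{\pvec_{n+1}^{R[t]}\}$. Part 5 at level $n+1$ then follows from part 3 applied to graphs, using $\mu<\rho_{n+1}$.

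The main obstacle is the comparison of the projecta. Specifically, one must check that an $S[t]$-theory of $\beta\cup\{\pvec_{k+n}^{S[t]}\}$ not in $S[t]$ yields an $R[t]$-theory not in $R[t]$, and conversely. Here $S[t]$ has more sets than $R[t]$ (it is a proper extension above $\OR^R$), so the converse direction requires that any such set in $S[t]$ which is a subset of $\rho_n^{R[t]}<\OR^R$ already lies in $R[t]$. I would first try to obtain this from $\OR^R=\nu(F^R)^{+S[t]}$ and $\rho_n^{R[t]}\le\nu(F^R)$, so that subsets of $\nu(F^R)$ in $S[t]$ are in $S[t]|\OR^R\subseteq R[t]$.
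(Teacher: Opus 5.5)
Your proposal is correct and is essentially the paper's proof, which consists only of the remark that the result follows by a straightforward induction on $n$ with Proposition \ref{prop:tp_2_1-sound_R[t]_translate_rSigma_1_about_p_1} as the base case; you fill in that induction along the expected lines, via the usual $T_{n+1}$-witness translation. One small point of ordering: in the step to $n+1$, establish $\rho_{n+1}^{R[t]}=\rho_{k+n+1}^{S[t]}$ and $p_{n+1}^{R[t]}=p_{k+n+1}^{S[t]}$ first, which needs only the level-$n$ translation, and only then build the $\rSigma_{n+2}$ versus $\rSigma_{k+n+2}$ translation, since that translation uses the parameter match.
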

\begin{proof}
Using Proposition \ref{prop:tp_2_1-sound_R[t]_translate_rSigma_1_about_p_1} at the base, this is a straightforward induction on $n$.
\end{proof}
\subsubsection{When $R$ is type 3,
$\crit(F^R)<\delta$ and there is no $R$-$\delta$-measure}\label{sec:type_3_translate_step}

Fix an active $\star$-suitable  $R$ of type 3,
with $\kappa=\crit(F^R)<\delta$.
(Note that in this case we first convert from $R$ to $R_J$,
using \S\ref{sec:between_R_and_R_J}.
And we translated between $R_J$ and $R_J[t]$ in \S\ref{sec:between_R_and_R[t]}.) In this section we translate between $R_J[t]$ and $S[t]$ where $S$ is defined like at the start of \S\ref{sec:type_2_translate_step}, but now using $R_J$. Let $\kappa=\crit(F^R)$, $k$, $P$, $j:P\to S$, etc, also be like there.

\begin{prop}\label{prop:tp_3_overlap_no_delta-meas_R_J[t]_S[t]} There is an $\rSigma_{k+1}$  formula $\psi$
and an $\rSigma_1$ formula $\tau$ such that:
\begin{enumerate}
\item\label{item:tp_3_R_J[t]_to_S[t]_R_J[t]_sub_S[t]} $R_J[t]\sub S[t]$
and $\univ{R_J}$, $\es^{R_J}$
are $\Sigma_1^{S[t]}(\{\lambda(F^{R_J})\})$,
and $\OR^{R_J}=\lambda(F^{R_J})^{+S[t]}$ \tu{(}possibly $\lambda(F^{R_J})$ is the largest cardinal of $S[t]$,
in which case $R_J[t]$ and $S[t]$ have the same universe
and $P$, $S$ are active type 2\tu{)}.
\item\label{item:tp_3_truth_conversion_one-step_star_active_overlapping} Set
$z = (\pvec_k^{S[t]},p_{k+1}^{S[t]}\cut j(\kappa),\kappa)$.
Then there is an $\rSigma_{k+1}$ formula $\psi$ and an $\rSigma_1$ formula $\tau$ such that
$\rSigma_{k+1}^{S[t]}(\{z,\alpha_\Tt\})$
statements about parameters in $R_J[t]$
are recursively equvalent to $\rSigma_1^{R_J[t]}(\{\alpha_\Tt\})$ statements about those parameters, as witnessed by $\psi,\tau$. In fact,
for all $\rSigma_1$ formulas $\varphi$
and $v\in R_J[t]$, we have
\[ R_J[t]\sats\varphi(v,\alpha_\Tt)\iff S[t]\sats\psi(\varphi,v,z,\alpha_\Tt) \]
and for all $\rSigma_{k+1}$ formulas $\varphi$ and $v\in R_J[t]$, we have
\[ S[t]\sats\varphi(v,z,\alpha_\Tt)\iff R_J[t]\sats\tau(\varphi,v,\alpha_\Tt),\]
The choice of $\psi,\tau$ are moreover uniform in $R$.

\item\label{item:type_3_R_J_and_S^R_correspond} With $z$ as in part \ref{item:truth_conversion_one-step_star_active_overlapping} and
 $X\sub R$, we have
\[ R[t]\cap \Hull^{S[t]}_{\rSigma_{k+1}}(X\cup\{z,\alpha_\Tt\})=\Hull_{\rSigma_1}^{R[t]}(X\cup\{\alpha_\Tt\}).\]
\item\label{item:type_3_R_J_and_S_functions_into_R_J[t]_and_S[t]} Given $\mu<\OR^{R}$
and $f:[\mu]^{<\om}\to R[t]$,
$f$ is $\rSigma_k^{S[t]}$ iff $f\in R[t]$.
\end{enumerate}
\end{prop}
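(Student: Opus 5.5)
We will follow the proof of Proposition \ref{prop:tp_2_overlap_no_delta-meas_R[t]_S[t]}, simplifying where the type 3 setting allows. The simplification comes from using $R_J$ in place of $R$. By Proposition \ref{prop:R_J_1-sound}, $R_J$ is $1$-sound with $p_1^{R_J}=\emptyset$ and $\rho_1^{R_J}=\nu(F^R)$. Moreover its active predicate is the $\lambda$-indexed amenable code $\{j\rest R|\xi\mid\xi<\kappa^{+R}\}$, and $R_J$ has no $F_\downarrow$ constant. So the auxiliary parameters $s_0,\alpha_0$ (and the integer $m$) used to recover $F^R_\downarrow$ in the type 2 case are no longer needed, which is why $z$ is shorter here.

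For part \ref{item:tp_3_R_J[t]_to_S[t]_R_J[t]_sub_S[t]}, we first note that $S=\Ult_k(P,F^R)$ agrees with $R_J$ below $\lambda(F^{R_J})=j(\kappa)$. Coherence gives $R|\nu\ins S$, and in fact $R_J^\passive=S|\OR^{R_J}$ with $\OR^{R_J}=j(\kappa)^{+U}$, where $U$ is the ultrapower of $R|\kappa^{+R}$. Since $P=M_\kappa$ agrees with $R$ up to $\kappa^{+R}$ and projects to $\kappa$, we get $j(\kappa)^{+S}=j(\kappa)^{+U}$. Adding $t$ changes neither cardinals $\geq\delta$ nor this agreement, so $R_J[t]\sub S[t]$ and $\OR^{R_J}=\lambda^{+S[t]}$. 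The degenerate case, where $\lambda$ is the largest cardinal of $S[t]$, occurs exactly when $P$ is type 2 with largest cardinal $\kappa$. We then still need to define $\univ{R_J}$ and $\es^{R_J}$ over $S[t]$ in a $\Sigma_1(\{\lambda\})$ way. For this we take $S[t]|\lambda^{+S[t]}$ (and $\es^S$ below it) via a $\Sigma_1$ search for a level of $S$ projecting to $\lambda$, or in the degenerate case we use the whole universe.

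For part \ref{item:tp_3_truth_conversion_one-step_star_active_overlapping}, we give the two translations separately.

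\textbf{The formula $\tau$.} Working in $R_J[t]$, we use the predicate $t$, $\alpha_\Tt$ and $\kappa=\crit(F^R)$, the latter being $\Sigma_1$-identifiable from the amenable code. From these we identify $P=M_\kappa$ and $k$. Every element of $S$ has the form $[a,f]$ with $a\in[\nu(F^R)]^{<\om}$ and $f$ an $\rSigma_k^P$ function. We represent $z$ via $j(\pvec_k^P)$, $j(p_{k+1}^P\cut\kappa)$ and $[\{\kappa\},\id]$, and we represent names $\dot v$ for parameters $v\in R_J[t]$ in the same way. By \S\ref{sec:between_R_and_R[t]}, $S[t]\sats\varphi(v,z,\alpha_\Tt)$ iff some $p\in G_t$ strongly forces it over $S$. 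By Łoś for $\rSigma_{k+1}$, this holds iff there are $a$ and $X\in(F^R)_a$ such that $P$ satisfies the corresponding $\rSigma_{k+1}$ statement on $X$, witnessed by a theory $\Th^P_{\rSigma_k}(\beta\cup\{\pvec^P_k\})$ with $\beta<\rho_k^P$. The measure-one set is quantified via some fragment $F^R\rest\alpha$, that is, some $j\rest R|\xi$ in the active predicate. The resulting statement is $\rSigma_1^{R_J[t]}(\{\alpha_\Tt\})$.

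\textbf{The formula $\psi$.} Conversely, for $\psi$ we must recover the active predicate of $R_J$ inside $S[t]$ in an $\rSigma_{k+1}(\{z\})$ way. We use
\[\rg(j)=\Hull_{k+1}^S(\kappa\cup\{p_{k+1}^S\cut j(\kappa),\pvec_k^S\}),\]
which is $\rSigma_{k+1}^{S[t]}(\{z\})$ by Proposition \ref{prop:inter_Sigma_1_R,R[t]_n+1}. Then $E$ is $j\rest R|\bar\xi$ for some $\bar\xi<\kappa^{+R}$ iff there is $\xi\in\rg(j)$ such that $\bar\xi$ is its preimage and $E$ is the elementary map $S|\bar\xi\to S|\xi$ determined by $j$. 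Given this, a $\Sigma_1$ statement about $R_J[t]$ becomes an existential statement over $S[t]|\lambda^{+S[t]}$ together with such fragments, which is $\rSigma_{k+1}$.

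Part \ref{item:type_3_R_J_and_S^R_correspond} then follows immediately from part \ref{item:tp_3_truth_conversion_one-step_star_active_overlapping}. For part \ref{item:type_3_R_J_and_S_functions_into_R_J[t]_and_S[t]}, we use that $R_J[t]^\passive$ is an initial segment of $S[t]$ of height $\OR^{R_J}$. Since $\OR^{R_J}\leq\rho_k^{S}$ (with equality only in the degenerate case), bounded $\bfrSigma_k^{S[t]}$ functions into it lie in $S[t]|\OR^{R_J}$, and conversely.

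The main obstacle is the uniform $\rSigma_{k+1}$ identification of the fragments $j\rest R|\bar\xi$ inside $S[t]$, together with the degenerate case where $\OR^{R_J}=\rho_0^S$. In the degenerate case the "$\lambda^{+S[t]}$" bound disappears, and part \ref{item:type_3_R_J_and_S_functions_into_R_J[t]_and_S[t]} must instead be checked directly from $k=0$ and the amenability of $F^S$.
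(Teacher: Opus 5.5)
Your overall route matches the paper's, whose proof just says to simplify the type 2 argument. Your $\tau$ codes $\Ult_k(P,F^R)$ by standard representatives, strong forcing and \L o\'s, quantifying measure one sets through the maps $j\rest R|\xi$ of the $\lambda$-indexed amenable code. Your $\psi$ recovers $\rg(j)=\Hull_{k+1}^S(\kappa\cup\{\pvec_k^S,p_{k+1}^S\cut j(\kappa)\})$ and from it the fragments $j\rest R|\bar\xi$. Both are exactly that simplification. Part 4, however, has a gap, and it rests on a false claim.

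You assert that $\OR^{R_J}\le\rho_k^S$ with equality only in the degenerate case. But $k$ is only required to satisfy $\rho_{k+1}^P\le\kappa<\rho_k^P$, so one can have $\rho_k^P=\kappa^{+P}<\OR^P$ (hence $k\geq 1$). Then $\rho_k^{S[t]}=\rho_k^S=\sup j``\kappa^{+P}=\lambda^{+S}=\OR^{R_J}<\OR^S$. This is precisely the ``minor new detail'' the paper flags for part 4; in the type 2 case one has $\OR^R<\rho_k^{S[t]}$, which is all the argument there uses.

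In this case, a $\bfrSigma_k^{S[t]}$ map $f:[\mu]^{<\om}\to R_J[t]$ with $\mu<\OR^{R_J}$ is not a priori an element of $S[t]$, because nothing yet bounds its values below $\rho_k^{S[t]}$. Your argument only covers bounded maps. The missing step is that every such $f$ is bounded in $\OR^{R_J}$, which the paper gets from $\OR^{R_J}=\lambda^{+S[t]}$ being a regular cardinal of $S[t]$.

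Since $f$ is only known to be $\bfrSigma_k^{S[t]}$, not an element of $S[t]$, applying regularity needs a fine structural argument. For example: suppose $f$ were unbounded. Closing under least surjections from $\lambda$, we would get $\rho_k^{S[t]}=\lambda^{+S[t]}\sub\Hull_k^{S[t]}(\lambda\cup\{x\})$ for a finite $x$ containing $\pvec_k^{S[t]}$, $\mu$ and the parameters of $f$. By $k$-soundness this hull is all of $S[t]$, forcing $\rho_k^{S[t]}\le\lambda$, a contradiction.

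Once $f$ is bounded, it is coded by a $\bfrSigma_k^{S[t]}$ subset of an ordinal below $\rho_k^{S[t]}$. So $f\in S[t]$, and then $f\in R_J[t]$ by acceptability. Also, when $\OR^{R_J}<\rho_k^{S[t]}$, you should say why $f$ is bounded: there $f\in S[t]$ outright, and regularity of $\OR^{R_J}$ in $S[t]$ applies directly.
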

\begin{proof}
Just simplify  the proof of Proposition \ref{prop:tp_2_overlap_no_delta-meas_R[t]_S[t]}. (One minor new detail regarding part \ref{item:type_3_R_J_and_S_functions_into_R_J[t]_and_S[t]} is that it can be that $\OR^{R[t]}=\rho_k^{S[t]}<\OR^{S[t]}$.
But the same conclusion still holds,
since $\OR^{R[t]}$ is a regular cardinal of $S[t]$.)
\end{proof}

The following proposition is proved like \ref{prop:tp_2_no_delta-meas_Dodd-soundness}; recall that
by \S\ref{sec:between_R_and_R_J}, $R_J$ is $1$-sound with $\rho_1^{R_J}=\nu(F^{R_J})$
and $p_1^{R_J}=\emptyset$.

\begin{prop}\label{prop:tp_3_overlap_no_delta-meas_R_J[t]_S[t]_getting_extra_soundness}
Suppose $R$ is $\star$-suitable, type 3, $\crit(F^R)<\delta$, and $R_J$ has no $\delta$-measure.
Then:
\begin{enumerate}
\item\label{item:R_tp_3_no_delta-meas_S[t]_sound} $S[t]$ is $(k+1)$-sound,
\item\label{item:R_tp_3_no_delta-meas_rho_k+1^S[t]} $\rho_{k+1}^{S[t]}=\rho_1^{R_J[t]}$
and
\item\label{item:R_tp_3_no_delta-meas_p_k+1^S[t]} $p_{k+1}^{S[t]}=j(p_{k+1}^{P}\cut\kappa)$.
\end{enumerate}
Suppose further that
$P=M_\kappa$ is active type 2 and $\crit(F^P)<\kappa=\crit(F^R)$. Then:
\begin{enumerate}[resume*]
 \item\label{item:R_tp_3_no_delta-meas_P_tp_2_S_tp_2} $S$ is active type 2, $\crit(F^S)<\delta$ and $S[t]$ is $>\delta$-Dodd-sound,
and
\item\label{item:R_tp_3_no_delta-meas_cases_with_rho}
either:
\begin{enumerate}[resume*]
\item $\kappa<\rho_1^P$
and $j(\kappa)<\rho_1^S=\rho_1^{S[t]}$
and $S$ is Dodd-sound, or
\item $\rho_1^P\leq\kappa$
and $\rho_1^{S[t]}=\max(\rho_1^{R_J},\delta)$
and $F^S$ is generated by \[ \{\delta\}\cup \rho_1^{S[t]}\cup j(t^P) \]
and the Dodd-solidity witnesses for $j(t^P)$ are in $S[t]$.
\end{enumerate}
\end{enumerate}
\end{prop}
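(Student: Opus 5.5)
We will follow the proof of Proposition \ref{prop:tp_2_no_delta-meas_Dodd-soundness}. The situation is simpler here, because $R_J$ has no Dodd parameter to carry across. By \S\ref{sec:between_R_and_R_J}, $R_J$ is $1$-sound with $\rho_1^{R_J}=\nu(F^R)$ and $p_1^{R_J}=\emptyset$. By \S\ref{sec:between_R_and_R[t]} (Proposition \ref{prop:R,R[t]_fs_match_Sigma_1}, using $\delta<\nu(F^R)$ since $\crit(F^R)<\delta$ and $R$ is $\star$-suitable), $R_J[t]$ is then $1$-sound with $\rho_1^{R_J[t]}=\nu(F^R)$ and $p_1^{R_J[t]}=\emptyset$.

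For parts \ref{item:R_tp_3_no_delta-meas_S[t]_sound}--\ref{item:R_tp_3_no_delta-meas_p_k+1^S[t]}, the first step is to show
\[ S=\Hull_{k+1}^S\big(\nu(F^R)\cup\{j(\pvec_{k+1}^P\cut\kappa)\}\big). \]
Since $P$ is $\kappa$-sound ($P=M_\kappa$ or its $J$-version, with $\rho_{k+1}^P\leq\kappa<\rho_k^P$), we have $\rg(j)=\Hull_{k+1}^S(\kappa\cup\{j(p_{k+1}^P\cut\kappa)\})$. Every element of $S$ has the form $j(f)(a)$ with $a\in[\nu(F^R)]^{<\om}$, and $j$ is an $\rSigma_{k+1}$-elementary $k$-ultrapower map, so the displayed hull contains all of $S$. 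The same then holds in $S[t]$, using Proposition \ref{prop:inter_Sigma_1_R,R[t]_n+1} and the fact that the signature of $S[t]$ supplies $\delta$ and $t$.

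It then remains to show $\rho_{k+1}^{S[t]}\geq\nu(F^R)$ and to verify solidity of $j(p_{k+1}^P\cut\kappa)$ in $S[t]$.
\begin{enumerate}[label=(\roman*)]
\item For the projectum: for each $\alpha<\nu(F^R)$, the theory $\Th_{k+1}^{S[t]}(\alpha\cup\{z\})$ is computable in $R_J[t]\sub S[t]$ from $F^R\rest\alpha$, $P$ and $G_t$. This uses the translation of Proposition \ref{prop:tp_3_overlap_no_delta-meas_R_J[t]_S[t]} part \ref{item:tp_3_truth_conversion_one-step_star_active_overlapping}, restricted to a bounded fragment of the extender, exactly as in the proof of Proposition \ref{prop:R_J_1-sound}.
\item The projectum cannot exceed $\nu(F^R)$. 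Otherwise the full theory would be in $S[t]$, hence so would $F^R$, and $R_J$ would be collapsed below $\OR^{R_J}=\lambda(F^{R_J})^{+S[t]}$ (Proposition \ref{prop:tp_3_overlap_no_delta-meas_R_J[t]_S[t]} part \ref{item:tp_3_R_J[t]_to_S[t]_R_J[t]_sub_S[t]}), which is a contradiction.
\item For solidity: the solidity witnesses for $p_{k+1}^P\cut\kappa$ lie in $P$, and $P\in S[t]$. Applying $j$ via fragments of $F^R$, we obtain the corresponding witnesses for $j(p_{k+1}^P\cut\kappa)$, as in the earlier proof.
\end{enumerate}
This yields $p_{k+1}^{S[t]}=j(p_{k+1}^P\cut\kappa)$ with no extra Dodd fragment, and hence the $(k+1)$-soundness of $S[t]$.

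For parts \ref{item:R_tp_3_no_delta-meas_P_tp_2_S_tp_2}--\ref{item:R_tp_3_no_delta-meas_cases_with_rho}, suppose $P$ is type 2 with $\crit(F^P)<\kappa$. Then $S$ is type 2 with $F^S$ the image of $F^P$ under $j$, and $\crit(F^S)=\crit(F^P)<\delta$. If $\kappa<\rho_1^P$, Dodd-soundness of $P$ transfers through the fine $k$-ultrapower, which is standard. If $\rho_1^P\leq\kappa$, then $k=0$. Since $F^P$ is generated by $\kappa\cup t^P$ and $F^R$ is generated by $\nu(F^R)$, it follows that $F^S$ is generated by $\nu(F^R)\cup j(t^P)$, which lies inside $\{\delta\}\cup\rho_1^{S[t]}\cup j(t^P)$. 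The Dodd-solidity witnesses for $j(t^P)$ are obtained in $S[t]$ by applying fragments of $F^R$ to those of $P$.

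The main obstacle is step (i): checking carefully that the bounded theories are in $S[t]$ at the correct level, namely $\rSigma_{k+1}$ in exactly the parameter $z$ rather than $\rSigma_{k+2}$. This will rely on $\OR^{R_J[t]}$ being a regular cardinal of $S[t]$, as noted in the proof of Proposition \ref{prop:tp_3_overlap_no_delta-meas_R_J[t]_S[t]}.
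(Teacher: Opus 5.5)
Your proposal is correct and takes the paper's route. The paper just says this is proved like Proposition \ref{prop:tp_2_no_delta-meas_Dodd-soundness}, using that $R_J$ is $1$-sound with $\rho_1^{R_J}=\nu(F^R)$ and $p_1^{R_J}=\emptyset$, and your hull, projectum and solidity steps carry out exactly that simplification: there is no Dodd fragment $t^R\cut(\delta+1)$ to add to $p_{k+1}^{S[t]}$.

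Two small corrections. First, you only have $\delta\leq\nu(F^R)$ here, not $\delta<\nu(F^R)$; equality is possible when there is no $\delta$-measure, as the footnote to Definition \ref{dfn:R_i} notes. That weaker inequality is all Proposition \ref{prop:R,R[t]_fs_match_Sigma_1} needs, so nothing breaks. Second, step (i) needs no appeal to regularity of $\OR^{R_J[t]}$. It follows directly from the $\tau$-direction of Proposition \ref{prop:tp_3_overlap_no_delta-meas_R_J[t]_S[t]} together with $\alpha<\rho_1^{R_J[t]}$.
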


\begin{prop}\label{prop:tp_3_overlap_no_delta-meas_R[t]_S[t]_n} Suppose $R$ is $\star$-suitable, type 3, $\crit(F^R)<\delta$, and $R_J$ has no $\delta$-measure.
 Let $0<n<\om$. Suppose that $R_J[t]$
is $n$-sound and $\delta<\rho_n^{R_J[t]}$.
Then:
\begin{enumerate}
\item $\rho_i^{R_J[t]}=\rho_{k+i}^{S[t]}$ and $S[t]$ is $(k+i)$-sound,
for all $i\in[1,n]$.
\item $p_i^{R_J[t]}=p_{k+i}^{S[t]}$ for all $i\in[2,n]$ and $p_1^{R_J[t]}=\emptyset$.\footnote{We needn't have $p_1^{R_J[t]}=p_{k+1}^{S[t]}$,
but see Proposition
\ref{prop:tp_3_overlap_no_delta-meas_R_J[t]_S[t]_getting_extra_soundness}
in this regard.}
\item
$\rSigma_{k+n+1}^{S[t]}(\{\pvec_{k+1}^{S[t]},\kappa,\alpha_\Tt\})$
statements about parameters in $R_J[t]$
are recursively equivalent to $\rSigma_{n+1}^{R_J[t]}(\{\alpha_\Tt\})$ statements about those parameters.
That is, there is an $\rSigma_{k+n+1}$ formula $\psi_{k+n+1}$ such that
for all $\rSigma_{n+1}$ formulas $\varphi$
and all $v\in R[t]$, we have
\[ R_J[t]\sats\varphi(v,\alpha_\Tt)\iff S[t]\sats\psi_{k+n+1}(\varphi,v,\pvec_{k+1}^{S[t]},\kappa,\alpha_\Tt) \]
and there is an $\rSigma_{n+1}$ formula $\tau_{n+1}$ such that for all $\rSigma_{k+n+1}$ formulas $\varphi$ and all $v\in R_J[t]$, we have
\[ S[t]\sats\varphi(v,\pvec_{k+1}^{S[t]},\kappa,\alpha_\Tt)\iff R_J[t]\sats\tau_{n+1}(\varphi,v,\alpha_\Tt).\]
\tu{(}Note here that $p_1^{R_J[t]}=\emptyset$ and the standard parameters to which we refer remain just $\pvec_{k+1}^{S[t]}$,
independent of $n$.\tu{)}
\item\label{item:type_2_R_and_S^R_correspond_n}
For  $X\sub R_J[t]$, we have
\[ R_J[t]\cap \Hull^{S[t]}_{\rSigma_{k+n+1}}(X\cup\{\pvec_{k+1}^{S[t]},\kappa,\alpha_\Tt\})=\Hull_{\rSigma_{n+1}}^{R_J[t]}(X\cup\{\alpha_\Tt\}).\]
\item\label{item:type_2_R_and_S^R_functions_into_R[t]_and_S[t]_n} For $\mu<\rho_n^{R_J[t]}=\rho_{n+k}^{S[t]}$
and $f:[\mu]^{<\om}\to R_J[t]$, we have
\[ f\text{ is }\bfrSigma_{k+n}^{S[t]} \iff f\text{ is }\bfrSigma_n^{R_J[t]}.\qedhere\]
\end{enumerate}
\end{prop}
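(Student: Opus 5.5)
The plan is an induction on $n$, parallel to the proof of Proposition \ref{prop:tp_2_overlap_no_delta-meas_R[t]_S[t]_n}, with the type 3 base case coming from Propositions \ref{prop:tp_3_overlap_no_delta-meas_R_J[t]_S[t]} and \ref{prop:tp_3_overlap_no_delta-meas_R_J[t]_S[t]_getting_extra_soundness}. The type 3 situation is simpler than type 2 in one respect. By \S\ref{sec:between_R_and_R_J}, $R_J$ is $1$-sound with $p_1^{R_J}=\emptyset$ and $\rho_1^{R_J}=\nu(F^R)$. Since $\delta<\rho_1^{R_J[t]}$, Proposition \ref{prop:R,R[t]_fs_match_Sigma_1} gives $p_1^{R_J[t]}=\emptyset$ and $\rho_1^{R_J[t]}=\nu(F^R)$. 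So no Dodd-parameter bookkeeping (the $s_0,\alpha_0$ and $t^R\cut(\delta+1)$ of the type 2 case) is needed. The only parameters on the $S[t]$ side are $\pvec_{k+1}^{S[t]}$ and $\kappa$. By Proposition \ref{prop:tp_3_overlap_no_delta-meas_R_J[t]_S[t]_getting_extra_soundness}, $p_{k+1}^{S[t]}=j(p_{k+1}^P\cut\kappa)$, so together with $\pvec_k^{S[t]}$ this recovers the parameter $z$ of Proposition \ref{prop:tp_3_overlap_no_delta-meas_R_J[t]_S[t]}.

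\textbf{Base step ($n$ replaced by $0$, i.e.\ level $1$ versus $k+1$, then level $2$ versus $k+2$).} From Proposition \ref{prop:tp_3_overlap_no_delta-meas_R_J[t]_S[t]}, parts 2 and 3, we get the translation between $\rSigma_1^{R_J[t]}(\{\alpha_\Tt\})$ and $\rSigma_{k+1}^{S[t]}(\{\pvec_{k+1}^{S[t]},\kappa,\alpha_\Tt\})$, and the matching hulls. Proposition \ref{prop:tp_3_overlap_no_delta-meas_R_J[t]_S[t]_getting_extra_soundness} gives $\rho_1^{R_J[t]}=\rho_{k+1}^{S[t]}$ and the $(k+1)$-soundness of $S[t]$.

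To lift to level $2$ versus $k+2$, I would proceed as in Proposition \ref{prop:tp_2_1-sound_R[t]_translate_rSigma_1_about_p_1}, part \ref{item:tp_2_rSigma_2^R[t]_and_rSigma_k+2^S[t]}. An $\rSigma_2^{R_J[t]}$ statement is witnessed by a theory in $T_1^{R_J[t]}$, a $\Sigma_1$ theory of $\beta\cup\{\emptyset\}$ for some $\beta<\rho_1$. Using the level-$1$ translation, such a theory corresponds recursively to an $\rSigma_{k+1}^{S[t]}$ theory in the parameters $\pvec_{k+1}^{S[t]},\kappa$, which is exactly a $T_{k+1}^{S[t]}$-type witness (modulo the uniform recoding). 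This yields $\psi_{k+2}$ and $\tau_2$.

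One point does differ from the type 2 case, where $F_\downarrow^R$ had to be identified at $\rSigma_{k+2}$. Here the amenable code of $F^{R_J}$ (restrictions $j\rest R|\xi$) is already recovered at $\rSigma_{k+1}^{S[t]}$ via the hull $\rg(j)$, as in that proof. So I expect no extra $\rPi$ step.

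\textbf{Inductive step.} Assume the statement holds at $n$ and that $R_J[t]$ is $(n+1)$-sound with $\delta<\rho_{n+1}^{R_J[t]}$. By part 3 at $n$, the $\rSigma_{n+1}$ theories of $R_J[t]$ over $\beta\cup\{\pvec\}$ and the $\rSigma_{k+n+1}$ theories of $S[t]$ over $\beta\cup\{\pvec\}$ are mutually recursive, uniformly in $\beta$. Hence one is in $R_J[t]$ iff the other is in $S[t]$, using part 5 at $n$ together with $R_J[t]\sub S[t]$ and the fact that such theories are subsets of $\beta<\OR^{R_J}$. This gives $\rho_{n+1}^{R_J[t]}=\rho_{k+n+1}^{S[t]}$.

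Matching hulls (part 4) then shows that $p_{n+1}^{R_J[t]}=p_{k+n+1}^{S[t]}$. Moreover, solidity witnesses and soundness transfer, since $\Hull_{k+n+1}^{S[t]}$ of $\rho\cup\{\pvec\}$ contains $R_J[t]$ and the parameters $\pvec_{k+1}^{S[t]},\kappa$, hence everything, by $(k+1)$-soundness. Part 3 at $n+1$ follows by translating $T_{n+1}$-witnesses as in the base step.

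For part 5, take $f\colon[\mu]^{<\om}\to R_J[t]$ with $\mu<\rho_{n}$. Then $f$ is $\bfrSigma_n^{R_J[t]}$ iff its graph is defined by an $\rSigma_n$ formula with a parameter. By part 3 at level $n-1$ (with additional parameters from $R_J[t]$, which is harmless since $R_J[t]\sub S[t]$ and, conversely, elements of $S[t]$ are generated by the hull), this holds iff $f$ is $\bfrSigma_{k+n}^{S[t]}$.

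The main obstacle is this last point, together with the conversion of arbitrary $S[t]$-parameters into $R_J[t]$-parameters. I would handle it using $(k+1)$-soundness of $S[t]$: every element of $S[t]$ is $\rSigma_{k+1}$-definable from $\rho_{k+1}^{S[t]}\cup\{\pvec_{k+1}^{S[t]},\kappa\}$, so a parameter in $S[t]$ can be replaced by an ordinal below $\rho_1^{R_J[t]}$ at the cost of one more existential layer, which is absorbed at level $k+n\ge k+1$. A secondary subtlety is the possibility $\OR^{R_J[t]}=\rho_k^{S[t]}$, noted in Proposition \ref{prop:tp_3_overlap_no_delta-meas_R_J[t]_S[t]}. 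As there, I would use the regularity of $\OR^{R_J[t]}$ in $S[t]$ to bound the functions.
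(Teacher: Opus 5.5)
Your proposal is correct and matches the paper's argument. The paper's proof is just ``induction on $n$, with Propositions \ref{prop:tp_3_overlap_no_delta-meas_R_J[t]_S[t]} and \ref{prop:tp_3_overlap_no_delta-meas_R_J[t]_S[t]_getting_extra_soundness} at the base'', and your sketch fills in the standard lifting via matching theories and reducts, with $p_1^{R_J[t]}=\emptyset$ and $z$ recovered from $\pvec_{k+1}^{S[t]}$ and $\kappa$.

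Two small points are worth making explicit:
\begin{enumerate}
\item When matching $p_{n+1}^{R_J[t]}$ with $p_{k+n+1}^{S[t]}$, first note that $p_{k+n+1}^{S[t]}$ lies lexicographically below the witness $p_{n+1}^{R_J[t]}$. This forces $p_{k+n+1}^{S[t]}\sub\OR^{R_J}$, so your translation applies to it.
\item The agreement of bounded subsets between $R_J[t]$ and $S[t]$, which you use for the projecta, comes from $\OR^{R_J}=\lambda(F^{R_J})^{+S[t]}$ (part 1 of Proposition \ref{prop:tp_3_overlap_no_delta-meas_R_J[t]_S[t]}), not from part 5.
\end{enumerate}
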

\begin{proof}
By induction on $n$, using Propositions \ref{prop:tp_3_overlap_no_delta-meas_R_J[t]_S[t]}
and \ref{prop:tp_3_overlap_no_delta-meas_R_J[t]_S[t]_getting_extra_soundness} at the base.
\end{proof}

\subsection{Between $R[t]$ and $R^\star$ when $R$ is admissibly buffered}\label{sec:R[t]_and_R^*_admissibly_buffered}

We now compute the final step of the translation of fine structure
between $R$ and $R^\star$ -- the step between $R_m[t]=(R_m)'[t]$ and $(R_m)^\star$,
or resetting notation, the step from $R[t]$ to $R^\star$ when $R$ is admissibly buffered. For this step, we will rely on some of the standard first order consequences of the iterability of $R^\star$, in the case that $R[t]$ projects to $\delta$. This has the implication that
our proof that $R^\star$ is a premouse
will rely on the iterability of its proper segments,
and also the full translation of fine structure between $R$ and $R^\star$
will rely on the iterability of $R^\star$.

\begin{rem}\label{rem:R[t]_not_rSigma_1_in_R^star}
Before we begin, we will indicate why  some care is required in the calculations to come.
It turns out that $R$ can fail to be $\rSigma_1^{R^\star}$, and even fail to be $\rSigma_1^{R^\star}(\{x\})$
for ``small'' parameters $x$, and even subsets of $\delta$ can easily be ``small'' here. Moreover,
\cite[Theorem 1.2.9(a)]{closson}
is not correct in general in the case that $j=1$, and (in the notation there) the $*$-type of $\mathcal{P}$ is I or II. For suppose that  $\gamma>\delta$ is an $R$-successor-cardinal and there is $E\in\es^R$ with \[ \crit(E)<\delta<\gamma<\gamma^{+R}<\gamma^{++R}<\lh(E)<\OR^R.\]
Now $R$ is $\rSigma_1^R$,
but we claim that $R$ is not $\rSigma_1^{R^\star}(\{x\})$
for any $x\in R^\star|\gamma$. For let $x\in R^\star|\gamma$
and $\varphi$ be $\rSigma_1$ and suppose that for each $N\in R^\star$,
we have
\[ N\pins R\iff R^\star\sats\varphi(x,N).\]
Then $R^\star\sats\varphi(x,R|\gamma^{+R})$.
Therefore by condensation,
$R^\star|\gamma^{++R}\sats\varphi(x,R|\gamma^{+R})$.
Let $U=\Ult(R,E)$.
We have $\gamma^{++R^\star}=\gamma^{++R}$
and \[ R^\star|\gamma^{++R^\star}=(R|\gamma^{++R})^\star=(U|\gamma^{++U})^\star\sats\varphi(x,R|\gamma^{+R}),\]
and since $R|\gamma^{++R}=U|\gamma^{++U}$, therefore
\[ (U|\gamma^{++U})^\star\sats\varphi(x,U|\gamma^{+U}).\]
Now
let $F$ be the trivial completion of $E\rest\gamma$,
so $F\in\es^R$. Let $\xi=\lh(F)$. Let $U_F=\Ult(R,F)$.
Let $\pi:U\to U_F$
be the factor map,
so $\crit(\pi)=\xi=\gamma^{+U_F}$,
 $\pi(\gamma^{+U_F})=\gamma^{+U}$
 and $\pi(\gamma^{++U_F})=\gamma^{++U}$,
 so by elementarity,
 \[ (U_F|\gamma^{++U_F})^\star\sats\varphi(x,U_F|\xi).\]
But $(U_F|\gamma^{++U_F})^\star\ins(R|\xi)^\star\ins R^\star$
(using here that  $\gamma$ is an $R$-successor-cardinal, so $i_F(\kappa)>\gamma$
where $\kappa=\crit(F)$,
so $U_F$ agrees sufficiently with $\Ult_{n_\kappa}(P_\kappa,F)$).
And since $\varphi$ is $\rSigma_1$,
therefore $R^\star\sats\varphi(x,U_F|\xi)$, so we have $U_F|\xi\ins R$.
But this is false, since $R|\xi$ is active with $F$, whereas $\xi=\gamma^{+U_F}$.
\end{rem}

\begin{prop}\label{prop:adm_buff_basics} Assume $R$ is $\star$-iterable and admissibly buffered and $R^\star$ is a well-defined premouse. Then:
\begin{enumerate}
\item\label{item:adm_buff_R[t]_R^star_same_universe} $R[t]$ and $R^\star$ have the same universe.
\item\label{item:adm_buff_R^star_Delta_1_in_R[t]} $R^\star$ is $\Delta_1^{R[t]}(\{\alpha_\Tt\})$
and $\{P\}$ is $\Delta_1^{R[t]}(\emptyset)$.
\item\label{item:adm_buff_rSigma_1^R^star_is_rSigma_1^R[t]} $\rSigma_1^{R^\star}(\{\alpha_\Tt,P\})$ recursively reduces to $\rSigma_1^{R[t]}(\{\alpha_\Tt\})$.\footnote{Note that this reduction is only claimed in the one direction.}
That is, there is an $\rSigma_1$ formula $\tau_1$ such that for all $\rSigma_1$ formulas $\varphi$ and all $v\in R[t]$ \tu{(}equivalently, $v\in R^\star$\tu{)}, we have
\[ R^\star\sats\varphi(v,\alpha_\Tt,P)\iff R[t]\sats\tau_1(\varphi,v,\alpha_\Tt).\]
\item\label{item:adm_buff_R[t]_Delta_2_in_R^star}
$R[t]$ is $\rDelta_2^{R^\star}(\{\alpha_\Tt,P\})$.
\end{enumerate}
\end{prop}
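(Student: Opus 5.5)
The plan is to work directly from the recursive definition of $R^\star$ in \cite[\S8]{odle_v2}, exploiting that $R$ is admissibly buffered: every $\star$-expanding segment $N\ins R$ has an admissible $N'\ins R$ with $N\pins N'$. Admissibility lets the $\star$-translation of each such $N$ be computed inside $R[t]$ by a $\Sigma_1$ recursion of bounded length.

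\textbf{Step 1: common universe and $\Delta_1$ definability.} For part \ref{item:adm_buff_R[t]_R^star_same_universe}, I will show by induction on $\xi\in[\delta,\OR^R]$ that $\univ{(R|\xi)^\star}$ and $\univ{R[t]|\xi}$ agree on a club of levels. The plan is:
\begin{enumerate}[label=(\roman*)]
\item below $\delta$, $R^\star|\delta=M|\delta$, and this is computed from $(M(\Tt),t)$, i.e.\ from $P$, since $t$ codes the theory determining $P$;
\item at non-expanding steps the star-translation just adds the restricted extender, so the universes grow in parallel;
\item at $\star$-expanding steps the extra objects (cores, ultrapowers $\Ult(P_\kappa,F)$, and extenders derived from core embeddings) are computed inside the next admissible level $N'$ of $R[t]$, so nothing new is added to the universe.
\end{enumerate}

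For part \ref{item:adm_buff_R^star_Delta_1_in_R[t]}, the same recursion, run inside $R[t]$ from $\alpha_\Tt$ (which, with $t$, identifies $\Tt$ and $M(\Tt)$), gives the relation ``$y=(R|\xi)^\star$''. It is $\Sigma_1^{R[t]}(\{\alpha_\Tt\})$, because each stage is witnessed by an element of some admissible initial segment of $R[t]$. Its complement is also $\Sigma_1$, by functionality: $y\neq(R|\xi)^\star$ iff some $y'\neq y$ equals $(R|\xi)^\star$. Hence $R^\star$ is $\Delta_1^{R[t]}(\{\alpha_\Tt\})$. Also $P$ is $\Delta_1^{R[t]}(\emptyset)$, being $\Sigma_1$-definable from the predicate $t$, which is available in the signature of $R[t]$.

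\textbf{Step 2: part \ref{item:adm_buff_rSigma_1^R^star_is_rSigma_1^R[t]}.} An $\rSigma_1^{R^\star}$ statement is witnessed by a proper initial segment $R^\star|\xi$, or by the active extender fragments of $R^\star$. So I will take $\tau_1$ to assert: ``there exist $\xi$ and $y$ such that $y=(R|\xi)^\star$, computed via the recursion of Step 1, and $y$ (with the appropriate fragment of the active predicate) satisfies the $\Sigma_0$ matrix.'' When $R$ is active with $\crit(F^R)>\delta$, we have $F^{R^\star}\supseteq F^R$ restricted to $R^\star$, so fragments of $F^{R^\star}$ are computed from fragments of $F^R$. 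When $\crit(F^R)<\delta$, we are at $Q_m$, which is not $\star$-expanding, so this case does not arise. The reduction is only one-way because, by Remark \ref{rem:R[t]_not_rSigma_1_in_R^star}, $R$ need not be $\rSigma_1^{R^\star}$.

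\textbf{Step 3: part \ref{item:adm_buff_R[t]_Delta_2_in_R^star}.} The key observation is that $R|\xi$, and hence $R[t]|\xi$, is the $\bh$-construction of $R^\star|\xi$ over $M(\Tt)$. The construction is uniformly definable over $R^\star$ from $P$ and $\alpha_\Tt$, but in general only with a $\Pi_1$ clause verifying that the translation is correct (compare condensation, Remark \ref{rem:R[t]_not_rSigma_1_in_R^star}). So I will characterize $R[t]|\xi=y$ by the conjunction of:
\begin{enumerate}[label=(\roman*)]
\item the $\Sigma_1$ statement that $y^\star$, computed in $R^\star$, is an initial segment of $R^\star$;
\item a $\Pi_1$ statement that no proper segment of $R^\star$ disagrees.
\end{enumerate}
This makes the graph of $\xi\mapsto R[t]|\xi$ an $\rSigma_2$ relation, with a similar $\rPi_2$ form, so $R[t]$ is $\rDelta_2^{R^\star}(\{\alpha_\Tt,P\})$.

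\textbf{Main obstacle.} The hardest step will be the admissibility argument in Step 1: checking that each black-hole step (the coring at measurable-$\delta$ steps, and the derived-extender steps) really is absorbed into the next admissible segment. This needs the fine-structural correspondences of \S\ref{sec:between_R_and_R[t]}--\S\ref{sec:R[t]_and_S^R[t]} to identify cores and standard parameters inside that admissible set.
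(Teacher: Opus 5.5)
Your outline has the same architecture as the paper's proof: compute the $\star$-translation inside $R[t]$ using admissible buffering, recover $R$ inside $R^\star$ by the $\Moon$-construction, get part 3 from parts 1--2, and get part 4 from a $\Sigma_2$ eventual-agreement clause. However, two steps are not actually supplied.

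\textbf{Part 1.} You only argue $R^\star\sub R[t]$. Your item (iii) lists cores and extenders derived from core embeddings among the objects computed in $R[t]$. But those are steps of the $\Moon$-construction, and they are needed for the converse inclusion $R[t]\sub R^\star$, where they must be computed \emph{inside} $R^\star$. There one uses $t\in R^\star$ and runs the entire $\Moon$-construction in $R^\star$ to recover $R$, including the $\star$-expanding segments $N\ins R$, which are not segments of $R^\star$. It is admissible buffering that makes $R^\star$ closed enough to do this. Your induction on levels never provides this direction. Also, your phrase ``the next admissible level $N'$ of $R[t]$'' presupposes that $N'[t]$ is admissible whenever $N'\ins R$ is. This is not automatic; the paper gets it from \cite[Theorem 10]{conjecture_mouse_order_for_weasels}.

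\textbf{Part 4.} Your ``key observation'' that $R|\xi$ is the $\Moon$-construction of $R^\star|\xi$ is false. The $\star$-expanding segments are translated via ultrapowers such as $\Ult(P_\kappa,F)$ and $\Ult_r(N,\mu^N_\delta)$, so levels of $R$ and $R^\star$ do not line up, and short segments of $R^\star$ have $\Moon$-constructions that genuinely diverge from $R$. Consequently your $\Pi_1$ clause cannot range over all proper segments of $R^\star$. In the situation of Remark \ref{rem:R[t]_not_rSigma_1_in_R^star}, the segment $S'=(U_F|\gamma^{++U_F})^\star\pins R^\star$ has $(S')^\moon=U_F|\gamma^{++U_F}$. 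This structure is passive at $\lh(F)$, while $R|\lh(F)$ is active with $F$, so every correct candidate $R|\zeta$ with $\zeta\geq\lh(F)$ disagrees with it. The clause must quantify over a tail, as in the paper:
\[ N\pins R\iff\exists S\pins R^\star\ \forall S'\ \big[S\ins S'\pins R^\star\Rightarrow N\ins (S')^\moon\big]. \]
This is $\rSigma_2^{R^\star}(\{\alpha_\Tt,P\})$ and never applies $\star$ to the candidate. You might repair your version by restricting to $S'$ with $y^\star\ins S'$. But then you would have to show that every such $(S')^\moon$ extends $y$. You would also have to handle candidates $y$ for which $y^\star$ is not defined, namely segments that are not $\star$-suitable, e.g.\ ones carrying a $\delta$-measure but projecting above $\delta$.
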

\begin{proof}
Part \ref{item:adm_buff_R[t]_R^star_same_universe}: We have $R^\star\sub R[t]$ because working in $R[t]$,
we can compute the star-translation, remaining inside $R[t]$, since $R$ is admissibly buffered,
and by \cite[Theorem 10]{conjecture_mouse_order_for_weasels}, $S[t]$ is admissible for every admissible $S$ with $R|\delta\pins S\ins R$. Conversely, we have $R[t]\sub R^\star$ because $t\in R^\star$ and since $R$ can be recovered working in $R^\star$, by computing the $\Moon$-construction. Since $R$ is admissibly buffered,
$R^\star$ is sufficiently closed that it computes the entire $\Moon$-construction.

In the case that $R$ is passive, part \ref{item:adm_buff_R^star_Delta_1_in_R[t]} follows from a little examination of the preceding paragraph,
and since $P$ is determined by the theory $t$,
and $\Tt$ determined by $(P,\alpha_\Tt)$.
In the case that $R$ is active type 2 (hence with $\delta<\crit(F^R)$, these observations
suffice to define $(R^\star)^\passive$.
But (the amenable code of) $F^{R^\star}$
just consists of those pairs $(N,G)$
such that  $N\pins R^\star$,
$G$ measures the appropriate collection of subsets of its critical point, has support $\nu(F^R)=\nu(F^{R^\star})$, and is induced by some element of the amenable code of $F^{R[t]}$.
Since $(R^\star)^\passive$ is $\rSigma_1^{R[t]}(\{\alpha_\Tt\})$, so is this amenable code. In the case that $R$ is active type 3, it is rather similar,
but we work literally with $R^\sq$, $(R[t])^\sq$ and $(R^\star)^\sq$, hence working with the corresponding notion of amenable code.

Part \ref{item:adm_buff_rSigma_1^R^star_is_rSigma_1^R[t]}: This is a direct consequence of parts \ref{item:adm_buff_R[t]_R^star_same_universe} and \ref{item:adm_buff_R^star_Delta_1_in_R[t]}.

Part \ref{item:adm_buff_R[t]_Delta_2_in_R^star} is likewise, noting that $\rDelta_2$ suffices (whereas by Remark \ref{rem:R[t]_not_rSigma_1_in_R^star}, $\rDelta_1$ does not in general suffice), because given a premouse $N\in R^\star$, we have $N\pins R=(R^\star)^\moon$ iff for some $S\pins R^\star$, we have $N\ins (S')^\moon$ for all $S'$ such that $S\ins S'\pins R^\star$. (In the particular case that $R=\J(N)$ then $R^\star=\J(N^\star)$, and $\{N\}$ is $\rDelta_2^{R^\star}(\{\alpha_\Tt,P\})$,
and hence so is $R$.)
\end{proof}
\begin{prop}\label{prop:adm_buff_Sigma_1_corresp_passive_largest_card} Suppose $R$ is $\star$-iterable, admissibly buffered,
passive, and has a largest cardinal $\gamma$. Suppose $R^\star$ is a well-defined premouse. Then:
\begin{enumerate}
\item\label{item:adm_buff_R[t]_Delta_1_in_R^star} $R[t]$ is $\rDelta_1^{R^\star}(\{P,\alpha_\Tt,\gamma\})$.\footnote{Note that we have the parameter $\gamma$ available here. Cf.~Remark \ref{rem:R[t]_not_rSigma_1_in_R^star}.}
\item\label{item:adm_buff_rSigma_1_equiv}  $\rSigma_1^{R[t]}(\{\alpha_\Tt,\gamma\})$ statements
about parameters
are recursively equivalent
to $\rSigma_1^{R^\star}(\{P,\alpha_\Tt,\gamma\})$ statements about those parameters. That is,
there is an $\rSigma_1$ formula $\psi_1$ such that for all $\rSigma_1$ formulas $\varphi$ and all $v\in R[t]$ \tu{(}equivalently, $v\in R^\star$\tu{)}, we have
\[ R[t]\sats\varphi(v,\alpha_\Tt,\gamma)\iff R^\star\sats\psi_1(\varphi,v,P,\alpha_\Tt,\gamma),\]
and there is an $\rSigma_1$ formula $\tau_1$ such that for all $\rSigma_1$ formulas $\varphi$ and all $v\in R[t]$, we have
\[ R^\star\sats\varphi(v,P,\alpha_\Tt,\gamma)\iff R[t]\sats\tau_1(\varphi,v,\alpha_\Tt,\gamma).\]
\item\label{item:adm_buff_Hull_1_corresp}
For any $X\sub R[t]$,
$\Hull_1^{R[t]}(X\cup\{\alpha_\Tt,\gamma\})$ and $\Hull_1^{R^\star}(X\cup\{\alpha_\Tt,P,\gamma\})$ have identical universes.
\end{enumerate}
Moreover, the definitions and formulas witnessing parts   \ref{item:adm_buff_R[t]_Delta_1_in_R^star} and \ref{item:adm_buff_rSigma_1_equiv} are uniform in $R,P,\alpha_\Tt$.
\end{prop}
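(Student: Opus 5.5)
The plan is to reduce everything to part \ref{item:adm_buff_R[t]_Delta_1_in_R^star}, using Proposition \ref{prop:adm_buff_basics}. By that proposition, $R[t]$ and $R^\star$ have the same universe, and $\rSigma_1^{R^\star}(\{\alpha_\Tt,P\})$ already reduces to $\rSigma_1^{R[t]}(\{\alpha_\Tt\})$. So the formula $\tau_1$ of part \ref{item:adm_buff_rSigma_1_equiv} comes directly from Proposition \ref{prop:adm_buff_basics}, with the extra parameter $\gamma$ carried along trivially. The real work is the converse direction, producing $\psi_1$. For that it suffices to show that $\univ{R}$ and $\es^R$ (and hence $R[t]$, since $t$ is coded by $P$) are $\rDelta_1^{R^\star}(\{P,\alpha_\Tt,\gamma\})$. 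Once that holds, a $\Sigma_0$ matrix over $R[t]$ with an existential witness can be rewritten over $R^\star$ as ``there is a witness $w$, and a $\Sigma_1$-certificate that $w$ and the relevant fragment of $R[t]$ are as claimed''. Part \ref{item:adm_buff_Hull_1_corresp} then follows at once: with the parameters included, the two $\rSigma_1$ hulls are closed under the same definable functions, so they contain the same elements.

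For part \ref{item:adm_buff_R[t]_Delta_1_in_R^star}, I would use $\gamma$ to avoid the counterexample of Remark \ref{rem:R[t]_not_rSigma_1_in_R^star}. Since $R$ is passive with largest cardinal $\gamma>\delta$, every proper segment $N\pins R$ with $\OR^N>\gamma$ has a collapse map from $\gamma$ onto it inside $R$, and the same holds for the corresponding segments of $R^\star$. The key observation is a characterization of the segments: for $N\in R^\star$, we have $N\pins R$ iff there is $S\pins R^\star$ with $\gamma<\OR^S$, $S$ admissible (using admissible buffering), $N\in S$, and $N\ins S^\moon$. In other words, I claim the $\moon$-construction computed inside a large enough initial segment of $R^\star$ is already correct below $\OR^S$.

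Correctness should follow from the fact that $(S^\moon)^\star=S$ together with the agreement of $R$ and $R^\star$ above $\gamma$. That agreement holds because no extender of $R^\star$ with index above $\gamma$ can have critical point $\le\gamma$ in a way that causes the problematic ``$\gamma^{+U}$ versus $\lh(F)$'' mismatch: since $\gamma$ is the largest cardinal, $\gamma^{+R}$ does not exist inside $R$. Negative instances ($N\notin R$) should likewise be $\rSigma_1$, namely as ``there is an admissible $S\pins R^\star$ above $\gamma$ whose $\moon$-construction contains some $N'\neq N$ with $\OR^{N'}=\OR^N$, or which extends past $\OR^N$ without $N$''. This gives $\Delta_1$. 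Uniformity in $R,P,\alpha_\Tt$ is clear, since the $\moon$-construction is defined uniformly from $(P,\alpha_\Tt)$.

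The main obstacle is justifying that the local $\moon$-construction of an initial segment $S\pins R^\star$ with $\OR^S>\gamma$ agrees with $R$. Concretely, I need to rule out that $S^\moon$ is a different premouse whose $\star$-translation is $S$ but which is not an initial segment of $R$; this is exactly the phenomenon exhibited in Remark \ref{rem:R[t]_not_rSigma_1_in_R^star} below $\gamma$. I would try to handle it by showing that the $\star$-translation is injective on segments of height $>\gamma$ when $\gamma$ is the largest cardinal. The argument would run the reflection in that remark backwards: any such mismatch requires an extender $E$ with index beyond $\gamma^{++}$ of some model overlapping $\gamma$, and this is impossible inside a model in which $\gamma$ is the largest cardinal. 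If that fails, I would fall back on using $\gamma$ to code, in a $\Sigma_1$ way, the sequence of collapsing maps from $\gamma$ onto the segments of $R$, which pins down the $\moon$-construction uniquely.
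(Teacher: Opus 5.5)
Your reduction of parts 2 and 3 to part 1, with $\tau_1$ taken from Proposition~\ref{prop:adm_buff_basics}, is what the paper does. The gap is in part 1. Your criterion ``$N\pins R$ iff $N\ins S^\moon$ for some (admissible) $S\pins R^\star$ with $\gamma<\OR^S$ and $N\in S$'' is false. The reason you give for it (no harmful overlaps above the largest cardinal) is also wrong, because proper segments of $R^\star$ need not be $\star$-translations of segments of $R$.

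Suppose $N_0\pins R$ is active with $\kappa=\crit(F^{N_0})<\delta$ and $\OR^{N_0}>\gamma$. Nothing about $\gamma$ being the largest cardinal prevents this. When $\gamma=\delta$ (which you exclude by assuming $\gamma>\delta$, though it is the typical Q-structure case), every extender of $R$ overlapping $\delta$ gives such an $N_0$. Then $N_0^\star$ is the $\star$-translation of $U=\Ult_{n_\kappa}(P_\kappa,F^{N_0})$ (Definition~\ref{dfn:R_i}). $U$ agrees with $R$ only below $\OR^{N_0}$, which is a $U$-cardinal, and in general continues past it. For $\xi\in(\OR^{N_0},\OR^U)$, the segment $S=(U|\xi)^\star\pins R^\star$ has $S^\moon=U|\xi$, and this has $N_0^\passive$ as an initial segment. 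But $N_0^\passive\not\pins R$, since $R|\OR^{N_0}=N_0$ is active.

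Injectivity of $\star$ cannot repair this, because such an $S$ is not the translation of any segment of $R$. Your negative criterion fails in the same way: it would reject $N_0$ itself on account of $N_0^\passive$. This is exactly why Proposition~\ref{prop:adm_buff_basics} only gets $\rDelta_2$, by requiring $N\ins(S')^\moon$ for all $S'$ beyond some $S$.

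The missing idea is to use $\gamma$ through projecta rather than through heights. First, $R|\gamma=(R^\star|\gamma)^\moon$, since $\gamma$ is an $R^\star$-cardinal. Above that, $R|\gamma\ins N\pins R$ iff $N\ins N'$ for some $N'\ins S^\moon$ with $S\pins R^\star$ and $\rho_{n+1}^{N'}=\gamma$ for some $n$.

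Such $N'$ are cofinal in $R$ because $\gamma$ is the largest cardinal. They cannot lie in the divergent part of an ultrapower $U$ as above, since proper segments of $U$ beyond $\OR^{N_0}$ project to at least $\OR^{N_0}>\gamma$. Likewise, when $\gamma=\delta$, proper segments of $\Ult_n(N_0,\mu_\delta^{N_0})$ beyond $\delta^{+N_0}$ project to at least $\delta^{+N_0}>\delta$. This gives the $\rSigma_1^{R^\star}(\{P,\alpha_\Tt,\gamma\})$ definition, and from it the $\rDelta_1$ one. Your fallback about coding collapses of segments onto $\gamma$ points in this direction, but without the projectum criterion it does not single out the right segments.
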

\begin{proof}
Part \ref{item:adm_buff_R[t]_Delta_1_in_R^star}:
We have $R|\gamma=(R^\star|\gamma)^{\moon}\ins R=(R^\star)^\moon$,
since $\gamma$ is an $R^\star$-cardinal. So since we have the parameter $\gamma$ available,
we can identify $R|\gamma$.
Beyond $R|\gamma$, we have
$R|\gamma\ins N\pins (R^\star)^\moon$
iff there is some $N'\in R^\star$
and $n<\om$
such that $N\ins N'$,
$\rho_{n+1}^{N'}=\gamma$
and $N'\ins S^{\moon}$
for some $S\pins R^\star$. This yields an $\rSigma_1^{R^\star}(\{P,\alpha_\Tt,\gamma\})$ definition of $R$.

Parts \ref{item:adm_buff_rSigma_1_equiv}  and \ref{item:adm_buff_Hull_1_corresp}
are immediate consequences.
\end{proof}

\begin{prop}\label{item:adm_buff_Sigma_1_corresp_passive_no_largest_card} Suppose $R$ is passive, $\star$-iterable and admissibly buffered, with no largest cardinal,
and $R\sats$ ``all my proper segments satisfy condensation''. Suppose $R^\star$ is a well-defined premouse. Then:
\begin{enumerate}
\item\label{item:adm_buff_R[t]_no_lgst_card_rho_1=OR} $\rho_1^R=\rho_1^{R[t]}=\rho_1^{R^\star}=\OR^R$,
and for each $R$-cardinal $\gamma>\delta$, we have
\[ R|\gamma\preccurlyeq_1 R\text{ and }R[t]|\gamma\preccurlyeq_1 R[t]\text{ and }R^\star|\gamma\preccurlyeq_1 R^\star.\]
\item\label{item:adm_buff_rSigma_2_equiv_no_lgst_card}  $\rSigma_2^{R[t]}(\{\alpha_\Tt\})$ statements
about parameters
are recursively equivalent
to $\rSigma_2^{R^\star}(\{P,\alpha_\Tt\})$ statements about those parameters. That is,
there is an $\rSigma_2$ formula $\psi_2$ such that for all $\rSigma_2$ formulas $\varphi$ and all $v\in R[t]$, we have
\[ R[t]\sats\varphi(v,\alpha_\Tt)\iff R^\star\sats\psi_2(\varphi,v,P,\alpha_\Tt),\]
and there is an $\rSigma_2$ formula $\tau_2$ such that for all $\rSigma_2$ formulas $\varphi$ and all $v\in R[t]$, we have
\[ R^\star\sats\varphi(v,P,\alpha_\Tt)\iff R[t]\sats\tau_2(\varphi,v,\alpha_\Tt).\]
\item\label{item:adm_buff_Hull_2_corresp_no_lgst_card}
For any $X\sub R[t]$,
$\Hull_2^{R[t]}(X\cup\{\alpha_\Tt\})$ and $\Hull_2^{R^\star}(X\cup\{\alpha_\Tt,P\})$ have identical universes.
\end{enumerate}
\end{prop}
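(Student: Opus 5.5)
The plan is to prove part \ref{item:adm_buff_R[t]_no_lgst_card_rho_1=OR} first and then obtain the $\rSigma_2$ correspondence by localizing to the segments $R|\gamma$, on which Proposition \ref{prop:adm_buff_Sigma_1_corresp_passive_largest_card} applies.

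\textbf{Part \ref{item:adm_buff_R[t]_no_lgst_card_rho_1=OR}.} Let $\gamma>\delta$ be an $R$-cardinal. Since $R$ is passive with no largest cardinal, $\OR^R$ is a limit of $R$-cardinals.

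First, $R|\gamma\preccurlyeq_1 R$. Take an $\rSigma_1$ statement with parameters in $R|\gamma$ that holds in $R$, and let $N\pins R$ witness it. Then take the $\Sigma_1$-hull of $(\gamma+1)$ together with the parameter inside some $N'\pins R$ with $N\pins N'$ projecting to $\gamma$. Condensation for proper segments (assumed in $R$) makes the collapse a segment of $R|\gamma^{+R}$. Finally, use that $\gamma$ is a cardinal, hence (for the relevant witnesses) the collapse lies in $R|\gamma$.

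Next transfer this to $R[t]$ and $R^\star$. For $R[t]$, use the forcing translation of Proposition \ref{prop:inter_Sigma_1_R,R[t]}. Since $\BB$ has size $\delta<\gamma$, names for the relevant witnesses lie in $R|\gamma$, so $R[t]|\gamma\preccurlyeq_1 R[t]$. For $R^\star$, use that $\gamma$ is also an $R^\star$-cardinal and that $R^\star|\gamma=(R|\gamma)^\star$. Proposition \ref{prop:adm_buff_basics} shows that $\rSigma_1^{R^\star}$ reduces to $\rSigma_1^{R[t]}$; because $\star$-translation is local below cardinals, this reduction can be relativized to the segment $R[t]|\gamma$.

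Since the $\gamma$ are cofinal in $\OR^R$, each $\Sigma_1$ theory $\Th_1(\alpha\cup\{x\})$ is computed inside some proper segment. Hence $\rho_1=\OR$ for all three structures.

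\textbf{Part \ref{item:adm_buff_rSigma_2_equiv_no_lgst_card}.} By part \ref{item:adm_buff_R[t]_no_lgst_card_rho_1=OR}, an $\rSigma_2$ statement over $R[t]$ reduces to the following form: there exist a cardinal $\gamma>\delta$ with the parameters in $R[t]|\gamma$, and an appropriate $\rPi_1$ fact about $R[t]|\gamma$. Equivalently, it is a $\Sigma_2$ assertion over the $\Sigma_1$-elementary chain of segments $R[t]|\gamma$ as $\gamma$ ranges over cardinals. The same form holds for $R^\star$.

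Next fix a successor cardinal $\gamma$ of $R$ and let $\gamma'$ be its predecessor cardinal. Then $R|\gamma$ is passive with largest cardinal $\gamma'$, and it is admissibly buffered, since it is inherited from $R$ below a cardinal. Apply Proposition \ref{prop:adm_buff_Sigma_1_corresp_passive_largest_card} to $R|\gamma$ with parameter $\gamma'$, using the uniformity clause. This gives uniform translations between $\rSigma_1^{R[t]|\gamma}(\{\alpha_\Tt,\gamma'\})$ and $\rSigma_1^{R^\star|\gamma}(\{P,\alpha_\Tt,\gamma'\})$, and hence also between the corresponding $\rPi_1$ facts.

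Define $\psi_2(\varphi,v,P,\alpha_\Tt)$ to say: there are $\gamma'<\gamma$ such that $\gamma$ is the cardinal successor of $\gamma'$, the segment $R^\star|\gamma$ exists with $v\in R^\star|\gamma$, and $R^\star|\gamma$ satisfies the translate of the $\rPi_1$ part of $\varphi$ via $\psi_1$. Define $\tau_2$ symmetrically. The quantifier ``$\gamma$ is a cardinal'' is $\Pi_1$, which is acceptable inside an $\rSigma_2$ formula. By part \ref{item:adm_buff_R[t]_no_lgst_card_rho_1=OR}, evaluating in the segment agrees with evaluating in the whole structure.

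\textbf{Part \ref{item:adm_buff_Hull_2_corresp_no_lgst_card}.} This follows immediately from part \ref{item:adm_buff_rSigma_2_equiv_no_lgst_card}, since the two structures have the same universe by Proposition \ref{prop:adm_buff_basics}.

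\textbf{Main obstacle.} The hardest step is the $R^\star$ half of part \ref{item:adm_buff_R[t]_no_lgst_card_rho_1=OR}. Remark \ref{rem:R[t]_not_rSigma_1_in_R^star} shows that $R$ is not $\rSigma_1$ over $R^\star$ from small parameters. So one must argue that a $\Sigma_1$ witness in $R^\star$ with parameters below $\gamma$ can be reflected below $\gamma$ using only $R^\star$-condensation (a consequence of iterability of $R^\star$), without decoding $R$. To get $R^\star|\gamma\preccurlyeq_1 R^\star$, I would first try applying condensation directly in $R^\star$.
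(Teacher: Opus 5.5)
Your proposal is correct and is in substance the paper's argument. Part 1 comes from condensation in $R$ together with $(R|\gamma)[t]=R[t]|\gamma$ and $(R|\gamma)^\star=R^\star|\gamma$ at $R$-cardinals $\gamma>\delta$. Part 2 reads $\rSigma_2$ truth off the $\Sigma_1$ truth of such segments, using that ``$\gamma$ is a cardinal'' is $\Pi_1$. Part 3 is then a corollary. Your detour through Proposition \ref{prop:adm_buff_Sigma_1_corresp_passive_largest_card} at successor cardinals, with the predecessor $\gamma'$ as parameter, is valid. The paper instead uses part 1 together with Proposition \ref{prop:adm_buff_basics} (in particular, that $R[t]$ is $\rDelta_2^{R^\star}(\{\alpha_\Tt,P\})$) to identify $\Th_{\rSigma_1}^{R[t]}(\gamma)$ for $R$-cardinals $\gamma>\delta$ in an $\rSigma_2$ fashion. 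That avoids restricting to successor cardinals and carrying $\gamma'$.

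Two local corrections.

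\textbf{The hull for $R|\gamma\preccurlyeq_1 R$.} As stated, it fails. A hull containing $\gamma+1$ collapses to a structure of height $>\gamma$, so it cannot land in $R|\gamma$. You need a hull of a set of $R$-cardinality $<\gamma$, say $R|\xi\cup\{w\}$ with $\xi<\gamma$ and the parameter in $R|\xi$. Its collapse fixes the parameter, is a proper segment of $R$ by condensation, and has height $<\gamma$ because $\gamma$ is an $R$-cardinal.

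\textbf{Your ``main obstacle'' is not one for the argument you actually gave.} The chain $R^\star\to R[t]\to R[t]|\gamma\to R^\star|\gamma$ uses only the reduction of $\rSigma_1^{R^\star}(\{P,\alpha_\Tt\})$ to $\rSigma_1^{R[t]}(\{\alpha_\Tt\})$ from Proposition \ref{prop:adm_buff_basics}. It is applied uniformly to $R$ and to $R|\gamma$; the latter is admissibly buffered, since $R|\gamma\preccurlyeq_1 R$ and ``some admissible segment lies above $N$'' is $\Sigma_1$. Remark \ref{rem:R[t]_not_rSigma_1_in_R^star} concerns the opposite direction, namely defining $R$ over $R^\star$. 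So no condensation inside $R^\star$ is needed, which matters because the hypotheses only give condensation for proper segments of $R$.
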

\begin{proof}
Part \ref{item:adm_buff_R[t]_no_lgst_card_rho_1=OR} is by condensation and since for each $R$-cardinal $\gamma>\delta$, we have $(R|\gamma)[t]=(R[t])|\gamma$ and $(R|\gamma)^\star=R^\star|\gamma$.
Part \ref{item:adm_buff_rSigma_2_equiv_no_lgst_card} is a consequence of this and Proposition \ref{prop:adm_buff_basics}. In particular, for the formula $\psi_2$,
working in $R^\star$,
one can  identify theories of the form $\Th_{\rSigma_1}^{R[t]}(\gamma)$,
for cardinals $\gamma>\delta$, in an $\rSigma_2(\{P,\alpha_\Tt\})$ fashion, by using part \ref{item:adm_buff_R[t]_no_lgst_card_rho_1=OR}. Part \ref{item:adm_buff_Hull_2_corresp_no_lgst_card} is an immediate corollary
of part \ref{item:adm_buff_rSigma_2_equiv_no_lgst_card}.
\end{proof}
\begin{prop}\label{item:adm_buff_Sigma_1_corresp_active}
Assume $R$ is $\star$-iterable, admissibly buffered, and active, hence with $\delta<\crit(F^R)$. Suppose $R^\star$ is a well-defined premouse. Then:
\begin{enumerate}
\item\label{item:adm_buff_active_R[t]_rSigma_1^R^star} $R[t]$ is $\rDelta_1^{R^\star}(\{P,\alpha_\Tt\})$.
\item\label{item:adm_buff_rSigma_1_equiv_active}  $\rSigma_1^{R[t]}(\{\alpha_\Tt\})$ statements
about parameters
are recursively equivalent
to $\rSigma_1^{R^\star}(\{P,\alpha_\Tt\})$ statements about those parameters. That is,
there is an $\rSigma_1$ formula $\psi_1$ such that for all $\rSigma_1$ formulas $\varphi$ and all $v\in R[t]$, we have
\[ R[t]\sats\varphi(v,\alpha_\Tt)\iff R^\star\sats\psi_1(\varphi,v,P,\alpha_\Tt),\]
and there is an $\rSigma_1$ formula $\tau_1$ such that for all $\rSigma_1$ formulas $\varphi$ and all $v\in R[t]$, we have
\[ R^\star\sats\varphi(v,P,\alpha_\Tt)\iff R[t]\sats\tau_1(\varphi,v,\alpha_\Tt).\]
\item\label{item:adm_buff_Hull_1_corresp_active}
For any $X\sub R[t]$,
$\Hull_1^{R[t]}(X\cup\{\alpha_\Tt\})$ and $\Hull_1^{R^\star}(X\cup\{\alpha_\Tt,P\})$ have identical universes.
\end{enumerate}
\end{prop}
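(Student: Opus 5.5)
Since Proposition \ref{prop:adm_buff_basics} already gives that $R[t]$ and $R^\star$ have the same universe and supplies the reduction $\tau_1$ from $\rSigma_1^{R^\star}(\{P,\alpha_\Tt\})$ to $\rSigma_1^{R[t]}(\{\alpha_\Tt\})$, the real work is part \ref{item:adm_buff_active_R[t]_rSigma_1^R^star}. Parts \ref{item:adm_buff_rSigma_1_equiv_active} and \ref{item:adm_buff_Hull_1_corresp_active} then follow as in Proposition \ref{prop:adm_buff_Sigma_1_corresp_passive_largest_card}. Given a $\Delta_1$ definition of $R[t]$ (together with its predicates) over $R^\star$, we build $\psi_1$ by relativizing the $\Sigma_0$ matrix of $\varphi$ to that definition. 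The hulls then coincide, because each model's $\rSigma_1$ truth about the shared parameters is $\rSigma_1$ in the other.

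For part \ref{item:adm_buff_active_R[t]_rSigma_1^R^star}, I would exploit the fact that $R$ active with $\delta<\kappa=\crit(F^R)$ makes $\kappa$ and $\nu=\nu(F^R)$ behave like the parameter $\gamma$ of Proposition \ref{prop:adm_buff_Sigma_1_corresp_passive_largest_card}, but computable without extra parameters. The key steps are as follows.

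First, $\OR^R$ is the $R$-successor of $\nu(F^R)$ (or of $\lambda$ in the type 3 case, working with the squashed structures as in the proof of Proposition \ref{prop:adm_buff_basics}). Also $(R^\star)^\passive$ and $R^\passive$ have all their initial segments below $\OR^R$ given by $\Moon$-construction of segments of $R^\star$.

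Second, I would define $R^\passive$ over $R^\star$ as follows: $N\pins R$ iff there are $S\pins R^\star$ and $N'$ with $N\ins N'\ins S^\moon$ and $\rho_\om^{N'}\leq\nu$, where $\nu$ is read off from $F^{R^\star}$ (which has the same critical point and support as $F^R$). This works because every proper segment of $R$ projects to at most $\nu$, and, as in Proposition \ref{prop:adm_buff_Sigma_1_corresp_passive_largest_card}, condensation and cardinality of $\nu$ in $R^\star$ ensure that $S^\moon$ agrees with $R$ up to the relevant height once $\rho_\om^{N'}\leq\nu$. Because this definition is $\rSigma_1$, and non-membership is expressed via ``$N\notin R$ iff some $N'\pins R$ of the same height differs'', the result is $\Delta_1$.

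Third, I would recover the amenable code of $F^R$: a fragment $G$ belongs to it iff $G$ is the extension to $R|\xi$ (computed above) of some element of the amenable code of $F^{R^\star}$, which is directly available. This uses that $F^{R^\star}$ and $F^R$ have the same critical point above $\delta$ and measure corresponding sets, with $P(\kappa)\cap R=P(\kappa)\cap R^\star$ modulo the translation. Finally, $t$ and $G_t$ are obtained from $P$.

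I expect the main obstacle to be the second step: verifying that the bound $\rho_\om^{N'}\leq\nu$ really pins segments of $R$ correctly, avoiding the pathology of Remark \ref{rem:R[t]_not_rSigma_1_in_R^star}. In the active case the extender overlapping that remark's $\gamma$ is excluded, because everything projects to $\nu$ and $\nu$ is an $R^\star$-cardinal. I would try to argue this via condensation in $R^\star$ applied to $\Moon$-constructions of its proper segments, as in the proof of Proposition \ref{prop:adm_buff_Sigma_1_corresp_passive_largest_card}.
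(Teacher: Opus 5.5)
Your reduction of parts 2 and 3 to part 1 is fine. Your third step, recovering the amenable code of $F^{R[t]}$ from that of $F^{R^\star}$ by restricting fragments to $\pow(\kappa)\cap R$, is essentially what the paper does. The gap is in your second step.

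\textbf{Why the second step fails.} The clause $\rho_\om^{N'}\leq\nu$ is vacuous for every $S\pins R^\star|\nu$. So your formula collapses to ``$N\ins S^\moon$ for some $S\pins R^\star$''. That is exactly the naive definition refuted by Remark \ref{rem:R[t]_not_rSigma_1_in_R^star}, and it is why Proposition \ref{prop:adm_buff_basics} only obtains $\rDelta_2$, by quantifying over all $S'$ with $S\ins S'\pins R^\star$.

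Activity of $R$ does not exclude the pathology. $F^R$ has critical point $\kappa>\delta$, but $\es^R\rest\kappa$ may still contain $E$ with $\crit(E)<\delta<\gamma<\gamma^{++R}<\lh(E)<\kappa$, with $R$ still admissibly buffered. Take $F,\xi,U_F$ as in the Remark. Then $S=(U_F|\gamma^{++U_F})^\star\pins R^\star$ has $S^\moon=U_F|\gamma^{++U_F}$. Setting $N=N'=U_F|\xi\ins S^\moon$, we have $\rho_\om^{N'}<\kappa<\nu$, yet $N\not\pins R$, since $R|\xi$ is active with $F$.

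Your appeal to condensation and the cardinality of $\nu$ does not rescue this. $(R^\star|\gamma')^\moon=R|\gamma'$ is guaranteed only when $\gamma'$ is an $R^\star$-cardinal. For arbitrary $S\pins R^\star$, $S^\moon$ can diverge from $R$, and a projectum \emph{bound} does not place $N'$ at a cardinal. Note also that $\nu(F^R)$ is not itself an $R$-cardinal when $F^R$ has type 1 or 2, so it cannot serve directly as the anchor.

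\textbf{What is missing.} You need the cardinal anchoring from the proof of Proposition \ref{prop:adm_buff_Sigma_1_corresp_passive_largest_card}. For an $R^\star$-cardinal $\gamma'$, one takes $R|\gamma'=(R^\star|\gamma')^\moon$. Above that, one accepts only $N'\ins S^\moon$ with $R|\gamma'\ins N'$ and $\rho_{n+1}^{N'}=\gamma'$ \emph{exactly}, for some $n<\om$. Then $N'$ is a sound premouse over $R|\gamma'$ projecting to $\gamma'$, and so it lines up with $R$.

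The paper runs this without the extra parameter $\gamma$ of that proposition. It observes that, since $R^\star$ is active, the class of $R^\star$-cardinals is $\rDelta_1^{R^\star}$. So a quantifier of the form ``there is an $R^\star$-cardinal $\gamma'$ such that \dots'' may be used inside $\rSigma_1$ definitions. With that, the argument of Proposition \ref{prop:adm_buff_Sigma_1_corresp_passive_largest_card} adapts to show that $(R[t])^\passive$ is $\rDelta_1^{R^\star}(\{P,\alpha_\Tt\})$. This observation, not a bound in terms of $\nu$, is the idea your second step needs.
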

\begin{proof}
Part \ref{item:adm_buff_active_R[t]_rSigma_1^R^star}:
Because $R^\star$ is active, the class of $R^\star$-cardinals is $\rDelta_1^{R^\star}$, and so the  proof of Proposition \ref{prop:adm_buff_Sigma_1_corresp_passive_largest_card}
adapts to give that $(R[t])^\passive$ is $\rDelta_1^{R^\star}(\{P,\alpha_\Tt\})$ in this case.
But then (the amenable code of) $F^{R[t]}$ is also $\rSigma_1^{R^\star}(\{P,\alpha_\Tt\})$,
just like in the converse direction in the proof of Proposition \ref{prop:adm_buff_basics} part \ref{item:adm_buff_R^star_Delta_1_in_R[t]}.

As usual, parts \ref{item:adm_buff_rSigma_1_equiv_active} and \ref{item:adm_buff_Hull_1_corresp_active} are  consequences of part \ref{item:adm_buff_active_R[t]_rSigma_1^R^star} and Proposition \ref{prop:adm_buff_basics}.
\end{proof}
\begin{prop}\label{prop:adm_buff_fs_corresp}
 Let $n<\om$. Suppose that $R[t]$ is  $n$-sound,
 $\delta<\rho_n^{R[t]}$ and $R^\star$ is a well-defined premouse.
 Suppose that $R\sats$ ``All of my proper segments satisfy condensation''. Then:
\begin{enumerate}
\item\label{item:adm_buff_fs_corresp_up_to_n} $R^\star$ is $n$-sound, $\rho_i^{R[t]}=\rho_i^{R^\star}$ and
 $p_i^{R[t]}=p_i^{R^\star}$ for all $i\leq n$.
\item\label{item:adm_buff_rSigma_n+1_equiv} If $R$ is passive with a largest cardinal, assume  $0<n$.\footnote{See Proposition \ref{prop:adm_buff_Sigma_1_corresp_passive_largest_card} for the variant when $R$ is passive with a largest cardinal and $n=0$.} Then \tu{(}in general\tu{)},  $\rSigma_{n+1}^{R[t]}(\{\alpha_\Tt\})$ statements
about parameters
are recursively equivalent
to $\rSigma_{n+1}^{R^\star}(\{P,\alpha_\Tt\})$ statements about those parameters. That is,
there is an $\rSigma_{n+1}$ formula $\psi_{n+1}$ such that for all $\rSigma_{n+1}$ formulas $\varphi$ and all $v\in R[t]$ \tu{(}equivalently, $v\in R^\star$\tu{)}, we have
\[ R[t]\sats\varphi(v,\alpha_\Tt)\iff R^\star\sats\psi_{n+1}(\varphi,v,P,\alpha_\Tt),\]
and there is an $\rSigma_{n+1}$ formula $\tau_1$ such that for all $\rSigma_{n+1}$ formulas $\varphi$ and all $v\in R[t]$, we have
\[ R^\star\sats\varphi(v,P,\alpha_\Tt)\iff R[t]\sats\tau_{n+1}(\varphi,v,\alpha_\Tt).\] \item\label{item:adm_buff_Hull_n+1_corresp}
If $R$ is passive with a largest cardinal, assume  $0<n$.
\footnote{See Proposition \ref{prop:adm_buff_Sigma_1_corresp_passive_largest_card} for the variant when $R$ is passive with a largest cardinal and $n=0$.}
Then \tu{(}in general\tu{)},
for any $X\sub R[t]$,
$\Hull_{n+1}^{R[t]}(X\cup\{\alpha_\Tt\})$ and $\Hull_{n+1}^{R^\star}(X\cup\{\alpha_\Tt,P\})$ have identical universes.
\item\label{item:adm_buff_fs_corresp_rho_n+1>delta} Suppose $\rho_{n+1}^{R[t]}>\delta$. Then:
\begin{enumerate}[label=\tu{(}\alph*\tu{)}]
\item $\rho_{n+1}^{R[t]}=\rho_{n+1}^{R^\star}$,
\item $p_{n+1}^{R[t]}=p_{n+1}^{R^\star}$,
\item\label{item:adm_buff_fs_corresp_rho_n+1>delta_solidity}
$R[t]$ is $(n+1)$-solid iff $R^\star$ is $(n+1)$-solid, and
\item\label{item:adm_buff_fs_corresp_rho_n+1>delta_soundness} $R[t]$ is $(n+1)$-sound iff $R^\star$ is $(n+1)$-sound.
\end{enumerate}
\item\label{item:adm_buff_fs_corresp_rho_n+1=delta_P-premouse} Suppose $\rho_{n+1}^{R[t]}=\delta$. Let $R^\star_P$ be the $P$-premouse whose extender sequence is just $\es^{R^\star}_+\rest(\OR^P,\OR^{R^\star}]$. Then:
\begin{enumerate}[label=\tu{(}\alph*\tu{)}]
\item $\rho_{n+1}^{R^\star_P}=\OR^P$,
\item $p_{n+1}^{R[t]}=p_{n+1}^{R^\star}$, and $p_{n+1}^{R[t]}\cap(\OR^P+1)=\emptyset$,
\item $R[t]$ is $(n+1)$-solid iff $R^\star_P$ is $(n+1)$-solid, and
\item $R[t]$ is $(n+1)$-sound iff $R^\star_P$ is $(n+1)$-sound.
\end{enumerate}
\item\label{item:adm_buff_fs_corresp_rho_n+1=delta=rho_n+1^R^star} Suppose $\rho_{n+1}^{R[t]}=\delta=\rho_{n+1}^{R^\star}$
and $R^\star$ is $(n+1)$-universal and $(n+1)$-solid.
Then:
\begin{enumerate}[label=\tu{(}\alph*\tu{)}]
\item\label{item:adm_buff_fs_corresp_rho_n+1=delta=rho_n+1^R^star_when_P_in_Hull} If $P\in\Hull_{n+1}^{R^\star}(\delta\cup\{\pvec_n^{R^\star},p_{n+1}^{R[t]}\})$ then $p_{n+1}^{R[t]}=p_{n+1}^{R^\star}$.
\item\label{item:adm_buff_fs_corresp_rho_n+1=delta=rho_n+1^R^star_when_P_not_in_Hull} If $P\notin\Hull_{n+1}^{R^\star}(\delta\cup\{\pvec_n^{R^\star},p_{n+1}^{R[t]}\})$
then there is $\zeta\in[\delta,\OR^P]$ such that $p_{n+1}^{R[t]}\cup\{\zeta\}=p_{n+1}^{R^\star}$.
\item $R[t]$ is $(n+1)$-universal and $(n+1)$-solid.
\item $R[t]$ is $(n+1)$-sound iff $R^\star$ is $(n+1)$-sound.
\item\label{item:adm_buff_rho_n+1^R[t]=delta=rho_n+1^R^star_charac_p_n+1} If $R[t]$ is $(n+1)$-sound
then $p_{n+1}^{R^\star}$
is the least $q\in[\OR^{R[t]}]^{<\om}$ such that $p_{n+1}^{R[t]}\ins q$
and $\OR^P\in\Hull_{n+1}^{R^\star}(\delta\cup\{\pvec_n^{R^\star},q\})$.
\end{enumerate}
\end{enumerate}
\end{prop}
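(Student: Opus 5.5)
The plan is to argue by induction on $n$, proving parts \ref{item:adm_buff_fs_corresp_up_to_n}--\ref{item:adm_buff_Hull_n+1_corresp} simultaneously; parts \ref{item:adm_buff_fs_corresp_rho_n+1>delta}--\ref{item:adm_buff_fs_corresp_rho_n+1=delta=rho_n+1^R^star} are then derived at each level from the translation of formulas.

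\emph{Base case.} For $n=0$, part \ref{item:adm_buff_fs_corresp_up_to_n} is trivial, and the $\rSigma_1$ equivalence comes from one of Propositions \ref{prop:adm_buff_Sigma_1_corresp_passive_largest_card}, \ref{item:adm_buff_Sigma_1_corresp_passive_no_largest_card} or \ref{item:adm_buff_Sigma_1_corresp_active}, depending on whether $R$ is passive with a largest cardinal, passive without one, or active. In the passive-with-largest-cardinal case the statement excludes $n=0$. There, at level $n=1$, I will use the fact that $\gamma$ is $\rSigma_2$-definable (it is the largest cardinal, a $\Pi_1$ property) to absorb the parameter $\gamma$. In the passive no-largest-cardinal case, Proposition \ref{item:adm_buff_Sigma_1_corresp_passive_no_largest_card} already gives the $\rSigma_2$ equivalence and $\rho_1=\OR$.

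\emph{Inductive step.} Suppose the $\rSigma_{n}$ equivalence (parts \ref{item:adm_buff_rSigma_n+1_equiv}, \ref{item:adm_buff_Hull_n+1_corresp} at level $n-1$) holds, together with $\rho_i$, $p_i$ agreement for $i\le n$. Because $\delta<\rho_n$ and the $\rSigma_n$ hulls agree, the standard codes $T_n^{R[t]}$ and $T_n^{R^\star}$ are interconvertible. The theories $\Th_{\rSigma_n}(\beta\cup\{\pvec_n\})$ for $\beta<\rho_n$ coincide modulo the recursive translations, so the witnesses of $\rSigma_{n+1}$ statements translate, which yields $\psi_{n+1}$ and $\tau_{n+1}$. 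The hull agreement is then immediate. Once the $\rSigma_{n+1}$ translation is in hand, part \ref{item:adm_buff_fs_corresp_rho_n+1>delta} follows as in Proposition \ref{prop:R,R[t]_fs_match_Sigma_1}:
\begin{enumerate}[label=--]
\item for $\rho>\delta$, a subset of $\rho$ is $\rSigma_{n+1}$ in one model iff in the other;
\item both models have the same bounded subsets;
\item the parameters agree because the extra parameters $P,\alpha_\Tt$ lie in $\delta\cup$ hulls (with $P\in R^\star|\delta^{+}$ coded below $\rho$);
\item solidity witnesses correspond, since they are hull collapses and theories;
\item soundness transfers via the equality of hulls.
\end{enumerate}

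\emph{The case $\rho_{n+1}^{R[t]}=\delta$.} Here the parameter $P$ matters, since $P$ is not in $R^\star|\delta$; it has height $\geq\delta$. For part \ref{item:adm_buff_fs_corresp_rho_n+1=delta_P-premouse} I will treat $R^\star_P$ as a $P$-premouse. Its fine structure is computed over $\OR^P$ with $P$ available, matching the fine structure of $R[t]$ over $(R|\delta,t)$, since $P$ and $(R|\delta,t)$ are interdefinable ($P$ is determined by $t$ and vice versa). The same hull-comparison argument then gives the four clauses.

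For part \ref{item:adm_buff_fs_corresp_rho_n+1=delta=rho_n+1^R^star} we compare $H^\star=\Hull_{n+1}^{R^\star}(\delta\cup\{\pvec_n,p_{n+1}^{R[t]}\})$ with the corresponding $R[t]$-hull, which equals everything if $R[t]$ is sound. The $R[t]$-hull is $H^\star$ closed under $P$.
\begin{enumerate}[label=(\alph*)]
\item If $P\in H^\star$, the two hulls coincide. The minimality of the standard parameter on both sides, together with $(n+1)$-solidity of $R^\star$, then gives equality of parameters.
\item If $P\notin H^\star$, then the least ordinal $\zeta$ needed to add $P$ must lie in $[\delta,\OR^P]$. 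Using solidity and universality of $R^\star$ with the usual parameter-comparison (iterated minimization), $p_{n+1}^{R^\star}$ must be $p_{n+1}^{R[t]}\cup\{\zeta\}$.
\end{enumerate}
This also yields the characterization in clause \ref{item:adm_buff_rho_n+1^R[t]=delta=rho_n+1^R^star_charac_p_n+1}. Universality and solidity of $R[t]$ then transfer from those of $R^\star$ through the translation.

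I expect the main obstacle to be this last clause (b): showing that exactly one extra ordinal $\zeta\le\OR^P$ is added, and that it fits above $p_{n+1}^{R[t]}$ in the right way. This needs careful use of solidity witnesses of $R^\star$ for elements of $p_{n+1}^{R^\star}$ below $\OR^P$. My first attempt will be to show that any element of $p_{n+1}^{R^\star}\cap[\delta,\OR^P]$ other than the least one would contradict the condensation hypothesis on proper segments.
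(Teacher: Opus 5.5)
Outside one case, your outline follows the paper: induction on $n$, with parts 4--6 read off from the $\rSigma_{n+1}$ translation and the hull correspondence. That one case is a genuine gap.

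\textbf{The gap: $n=0$ with $R$ passive with a largest cardinal $\gamma$.} The restriction ``assume $0<n$'' in this case applies only to parts 2 and 3. Parts 4, 5 and 6 must still be proved at $n=0$ for such $R$, and your step at $n=1$ (which assumes $\rho_1$- and $p_1$-agreement) depends on them. Your recipe fails here. The only $\rSigma_1$ correspondence available is Proposition \ref{prop:adm_buff_Sigma_1_corresp_passive_largest_card}, and it needs the parameter $\gamma$. When $\delta<\rho_1<\OR^R$ we have $\rho_1\leq\gamma$, so $\gamma$ is not ``coded below $\rho$''. There is then no a priori reason why $\Th_1^{R^\star}(\rho\cup\{p_1^{R[t]}\})$ and $\Th_1^{R[t]}(\rho\cup\{p_1^{R[t]}\})$ compute each other. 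Absorbing $\gamma$ via $\rSigma_2$-definability only helps one level up. (Also, ``largest cardinal'' is not a $\Pi_1$ property, though $\{\gamma\}$ is $\rDelta_2$ in both models.)

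The missing idea is where the paper actually uses the hypothesis that proper segments of $R$ satisfy condensation:
\begin{enumerate}
\item Condensation gives $R|\gamma\preccurlyeq_1 R$, so $p_1^R\not\subseteq\gamma$.
\item Let $\xi=\max(p_1^R)$. Minimality of $p_1^R$ gives $R|\xi\preccurlyeq_1 R$, so either $\xi=\gamma$ or $\gamma$ is the largest cardinal of $R|\xi$.
\item Hence $\{\gamma\}$ is $\rSigma_1(\{\xi\})$ over both $R[t]$ and $R^\star$, so $\gamma$ can be eliminated from the theories. This gives $p_1^{R^\star}=p_1^{R[t]}$.
\item For solidity, the witnesses at $\xi$ exist automatically in both models, and the remaining ones may use $\gamma$.
\end{enumerate}
When $\gamma=\delta$ (relevant to parts 5 and 6), you instead use that $\delta$ is available in the $t$-premouse language. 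Then $p_1^{R[t]}$ is either $\emptyset$ or $\{\xi\}$ with $\xi>\OR^P$, and each subcase is treated directly.

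\textbf{A smaller issue in part 6(b).} You presuppose that some ordinal must be added to $p_{n+1}^{R[t]}$ to get $p_{n+1}^{R^\star}$; this needs proof. The paper shows $p_{n+1}^{R^\star}\neq p_{n+1}^{R[t]}$ as follows:
\begin{enumerate}
\item If $\delta$ were in $H^\star$, the $(n+1)$-universality of $R^\star$ would put $R^\star|\delta^{+R^\star}$, and hence $P$, into $H^\star$.
\item So $\delta\notin H^\star$. Since $\delta$ is a successor cardinal of $R^\star$, this forces $n=0$ and $\Hull_1^{R^\star}(\delta\cup\{p_1^{R^\star}\})=R^\star|\delta\in R^\star$, a contradiction.
\end{enumerate}
The exact form $p_{n+1}^{R[t]}\cup\{\zeta\}$ with $\zeta\in[\delta,\OR^P]$ then comes from parts 2 and 3: once $P$ is in the $R^\star$-hull, that hull is the $R[t]$-hull. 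Clause (e) comes from minimality of $p_{n+1}^{R^\star}$ plus soundness. Condensation on proper segments plays no role in (b), so your planned condensation argument there is not needed; the hull correspondence does the work.
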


\begin{proof}
Parts \ref{item:adm_buff_fs_corresp_up_to_n}
 is trivial when $n=0$, and by induction
(using part \ref{item:adm_buff_fs_corresp_rho_n+1>delta}) when $n>0$.

Part \ref{item:adm_buff_rSigma_n+1_equiv}: This is by
Propositions \ref{item:adm_buff_Sigma_1_corresp_passive_no_largest_card} and \ref{item:adm_buff_Sigma_1_corresp_active} when $n=0$. It is by induction (with part \ref{item:adm_buff_rSigma_n+1_equiv}) when $n>0$,
except in case $n=1$ and $R$ is passive with a largest cardinal $\gamma$. In the latter case, it follows from Proposition \ref{prop:adm_buff_Sigma_1_corresp_passive_largest_card},
since $\{\gamma\}$ is $\rDelta_2^{R[t]}$ and $\rDelta_2^{R^\star}$.

Part \ref{item:adm_buff_Hull_n+1_corresp} is a corollary of part \ref{item:adm_buff_rSigma_n+1_equiv}.

Parts \ref{item:adm_buff_fs_corresp_rho_n+1>delta} and \ref{item:adm_buff_fs_corresp_rho_n+1=delta_P-premouse}:
We have $p_{n+1}^{R[t]}\cap(\OR^P+1)=\emptyset$ since $P$ is encoded into $t$, and the language for $R[t]$ can refer directly to $t$. So
 parts \ref{item:adm_buff_fs_corresp_rho_n+1>delta} and \ref{item:adm_buff_fs_corresp_rho_n+1=delta_P-premouse}
 are easy consequences of parts
\ref{item:adm_buff_rSigma_n+1_equiv} and \ref{item:adm_buff_Hull_n+1_corresp}, except in the case that $n=0$ and $R$ is passive with a largest cardinal $\gamma$. So consider the latter case of part \ref{item:adm_buff_fs_corresp_rho_n+1>delta}.
By Proposition \ref{prop:adm_buff_Sigma_1_corresp_passive_largest_card},
 $\rho\eqdef\rho_1^{R}=\rho_1^{R[t]}=\rho_1^{R^\star}>\delta$. If $\rho_1^R=\OR^R$,
there is nothing to prove, so suppose $\rho_1^R<\OR^R$. Therefore $\delta<\rho_1^R\leq\gamma$.
So $p_1^R=p_1^{R[t]}$. Now we claim that $p_1^{R^\star}=p_1^{R[t]}$.
For $p_1^{R[t]}=p_1^R\not\sub\gamma$, since by condensation, $R|\gamma\preccurlyeq_1 R$. So $p_1^R=(p_1^R\cap\gamma)\cup\{\xi\}$ where $\xi=\max(p_1^R)$. But $R|\xi\preccurlyeq_1 R$, since otherwise $\xi\in\Hull_1^R(\xi)$, contradicting the minimality of $p_1^R$. In particular, either $\xi=\gamma$ or $\gamma$ is the largest cardinal of $R|\xi$. In either case, $\{\gamma\}$ is $\rSigma_1^{R^\star}(\{\xi\})$ (since if $\gamma<\xi$ then $\gamma$ is just the unique ordinal $\gamma'<\xi$ such that for some $m<\om$, $\gamma'=\rho_{m+1}^N$ for some $N\pins R^\star$).
So by Proposition \ref{prop:adm_buff_Sigma_1_corresp_passive_largest_card}, the theory $t=\Th_{1}^{R^\star}(\rho\cup\{p_1^{R[t]}\})$ encodes  the theory $\Th_1^{R[t]}(\rho\cup\{p_1^{R[t]}\})$, so $t\notin R[t]$, so $t\notin R^\star$. This shows that $p_1^{R^\star}\leq p_1^{R[t]}$. But conversely, if $p_1^{R^\star}<p_1^{R[t]}$ then just by the minimality of $p_1^{R[t]}$, we have $t'=\Th_1^{R[t]}(\rho\cup\{p_1^{R^\star}\})\in R[t]$, hence $t'\in R^\star$, but by Proposition \ref{prop:adm_buff_basics} part \ref{item:adm_buff_rSigma_1^R^star_is_rSigma_1^R[t]} (here we don't need even need to consider the parameter $\gamma$), $t''=\Th_1^{R^\star}(\rho\cup\{p_1^{R^\star}\})$ can be computed from $t'$, so $t''\in R^\star$, contradiction. So $p_1^{R^\star}=p_1^{R[t]}$.
Regarding clause \ref{item:adm_buff_fs_corresp_rho_n+1>delta_solidity}
(solidity), we automatically have the $1$-solidity witnesses for $\xi=\max(p_1^{R[t]})=\max(p_1^{R^\star})$, for both models, by the discussion above.
But the remaining $1$-solidity witnesses have access to the parameter $\gamma$, so the equivalence follows
from Proposition \ref{prop:adm_buff_Sigma_1_corresp_passive_largest_card}. Clause \ref{item:adm_buff_fs_corresp_rho_n+1>delta_soundness} (soundness) follows
from clause \ref{item:adm_buff_fs_corresp_rho_n+1>delta_solidity} and Proposition \ref{prop:adm_buff_Sigma_1_corresp_passive_largest_card}. This completes the proof of part \ref{item:adm_buff_fs_corresp_rho_n+1>delta} in the present case. Part \ref{item:adm_buff_fs_corresp_rho_n+1=delta_P-premouse} is very similar. If $\gamma>\delta$ it is essentially the same. If $\gamma=\delta$,
both models can easily define the parameter $\delta$ via their language (along with $P,t$). Using that parameter, we can proceed  as before.

Part \ref{item:adm_buff_fs_corresp_rho_n+1=delta=rho_n+1^R^star}: Here we break the proof into cases:

\begin{case}\label{case:not_n=0_and_R_pv_with_lgst_card} It is not the case that $n=0$ and $R$ is passive with a largest cardinal.

\begin{scase} $P\in\Hull_{n+1}^{R^\star}(\delta\cup\{\pvec_n^{R^\star},p_{n+1}^{R[t]}\})$.

By parts \ref{item:adm_buff_rSigma_n+1_equiv} and \ref{item:adm_buff_Hull_n+1_corresp}, $p_{n+1}^{R[t]}=p_{n+1}^{R^\star}$, and also since $R^\star$ is $(n+1)$-universal and by parts \ref{item:adm_buff_rSigma_n+1_equiv} and \ref{item:adm_buff_Hull_n+1_corresp}, $R[t]$ is $(n+1)$-universal.
The $(n+1)$-solidity of $R[t]$ is likewise, noting that since $\OR^P<\alpha$ for all $\alpha\in p_{n+1}^{R[t]}$, the $(n+1)$-solidity witnesses for $R^\star$ can all refer to the parameter $\OR^P$, and hence suffice to compute the corresponding $(n+1)$-solidity witnesses for $R[t]$.
Again using parts \ref{item:adm_buff_rSigma_n+1_equiv} and \ref{item:adm_buff_Hull_n+1_corresp}, it follows that $R[t]$ is $(n+1)$-sound iff $R^\star_P$ is $(n+1)$-sound.
Finally, since $P\in\Hull_{n+1}^{R^\star}(\delta\cup\{\pvec_n^{R^\star},p_{n+1}^{R[t]}\})$
and $p_{n+1}^{R^\star}=p_{n+1}^{R[t]}$, clause \ref{item:adm_buff_rho_n+1^R[t]=delta=rho_n+1^R^star_charac_p_n+1} is immediate in the present case.
\end{scase}

\begin{scase}
 $ P\notin\Hull_{n+1}^{R^\star}(\delta\cup\{\pvec_n^{R^\star},p_{n+1}^{R[t]}\})$.

We claim that $p_{n+1}^{R^\star}\neq p_{n+1}^{R[t]}$. For suppose  $p_{n+1}^{R^\star}=p_{n+1}^{R[t]}$.
Then by the $(n+1)$-universality of $R^\star$, $\delta\notin\Hull_{n+1}^{R^\star}(\delta\cup\{\pvec_n^{R^\star},p_{n+1}^{R^\star}\})$,
but since $\delta$ is a successor cardinal of $R^\star$ (by our original assumptions on $P$),
therefore $n=0$ and $\Hull_1^{R^\star}(\delta\cup\{p_1^{R^\star}\})=R^\star|\delta$, which is in $R^\star$, a contradiction.

It follows (again using
parts \ref{item:adm_buff_rSigma_n+1_equiv} and \ref{item:adm_buff_Hull_n+1_corresp}) that $p_{n+1}^{R^\star}=p_{n+1}^{R[t]}\cup\{\zeta\}$ for some $\zeta\in[\delta,\OR^P]$.
Now the $(n+1)$-universality of $R^\star$
yields that of $R[t]$, and likewise for $(n+1)$-solidity, since $p_{n+1}^{R[t]}\cap(\OR^P+1)=\emptyset$.
Like before, it follows that $R[t]$ is $(n+1)$-sound iff $R^\star$ is $(n+1)$-sound.
Finally, for clause \ref{item:adm_buff_rho_n+1^R[t]=delta=rho_n+1^R^star_charac_p_n+1}, suppose $R[t]$ is $(n+1)$-sound, and hence so is $R^\star$, as we have seen. Therefore $P\in\Hull_{n+1}^{R^\star}(\delta\cup\{\pvec_{n+1}^{R^\star}\})$. Conversely,
suppose there is $\zeta'<\zeta$ such that
 $P\in\Hull_{n+1}^{R^\star}(\delta\cup\{\pvec_n^{R^\star},p_{n+1}^{R[t]},\zeta'\})$. Then note that since $R[t]$ is $(n+1)$-sound, this hull is just $R^\star$, contradicting the minimality of $p_{n+1}^{R^\star}$.
\end{scase}
\end{case}
\begin{case} $n=0$ and $R$ is passive with a largest cardinal $\gamma$.

If $\gamma>\delta$ then by the argument for part \ref{item:adm_buff_fs_corresp_rho_n+1=delta_P-premouse},
$\gamma$ is easily definable from $\max(p_1^{R[t]})=\max(p_1^{R^\star})$,
 and because of this and by Proposition  \ref{prop:adm_buff_Sigma_1_corresp_passive_largest_card}, we can argue as in Case \ref{case:not_n=0_and_R_pv_with_lgst_card}.
Suppose $\gamma=\delta$. Then either $p_1^{R[t]}=\emptyset$ or  $p_1^{R[t]}=\{\xi\}$ for some $\xi\in(\delta,\OR^R)$ (since $R[t]$ can identify $t$ via its language, and hence define the parameter $\delta$).

Suppose $p_1^{R[t]}=\{\xi\}$. then $\xi>\OR^P$,
since $P$ can be identified from $t$, and so $\delta,P\in\Hull_1^{R^\star}(\delta\cup\{p_1^{R[t]}\})$.
But then much as before, $p_1^{R^\star}\leq p_1^{R[t]}$.
But if $p_1^{R^\star}<p_1^{R[t]}$
then $t=\Th_{1}^{R[t]}(\delta\cup\{p_1^{R^\star}\})\in R[t]$, so $t\in R^\star$, and this gives a contradiction like before. So $p_1^{R[t]}=\{\xi\}=p_1^{R^\star}$,
and the rest is now clear.

Now suppose instead that $p_1^{R[t]}=\emptyset$.
If $P\in\Hull_1^{R^\star}(\delta)$ then also $\delta\in\Hull_1^{R^\star}(\delta)$, and then as before,
it follows that $p_1^{R^\star}=\emptyset$,
and the remaining clauses are clear.
If instead $P\notin\Hull_1^{R^\star}(\delta)$,
then this hull is bounded in $\OR^{R^\star}$,
hence an element of $R^\star$, and so $p_1^{R^\star}\neq\emptyset$,
and it follows that $p_1^{R^\star}=\{\zeta\}$
for some $\zeta\in[\delta,\OR^P]$, etc.\qedhere
\end{case}
\end{proof}

\begin{prop}
 Let $n<\om$. Suppose that $R[t]$ is  $n$-sound,
 $\delta<\rho_n^{R[t]}$ and $R^\star$ is a well-defined premouse.
Let $\mu<\rho_n^{R[t]}$
 and $f:[\mu]^{<\om}\to R[t]$ \tu{(}equivalently, $f:[\mu]^{<\om}\to R^\star$\tu{)}. Then $f$ is $\bfrSigma_n^{R[t]}$ iff $f$ is $\bfrSigma_n^{R^\star}$.
\end{prop}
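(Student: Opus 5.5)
The plan is to derive this from Proposition \ref{prop:adm_buff_fs_corresp}, in the same way that the analogous statements about functions were obtained in the earlier subsections. For $n=0$ there is nothing to prove. By Proposition \ref{prop:adm_buff_basics} part \ref{item:adm_buff_R[t]_R^star_same_universe}, $R[t]$ and $R^\star$ have the same universe. Being $\bfrSigma_0$ just means being an element (for functions with domain $[\mu]^{<\om}$, $\mu<\rho_0=\OR$, boundedness is automatic by amenability), so the claim holds.

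So assume $n\geq 1$. By Proposition \ref{prop:adm_buff_fs_corresp} part \ref{item:adm_buff_fs_corresp_up_to_n}, applied to $n$, we have $\rho_i^{R[t]}=\rho_i^{R^\star}$ and $p_i^{R[t]}=p_i^{R^\star}$ for $i\leq n$. In particular $\vec{p}_n^{R[t]}=\vec{p}_n^{R^\star}$ and $\mu<\rho_n^{R^\star}$.

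Next, apply part \ref{item:adm_buff_rSigma_n+1_equiv} at level $n-1$; this is legitimate since $R[t]$ is $(n-1)$-sound and $\delta<\rho_n^{R[t]}\leq\rho_{n-1}^{R[t]}$. It says that $\rSigma_n^{R[t]}(\{\alpha_\Tt\})$ and $\rSigma_n^{R^\star}(\{P,\alpha_\Tt\})$ statements about arbitrary parameters $v$ are recursively intertranslatable. Now suppose $f$ is $\bfrSigma_n^{R[t]}$, say $f(u)=y\iff R[t]\sats\varphi(u,y,x)$ with $\varphi$ $\rSigma_n$ and $x\in R[t]$. Put $v=(u,y,x)$ and use the formula $\psi_n$ to get $f(u)=y\iff R^\star\sats\psi_n(\varphi,(u,y,x),P,\alpha_\Tt)$. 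This exhibits $f$ as $\rSigma_n^{R^\star}(\{x,P,\alpha_\Tt\})$, hence as $\bfrSigma_n^{R^\star}$. The converse is symmetric, using $\tau_n$ and the parameter $x,\alpha_\Tt$ in $R[t]$; the parameter $P$ is harmless there, being $\Delta_1^{R[t]}(\emptyset)$ by Proposition \ref{prop:adm_buff_basics} part \ref{item:adm_buff_R^star_Delta_1_in_R[t]}.

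The main obstacle is the exceptional case $n=1$ with $R$ passive with a largest cardinal $\gamma$, where part \ref{item:adm_buff_rSigma_n+1_equiv} at level $0$ is excluded. In that case I will instead invoke Proposition \ref{prop:adm_buff_Sigma_1_corresp_passive_largest_card} part \ref{item:adm_buff_rSigma_1_equiv}. This gives the same intertranslation, but with the extra parameter $\gamma$ on both sides. Since we are working with boldface definability, $\gamma$ may simply be absorbed into the parameter $x$, so the argument goes through unchanged.

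A secondary point needs checking: whether "$\bfrSigma_n$" for functions in this paper carries an implicit uniformity beyond boldface definability of the graph, such as being coded by $\rSigma_n$ terms relative to $\vec{p}_n$. If it does, the equality $\vec{p}_n^{R[t]}=\vec{p}_n^{R^\star}$ together with the uniform formulas $\psi_n,\tau_n$ still transfers such codes, so no further work should be needed.
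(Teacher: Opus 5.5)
Your proposal is correct and follows the paper's proof. For $n=0$ you use that $R[t]$ and $R^\star$ have the same universe. For $n>0$ you transfer the $\rSigma_n$ graph via part \ref{item:adm_buff_rSigma_n+1_equiv} of Proposition \ref{prop:adm_buff_fs_corresp} at level $n-1$. In the excluded case ($n=1$, $R$ passive with a largest cardinal) you use Proposition \ref{prop:adm_buff_Sigma_1_corresp_passive_largest_card}, absorbing $\gamma$ into the boldface parameter.

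One small point: for $n=0$, $\bfrSigma_0$ simply means membership by convention, so your parenthetical appeal to amenability is unnecessary and should be dropped.
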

\begin{proof}
If $n=0$, this is because $R[t]$ and $R^\star$ have the same universe,
and if $n>0$, it is an immediate consequence of
\ref{prop:adm_buff_Sigma_1_corresp_passive_largest_card} and \ref{prop:adm_buff_fs_corresp}
 part \ref{item:adm_buff_rSigma_n+1_equiv}.
\end{proof}

\subsection{When $R^+$ is passive and there is no $R^+$-$\delta$-measure}\label{sec:append_ordinals}

Let $N$ be $\star$-iterable. Suppose that either:
\begin{enumerate}[label=(\roman*)]
\item\label{item:F^R'_strictly_overlaps} $N$ is active type 2 with $\crit(F^{N})<\delta$
and there is no $N$-$\delta$-measure, or
\item\label{item:F^R'_strictly_overlaps_tp_3} $N$ is active type 3 with $\crit(F^{N})<\delta$ and there is no $N$-$\delta$-measure, or
\item\label{item:R'-delta-measure} there is an $N$-$\delta$-measure
and $\rho_\om^{N}=\delta$, or
\item\label{item:R'_adm_buffered} $N=Q_m$ (which is admissibly buffered).
\end{enumerate}
Assume also that we have all of the extra assumptions regarding $N$ and $N^\star$ used in the foregoing sections, such as that $N^\star$ is a well-defined premouse,
is $1$-solid, etc, which guarantee the correspondence of fine structure established there.

Let $0<\xi\in\OR$ be such that
$\J_\xi(N[t])$ is a $t$-premouse and $\rho_1^{\J_\alpha(N[t])}\leq\OR^N$ for all $\alpha\in(0,\xi]$, and if \ref{item:R'_adm_buffered} holds, that $\xi\leq\alpha_{m-1}+\ldots+\alpha_0$ (see Definition \ref{dfn:R_i}).

\begin{prop}\label{prop:R_passive_not_adm_buff_no_R-delta_meas}
Let $R,S$ be such that either:
\begin{enumerate}[label=\tu{(}\alph*\tu{)}]
\item\label{R_passive_not_adm_buff_no_R-delta_meas_tp_2} \ref{item:F^R'_strictly_overlaps} holds, $R=N$ and $S$ is the corresponding structure defined in \S\ref{sec:type_2_translate_step},
\item\label{R_passive_not_adm_buff_no_R-delta_meas_R_to_R_J} $R=N$ is active type 3 with $\crit(F^R)<\delta$
and $S=N_J=R_J\neq R=N$, or
\item\label{R_passive_not_adm_buff_no_R-delta_meas_tp_3} \ref{item:F^R'_strictly_overlaps_tp_3} holds, $R=N_J$ and $S$ is the corresponding structure defined in \S\ref{sec:type_3_translate_step}.
\end{enumerate}
Let $0<\alpha\leq\xi$ and $n<\om$. Suppose that $\J_\alpha(R[t])$ is an $n$-sound $t$-premouse which is $1$-solid. Let $r\in[\OR]^{<\om}$ be least
such that $\OR^R\in\Hull_1^{\J_\alpha(R[t])}(\{r,p_1^{\J_\alpha(S[t])}\})$, and let $r'=r\cut \rho_1^{\J_\alpha(R[t])}$.\footnote{Note $r'\cap(\delta+1)=\emptyset$,
since $\rho_1^{\J_\alpha(R[t])}\geq\delta$ by definition, and the parameter $\delta$ can be defined via the $t$-premouse language.}
Then:
\begin{enumerate}
\item $\J_\alpha(R[t])\sub \J_\alpha(S[t])$,
\item $\pow(\beta)\cap\J_\alpha(R[t])=\pow(\beta)\cap\J_\alpha(S[t])$ for all $\beta\in\OR^{\J_\alpha(R[t])}$.
\item $\J_\alpha(S[t])$ is a $t$-premouse,
\item\label{item:passive_after_overlap_rSigma_1_equiv} $\rSigma_1^{\J_\alpha(R[t])}(\{R[t],\alpha_\Tt\})$ statements
about parameters in $\J_\alpha(R[t])$
are recursively equivalent
to $\rSigma_1^{\J_\alpha(S[t])}(\{\alpha_\Tt\})$ statements about those parameters. That is,
there is an $\rSigma_1$ formula $\psi_1$ such that for all $\rSigma_1$ formulas $\varphi$ and all $v\in \J_\alpha(R[t])$, we have
\[ \J_\alpha(R[t])\sats\varphi(v,R[t],\alpha_\Tt)\iff \J_\alpha(S[t])\sats\psi_1(\varphi,v,\alpha_\Tt),\]
and there is an $\rSigma_1$ formula $\tau_1$ such that for all $\rSigma_1$ formulas $\varphi$ and all $v\in \J_\alpha(R[t])$, we have
\[ \J_\alpha(S[t])\sats\varphi(v,\alpha_\Tt)\iff \J_\alpha(R[t])\sats\tau_1(\varphi,v,R[t],\alpha_\Tt).\]

\item\label{item:passive_after_overlap_rho_1_match}
$\rho_{1}^{\J_\alpha(R[t])}=\rho_{1}^{\J_\alpha(S[t])}$,

\item\label{item:passive_after_overlap_p_1_correspondence}
We have:
\begin{enumerate}[label=--]
\item $p_1^{\J_\alpha(R[t])}=p_1^{\J_\alpha(S[t])}\cup r'$
\item
$\OR^R\in\Hull_1^{\J_\alpha(R[t])}(\eta\cup \{p_1^{\J_\alpha(R[t])}\cut(\eta+1)\})$ for each $\eta\in p_1^{\J_\alpha(S[t])}$,
and
\item $\J_\alpha(S[t])$ is $1$-solid.
\end{enumerate}
\item\label{item:passive_after_overlap_n-soundness} $\J_\alpha(S[t])$ is $n$-sound.
\item\label{item:passive_after_overlap_rSigma_i+1_translation} For all $i\in [1,n]$, $\rSigma_{i+1}^{\J_\alpha(R[t])}(\{\alpha_\Tt\})$
statements about parameters in $\J_\alpha(R[t])$
are recursively equivalent
to $\rSigma_{i+1}^{\J_\alpha(S[t])}(\{\alpha_\Tt\})$ statements about those parameters.
That is, there is an $\rSigma_{i+1}$ formula $\psi_{i+1}$
such that for all $\rSigma_{i+1}$ formulas $\varphi$
and all $v\in\J_\alpha(R[t])$, we have
\[ \J_\alpha(R[t])\sats\varphi(v,\alpha_\Tt)\iff \J_\alpha(S[t])\sats\psi_{i+1}(\varphi,v,\alpha_\Tt),\]
and there is an $\rSigma_{i+1}$
formula $\tau_{i+1}$ such that for all $\rSigma_{i+1}$ formulas $\varphi$
and all $v\in\J_\alpha(R[t])$, we have
\[ \J_\alpha(S[t])\sats\varphi(v,\alpha_\Tt)\iff\J_\alpha(R[t])\sats\tau_{i+1}(\varphi,v,\alpha_\Tt).\]
\item\label{item:passive_after_overlap_rho_i+1_match}
$\rho_{i+1}^{\J_\alpha(S[t])}=\rho_{i+1}^{\J_\alpha(R[t])}$ for all $i\in[1, n]$,

\item\label{item:passive_after_overlap_p_i+1_match} $p_{i+1}^{\J_\alpha(S[t])}=
p_{i+1}^{\J_\alpha(R[t])}$
\item\label{item:passive_after_overlap_i+1-solidity} $\J_\alpha(R[t])$ is $(i+1)$-solid iff $\J_\alpha(S[t])$ is $(i+1)$-solid, for all $i\in[1, n]$.
\item\label{item:passive_after_overlap_function_match} Let $\mu<\rho_n^{\J_\alpha(R[t])}=\rho_n^{\J_\alpha(S[t])}$, and let $f:[\mu]^{<\om}\to R[t]$. Then
\[ f\text{ is }\bfrSigma_n^{\J_\alpha(R[t])}\iff f\text{ is }\bfrSigma_n^{\J_\alpha(S[t])}.\]
\end{enumerate}
\end{prop}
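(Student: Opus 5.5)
The approach is induction on $\alpha\in(0,\xi]$, and within a fixed $\alpha$, induction on the fine-structural level $i\leq n$. The whole argument rests on the already established one-step translation between $R[t]$ and $S[t]$: Proposition \ref{prop:tp_2_overlap_no_delta-meas_R[t]_S[t]} in case \ref{R_passive_not_adm_buff_no_R-delta_meas_tp_2}, \S\ref{sec:between_R_and_R_J} in case \ref{R_passive_not_adm_buff_no_R-delta_meas_R_to_R_J}, and Proposition \ref{prop:tp_3_overlap_no_delta-meas_R_J[t]_S[t]} in case \ref{R_passive_not_adm_buff_no_R-delta_meas_tp_3}.

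In each case we know the following. $R[t]\sub S[t]$. The passive part of $R[t]$ is a segment of $S[t]$, namely $S[t]|\OR^R$ with $\OR^R$ a successor cardinal of $S[t]$ (or equal to $\OR^{S[t]}$ in case \ref{R_passive_not_adm_buff_no_R-delta_meas_R_to_R_J}). The predicates of $R[t]$ are $\Sigma_1^{S[t]}$ in small parameters, and conversely $S[t]$ is coded inside $R[t]$ as an ultrapower of $P=M_\kappa$ by $F^R$, which is computed in $R[t]$ from $t$ and $\kappa$. The hypothesis $\rho_1^{\J_\beta(R[t])}\leq\OR^N$ for $\beta\leq\xi$ ensures that each $\J_\beta(R[t])$ adds new subsets of $\OR^R$ but nothing more.

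\textbf{Parts 1--3 (universes and power sets).} By induction on $\beta\leq\alpha$, I show that $\J_\beta(R[t])$ is a rudimentary closure step over $\J_\beta(S[t])$ restricted appropriately, using that $R[t]\in\J_1(S[t])$ (because $R[t]$ is $\Sigma_1$-definable over $S[t]$). This gives part 1. For part 2, every subset of $\beta<\OR^{R}$ is already in $R[t]$, so the claim reduces to $\beta\geq\OR^R$. There, a new bounded subset in $\J_\alpha(S[t])$ is $\Sigma_\om$-definable over some $\J_\gamma(S[t])$, and by the truth translation of the previous level it is definable over $\J_\gamma(R[t])$. Part 3 then follows, since the $t$-premouse axioms for $\J_\alpha(S[t])$ transfer from those of $\J_\alpha(R[t])$ and $S[t]$.

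\textbf{Part \ref{item:passive_after_overlap_rSigma_1_equiv} (the $\rSigma_1$ translation).} Here $\rSigma_1$ over $\J_\alpha$ means existence of a witness in some $\J_\gamma$, $\gamma<\alpha$, together with a $\Sigma_\om$ statement over it. The witness levels correspond. To translate a formula of $\J_\alpha(R[t])$ with parameter $R[t]$ into one over $\J_\alpha(S[t])$, we recover $R[t]$ by a $\Sigma_1$ definition from the top of $S[t]$: $S[t]$ is a proper segment of $\J_\alpha(S[t])$, so the $\rSigma_{k+1}^{S[t]}$ parameters $z$ become $\rSigma_1$-accessible (even $\Delta_1$) in $\J_\alpha(S[t])$, since $S[t]$ is an element. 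Conversely, in $\J_\alpha(R[t])$ we reconstruct $S[t]$ from $R[t]$ (via the ultrapower, or via $R^\sq$ in case \ref{R_passive_not_adm_buff_no_R-delta_meas_R_to_R_J}) in a $\Sigma_1$ way.

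\textbf{Parts \ref{item:passive_after_overlap_rho_1_match}--\ref{item:passive_after_overlap_p_1_correspondence} (the first projectum and parameter).} Equality of $\rho_1$ follows from part \ref{item:passive_after_overlap_rSigma_1_equiv} together with part 2, since $\rSigma_1$ theories coded as subsets of ordinals below $\OR^R$ are in one model iff they are in the other. The subtlety is the parameter. $\J_\alpha(S[t])$ can define $R[t]$ without extra parameters, but $\J_\alpha(R[t])$ needs $R[t]$, equivalently $\OR^R$, in its hulls. So the first-level parameter of $\J_\alpha(R[t])$ must additionally pick up $r'$, the least tail needed to put $\OR^R$ into the hull generated by $p_1^{\J_\alpha(S[t])}$. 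I argue this by the usual minimization. First, $p_1^{\J_\alpha(R[t])}\le p_1^{\J_\alpha(S[t])}\cup r'$, because the corresponding theory is not in the model. Second, solidity witnesses for the $S$-side yield those for the $R$-side once $\OR^R$ is in the hull, which gives the displayed hull condition and $1$-solidity of $\J_\alpha(S[t])$ from that of $\J_\alpha(R[t])$. I expect this to be the main obstacle: the interleaving of $r'$ with $p_1^{\J_\alpha(S[t])}$ in the lexicographic order must be checked carefully. To handle it, I will compare the solidity witnesses coordinate by coordinate, from the top element downwards, using the minimality of $r$.

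\textbf{Higher levels (parts \ref{item:passive_after_overlap_n-soundness} onward).} Once $\OR^R$ lies in the first-level hull with the standard parameter, the parameter $R[t]$ is absorbed into $p_1$. The $\rSigma_{i+1}$ translations for $i\ge1$ then proceed by the routine induction used in Propositions \ref{prop:tp_2_overlap_no_delta-meas_R[t]_S[t]_n} and \ref{prop:inter_Sigma_1_R,R[t]_n+1}: elements of $T_i$ on one side are computed from those on the other, and projecta, parameters, solidity, soundness and $\bfrSigma_n$ functions all match, using part 2 for the functions.
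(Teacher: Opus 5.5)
Your outline has the same architecture as the paper's proof: induction on $(\alpha,n)$, an $\rSigma_1$ translation with the parameter $R[t]$ only on the $R$-side, a top-down analysis of $p_1$, and routine higher levels. However, two central steps are missing, and there are some smaller errors.

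\textbf{The $\rSigma_1$ translation (part 4).} The formula $\psi_1(\varphi,v,\alpha_\Tt)$ has no parameter for $S[t]$ or $R[t]$. So $\J_\alpha(S[t])$ must single out $S[t]$ in an $\rSigma_1$ way from $\alpha_\Tt$ alone. That $S[t]$ is an element of $\J_\alpha(S[t])$ does not give this. Your sketch never says how $\J_\alpha(S[t])$ recognizes where $S[t]$ ends; for instance, $S$ need not be active. The paper's key observation is that $S[t]$ is the \emph{least} segment $Q[t]$ of $\J_\alpha(S[t])$ such that $Q$ is unsound (cases (a), (c)), resp.\ $Q$ is not Mitchell--Steel indexed (case (b)). From $S$ one then recovers $R$, $R[t]$ and the hierarchy above $R[t]$. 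This parameter-free definability also underlies the asymmetry in part 6: no $r'$ is needed on the $S$-side, and the top piece of $r'$ lies in $\Hull_1^{\J_\alpha(S[t])}(\emptyset)$.

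\textbf{Smaller points (parts 2 and 3).} Part 3 hinges on $S[t]$ being sound as a $t$-premouse even though $S$ is not. The paper gets this from $\J(R)$ being a premouse (so $R$, hence $R[t]$, is sound) together with the earlier soundness transfer from $R[t]$ to $S[t]$; your remark that the axioms ``transfer'' does not supply it. In part 2, your reduction to $\beta\geq\OR^R$ rests on a false claim. For example, in case (a), $\J_1(R[t])$ already contains $\Th_1^{R[t]}(\rho_1^{R[t]}\cup\{p_1^{R[t]}\})\notin R[t]$, a subset of $\rho_1^{R[t]}<\OR^R$. Your translation argument for new sets would also first need the defining parameters from $\J_\gamma(S[t])$ replaced by parameters from $\J_\gamma(R[t])$.

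\textbf{The first projectum and parameter (part 6).} You prove only $p_1^{\J_\alpha(R[t])}\leq p_1^{\J_\alpha(S[t])}\cup r'$. That argument is fine: the $R$-side theory in that parameter has $R[t]$ in its hull, so via $\tau_1$ it computes the $S$-side theory in $p_1^{\J_\alpha(S[t])}$, which is not in $\J_\alpha(S[t])\supseteq\J_\alpha(R[t])$. The reverse inequality, the displayed hull condition and the $1$-solidity of $\J_\alpha(S[t])$ are deferred to a comparison you do not carry out. The one mechanism you do state runs the wrong way. The hypothesis is $1$-solidity of $\J_\alpha(R[t])$, so you must compute $S$-side witnesses from $R$-side ones. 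Because $\tau_1$ uses the parameter $R[t]$, this needs $\OR^R$ in the relevant $R$-side hull. So the hull condition must be established first, from where $r'$ sits in $p_1^{\J_\alpha(R[t])}$ and the minimality of $r$; it is not a by-product of transferring witnesses.

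\textbf{The paper's route for part 6.}
\begin{itemize}
\item First, $p_1^{\J_\alpha(S[t])}\leq p_1^{\J_\alpha(R[t])}$, since $\J_\alpha(R[t])$ is $\rSigma_1^{\J_\alpha(S[t])}$ without extra parameters.
\item Also $r'\leq p_1^{\J_\alpha(R[t])}$, since otherwise the $R$-side $\Sigma_1$ hull omits $R[t]$ and is bounded.
\item Let $r'_0$ be the longest top segment of $r'$ that is a top segment of $p_1^{\J_\alpha(R[t])}$, and let $\gamma_0=\max(p_1^{\J_\alpha(R[t])}\cut r'_0)$. Since $r'\cut r'_0\sub\gamma_0$, one gets $R[t]\in\Hull_1^{\J_\alpha(R[t])}(\gamma_0\cup\{r'_0\})$.
\item Hence the $R$-side solidity witness at $\gamma_0$ computes $\Th_1^{\J_\alpha(S[t])}(\gamma_0\cup\{r'_0\})$.
\item As $r'_0\in\Hull_1^{\J_\alpha(S[t])}(\emptyset)$ and $p_1^{\J_\alpha(S[t])}\sub\gamma_0+1$, this forces $\max(p_1^{\J_\alpha(S[t])})=\gamma_0$ and gives the $S$-side witness there.
\item One then iterates below $\gamma_0$.
\end{itemize}

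\textbf{Higher levels.} Your plan of absorbing $R[t]$ into $p_1$ is a reasonable alternative to the paper's use of the fact that $\{R[t]\}$ is $\rSigma_2^{\J_\alpha(R[t])}$. It depends, however, on the $p_1$ analysis above already being in place.
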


\begin{proof}
We proceed by induction on $(\alpha,n)$,
building on the results of \S\ref{sec:type_2_translate_step}. In particular, since $\J(R)$ is a premouse, $R$ is sound,
so $R[t]$ is a sound $t$-premouse, so $S[t]$ is a sound $t$-premouse, so $\J(S[t])$ is a $t$-premouse (although $\J(S)$ is not a premouse, since $S$ is not sound).

Part \ref{item:passive_after_overlap_rSigma_1_equiv}: Working in $\J_\alpha(S[t])$,
$S[t]$ is the least segment $Q[t]$
such that either $Q$ is not sound (under hypotheses \ref{R_passive_not_adm_buff_no_R-delta_meas_tp_2}
and \ref{R_passive_not_adm_buff_no_R-delta_meas_tp_3}),
or $Q$ is not Mitchell-Steel indexed (under hypothesis \ref{R_passive_not_adm_buff_no_R-delta_meas_R_to_R_J}). This allows us to identify $S[t]$ and hence $S$ in a $\Sigma_1$ fashion, and hence compute $R$ and $R[t]$. By induction, we also have $\J_\alpha(R[t])\sub \J_\alpha(S[t])$, and the $\J_\alpha(R[t])$ hierarchy above $R[t]$ is $\rSigma_1^{\J_\alpha(S[t])}$, which suffices to formulate $\psi_1$.

To formulate $\tau_1$, since we are working from the parameter $R[t]$,
it is easy to compute a coded version $S[t]$ and the $\J_\alpha(S[t])$ hierarchy, embedding the elements of $\J_\alpha(R[t])$ into these codes faithfully. This yields $\tau_1$.

Part \ref{item:passive_after_overlap_rho_1_match}: This is an immediate consequence of part \ref{item:passive_after_overlap_rSigma_1_equiv}.

Part \ref{item:passive_after_overlap_p_1_correspondence}: Let $\rho=\rho_1^{\J_\alpha(R[t])}=\rho_1^{\J_\alpha(S[t])}$. Since $\{R[t]\}$ is $\rSigma_1^{S[t]}(\emptyset)$, \[ \pow(\rho)\cap\J_\alpha(R[t])=\pow(\rho)\cap\J_\alpha(S[t]) \]
and $\J_\alpha(R[t])$ is $\rSigma_1^{\J_\alpha(S[t])}$,
certainly $p_1^{\J_\alpha(S[t])}\leq p_1^{\J_\alpha(R[t])}$. And
 $r'\leq p_1^{\J_\alpha(R[t])}$,
 since otherwise $R[t]\notin\Hull_1^{\J_\alpha(R[t])}(\rho\cup\{p_1^{J_\alpha(R[t])}\})$,
 which implies that this hull is bounded in $\J_\alpha(R[t])$, and hence an element of $\J_\alpha(R[t])$, impossible. Let $r'_0$ be the longest (top) segment of $r'$ such that $r'_0\ins p_1^{\J_\alpha(R[t])}$.
 Note that $r'_0\in\Hull_1^{\J_\alpha(S[t])}(\emptyset)$. So if $r'_0=p_1^{\J_\alpha(R[t])}$ then $p_1^{\J_\alpha(S[t])}=\emptyset$
 and we are done.
 So suppose $r'_0\psub p_1^{\J_\alpha(R[t])}$, and let $\gamma_0=\max(p_1^{\J_\alpha(R[t])}\cut r'_0)$. We claim that $\gamma_0=\max(p_1^{\J_\alpha(S[t])})$.
 For note that either $r'=r'_0$, or $\max(r'\cut r'_0)<\gamma_0$, and therefore $R[t]\in\Hull_1^{\J_\alpha(R[t])}(\gamma_0\cup \{r'_0\})$.
 By $1$-solidity for $\J_\alpha(R[t])$,
 we have $t=\Th_1^{\J_\alpha(R[t])}(\gamma_0\cup\{r'_0\})\in \J_\alpha(R[t])$. But since $R[t]$ is in that hull, it follows that $t'=\Th_1^{\J_\alpha(S[t])}(\gamma_0\cup\{r'_0\})$ can be computed from $t$, so $t'\in \J_\alpha(R[t])\sub\J_\alpha(S[t])$. Since we already know that $p_1^{\J_\alpha(S[t])}\sub\gamma_0+1$, it follows that $\gamma_0=\max(p_1^{\J_\alpha(S[t])})$,
 and that $\J_\alpha(S[t])$ is $1$-solid with respect to this parameter.
 Iterating this argument, but relative to the parameter $\gamma_0$ (and then $\{\gamma_0,\gamma_1\}$, etc), one can easily complete the proof.

 Part \ref{item:passive_after_overlap_n-soundness} is immediate by induction and earlier parts.

 Part \ref{item:passive_after_overlap_rSigma_i+1_translation}:  This follows by induction, and using the fact that $\{R[t]\}$ is $\rSigma_2^{\J_\alpha(R[t])}$.

 Part \ref{item:passive_after_overlap_rho_i+1_match} follows immediately from part \ref{item:passive_after_overlap_rSigma_i+1_translation},
 and parts \ref{item:passive_after_overlap_p_i+1_match}
 and \ref{item:passive_after_overlap_i+1-solidity} then follow from both of those parts together.

 Part \ref{item:passive_after_overlap_function_match} follows from earlier parts.
\end{proof}

\begin{prop}\label{prop:R_passive_not_adm_buff_R-delta-meas}
Suppose that \ref{item:R'-delta-measure} holds,
$R=N$ and $S$ is the corresponding structure defined in \S\ref{sec:exists_R-delta-measure}.
Let $(\alpha,r,r')$ be as in Proposition \ref{prop:R_passive_not_adm_buff_no_R-delta_meas} for $(R,S)$,
and suppose that $\J_\alpha(R[t])$ is a $t$-premouse. Then:
\begin{enumerate}
\item $\rho_\om^{R[t]}=\delta=\rho_\om^{S[t]}$ and $R[t]$, $S[t]$ are sound $t$-premice.
\item $\pow(\delta)\cap\J_\alpha(R[t])=\pow(\delta)\cap\J_\alpha(S[t])$.
\item $\J_\alpha(S[t])$ is a $t$-premouse,
and both $\J_\alpha(R[t])$ and $\J_\alpha(S[t])$
have largest cardinal $\delta$.
\item\label{item:passive_after_delta-meas_rSigma_1_equiv} $\rSigma_1^{\J_\alpha(R[t])}(\{R[t]\})$ statements
about parameters in $\delta$
are recursively equivalent
to $\rSigma_1^{\J_\alpha(S[t])}$ statements about those parameters. That is,
there is an $\rSigma_1$ formula $\psi_1$ such that for all $\rSigma_1$ formulas $\varphi$ and all $v\in\delta$, we have
\[ \J_\alpha(R[t])\sats\varphi(v,R[t])\iff \J_\alpha(S[t])\sats\psi_1(\varphi,v),\]
and there is an $\rSigma_1$ formula $\tau_1$ such that for all $\rSigma_1$ formulas $\varphi$ and all $v\in \delta$, we have
\[ \J_\alpha(S[t])\sats\varphi(v)\iff \J_\alpha(R[t])\sats\tau_1(\varphi,v,R[t]).\]

\item\label{item:passive_after_overlap_rho_1_match_delta-measure}
$\rho_{1}^{\J_\alpha(R[t])}=\rho_{1}^{\J_\alpha(S[t])}=\delta$,

\item\label{item:passive_after_overlap_p_1_correspondence_delta-measure} $p_1^{\J_\alpha(R[t])}=r'$
and $p_1^{\J_\alpha(S[t])}=\emptyset$.
\item\label{item:passive_after_overlap_n-soundness_delta-measure} $\J_\alpha(R[t])$ and $\J_\alpha(S[t])$ are sound $t$-premice.
\end{enumerate}
\end{prop}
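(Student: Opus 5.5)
The plan is to mirror the proof of Proposition \ref{prop:R_passive_not_adm_buff_no_R-delta_meas}, proceeding by induction on $\alpha$. In place of the translation results of \S\ref{sec:type_2_translate_step}, we use those of \S\ref{sec:exists_R-delta-measure}, namely Propositions \ref{prop:R-delta-measure_rSigma_r+1_equiv_over_mice} and \ref{prop:exists_R-delta-measure_R[t]_S[t]_rSigma_r+1_equiv}.

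\emph{Part 1.} Since \ref{item:R'-delta-measure} holds, $\rho_\om^R=\delta$, so $\rho_{r+1}^R=\delta$. Because $\J(R)$ (more precisely $\J_\alpha(R[t])$) is a $t$-premouse, $R$ is $\delta$-sound. By Proposition \ref{prop:inter_Sigma_1_R,R[t]_n+1} (the case $\rho_{r+1}^R<\delta$ or $=\delta$), $\rho_{r+1}^{R[t]}=\delta$ and $R[t]$ is sound; recall that projecting to $\delta$ means soundness above $\delta$, with $\delta$ available via the language. For $S=\Ult_r(R,\mu^R_\delta)$, which is $(\delta+1)$-sound with $\rho_{r+1}^S\leq\delta$, the same proposition gives that $S[t]$ is a sound $t$-premouse with $\rho_\om^{S[t]}=\delta$.

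\emph{Parts 2 and 4.} From Proposition \ref{prop:exists_R-delta-measure_R[t]_S[t]_rSigma_r+1_equiv} we get $\Th_{r+1}^{R[t]}(\delta\cup\{\vec p,\alpha_\delta\})$ and $\Th_{r+1}^{S[t]}(\delta\cup\{\vec p,\alpha_\delta\})$ recursively interdefinable. Each model, being sound and projecting to $\delta$, is coded by its theory. Hence each of $R[t]$, $S[t]$ can be reconstructed (as a decoded theory) in a $\Sigma_1$ way from the other, and $\J_\beta$-levels above them correspond level by level. Concretely, $\tau_1$ works from the parameter $R[t]$: it computes $\mu^R_\delta$ and a coded $S[t]$ via the ultrapower, then runs the $\J$-hierarchy over the code. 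For $\psi_1$, working in $\J_\alpha(S[t])$, we identify $S[t]$ as the unique segment that is not sound above $\delta$ but has a $\delta$-measure, or else simply as the top base. We then decode its $\rSigma_{r+1}$ theory with parameters in $\delta$ to reconstruct $R[t]$, using the formula $\tau_{r+1}$ of Proposition \ref{prop:exists_R-delta-measure_R[t]_S[t]_rSigma_r+1_equiv}. This reconstruction is $\Sigma_1$ in $\J_\alpha(S[t])$ since $S[t]\in\J_\alpha(S[t])$. The sets $\pow(\delta)$ then agree, because each new subset of $\delta$ in one hierarchy is definable over a level of the other. I expect the main obstacle to be exactly this point: parameters $\vec p_{r+1}\cut\delta$ and $\alpha_\delta$ must be definable without extra parameters inside $\J_\alpha(S[t])$. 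I would try using that $\vec p_{r+1}^{S[t]}$ is the standard parameter of the element $S[t]$, which is $\Sigma_1$-identifiable, and that $\alpha_\delta$ is least with the required property.

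\emph{Parts 3, 5, 6 and 7.} $\delta$ is the largest cardinal of both models, since $R[t]$ and $S[t]$ project to $\delta$ and are elements. Then $\rho_1=\delta$ for both by part 4, since $\Th_1(\delta)$ of the hierarchy codes new subsets of $\delta$. For part 6, $p_1^{\J_\alpha(S[t])}=\emptyset$ because $S[t]$ is $\rSigma_1$-definable without parameters, so $\Hull_1(\delta)$ is everything. For $\J_\alpha(R[t])$, by definition of $r$ the hull $\Hull_1(\delta\cup\{r'\})$ contains $R[t]$, hence is everything. Minimality and solidity of $r'$ follow by the argument in part \ref{item:passive_after_overlap_p_1_correspondence} of Proposition \ref{prop:R_passive_not_adm_buff_no_R-delta_meas}: iteratively compare with $\Th_1^{\J_\alpha(S[t])}$, which is computable from the corresponding $\J_\alpha(R[t])$-theory containing $R[t]$. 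Soundness (part 7) follows, and higher projecta are all $\delta$.
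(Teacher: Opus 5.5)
\textbf{There is a genuine gap in parts 3, 5 and 6.} Your handling of parts 1, 2 and 4 matches the paper. $R[t]$ and $S[t]$ are sound and project to $\delta$, so each computes the other in the codes. Working in codes is in fact necessary, since in general $\J_\alpha(R[t])\not\sub\J_\alpha(S[t])$, and even $R\not\sub\J_\alpha(S[t])$.

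In parts 3, 5 and 6 you never invoke the standing hypothesis from the start of \S\ref{sec:append_ordinals}: $\rho_1^{\J_{\alpha'}(N[t])}\leq\OR^N$ for all $\alpha'\in(0,\xi]$. The conclusions are false without it, so the inferences you use in its place cannot be valid.
\begin{itemize}
\item That $R[t]$ and $S[t]$ project to $\delta$ only collapses those two structures to $\delta$. It says nothing about later levels $\J_\beta(R[t])$; for large $\alpha$, $\J_\alpha(R[t])$ can have cardinals above $\delta$. So neither ``$\delta$ is the largest cardinal'' nor ``$\Th_1(\delta)$ codes a new subset of $\delta$'' follows.
\item More seriously, in part 6 you infer that $\Hull_1^{\J_\alpha(S[t])}(\delta)$ is everything because it contains $S[t]$, and that $\Hull_1^{\J_\alpha(R[t])}(\delta\cup\{r'\})$ is everything because it contains $R[t]$. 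Containing the base does not make a $\Sigma_1$ hull cofinal. Suppose some proper segment $\J_\alpha(S[t])|\zeta$ with $\zeta>\OR^S$ were $\Sigma_1$-elementary in $\J_\alpha(S[t])$. Then $\Hull_1(\delta)$ would lie inside it, its $\rSigma_1$ theory would belong to $\J_\alpha(S[t])$, and $p_1^{\J_\alpha(S[t])}$ would be nonempty.
\end{itemize}

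\textbf{How the paper closes this.} Excluding $\Sigma_1$-elementary proper segments is exactly the content of the paper's argument.
\begin{itemize}
\item By the hypothesis, each $\J_{\alpha'}(R[t])$ with $0<\alpha'<\alpha$ is collapsed onto $\OR^R$, hence to $\delta$, at the next level. So $\delta$ is the largest cardinal.
\item $\rho_1^{\J_\alpha(R[t])}$ is a cardinal $\leq\OR^R$, so it equals $\delta$.
\item $\Hull_1(\delta\cup\{p_1\})$ is cofinal, hence everything, because $\delta$ is the largest cardinal.
\item If $p_1^{\J_\alpha(R[t])}\neq\emptyset$, it is a singleton $\{\zeta\}$ with $\Hull_1^{\J_\alpha(R[t])}(\zeta)=\J_\alpha(R[t])|\zeta$. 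If $\zeta>\OR^R$, this segment is $\Sigma_1$-elementary in $\J_\alpha(R[t])$, so its $\rho_1$ equals its height, contradicting the hypothesis.
\item Hence $\zeta\leq\OR^R$, so $R[t]\notin\Hull_1(\zeta)$, and $p_1^{\J_\alpha(R[t])}=r'$.
\end{itemize}
The same argument on the $S$ side gives $p_1^{\J_\alpha(S[t])}=\emptyset$. There you use part 5 at earlier stages (by induction on $\alpha$) to exclude $\zeta>\OR^S$, and the parameter-free $\rSigma_1$-definability of $S[t]$ to exclude $\zeta\leq\OR^S$. Once that is in place, your transfer of minimality and solidity of $r'$ through part 4 does go through.

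\textbf{Minor point.} $S$ is $(\delta+1)$-sound, so it is not ``unsound above $\delta$'', and nothing guarantees it carries a $\delta$-measure. Identify $S[t]$ instead as coming from the premouse segment that fails to be $\delta$-sound.
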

\begin{proof}
This is like the proof of Proposition \ref{prop:R_passive_not_adm_buff_no_R-delta_meas},
but in this case, $R[t]$ and $S[t]$
project to $\delta$ and are sound,
by \S\ref{sec:exists_R-delta-measure},
which makes things mostly simpler than
in \ref{prop:R_passive_not_adm_buff_no_R-delta_meas}.
However, one wrinkle is that we needn't have $\J_\alpha(R[t])\sub \J_\alpha(S[t])$,
or even $R\sub\J_\alpha(S[t])$.
(This is because there could be subsets of $\delta^{+R}$
(such as $\mu_\delta^R$) which are in $R$,
and hence further sets produced from this which are in $R$, and require some distance to construct.) But $R[t]$ and $S[t]$
they can both compute each other in the codes, which is enough. The fact that $p_1^{\J_\alpha(R[t])}=r'$
is because if $\Hull_1^{\J_\alpha(R[t])}(\rho_1^{\J_\alpha(R[t])}\cup\{p_1^{\J_\alpha(R[t])}\})$ is cofinal in $\J_\alpha(R[t])$, but then since $\delta$ is the largest cardinal of $\J_\alpha(R[t])$,
that hull is just $\J_\alpha(R[t])$ itself,
but note that if $p_1^{\J_\alpha(R[t])}\neq\emptyset$
then $p_1^{\J_\alpha(R[t])}=\{\zeta\}$ for some $\zeta$,
and $\Hull_1^{\J_\alpha(R[t])}(\zeta)=\J_\alpha(R[t])|\zeta$, but if $\zeta>\OR^R$ then $\J_\zeta(R[t])\preccurlyeq_1 \J_\alpha(R[t])$,
so $\rho_1^{\J_\zeta(R[t])}=\zeta$, a contradiction,
so $\zeta\leq\OR^R$, so $R[t]\notin\Hull_1^{\J_\alpha(R[t])}(\zeta)$.
\end{proof}

\begin{prop}\label{prop:adm_buff_N_J_beta(N_i-1[t])_and_J_alpha(N^star)}
Suppose \ref{item:R'_adm_buffered} holds,
so $N=Q_m$ is admissibly buffered
\tu{(}along with our other assumptions mentioned at the start of \S\ref{sec:append_ordinals}, and in particular, $\alpha\leq\alpha_{m-1}+\ldots+\alpha_0$\tu{)}.
Let $\alpha=\alpha_{m-1}+\alpha_{m-2}+\ldots+\alpha_{i}+\beta$
where $0<\beta\leq\alpha_{i-1}$.
Recall that $Q_{i-1}=\J_{\alpha_{i-1}}(N_{i-1})$.
Let $\gamma$ be the largest cardinal of $\J_\beta(N_{i-1}[t])$.
Let $n<\om$ be such that $\J_\beta(N_{i-1}[t])$ is an $n$-sound $t$-premouse and $\delta<\rho_n^{\J_\beta(N_{i-1}[t])}$.
Let $N^\star_P$ be the $P$-premouse given by $N^\star$ \tu{(}recall $P\ins N^\star$ and $\OR^P$ is a strong cutpoint of $N^\star$; $N^\star_P$ has the same universe as does $N^\star$\tu{)}.
Then:
\begin{enumerate}
\item\label{item:gamma_lgst_card_of_J_beta(N^*)} $\gamma$ is the largest cardinal of $\J_\alpha(N^\star)$.
\item\label{item:pow(gamma)_same_J_beta(N_i-1)_and_J_beta(N^star)} $\pow(\gamma)\cap\J_\beta(N_{i-1}[t])=\pow(\gamma)\cap\J_\alpha(N^\star)$.
\item\label{item:functions_into_gamma_same_J_beta(N_i-1)_J_alpha(N^star)}Suppose $\delta<\gamma$. Then:
\begin{enumerate}[label=--]
\item $\J_\beta(N_{i-1}[t])\sub\J_\alpha(N^\star)$.
\item For all $\mu<\gamma$,
all $\lambda<\beta$, all $\ell<\om$
and all $f:[\mu]^{<\om}\to\Ss_{\ell}(\J_\lambda(N_{i-1}))$,\footnote{These are the kinds of functions relevant to forming $\Ult_0(\J_{\beta}(N_{i-1}[t]),E)$
and ``$i_E(\J_\beta(N_{i-1}[t]))$''
as a submodel of $\Ult_0(\J_\alpha(N^\star),E)$.
There \emph{can} be functions $f\in\J_\alpha(N^\star)$
such that $f:[\mu]^{<\om}\to\J_\beta(N_{i-1}[t])$,
or even just $f:[\mu]^{<\om}\to\OR^{\J_\beta(N_{i-1}[t])}$,
such that $f\notin\J_\beta(N_{i-1}[t])$,
because such $f$ can be unbounded in those ordinals. And it can even be that those ordinals have cofinality $\mu=\crit(E)$ in $\J_{\alpha}(N^\star)$. But one should ignore such functions when computing ``$i_E(\J_\beta(N_{i-1}[t]))$''.}
\[ f\in \J_\beta(N_{i-1}[t])\iff f\in \J_\alpha(N^\star).\]
\end{enumerate}
\item\label{item:gamma_and_gamma^+^N_i-1} $\gamma<\gamma^{+N_{i-1}}\leq\OR^{N_{i-1}}$ and
$\gamma^{+N_{i-1}}<\OR^{N^\star}$,
and
\[ \J_\beta(N_{i-1}[t])=\Hull_1^{\J_\beta(N_{i-1}[t])}(\gamma\cup\{\gamma^{+N_{i-1}}\}).\]

\item\label{item:gamma_and_p_1,delta<gamma} Suppose $\delta<\gamma$. Then $p_1^{\J_\beta(N_{i-1}[t])}\neq\emptyset$,
and $\gamma\leq\xi\leq\gamma^{+N_{i-1}}$ where $\xi=\max(p_1^{\J_\beta(N_{i-1}[t])})$, and either $\xi=\gamma$ or $N_{i-1}||\xi\sats$ ``$\gamma$ is the largest cardinal''.

\item\label{item:gamma_and_p_1,delta=gamma} Suppose $\delta=\gamma$ and $p_1^{\J_\beta(N_{i-1}[t])}\neq\emptyset$.
Then $p_1^{\J_\beta(N_{i-1}[t])}=\{\xi\}$ where $\delta<\xi\leq\delta^{+N_{i-1}}$,
and $N_{i-1}||\xi\sats$ ``$\delta$ is the largest cardinal''.

\item\label{item:from_R_m_to_Q^star_rSigma_1_reduce_right_truth_to_left_truth} $\rSigma_1^{\J_\alpha(N^\star_P)}(\{\alpha_\Tt\})$
statements about parameters
$v\leq\gamma^{+N_{i-1}}$ \tu{(}or if $\delta<\gamma$, $v\in\J_\beta(N_{i-1}[t])$\tu{)}
are recursively reducible to
 $\rSigma_1^{\J_\beta(N_{i-1}[t])}(\{\alpha_\Tt\})$ statements about $v$;
 that is, there is an $\rSigma_1$ formula $\tau_1$ such that for all $\rSigma_1$ formulas $\varphi$ and all $v\leq\gamma^{+N_{i-1}}$ \tu{(}or if $\delta<\gamma$, $v\in\J_\beta(N_{i-1}[t])$\tu{)}, we have
 \[ \J_\alpha(N^\star_P)\sats\varphi(v,\alpha_\Tt)\iff \J_\beta(N_{i-1}[t])\sats\tau_1(\varphi,v,\alpha_\Tt).\]
 \item\label{item:from_R_m_to_Q^star_rSigma_1_equiv} $\rSigma_1^{\J_\beta(N_{i-1}[t])}(\{\gamma,\alpha_\Tt\})$ statements about parameters
$v\leq\gamma^{+N_{i-1}}$ \tu{(}or if $\delta<\gamma$, $v\in\J_\beta(N_{i-1}[t])$\tu{)} are
recursively equivalent to
$\rSigma_1^{\J_\alpha(N^\star_P)}(\{\gamma,\alpha_\Tt\})$ statements about $v$. That is, there is an $\rSigma_1$ formula $\psi_1$
such that for all $\rSigma_1$ formulas $\varphi$ and all $v\leq\gamma^{+N_{i-1}}$ \tu{(}or if $\delta<\gamma$, $v\in\J_\beta(N_{i-1}[t])$\tu{)}, we have
\[ \J_\beta(N_{i-1}[t])\sats\varphi(v,\gamma,\alpha_\Tt)\iff \J_\alpha(N^\star_P)\sats\psi_1(\varphi,v,\gamma,\alpha_\Tt),\]
and there is an $\rSigma_1$ formula $\tau_1$ such that for all $\rSigma_1$ formulas $\varphi$ and all $v\leq\gamma^{+N_{i-1}}$ \tu{(}or if $\delta<\gamma$, $v\in\J_\beta(N_{i-1}[t])$\tu{)}, we have
\[ \J_\alpha(N^\star_P)\sats\varphi(v,\gamma,\alpha_\Tt)\iff \J_\beta(N_{i-1}[t])\sats\tau_1(\varphi,v,\gamma,\alpha_\Tt).\]
\item\label{item:rho_1_J_beta(N_i-1)_andJ_beta(N^star)} $\rho_1^{\J_\beta(N_{i-1}[t])}=\rho_1^{\J_\alpha(N^\star_P)}$
\item\label{item:p_1_J_beta(N_i-1)_andJ_beta(N^star)} $p_1^{\J_\beta(N_{i-1}[t])}=p_1^{\J_\alpha(N^\star_P)}$
\item\label{item:1-solidity_J_beta(N_i-1)_and_J_beta(N^star)} $\J_\beta(N_{i-1}[t])$
is $1$-solid iff $\J_\alpha(N^\star_P)$
is $1$-solid.
\item\label{item:1-soundness_J_beta(N_i-1)_and_J_beta(N^star)} $\J_\beta(N_{i-1}[t])$ is $1$-sound
iff $\J_\alpha(N^\star_P)$ is $1$-sound.
\item\label{item:J_alpha(N^star)_n-sound} $\J_\alpha(N^\star_P)$ is an $n$-sound $P$-premouse.
\item\label{item:rho_ell_J_beta(N_i-1)_J_alpha(N^star)} $\rho_\ell^{\J_\beta(N_{i-1}[t])}=\rho_\ell^{\J_\alpha(N^\star_P)}$ for all $\ell\in(0,n]$  and $p_\ell^{\J_\beta(N_{i-1}[t])}=p_\ell^{\J_\alpha(N^\star_P)}$ for all $\ell\leq n$.
\item\label{item:J_beta(N_i-1[t])_J_alpha(N^star)_rSigma_n_functions} If $0<n$ then for all $\mu<\rho_n^{\J_\beta(N_{i-1}[t])}$
and all $f:[\mu]^{<\om}\to\rho_n^{\J_\beta(N_{i-1}[t])}$,
$f$ is $\bfrSigma_n^{\J_\beta(N_{i-1}[t])}$ iff $f$ is $\bfrSigma_n^{\J_\alpha(N^\star)}$.
\item\label{item:from_R_m_to_Q^star_rSigma_n+1_equiv} If $0<n$ then $\rSigma_{n+1}^{\J_\beta(N_{i-1}[t])}(\{\alpha_\Tt\})$ statements
about parameters $v\leq\gamma^{+N_{i-1}}$ \tu{(}or if $\delta<\gamma$, $v\in\J_\beta(N_{i-1}[t])$\tu{)} are recursively equivalent
to $\rSigma_{n+1}^{\J_\alpha(N^\star_P)}(\{\alpha_\Tt\})$
statements about $v$. That is, there is an $\rSigma_{n+1}$ formula $\psi_{n+1}$
such that for all $\rSigma_{n+1}$ formulas $\varphi$ and all $v\leq\gamma^{+N_{i-1}}$ \tu{(}or if $\delta<\gamma$, $v\in\J_\beta(N_{i-1}[t])$\tu{)}, we have
\[ \J_\beta(N_{i-1}[t])\sats\varphi(v,\alpha_\Tt)\iff\J_\alpha(N^\star_P)\sats\psi_1(\varphi,v,\alpha_\Tt),\]
and there is an $\rSigma_{n+1}$
formula $\tau_{n+1}$ such that for all $\rSigma_{n+1}$ formulas $\varphi$
and all $v\leq\gamma^{+N_{i-1}}$ \tu{(}or if $\delta<\gamma$, $v\in\J_\beta(N_{i-1}[t])$\tu{)},
we have
\[ \J_\alpha(N^\star_P)\sats\varphi(v,\alpha_\Tt)\iff\J_\beta(N_{i-1}[t])\sats\tau_1(\varphi,v,\alpha_\Tt).\]
\item\label{item:rho_n+1_J_beta(N_i-1)_andJ_beta(N^star)} $\rho_{n+1}^{\J_\beta(N_{i-1}[t])}=\rho_{n+1}^{\J_\alpha(N^\star_P)}$
\item\label{item:p_n+1_J_beta(N_i-1)_andJ_beta(N^star)} $p_{n+1}^{\J_\beta(N_{i-1}[t])}=p_{n+1}^{\J_\alpha(N^\star_P)}$
\item\label{item:n+1-solidity_J_beta(N_i-1)_and_J_beta(N^star)} $\J_\beta(N_{i-1}[t])$ is $(n+1)$-solid iff $\J_\alpha(N^\star_P)$ is $(n+1)$-solid
\item\label{item:n+1-soundness_J_beta(N_i-1)_and_J_beta(N^star)} $\J_\beta(N_{i-1}[t])$ is $(n+1)$-sound iff $\J_\alpha(N^\star_P)$ is $(n+1)$-sound.
\end{enumerate}
\end{prop}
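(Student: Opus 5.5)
The proof is by induction on $(\beta,n)$. The base instance is $\beta$ minimal, with $\J_\beta$ the first step over $N_{i-1}[t]$. Throughout, we use the chain of correspondences already established between $N_{i-1}[t]$, $Q_i[t]$ (via \S\ref{sec:R[t]_and_S^R[t]}), $R_i[t]$, $R_m[t]$ and $N^\star$ (via \S\ref{sec:R[t]_and_R^*_admissibly_buffered} and Propositions \ref{prop:R_passive_not_adm_buff_no_R-delta_meas}, \ref{prop:R_passive_not_adm_buff_R-delta-meas}). The key point is the following. Since $N^\star=(R_m)^\star$ and $R_m=\J_{\alpha_{m-1}+\ldots+\alpha_0}(Q_m)$, the structure $\J_\alpha(N^\star)$ corresponds, level by level, to $\J_\beta(N_{i-1}[t])$ after we peel off the intermediate blocks $\alpha_{m-1},\ldots,\alpha_i$. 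Each such block is absorbed by the previous correspondence step.

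Parts \ref{item:gamma_lgst_card_of_J_beta(N^*)}--\ref{item:gamma_and_gamma^+^N_i-1} come first. Since $N_{i-1}$ is the largest $\star$-expanding segment of $Q_{i-1}$ and $Q_{i-1}$ is not admissibly buffered, $\J_\beta(N_{i-1}[t])$ is not admissible. It therefore collapses $\gamma^{+N_{i-1}}$ to $\gamma$, which yields the hull equation of part \ref{item:gamma_and_gamma^+^N_i-1}. Next, we compare the subsets of $\gamma$ on both sides. The relevant facts are that $N^\star$ is built by the $\star$-translation, and that, by the earlier propositions, $\pow(\gamma)$ agrees along the chain $N_{i-1}[t]\to Q_i[t]\to\cdots\to N^\star$. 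This agreement gives parts \ref{item:gamma_lgst_card_of_J_beta(N^*)} and \ref{item:pow(gamma)_same_J_beta(N_i-1)_and_J_beta(N^star)}. For part \ref{item:functions_into_gamma_same_J_beta(N_i-1)_J_alpha(N^star)} with $\delta<\gamma$, we argue that $N_{i-1}[t]$ is $\Sigma_1$-identifiable inside $\J_\alpha(N^\star)$, by computing the $\Moon$-construction and locating the first non-sound or non-MS-indexed stage, as in Proposition \ref{prop:R_passive_not_adm_buff_no_R-delta_meas}. The functions into $\Ss_\ell(\J_\lambda(N_{i-1}))$ are bounded in the hierarchy, so they are computed in the same constructible stage on both sides. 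Parts \ref{item:gamma_and_p_1,delta<gamma} and \ref{item:gamma_and_p_1,delta=gamma} then follow as in the proof of Proposition \ref{prop:adm_buff_fs_corresp} (the passive-with-largest-cardinal case). There, condensation gives $N_{i-1}|\gamma\preccurlyeq_1$, so $\max(p_1)$ must be a point $\xi$ in $[\gamma,\gamma^{+N_{i-1}}]$ that defines $\gamma$.

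Parts \ref{item:from_R_m_to_Q^star_rSigma_1_reduce_right_truth_to_left_truth} and \ref{item:from_R_m_to_Q^star_rSigma_1_equiv} come next. For $\tau_1$, we work in $\J_\beta(N_{i-1}[t])$: from $N_{i-1}[t]$ and $\alpha_\Tt$ we compute the codes for $Q_i,\ldots,Q_m$, then $N^\star$, and then the $\J$-hierarchy over $N^\star$ up to $\alpha$. This computation is $\Sigma_1$ because the ultrapowers involved are $\rSigma_1$-computable from $N_{i-1}$ and $P_\kappa$ (with $P_\kappa$ obtained from $t$). For $\psi_1$ we need the parameter $\gamma$, in the same way as in Proposition \ref{prop:adm_buff_Sigma_1_corresp_passive_largest_card}. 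Using $\gamma$, $\J_\alpha(N^\star_P)$ identifies $N_{i-1}|\gamma$. Above $\gamma$, it identifies segments of $N_{i-1}$ as those that project to $\gamma$ and appear in the $\Moon$-construction of a proper segment. Remark \ref{rem:R[t]_not_rSigma_1_in_R^star} shows that without $\gamma$ only the one-directional reduction can hold.

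Parts \ref{item:rho_1_J_beta(N_i-1)_andJ_beta(N^star)}--\ref{item:1-soundness_J_beta(N_i-1)_and_J_beta(N^star)} follow the argument of Proposition \ref{prop:adm_buff_fs_corresp}, Case 2. Parts \ref{item:gamma_and_p_1,delta<gamma} and \ref{item:gamma_and_p_1,delta=gamma} show that $\gamma$ is $\rSigma_1$-definable from $\max(p_1)$ on both sides, so the parameter $\gamma$ can be eliminated. Then the two 1-theories $\Th_1(\rho\cup\{p_1\})$ compute each other, and the solidity witnesses transfer. Parts \ref{item:J_alpha(N^star)_n-sound}--\ref{item:n+1-soundness_J_beta(N_i-1)_and_J_beta(N^star)} follow by induction on $n$. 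Since $\{\gamma\}$ is $\rDelta_2$ on both sides, the $\rSigma_{n+1}$ equivalence no longer needs $\gamma$, and the projecta and parameters match by the standard theory-comparison argument. Part \ref{item:J_beta(N_i-1[t])_J_alpha(N^star)_rSigma_n_functions} is immediate from the $\rSigma_n$ equivalence. I expect the main obstacle to be the $\psi_1$ direction for $\delta=\gamma$, together with the $p_1$ matching in that case. There the $\Moon$-construction over $N^\star$ must recover $N_{i-1}$ even though $\delta^{+}$-sized objects such as $\mu_\delta$ may not be present yet. I would handle this as in Proposition \ref{prop:R_passive_not_adm_buff_R-delta-meas}, by working only with codes and using $\xi\leq\delta^{+N_{i-1}}$.
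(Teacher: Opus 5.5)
Your outline has the same architecture as the paper's proof. That includes induction, $\pow(\gamma)$-agreement carried along the earlier correspondences, a coded $\star$-translation inside $\J_\beta(N_{i-1}[t])$ for $\tau_1$, and the $\Moon$-construction with parameter $\gamma$, accepting only stages projecting to $\gamma$, for $\psi_1$. It also uses theory comparison for $\rho_1,p_1$ and $\{\gamma\}\in\rDelta_2$ to drop $\gamma$ at higher levels.

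\textbf{Parameter restriction.} Your proposal never addresses what makes this proposition harder than its predecessors: parts 7, 8 and 16 only allow parameters $v\leq\gamma^{+N_{i-1}}$ (arbitrary $v$ only when $\delta<\gamma$). When $N_{i-1}$ has a $\delta$-measure (so $\gamma=\delta$), elements of $\J_\beta(N_{i-1}[t])$ need not lie in $\J_\alpha(N^\star)$ (cf.\ the proof of Proposition \ref{prop:R_passive_not_adm_buff_R-delta-meas}). So for $\tau_1$ you must say how $v$ is named inside the coded $\J_\alpha(N^\star)$, and for $\psi_1$ you must know $v$ exists there at all.

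The paper names $v\leq\gamma^{+N_{i-1}}$ by iterating its ultrapower representation through $N_{i-1}\to Q_i\to\cdots$. For $\delta<\gamma$ it writes an arbitrary $v$ as $\sigma(\eta,\gamma^{+N_{i-1}})$, with $\sigma$ an $\rSigma_1$ term and $\eta<\gamma^{+N_{i-1}}$. That is what the hull equation of part 4 is for; you derive it but never use it.

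The existence side is the clause $\gamma^{+N_{i-1}}<\OR^{N^\star}$ of part 4, which you omit. It needs a case split: $\OR^{N_{i-1}}<\OR^{N^\star}$ if there is no $N_{i-1}$-$\delta$-measure, and otherwise $\gamma=\delta$ and $\delta^{+N_{i-1}}<\OR^{N^\star}$.

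Consequently your ``standard theory-comparison'' for parts 9--20, and ``immediate'' for part 15, are only justified once every parameter fed into the equivalences is in the permitted range. By parts 5 and 6, $\pvec_{n+1}\sub\gamma^{+N_{i-1}}+1$ on both sides, and the functions in part 15 are definable from parameters in $\rho_n\cup\{\pvec_n\}$. Your stated ``main obstacle'' (the $\psi_1$ direction when $\gamma=\delta$) is not where the difficulty lies; the $\Moon$-construction recovers $N_{i-1}$ by coring as usual.

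\textbf{Hull equation.} ``Not admissible, therefore collapses $\gamma^{+N_{i-1}}$, which yields the hull equation'' is not a valid inference, since the collapse is just the definition of $\gamma$. You need non-admissibility to get $\J_\beta(N_{i-1})=\Hull_1(\OR^{N_{i-1}}+1)$. You also need that the least $\zeta$ with $\J_\zeta(N_{i-1})$ projecting to $\gamma$, together with $N_{i-1}$, is $\rSigma_1$ in $\gamma^{+N_{i-1}}$.

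\textbf{Part 3, second clause.} Your argument fails here. There is no stage-by-stage correspondence between the $\J$-hierarchies over $N_{i-1}[t]$ and over $N^\star$. Boundedness of the codomain explains why the clause is restricted (cf.\ the footnote), not why it holds. The missing idea is coding by subsets of $\gamma$:
\begin{enumerate}
\item Since $\gamma$ is the largest cardinal of $\J_\beta(N_{i-1}[t])$, there is a surjection $g:\gamma\to\Ss_\ell(\J_\lambda(N_{i-1}))$ in $\J_\beta(N_{i-1}[t])\sub\J_\alpha(N^\star)$.
\item For $f\in\J_\alpha(N^\star)$, the set $\{(u,\eta)\mid f(u)=g(\eta)\}$ is essentially a subset of $\gamma$, so it lies in $\J_\beta(N_{i-1}[t])$ by part 2.
\item $f$ is recovered from this set and $g$.
\end{enumerate}

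\textbf{Part 11.} Take $\xi=\max(p_1)=\gamma>\delta$. The solidity witness at $\xi$ is $\Th_1(\xi)$, whose hull does not contain $\gamma$. So ``eliminate $\gamma$ via $\max(p_1)$'' does not transfer it from $\J_\alpha(N^\star_P)$ to $\J_\beta(N_{i-1}[t])$. The paper instead uses minimality of $\xi$ to get $\Hull_1(\xi)=N_{i-1}||\xi$, so both models have that witness outright.

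Also, $N^\star=(Q_m)^\star$, not $(R_m)^\star$; in fact $(R_m)^\star=\J_{\alpha_{m-1}+\cdots+\alpha_0}(N^\star)$.
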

\begin{proof}
We proceed by induction on $(\alpha,n)$.
Parts
\ref{item:gamma_lgst_card_of_J_beta(N^*)}, \ref{item:pow(gamma)_same_J_beta(N_i-1)_and_J_beta(N^star)} and \ref{item:functions_into_gamma_same_J_beta(N_i-1)_J_alpha(N^star)} are by induction and the correspondence of fine structure in earlier sections (to translate for example between $N_{i-1}[t]$ and $Q_{i}[t]$, etc).
Regarding part \ref{item:functions_into_gamma_same_J_beta(N_i-1)_J_alpha(N^star)}: We get $\J_\beta(N_{i-1}[t])\sub\J_\alpha(N^\star)$ just by computing $\J_\alpha(N^\star)^{\moon}$
over $\J_\alpha(N^\star)$;
this construction is sufficiently local, since $\delta<\gamma$. And the fact that
$f\in\J_\beta(N_{i-1}[t])$ iff $f\in \J_\alpha(N^\star)$
for all functions $f$ as in the second clause follows from the agreement of the models over $\pow(\gamma)$,
since $\J_\beta(N_{i-1}[t])\sats$ ``every set is the surjective image of $\gamma$''.

Part \ref{item:gamma_and_gamma^+^N_i-1}:
We have $\gamma<\OR^{N_{i-1}}$
since $\rho_\om^{N_{i-1}}<\OR^{N_{i-1}}$ and since there is no $\zeta\leq\beta$
such that $\J_\zeta(N_{i-1})$ is admissible.
We have
 $\J_\beta(N_{i-1})=\Hull_1^{\J_\beta(N_{i-1})}(\OR^{N_{i-1}}+1)$
 (because there is no $\zeta\leq\beta$
 such that $\J_\zeta(N_{i-1})$
 is admissible).
 But there is some $\zeta<\beta$
 such that $\J_\zeta(N_{i-1})$
 projects to $\gamma$,
 and note that for the least such $\zeta$, $\{\zeta,N_{i-1}\}$
 is $\rSigma_1^{\J_\beta(N_{i-1})}(\{\gamma^{+N_{i-1}}\})$,
 and hence that $\J_\beta(N_{i-1})=\Hull_1^{\J_\beta(N_{i-1})}(\gamma\cup\{\gamma^{+N_{i-1}}\})$.

We have $\gamma^{+N_{i-1}}<\OR^{N^\star}$ because
either
 there is no $N_{i-1}$-$\delta$-measure and $\OR^{N_{i-1}}<\OR^{N^\star}$,
or there is an $N_{i-1}$-$\delta$-measure, $\gamma=\delta$ and
$\delta^{+N_{i-1}}<\OR^{N^\star}$.

 Part \ref{item:gamma_and_p_1,delta<gamma}:
 Suppose $\delta<\gamma$. Then $p_1^{\J_\beta(N_{i-1})}\not\sub\gamma$,
by condensation for proper segments of $\J_\beta(N_{i-1})$. So letting $\xi=\max(p_1^{\J_\beta(N_{i-1})})$,  by
part \ref{item:gamma_and_gamma^+^N_i-1}, we have $\gamma\leq\xi\leq\gamma^{+N_{i-1}}$.
(Note also that since $H=\Hull_1^{\J_\beta(N_{i-1})}(\gamma\cup\{\xi\})$ is cofinal in $\J_\beta(N_{i-1})$, in fact $N_{i-1}\in H$, and so  $H=\J_\beta(N_{i-1})$.)

Part \ref{item:gamma_and_p_1,delta=gamma}: Suppose $\delta=\gamma$ and $p_1^{\J_\beta(N_{i-1})}\neq\emptyset$.
So there is $\xi$ such that $p_1^{\J_\beta(N_{i-1})}=\{\xi\}$.
We have $\delta<\xi$, since the parameter $\delta$ can be defined via the $t$-premouse language. (Note
also that $\J_\beta(N_{i-1}[t])=\Hull_1^{\J_\beta(N_{i-1}[t])}(\delta\cup\{\xi\})$, since this hull is cofinal.)

Part \ref{item:from_R_m_to_Q^star_rSigma_1_reduce_right_truth_to_left_truth}:
This is a matter of computing $\J_\beta(N_{i-1})^\star$, in the codes,
over $\J_\beta(N_{i-1}[t])$,
and embedding parameters $\leq\gamma^{+N_{i-1}}$ appropriately into these codes \tu{(}and if $\delta<\gamma$, embedding  $v\in\J_\beta(N_{i-1}[t])$ into the codes\tu{)},
noting that this can be done in an $\rSigma_1(\{\alpha_\Tt\})$ fashion.
(Here is the outline. For simplicity suppose that $\J_\beta(N_{i-1})$ has no largest proper segment. Then working over $\J_\beta(N_{i-1}[t])$, given an $\rSigma_1$ formula $\varphi$ and $v\leq\gamma^{+N_{i-1}}$ \tu{(}or if $\delta<\gamma$, $v\in\J_\beta(N_{i-1}[t])$\tu{)},
$\J_\beta(N^\star)\sats\varphi(v)$ iff there is some $\star$-suitable $R\pins \J_\beta(N_{i-1})$
such that $R^\star\sats\varphi(v)$. So it suffices to define in an $\rSigma_1(\{\alpha_\Tt\})$ fashion, uniformly in such $R$, a coded version of $R^\star$
and codes for the parameter $v$. For this, we follow the process in Definition \ref{dfn:R_i} (but starting with our given $R$, instead of the $Q$ there), coded in a reasonable fashion. The ultrapowers involved in the definition of the $\star$-translation can indeed be coded, since we have access to $t$ and $\alpha_\Tt$ and hence $\Tt$ and $P$ and (hence, recursively, codes for) the  extenders involved. And because we are working in $\J_\beta(N_{i-1})$, we do have enough ordinals above $N_{i-1}$ to compute the resulting coded version of $R^\star$. The required code for parameters $v\leq\gamma^{+N_{i-1}}$,
come from their usual ultrapower representations, iterated through the sequence of ultrapowers formed;  for example  $v$ is represented in the first ultrapower by $[\{v\},\id]$, etc. If $\delta<\gamma$ and $v\in\J_\beta(N_{i-1}[t])$ is more arbitrary,
we can first identify (uniformly in $v$) some $\rSigma_1$ term $t$
and some $\eta<\gamma^{+N_{i-1}}$
such that $v=t^{\J_\beta(N_{i-1}[t])}(\eta,\gamma^{+N_{i-1}})$, and formulate an appropriate code for $v$ from
$t$ and codes for $\eta$ and $\gamma^{+N{i-1}}$. We leave the details to the reader. In case $\J_\beta(N_{i-1})$ has a largest proper segment $R$ (which must be $\star$-suitable), we just use Jensen's $\Ss$-hierarchy to stratify $\rSigma_1^{\J_\beta(N^\star)}$, as usual.)

Part \ref{item:from_R_m_to_Q^star_rSigma_1_equiv}: We discussed how to formulate $\tau_1$ in part \ref{item:from_R_m_to_Q^star_rSigma_1_reduce_right_truth_to_left_truth} (for that direction we don't rely on the parameter $\gamma$). For $\psi_1$ (reducing $\J_\beta(N[t])$ truth to $\J_\beta(N^\star)$ truth),
we just compute $\J_\beta(N^\star)^{\moon}$ directly, working in $\J_\beta(N^\star)$, accpeting only stages extending $(N^\star|\gamma)^\moon$ which project to $\gamma$. (See the proof of Proposition \ref{prop:adm_buff_Sigma_1_corresp_passive_largest_card} for more discussion.)

Part \ref{item:rho_1_J_beta(N_i-1)_andJ_beta(N^star)}: This follows from parts \ref{item:pow(gamma)_same_J_beta(N_i-1)_and_J_beta(N^star)} and \ref{item:from_R_m_to_Q^star_rSigma_1_equiv}.

Part \ref{item:p_1_J_beta(N_i-1)_andJ_beta(N^star)}: Let $\rho=\rho_1^{\J_\beta(N_{i-1}[t])}=\rho_1^{\J_\alpha(N^\star)}$.  Then
 $t=\Th_1^{\J_\beta(N^\star)}(\rho\cup\{p_1^{\J_\beta(N_{i-1})}\})$ computes
 $t'=\Th_1^{\J_\beta(N_{i-1})}(\rho\cup\{p_1^{\J_\beta(N_{i-1})}\})$,
by parts
\ref{item:from_R_m_to_Q^star_rSigma_1_equiv} and
 \ref{item:gamma_and_p_1,delta<gamma}
 (and since if $\gamma=\delta$
 then the parameter $\gamma$ is anyway directly
 definable
 via the $t$-premouse language),
 so by part \ref{item:pow(gamma)_same_J_beta(N_i-1)_and_J_beta(N^star)},
 $t\notin \J_\beta(N^\star)$.
 So $p_1^{\J_\beta(N^\star)}\leq p_1^{\J_\beta(N_{i-1})}$.
 But then by part \ref{item:from_R_m_to_Q^star_rSigma_1_reduce_right_truth_to_left_truth}
  and otherwise similarly,
  it follows that $p_1^{\J_\beta(N^\star)}=p_1^{\J_\beta(N_{i-1})}$.

  Part \ref{item:1-solidity_J_beta(N_i-1)_and_J_beta(N^star)}: Suppose $p_1^{\J_\beta(N_{i-1}[t])}\neq\emptyset$, and let $\xi=\max(p_1^{\J_\beta(N_{i-1}[t])})$.
We have $\xi\notin\Hull_1^{\J_\beta(N_{i-1}[t])}(\xi)$ by the minimality of $\xi$,
  so in fact $\Hull_1^{\J_\beta(N_{i-1})}(\xi)=N_{i-1}||\xi\in J_\beta(N_{i-1})$, giving the required $1$-solidity witness at $\xi$. Likewise for $\J_\beta(N^\star)$.
  For the other solidity witnesses,
  the equivalence  follows from part \ref{item:from_R_m_to_Q^star_rSigma_1_equiv}.

  Part \ref{item:1-soundness_J_beta(N_i-1)_and_J_beta(N^star)}: By parts \ref{item:from_R_m_to_Q^star_rSigma_1_equiv}--\ref{item:1-solidity_J_beta(N_i-1)_and_J_beta(N^star)} and since
  the hulls relevant to $1$-soundness include $\gamma$ and $\alpha_\Tt$.

  Parts \ref{item:J_alpha(N^star)_n-sound}, \ref{item:rho_ell_J_beta(N_i-1)_J_alpha(N^star)} and \ref{item:J_beta(N_i-1[t])_J_alpha(N^star)_rSigma_n_functions} are by induction on $n$;
  part \ref{item:J_beta(N_i-1[t])_J_alpha(N^star)_rSigma_n_functions} uses parts \ref{item:from_R_m_to_Q^star_rSigma_1_equiv} and \ref{item:from_R_m_to_Q^star_rSigma_n+1_equiv} from stage $n-1$,
  noting that it suffices to use parameters within $\rho_n\cup\{\pvec_n\}$ to define such functions $f$).

  Part \ref{item:from_R_m_to_Q^star_rSigma_n+1_equiv} is by part \ref{item:from_R_m_to_Q^star_rSigma_1_equiv} and induction (on $n$), and since  $\{\gamma\}$ is $\rSigma_2$-definable over both models.

Parts \ref{item:rho_n+1_J_beta(N_i-1)_andJ_beta(N^star)}--\ref{item:n+1-soundness_J_beta(N_i-1)_and_J_beta(N^star)} follow from
parts
\ref{item:pow(gamma)_same_J_beta(N_i-1)_and_J_beta(N^star)} and \ref{item:from_R_m_to_Q^star_rSigma_n+1_equiv} in the usual manner;
 by parts
 \ref{item:gamma_and_p_1,delta<gamma} and \ref{item:gamma_and_p_1,delta=gamma},
$\pvec_{n+1}^{\J_\beta(N_{i-1}[t])}\sub\gamma^{+N_{i-1}}+1$ and $\pvec_{n+1}^{\J_\alpha(N^\star)}\sub\gamma^{+N_{i-1}}+1$,
so part \ref{item:from_R_m_to_Q^star_rSigma_n+1_equiv} applies to all relevant parameters.
\end{proof}

With setup as above, we can now deduce the fine structural correspondence between $\J_\alpha(N[t])$ and $\J_\alpha(N^\star)$.

\begin{prop}\label{prop:adm_buff_N_J_alpha(N[t])_and_J_alpha(N^star)}
Assume the hypotheses and notation of Proposition \ref{prop:adm_buff_N_J_beta(N_i-1[t])_and_J_alpha(N^star)}.
Let $r\in[\OR]^{<\om}$
be least such that $\OR^N\in\Hull_1^{\J_\alpha(N^\star_P)}(\{r,p_1^{\J_\alpha(N[t])}\})$, and let $r'=r\cut\rho_1^{N^\star_P}$.
Let $n<\om$ and suppose that $\J_\alpha(N[t])$ is an $n$-sound $t$-premouse.
Then:
\begin{enumerate}
\item\label{item:J_alpha(N[t])_J_alpha(N^star)_same_univ} $\J_\alpha(N[t])$ and $\J_\alpha(N^\star)$ have the same universe, with largest cardinal $\gamma$.
 \item\label{item:J_alpha(N[t])_J_alpha(N^star)_rSigma_1_equiv} $\rSigma_1^{\J_\alpha(N[t])}(\{\OR^N,\alpha_\Tt\})$ statements about parameters
$v\in\J_\alpha(N[t])$ are
recursively equivalent to
$\rSigma_1^{\J_\alpha(N^\star_P)}(\{\OR^N,\alpha_\Tt\})$ statements about $v$. That is, there is an $\rSigma_1$ formula $\psi_1$
such that for all $\rSigma_1$ formulas $\varphi$ and all $v\in\J_\alpha(N[t])$, we have
\[ \J_\alpha(N[t])\sats\varphi(v,\OR^N,\alpha_\Tt)\iff \J_\alpha(N^\star_P)\sats\psi_1(\varphi,v,\OR^N,\alpha_\Tt),\]
and there is an $\rSigma_1$ formula $\tau_1$ such that for all $v\in\J_\alpha(N[t])$, we have
\[ \J_\alpha(N^\star_P)\sats\varphi(v,\OR^N,\alpha_\Tt)\iff \J_\alpha(N[t])\sats\tau_1(\varphi,v,\OR^N,\alpha_\Tt).\]
\item\label{item:rho_1_J_alpha(N[t])_J_alpha(N^star)} $\rho_1^{\J_\alpha(N[t])}=\rho_1^{\J_\alpha(N^\star_P)}$
\item\label{item:p_1_J_alpha(N[t])_J_alpha(N^star)} We have:
\begin{enumerate}[label=--]
\item $p_1^{\J_\alpha(N[t])}\cup r'=p_1^{\J_\alpha(N^\star_P)}=p_1^{\J_\beta(N_{i-1}[t])}$.
\item $\OR^N\in\Hull_1^{\J_\alpha(N^\star_P)}(\eta\cup\{p_1^{\J_\alpha(N^\star_P)}\cut(\eta+1)\})$ for each
$\eta\in p_1^{\J_\alpha(N[t])}$.
\end{enumerate}
\item\label{item:1-solidity_J_alpha(N[t])_J_alpha(N^star)} $\J_\alpha(N[t])$
is $1$-solid iff $\J_\alpha(N^\star_P)$
is $1$-solid.
\item\label{item:1-soundness_J_alpha(N[t])_J_alpha(N^star)_} $\J_\alpha(N[t])$ is $1$-sound
iff $\J_\alpha(N^\star_P)$ is $1$-sound.
\item\label{item:J_alpha(N^star)_n-sound_with_J_alpha(N[t])} $\J_\alpha(N^\star_P)$ is an $n$-sound $t$-premouse.
\item\label{item:J_alpha(N^star)_level_n_matchup} $\rho_\ell^{\J_\alpha(N[t])}=\rho_\ell^{\J_\alpha(N^\star_P)}$ for all $\ell\leq n$, and  $p_\ell^{\J_\alpha(N[t])}=p_\ell^{\J_\alpha(N^\star_P)}$ for all $\ell$ such that $2\leq\ell\leq n$.
\item\label{item:J_alpha(N[t])_J_alpha(N^star)_rSigma_n+1_equiv} Suppose $0<n$. Then $\rSigma_{n+1}^{\J_\alpha(N[t])}(\{\alpha_\Tt\})$ statements about parameters
$v\in\J_\alpha(N[t])$ are
recursively equivalent to
$\rSigma_{n+1}^{\J_\alpha(N^\star_P)}(\{\alpha_\Tt\})$ statements about $v$. That is, there is an $\rSigma_{n+1}$ formula $\psi_{n+1}$
such that for all $\rSigma_{n+1}$ formulas $\varphi$ and all $v\in\J_\alpha(N[t])$, we have
\[ \J_\alpha(N[t])\sats\varphi(v,\alpha_\Tt)\iff \J_\alpha(N^\star_P)\sats\psi_{n+1}(\varphi,v,\alpha_\Tt),\]
and there is an $\rSigma_{n+1}$ formula $\tau_{n+1}$ such that for all $v\in\J_\alpha(N[t])$, we have
\[ \J_\alpha(N^\star_P)\sats\varphi(v,\alpha_\Tt)\iff \J_\alpha(N[t])\sats\tau_{n+1}(\varphi,v,\alpha_\Tt).\]
\item\label{item:J_alpha(N[t])_J_alpha(N^star)_rho_n+1} $\rho_{n+1}^{\J_\alpha(N[t])}=\rho_{n+1}^{\J_\alpha(N^\star_P)}$.
\item\label{item:J_alpha(N[t])_J_alpha(N^star)_p_n+1} If $0<n$ then  $p_{n+1}^{\J_\alpha(N[t])}=p_{n+1}^{\J_\alpha(N^\star_P)}$.
\item\label{item:J_alpha(N[t])_J_alpha(N^star)_n+1-solid} $\J_\alpha(N[t])$ is $(n+1)$-solid iff $\J_\alpha(N^\star_P)$ is $(n+1)$-solid.
\item\label{item:J_alpha(N[t])_J_alpha(N^star)_n+1-sound} $J_\alpha(N[t])$ is $(n+1)$-sound
iff $\J_\alpha(N^\star_P)$ is $(n+1)$-sound.
\item\label{item:J_alpha(N[t])_J_alpha(N^star)_functions} Let $\mu<\rho_n^{\J_\alpha(N[t])}$
and $f:[\mu]^{<\om}\to\J_\alpha(N[t])$.
Then $f$ is $\bfrSigma_n^{\J_\alpha(N[t])}$ iff $f$ is $\bfrSigma_n^{\J_\alpha(N^\star_P)}$.
\end{enumerate}
\end{prop}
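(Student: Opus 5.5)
The plan is to compose two correspondences. The first is between $\J_\alpha(N[t])$ and $\J_\beta(N_{i-1}[t])$. It comes from iterating, along the chain $N=Q_m, Q_{m-1},\ldots,Q_i$, the step-by-step translations of \S\ref{sec:R[t]_and_S^R[t]} and Propositions \ref{prop:R_passive_not_adm_buff_no_R-delta_meas} and \ref{prop:R_passive_not_adm_buff_R-delta-meas}, applied with $R=N_{j}$ and $S=Q_{j+1}$ for $j\geq i-1$. The second is between $\J_\beta(N_{i-1}[t])$ and $\J_\alpha(N^\star_P)$, which is Proposition \ref{prop:adm_buff_N_J_beta(N_i-1[t])_and_J_alpha(N^star)}.

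More precisely, write $\J_\alpha(N[t])=\J_{\alpha_{m-1}}(\ldots\J_{\alpha_i+\beta}\ldots)$ in the nested form of Definition \ref{dfn:R_i}. By induction on $\alpha$, I will show that $\J_\alpha(N[t])$ and $\J_\beta(N_{i-1}[t])$ (lifted through the intermediate $\J_{\alpha_j}$ layers) satisfy the conclusions of Proposition \ref{prop:R_passive_not_adm_buff_no_R-delta_meas}: agreement of universes above the relevant cardinal, $\rSigma_1$-equivalence modulo the parameter $\OR^N$ (playing the role of $R[t]$ there), and $p_1$ correspondence up to the extra segment $r'$. Then part \ref{item:J_alpha(N[t])_J_alpha(N^star)_same_univ} follows from Proposition \ref{prop:adm_buff_basics} part \ref{item:adm_buff_R[t]_R^star_same_universe}, applied at each stage below the top, together with Proposition \ref{prop:adm_buff_N_J_beta(N_i-1[t])_and_J_alpha(N^star)} parts \ref{item:gamma_lgst_card_of_J_beta(N^*)}--\ref{item:functions_into_gamma_same_J_beta(N_i-1)_J_alpha(N^star)}. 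The key observation is that $N^\star=(Q_j)^\star$ for every $j$, so $\J_\alpha(N^\star)$ is literally the same structure whichever $Q_j$ we view it from.

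For part \ref{item:J_alpha(N[t])_J_alpha(N^star)_rSigma_1_equiv} I will argue directly, with $\OR^N$ in hand. In $\J_\alpha(N^\star_P)$ we identify $N^\star$ as the least proper segment above $P$ that is admissibly buffered with largest-cardinal pattern matching (or simply via $\OR^N=\OR^{N^\star}$). We then compute $N=(N^\star)^\moon$ and $N[t]$ in a $\Delta_1$ fashion, using Proposition \ref{prop:adm_buff_basics}, and the $\J$-hierarchy over $N[t]$ is then $\rSigma_1$; this gives $\psi_1$. For $\tau_1$, we compute $N^\star$ inside $N[t]$ by Proposition \ref{prop:adm_buff_basics}, and then the $\J$-hierarchy over it. Part \ref{item:rho_1_J_alpha(N[t])_J_alpha(N^star)} is immediate from this together with the agreement of $\pow(\gamma)$. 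For part \ref{item:p_1_J_alpha(N[t])_J_alpha(N^star)} I will run the argument of Proposition \ref{prop:R_passive_not_adm_buff_no_R-delta_meas} part \ref{item:passive_after_overlap_p_1_correspondence} verbatim, with the roles swapped: $\J_\alpha(N^\star_P)$ now plays the richer side, able to define $\OR^N$ only from $r$. First, $r'\trianglelefteq p_1^{\J_\alpha(N^\star_P)}$, since otherwise the relevant hull misses $\OR^N$ and is bounded. Then, peeling off the maximal top segment $r'_0$ and using $1$-solidity, the remaining elements $\gamma_0,\gamma_1,\ldots$ coincide with $p_1^{\J_\alpha(N[t])}$. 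The equality with $p_1^{\J_\beta(N_{i-1}[t])}$ comes from Proposition \ref{prop:adm_buff_N_J_beta(N_i-1[t])_and_J_alpha(N^star)} part \ref{item:p_1_J_beta(N_i-1)_andJ_beta(N^star)}. Solidity and soundness (parts \ref{item:1-solidity_J_alpha(N[t])_J_alpha(N^star)}, \ref{item:1-soundness_J_alpha(N[t])_J_alpha(N^star)_}) then follow, since the witnesses transfer through the $\rSigma_1$ equivalence once $\OR^N$ is available, and $\OR^N$ is in every relevant hull by the second clause of part \ref{item:p_1_J_alpha(N[t])_J_alpha(N^star)}.

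The higher levels (parts \ref{item:J_alpha(N^star)_n-sound_with_J_alpha(N[t])}--\ref{item:J_alpha(N[t])_J_alpha(N^star)_functions}) go by induction on $n$, exactly as in Proposition \ref{prop:R_passive_not_adm_buff_no_R-delta_meas} parts \ref{item:passive_after_overlap_rSigma_i+1_translation}--\ref{item:passive_after_overlap_function_match}. The key point is that $\{\OR^N\}$ (equivalently $\{N[t]\}$) is $\rSigma_2$-definable without parameters over both models: it is the largest admissibly buffered segment, below which projecta drop. So from $n\geq1$ on the parameter disappears and the standard parameters match exactly. The function clause uses that $\bfrSigma_n$ functions are definable from $\rho_n\cup\{\pvec_n\}$. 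I expect the main obstacle to be part \ref{item:p_1_J_alpha(N[t])_J_alpha(N^star)}: the $p_1$ bookkeeping when $\rho_1\leq\delta$, or when $\OR^N$ is only recoverable from a parameter overlapping $p_1^{\J_\alpha(N[t])}$. Here I would lean on Proposition \ref{prop:adm_buff_fs_corresp} parts \ref{item:adm_buff_fs_corresp_rho_n+1=delta_P-premouse}--\ref{item:adm_buff_fs_corresp_rho_n+1=delta=rho_n+1^R^star}, which explain why we work with $N^\star_P$ rather than $N^\star$, so that $P$ never needs to enter the parameter.
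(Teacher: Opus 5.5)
Your treatment of parts 1--3 and of the higher levels follows the paper's outline. The gap is in part 4, and in the definability fact that your part 9 also rests on.

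Write $A=\J_\alpha(N[t])$ and $B=\J_\alpha(N^\star_P)$. When you transplant the peeling argument of Proposition \ref{prop:R_passive_not_adm_buff_no_R-delta_meas} to the pair $(B,A)$, the step ``otherwise the hull misses $\OR^N$ and is bounded'' needs $\OR^N$ to be $\rSigma_1^B$-definable from every ordinal above it. In Proposition \ref{prop:R_passive_not_adm_buff_no_R-delta_meas} the analogous fact is free, because $R$ is active there. Here $N^\star$ may be passive.

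Your characterization (``the largest admissibly buffered segment, below which projecta drop'') does not supply this fact. Admissible buffering is a property of $\star$-suitable premice on the $Q$-side, not of levels of $B$. On the $A$-side, what actually singles out $N$ is its unsoundness, which is how the paper gets $\{\OR^N\}\in\rSigma_1^A$.

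The paper's mechanism on the $B$-side is different. Inside $\J_\eta(N^\star)$, the level $N^\star$ is picked out by counting the (at most $m$) segments above it whose $\moon$-construction is unsound. This has three consequences:
\begin{itemize}
\item $\OR^N$ is $\rSigma_1^B$ in any larger ordinal;
\item $\OR^N$ lies in every unbounded $\rSigma_1$-hull of $B$;
\item $\{\OR^N\}$ is $\rSigma_2^B$, which is exactly what your part 9 needs.
\end{itemize}
Your peeling also consumes $1$-solidity of $B$. For $n=0$ that is not among the hypotheses, so you must say where it comes from. For example, you can get it from $1$-solidity of $\J_\beta(N_{i-1}[t])$ via part \ref{item:1-solidity_J_beta(N_i-1)_and_J_beta(N^star)} of Proposition \ref{prop:adm_buff_N_J_beta(N_i-1[t])_and_J_alpha(N^star)}. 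This is the same input Proposition \ref{prop:R_passive_not_adm_buff_no_R-delta_meas} needs along the chain.

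For comparison, the paper never peels on $(B,A)$. It argues as follows:
\begin{itemize}
\item $p_1^B=p_1^{\J_\beta(N_{i-1}[t])}$, from Proposition \ref{prop:adm_buff_N_J_beta(N_i-1[t])_and_J_alpha(N^star)}.
\item $p_1^{\J_\beta(N_{i-1}[t])}=p_1^A\cup s'$, from Proposition \ref{prop:R_passive_not_adm_buff_no_R-delta_meas} applied along the chain.
\item $s'=r'$, proved from the largest-cardinal structure. Since $\{\OR^N\}$ is $\rSigma_1^A$, we have $p_1^A\sub\gamma$. If $\delta<\gamma$, then $\xi=\max(p_1^B)\geq\gamma$, and $\xi\leq\max(r')$ by minimality of the standard parameter. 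Cofinality of $\Hull_1^B(\xi+1)$ puts $\OR^N$ in it; this is exactly where the segment-counting is used. Hence $\max(r')=\xi$, and the rest is matched using $\xi,\gamma$ as parameters.
\end{itemize}

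Smaller points:
\begin{itemize}
\item In part 5, the $B$-witnesses at $\eta\in r'$ do not come by transfer. They come from the boundedness of those hulls, which omit $\OR^N$ by minimality of $r$.
\item For parts 10--13 you need $p_1^B\in\Hull_1^A(\{p_1^A\})$, so that the differing first parameters do not change the $\rSigma_{n+1}$ reducts. Saying that $\OR^N$ ``disappears'' does not give this.
\item What holds for every $j$ is $Q^\star=(R_j)^\star$, not $N^\star=(Q_j)^\star$.
\end{itemize}
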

\begin{proof}
Part \ref{item:J_alpha(N[t])_J_alpha(N^star)_same_univ} is by induction
and the correspondence between $N[t]$ and $N^\star$.

Part \ref{item:J_alpha(N[t])_J_alpha(N^star)_rSigma_1_equiv}: Note that $\J_\alpha(N[t])$ is $\rSigma_1^{\J_\alpha(N^\star_P)}(\{\OR^N\})$, since $N=(N^\star)^\moon$,
and above $N$ and hence $N[t]$, we just construct through to $\J_\alpha(N[t])$.
(We don't want to just compute $\J_\alpha(N^\star)^\moon$, since this could core down too much.)
Conversely, $\J_\alpha(N^\star_P)$
is just $\rSigma_1^{\J_\alpha(N[t])}(\{\alpha_\Tt\})$ (without needing the parameter $\OR^N$).

Part \ref{item:rho_1_J_alpha(N[t])_J_alpha(N^star)} is an immediate consequence
of parts \ref{item:J_alpha(N[t])_J_alpha(N^star)_same_univ} and \ref{item:J_alpha(N[t])_J_alpha(N^star)_rSigma_1_equiv}.

Part \ref{item:p_1_J_alpha(N[t])_J_alpha(N^star)}: By Proposition \ref{prop:adm_buff_N_J_beta(N_i-1[t])_and_J_alpha(N^star)},
$\rho=\rho_1^{\J_\alpha(N[t])}=\rho_1^{\J_\alpha(N^\star_P)}=\rho_1^{\J_\beta(N_{i-1}[t])}$
and
$p_1^{\J_\alpha(N^\star_P)}=p_1^{\J_\beta(N_{i-1}[t])}$.
Also by Proposition \ref{prop:R_passive_not_adm_buff_no_R-delta_meas}
and its proof, $p_1^{\J_\beta(N_{i-1}[t])}=p_1^{\J_\alpha(N[t])}\cup\{s'\}$,
where $s$ is least such that $\OR^{N_{i-1}}\in\Hull_1^{\J_\beta(N_{i-1}[t])}(\rho\cup\{p_1^{\J_\alpha(N[t])}\})$
and $s'=s\cut\rho$.
But $s'=r'$,
by Proposition \ref{prop:adm_buff_N_J_beta(N_i-1[t])_and_J_alpha(N^star)} and some straightforward calculation.
(Note that $\{\OR^N\}$ is $\rSigma_1^{\J_\alpha(N[t])}$,
since $N$ can be identified via its lack of soundness.
It follows then that $p_1^{\J_\alpha(N[t])}\sub\gamma$.
Suppose $\delta<\gamma$;
then $p=p_1^{\J_\beta(N_{i-1}[t])}=p_1^{\J_\alpha(N^\star_P)}\neq\emptyset$; let $\xi=\max(p)$.
Then $\xi\geq\gamma$ by condensation in $\J_\beta(N_{i-1}[t])$.
Further,  $\xi\notin\Hull_1^{\J_\beta(N_{i-1}[t])}(\xi)$, by the minimality of $\xi$,
so $\xi\leq\max(s')$;
likewise for $\J_\alpha(N^\star_P)$,
so $\xi\leq\max(r')$.
But $H=\Hull_1^{\J_\alpha(N^\star_P)}(\xi+1)$ is cofinal in $\J_\alpha(N^\star_P)$,
and it follows that $\OR^N\in H$.
(For given any $\eta\in[0,\alpha)$,
there is some $\ell\leq m$ such that
$N^\star$ is the unique segment $N'\ins\J_\eta(N^\star)$ such that
there are exactly $\ell$ segments $N''$ with $N'\ins N''\ins\J_\eta(N^\star)$
and $(N'')^\moon$ is unsound.)
Thereore $\max(r')\leq\xi$, and so we have shown that
 $\max(r')=\xi$. Also,
 by Propostion \ref{prop:adm_buff_N_J_beta(N_i-1[t])_and_J_alpha(N^star)},
 and since we can refer to the parameter $\xi$, and hence also $\gamma$, we can convert between $\rSigma_1$ truth over $\J_\beta(N_{i-1}[t])$ and $\J_\alpha(N^\star_P)$ (for parameters in $\J_\beta(N_{i-1}[t])$), in a simple enough fashion,
that one can now obtain $s'=r'$.
(We can pass back and forth between defining the point $\OR^{N_{i-1}}$
over $\J_\beta(N_{i-1}[t])$
and defining the point $\OR^N$
over $\J_\alpha(N^\star_P)$,
since they compute each other via
$\star$-translation and $\Moon$-construction.)

Part \ref{item:1-solidity_J_alpha(N[t])_J_alpha(N^star)}:
This is by a similar argument as in
the proof of Proposition \ref{prop:R_passive_not_adm_buff_no_R-delta_meas}. Since $\{\OR^N\}$
is $\rSigma_1^{\J_\alpha(N[t])}$,
we get that $r'\in\Hull_1^{\J_\alpha(N[t])}(\rho\cup\{p_1^{\J_\alpha(N[t])}\})$
(where $\rho=\rho_1^{\J_\alpha(N[t])}$),
and so if $\J_\alpha(N[t])$ is $1$-solid, its witness for $\eta\in p_1^{\J_\alpha(N[t])}$ encodes a witness for $\J_\alpha(N^\star_P)$
and $\alpha$. We get the witnesses for those $\alpha\in r'$ automatically,
because the corresponding hulls do not include $\OR^N$, and so they are bounded in $\J_\alpha(N[t])$.
Conversely, if $\J_\alpha(N^\star_P)$ is $1$-solid, and $\eta\in p_1^{\J_\alpha(N[t])}$,
then since $\OR^N\in\Hull_1^{\J_\alpha(N^\star_P)}(\eta\cup \{p_1^{\J_\alpha(N^\star_P)}\cut(\eta+1)\})$,
its solidity witness at $\eta$
encodes one for $\J_\alpha(N[t])$
at $\eta$.

Part \ref{item:1-soundness_J_alpha(N[t])_J_alpha(N^star)_}: This follows immediately
from parts \ref{item:J_alpha(N[t])_J_alpha(N^star)_rSigma_1_equiv}, \ref{item:p_1_J_alpha(N[t])_J_alpha(N^star)} and \ref{item:1-solidity_J_alpha(N[t])_J_alpha(N^star)}.

Parts
\ref{item:J_alpha(N^star)_n-sound_with_J_alpha(N[t])} and \ref{item:J_alpha(N^star)_level_n_matchup} are just by induction.

Part \ref{item:J_alpha(N[t])_J_alpha(N^star)_rSigma_n+1_equiv}: If $n=1$,
this follows from parts
\ref{item:rho_1_J_alpha(N[t])_J_alpha(N^star)} and \ref{item:J_alpha(N[t])_J_alpha(N^star)_rSigma_1_equiv}, noting that  $\{\OR^N\}$
is $\rSigma_1^{\J_\alpha(N[t])}$
and $\rSigma_2^{\J_\alpha(N^\star_P)}$,
so we can indeed apply part \ref{item:J_alpha(N[t])_J_alpha(N^star)_rSigma_1_equiv}.
(To see that $\{\OR^N\}$ is $\rSigma_2^{\J_\alpha(N^\star_P)}$,
note that we can identify $\OR^N$ much like we did in the proof of part \ref{item:p_1_J_alpha(N[t])_J_alpha(N^star)} from ordinals above $\OR^N$.)
If $n>1$, it follows directly by induction, using parts \ref{item:J_alpha(N^star)_level_n_matchup} and \ref{item:J_alpha(N[t])_J_alpha(N^star)_rSigma_n+1_equiv}.

Parts
\ref{item:J_alpha(N[t])_J_alpha(N^star)_rho_n+1},  \ref{item:J_alpha(N[t])_J_alpha(N^star)_p_n+1}, \ref{item:J_alpha(N[t])_J_alpha(N^star)_n+1-solid} and \ref{item:J_alpha(N[t])_J_alpha(N^star)_n+1-sound} now follow as usual from part \ref{item:J_alpha(N[t])_J_alpha(N^star)_rSigma_n+1_equiv},
and since the distinction between $p_1^{\J_\alpha(N[t])}$
and $p_1^{\J_\alpha(N^\star_P)}$
does not affect the value of $p_{n+1}$,
since $p_1^{\J_\alpha(N^\star_P)}\in H=\Hull_1^{\J_\alpha(N[t])}(\{p_1^{\J_\alpha(N[t])}\})$,
since $p_1^{\J_\alpha(N[t])},\OR^N\in H$. (Use the usual trick of finding a descending sequence of candidates for $r$ (the original $r$, not $r'$),
and letting $r_\infty$ be the last one found, note that $r'\ins r_\infty$.)

Part \ref{item:J_alpha(N[t])_J_alpha(N^star)_functions} is an immediate consequence of parts \ref{item:J_alpha(N[t])_J_alpha(N^star)_same_univ}, \ref{item:J_alpha(N[t])_J_alpha(N^star)_rSigma_1_equiv} and \ref{item:J_alpha(N[t])_J_alpha(N^star)_rSigma_n+1_equiv} (the latter at the previous induction level).
\end{proof}
\section{Iterability of $R^\star$}\label{sec:iterability_*-trans}

Let $b=\Sigma(\Tt)$ and $Q=Q(\Tt,b)$
and $q<\om$ be such that $\rho_{q+1}^Q\leq\delta<\rho_q^Q$ (if $Q=M^\Tt_b|\delta$, $\star$-translation is not needed, so we can ignore this case).
Let $\Phi(\Tt)\conc ((Q,q),\delta)$
be the phalanx of $\Tt$ followed by $(Q,q)$, with exchange ordinal $\delta$ associated to $\Tt$.
(Degree-maximal trees $\Uu$ on the phalanx have $\deg^\Uu_0=q$ and $\delta<\lh(E^\Uu_0)$.)
Fix a degree-maximal strategy $\Sigma$ for the phalanx.

Say a degree-maximal tree $\Uu$ on $\Phi(\Tt)\conc ((Q,q),\delta)$ is \emph{$\star$-valid}
iff
 for each $\alpha+1<\lh(\Uu)$,
it is not the case that $\delta^{+M^\Uu_\alpha|\lh(E^\Uu_\alpha)}<\lh(E^\Uu_\alpha)$
and $M^\Uu_\alpha||\lh(E^\Uu_\alpha)\sats$ ``there is a $\delta$-measure'',

We will show that
$\star$-valid trees $\Uu$ on $\Phi(\Tt)\conc Q$ via $\Sigma$
correspond precisely
to (putative) $\deg^\star(Q)$-maximal above-$\delta$ trees $\Uu'$ on $Q^\star$
via the (putative) strategy $\Sigma'$, where:
\begin{enumerate}
\item The extenders used in $\Uu'$
are precisely those used in $\Uu$
which have critical point $>\delta$.
\item  $\Uu'$ is padded,
setting $E^{\Uu'}_\alpha=\emptyset$
just when $\crit(E^\Uu_\alpha)\leq\delta$. (Note then that for every limit $\eta<\lh(\Uu)$,
$\eta$ is a limit of non-padded stages of $\Uu'$, since $\Uu$ can only use finitely many extenders in a row all with critical points $\leq\delta$.)
\item Write $\nu^\Uu_\alpha$ and $\nu^{\Uu'}_\alpha$ for the exchange ordinals associated to the nodes of $\Uu,\Uu'$. For $\Uu$, these are $\nu^\Uu_\alpha=\nu(E^\Uu_\alpha)$ as usual. For $\Uu'$, they are just $\nu^{\Uu'}_\alpha=\nu^\Uu_\alpha$ (so
$\nu^{\Uu'}_\alpha=\nu(E^{\Uu'}_\alpha)$ when $E^{\Uu'}_\alpha\neq\emptyset$).
Then $\pred^\Uu$ and $\pred^{\Uu'}$ are determined as usual using the exchange ordinals $\nu^\Uu_\alpha=\nu^{\Uu'}_\alpha$.
\item
If $E^{\Uu'}_\alpha=\emptyset$ then (for bookkeeping purposes),
set $M^{\Uu'}_{\alpha+1}=R^\star$
and $\deg^{\Uu'}_{\alpha+1}=\deg^\star(R)$, where:
\begin{enumerate}[label=--]
\item if $\crit(E^\Uu_\alpha)<\delta$
then $R=\mathrm{exit}^\Uu_\alpha$, and
\item if $\crit(E^\Uu_\alpha)=\delta$ then  $R\ins M^\Uu_\alpha$ is least such that  $\mathrm{exit}^\Uu_\alpha\ins R$ and either $R=M^\Uu_\alpha$ or $\rho_\om^R=\delta$.
 \end{enumerate}
 (We will see that this is indeed a segment of $M^{\Uu'}_\alpha$,
 and does not restrict the possibilities for $\Uu'$.)
\item For each limit $\eta<\lh(\Uu)$,
$[0,\eta)^{\Uu'}$ agrees with $[0,\eta)^{\Uu}$ on a tail.
\end{enumerate}

This determines $\Uu'$ from $\Uu$,
and vice versa. We also have:
\begin{enumerate}
\item $\pred^{\Uu'}_{\alpha+1}=\pred^\Uu_{\alpha+1}$ for all $\alpha+1<\lh(\Uu)$
such that $\crit(E^\Uu_\alpha)>\delta$ (here $E^{\Uu'}_\alpha=E^\Uu_\alpha$).

\item Let $\eta<\lh(\Uu)$.
Let the \emph{padding support $P^{\Uu'}_\eta$ of $\eta$} be the set of all $\alpha<^{\Uu'}\eta$ such that $E^{\Uu'}_\alpha=\emptyset$.
 Then $P^{\Uu'}_\eta$ can be characterized recursively as follows: let $\gamma=\mathrm{root}^\Uu(\eta)$. If $\gamma=\lh(\Tt)$ (that is, $M^\Uu_\eta$ is above $Q$) then $P^{\Uu'}_\eta=\emptyset$. Suppose $\gamma<\lh(\Tt)$ and let $\alpha+1=\mathrm{succ}^\Uu(\gamma,\eta)$. Then $P^{\Uu'}_\eta=\{\alpha\}\cup P^{\Uu'}_\alpha$. It follows that $P^{\Uu'}_\eta$ is finite.
 Further, letting $\left<\alpha_i\right>_{i\leq n}$ be the increasing enumeration of $P^{\Uu'}_\eta\cup\{\eta\}$, then $\vec{E}^{\Uu'}_{0\eta}$
 (the sequence of extenders used along $[0,\eta]^{\Uu'}$) is just
 \[ \vec{E}^{\Uu}_{0\alpha_0}\conc\vec{E}^{\Uu}_{\alpha_0+1,\alpha_1}\conc\ldots\conc\vec{E}^{\Uu}_{\alpha_{n-1}+1,\eta}.\]
\item For every $\alpha<\lh(\Uu)$,
$M^{\Uu'}_\alpha=(M^\Uu_\alpha)^\star$
and $\deg^{\Uu'}_\alpha=\deg^{\star}(M^\Uu_\alpha)$.
\item\label{item:it_star_M^*_alpha+1} For every $\alpha+1<\lh(\Uu)$
with $\delta<\crit(E^\Uu_\alpha)$,
letting $\kappa=\crit(E^\Uu_\alpha)$
 and $\beta=\pred^\Uu(\alpha+1)$, we have:
\begin{enumerate}[label=--]
\item $M^{*\Uu'}_\beta=(M^\Uu_\beta)^\star$,
\item  $\pow(\kappa)\cap (M^{\Uu'}_\beta[t])=\pow(\kappa)\cap M^{\Uu}_\beta$,
so $\kappa^{+M^{\Uu'}_\beta}=\kappa^{+ M^\Uu_\beta}$,
 \item  $\alpha+1\in\mathscr{D}^{\Uu'}$
 iff $\alpha+1\in\mathscr{D}^{\Uu}$,
\item $M^{*\Uu'}_{\alpha+1}=(M^{*\Uu}_{\alpha+1})^\star$
and $\deg^{\Uu'}_{\alpha+1}=
\deg^{\star}(M^{*\Uu}_{\alpha+1},\deg^\Uu_{\alpha+1})$.\footnote{Define $\deg^\star(M^{*\Uu}_{\alpha+1})$
has the maximal degree $n$ such that
$(M^{\Uu}_{\alpha+1})^\star$ is $n$-sound and $\delta<\rho_n((M^\Uu_{\alpha+1})^\star)$,
and if $M^\Uu_{\alpha+1}$ is $j$-sound
and $\delta\leq\mu<\rho_j(M^\Uu_{\alpha+1})$,
then
 $\deg^\star(M^{*\Uu}_{\alpha+1},\mu)$
 is the maximal degree $n$
 such that $(M^{\Uu}_{\alpha+1})^\star$ is $n$-sound and
 $\mu<\rho_n((M^\Uu_{\alpha+1})^\star)$.}
 \end{enumerate}
 \end{enumerate}

We will prove by induction on $\lh(\Uu)=\lh(\Uu')$ that for $\Uu,\Uu'$ built as above  (with $\Uu'$ a putative tree), we have:
\begin{enumerate}[label=--]
\item $\Uu'$ is in fact an iteration tree (has well-defined and wellfounded models) and the above properties hold,
\item $\Uu'$ can be freely extended
(as a putative $\deg^\star(Q)$-maximal tree),
along with a corresponding extension of $\Uu$ (as a degree-maximal tree),
\item $\Uu$ can be freely extended (as a degree-maximal tree), along with a corresponding extension of $\Uu'$
(as a putative $\deg^\star(Q)$-maximal tree).
\end{enumerate}

It will follow that $\Sigma'$ is
a $\deg^\star(Q)$-maximal above-$\delta$ strategy for $Q^\star$, as desired.

If $\Uu,\Uu'$ are trivial (use no extenders), then the desired properties are immediate. Now consider a successor step. Say we have $\lh(\Uu)=\lh(\Uu')=\alpha+1$, and $\Uu'$ has wellfounded last model,
and the properties listed above hold for the pair. So $M^{\Uu'}_{\alpha}=(M^\Uu_\alpha)^\star$, and considering the correspondence of the extender sequences between these models, it follows that we can indeed extend $\Uu,\Uu'$ appropriately. At this stage,
we extend only by choosing $E\in\es(M^\Uu_\alpha)$ with $\lh(E^\Uu_\beta)\leq\lh(E)$ for all $\beta<\alpha$. If there is $\beta<\alpha$ such that $E^{\Uu'}_\gamma=\emptyset$ for all $\gamma\in[\beta,\alpha)$,
and we want to extend $\Uu'$
with $E\in\es^{\Uu'}_\alpha$
where $\lh(E)<\lh(E^\Uu_\gamma)$ for some $\gamma\in[\beta,\alpha)$,
then we shouldn't be extending $\Uu'$;
we should be extending some proper segment of $\Uu'$ instead. Thus,
given $E\in\es_+(M^\Uu_\alpha)$
with $\lh(E^\Uu_\beta)\leq\lh(E)$
for all $\beta<\alpha$,
if  $\crit(E)>\delta$
then $E\in\es_+((M^{\Uu}_\alpha)^\star)=\es_+(M^{\Uu'}_\alpha)$, so we can extend both $\Uu$ and $\Uu'$ with $E$.
Conversely, given $E\in\es_+(M^{\Uu'}_\alpha)$ with $\lh(E^\Uu_\beta)<\lh(E)$ for all $\beta<\alpha$,
and letting $(F_0,\ldots,F_{m-1})$
be the $\star$-support of $E$ for $M^\Uu_\alpha$, then $E^\Uu_{\alpha+i}=F_i$ for $i<m$, and $E^\Uu_{\alpha+m}=E$ (so $E^{\Uu'}_{\alpha+i}=\emptyset$
for $i<m$ and $E^{\Uu'}_{\alpha+m}=E$).

Suppose we extend $\Uu$ with $E^\Uu_\alpha=E$
having $\crit(E)\leq\delta$.
Then note that $(M^\Uu_{\alpha+1})^\star\ins (M^\Uu_\alpha)^\star=M^{\Uu'}_\alpha$,
and letting $n=\deg^{\star}(M^\Uu_{\alpha+1})$,
we have $\rho_{n+1}((M^\Uu_{\alpha+1})^\star)\leq\nu^\Uu_\alpha<\rho_n((M^\Uu_{\alpha+1})^\star)$, so setting $M^{\Uu'}_{\alpha+1}=(M^\Uu_{\alpha+1})^\star$ and $\deg^{\Uu'}_{\alpha+1}=n$ is merely a bookkeeping device; it does not artificially restrict the possibilities for $\Uu'$ (given that we have already imposed the restrictions given by setting $E^\Uu_\alpha=E$).

Now suppose instead that $E^\Uu_\alpha=E$, with $\crit(E)>\delta$. So $E^{\Uu'}_\alpha=E$. Let $\beta=\pred^\Uu(\alpha+1)=\pred^{\Uu'}(\alpha+1)$. We have $M^{\Uu'}_\beta=(M^\Uu_\beta)^\star$
and $\deg^{\Uu'}_\beta=\deg^\star(M^\Uu_\beta)$.
By the level-by-level fine structural correspondence between $M^\Uu_\beta$ and $(M^\Uu_\beta)^\star$ (including the correspondence of cardinals, etc),
it follows that $M^{*\Uu'}_{\alpha+1}=(M^{*\Uu}_{\alpha+1})^\star$
and $\deg^\star(M^{*\Uu}_{\alpha+1})$.
Let $\mu=\crit(E)$ and $a\in[\nu(E)]^{<\om}$. Let $n=\deg^\Uu_{\alpha+1}$ and $n+k=\deg^{\Uu'}_{\alpha+1}$.
By \S\ref{sec:*-translation},
for each $f:[\mu]^{<\om}\to M^{*\Uu}_{\alpha+1}$,
\[ f\text{ is }\bfrSigma_n^{M^{*\Uu}_{\alpha+1}[t]}\iff f\text{ is }\bfrSigma_{n+k}^{M^{*\Uu'}_{\alpha+1}}.\]
But
\[ f\text{ is }\bfrSigma_n^{M^{*\Uu}_{\alpha+1}[t]}\implies
\exists X\in E_a\ \big[f\rest X\text{ is }\bfrSigma_n^{M^{*\Uu}_{\alpha+1}}\big],\]
and conversely,
\[ f\text{ is }\bfrSigma_n^{M^{*\Uu}_{\alpha+1}}\implies
 f\text{ is }\bfrSigma_n^{M^{*\Uu}_{\alpha+1}}.\]

It follows that the ultrapowers
\[ \Ult_n(M^{*\Uu}_{\alpha+1},E),\]
\[ \Ult_n(M^{*\Uu}_{\alpha+1}[t],E),\]
\[ \Ult_{n+k}((M^{*\Uu}_{\alpha+1})^\star,E) \]
all act on $M^{*\Uu}_{\alpha+1}\sub M^{*\Uu}_{\alpha+1}[t]\sub (M^{*\Uu}_{\alpha+1})^\star$
in the same fashion, and so it is straightforward to see that
\[ (\Ult_n(M^{*\Uu}_{\alpha+1},E))^\star=\Ult_{n+k}(M^{*\Uu}_{\alpha+1},E)=\Ult_{n+k}(M^{*\Uu'}_{\alpha+1},E)=M^{\Uu'}_{\alpha+1} \]
and
\[ \deg^\star(M^{\Uu}_{\alpha+1})=\deg^{\Uu'}_{\alpha+1}.\]

Because of this, it follows that $M^{\Uu'}_{\alpha+1}$ is wellfounded. That is, since $(\Tt\conc b)\conc (\Uu\conc\left<E\right>)$ is via $\Sigma$,
we can extend $\Uu\conc\left<E\right>$ and its initial segments,
with extenders involved in buildng $(M^\Uu_{\alpha+1})^\star$, showing
that $M^{\Uu'}_{\alpha+1}=(M^\Uu_{\alpha+1})^\star$ is wellfounded.

Propagating the inductive hypotheses through limit stages $\eta$ is straightforward, with a minor adaptation of the foregoing calculations to see that $(M^\Uu_\eta)^\star=M^{\Uu'}_\eta$.
(Note that, letting $\gamma=\mathrm{root}^\Uu(\eta)$ and $\alpha+1=\mathrm{succ}^\Uu(\gamma,\eta)$, then for every $\beta+1\in(\alpha+1,\eta]^\Uu=(\alpha+1,\eta]^{\Uu'}$,
we have $\delta<\crit(E^\Uu_\eta)$,
so $E^{\Uu'}_\beta=E^\Uu_\beta\neq\emptyset$.
So there there are no drops in model along $(\alpha+1,\eta)^{\Uu'}$ which are simply induced by padding.
(It can be that $\crit(E^\Uu_\gamma)\leq\delta$ where $\gamma\in[\alpha+1,\eta)^\Uu$, but then $\gamma+1\notin[\alpha+1,\eta)^\Uu$.)
Therefore, by property \ref{item:it_star_M^*_alpha+1},
$\mathscr{D}^{\Uu'}\cap(\alpha+1,\eta)^{\Uu'}=\mathscr{D}^{\Uu}\cap(\alpha+1,\eta)^{\Uu}$,
and in particular, this set is finite.
Now moving above all drops along these branches, it is easy to adapt the calculations from the successor step to stage $\eta$.)

\section{Self-iterability}

Let $R$ be the stack of all $\om$-mice (that is, requiring $(\om,\om_1+1)$-iterability).
Then $R\sats$ ``I am $(\om,\om_1)$-iterable'', and computes the correct restriction of the correct strategies. This is via genericity inflation of trees, using $\star$-translation. We just need to see that this never runs through $\OR^R$ many stages. If $\OR^R=\om_1^V$
then it can't, because in that case we just get a contradiction in $V$ by using $(\om,\om_1+1)$-iterability there. So suppose $\OR^R<\om_1^V$.
Let $R^+$ be the stack of $(\om,\om_1+1)$-iterable mice $S$
such that $R\pins S$
and $\rho_\om^S\leq\OR^R$.
Then in fact there is no such $S$
with $\rho_\om^S=\om$ (since this would contradict the definition of $R$), so those $S$ have $\rho_\om^S=\OR^R$, and $S\sats\ZFC^-$. But now the failure of termination of the process in $R$
implies that the resulting Q-structure $Q$ has $Q^\star\pins R^+$,
so  $Q\in R^+$, which leads to a contradiction in $R^+$.

Suppose $\OR^R<\om_1^V$.
Let $R^+$ be as above.
Then in $R^+\sats$ ``$R$ is $(\om,\om_1+1)$-iterable'';
this follows by relativizing the preceding proof; i.e. $R^+\sats$ ``I am $\OR$-iterable''.

\section{$\star$-translation for Varsovian models (incomplete)}

NOTE: At this stage, this section gives the main ideas for what is needed for the adaptation of $\star$-translation to the form needed for the papers \cite{vm2} and \cite{vmom} (and also rather much further),  but is at this point lacking a careful discussion of the $\star$-translation details. This will be added in due course, but what is missing is essentially very similar to the material in \S\ref{sec:*-translation} and \S\ref{sec:iterability_*-trans}.

In this section we will develop a variant of the $\star$-translation of the previous section, one which is which is owed by the papers \cite{vm2} and \cite{vmom}. A big difference here is that the background model $M$ can have extenders overlapping $\delta=\delta(\Tt)$. However,
there will be strong restrictions on such overlaps and how they relate to corresponding overlaps of $\delta$ in the Q-structure $Q$ (or variant thereof), which will be important in handling them.
In forming the $\star$-translation,
when we encounter extenders $E$ overlapping $\delta$ on the sequence of our $\star$-suitable model $Q$, we will not take the ultrapower by $E$, unlike in usual $\star$-translation. Instead,
we will (attempt to) use them to determine an overlapping extender $F$ to put on the sequence of the $\star$-translation. This $F$ might not agree with $E$ on the ordinals, but $F\rest\OR$
will still be easily determined by $E\rest\OR$, because of other details of our circumstances to be explained below.

Another difference is  that if we have $E$ as above
and $F\rest\OR$ determined from $E$ as (not yet explained in detail) above, it does not seem immediately obvious to the author
that we always get an extender $F$ extending $F\rest\OR$ which, when added to the sequence of the $\star$-translation, actually yields a premouse. This complicates the proof used for regular $\star$-translation.
For in that proof, we first showed that $Q^\star$ was a premouse,
with fine structure corresponding tightly to that of $Q$,
and then we showed that $Q^\star$ is appropriately iterable,
and from there, a standard comparison argument shows that $Q^\star$ must line up with the relevant   mice on the ``$\star$'' side.
But here it seems we can't directly show that $Q^\star$ is even a premouse, if we encounter a problematic $E$ and $F\rest\OR$  as just mentioned, with $E\in\es^Q$. In order to get around this, we will now instead
do everything at once.
In order to show that $Q^\star$ does line up with the relevant mice $N$ on the ``$\star$'' side,
we will argue directly by comparison, without knowing in advance that $Q^\star$ is even a premouse. If it has any initial segments which are not premice, then since $N$ is a premouse,
and $R\ins Q^\star$ is the least non-premouse, then
either $N\pins R\ins Q^\star$
(so we are done) or there is a disagreement between $Q^\star$ and $N$, and the least such disagreement is between $R$ and $N$. The first step of comparison will remove this least disagreement. (More precisely,
in the tree $\Uu$ on $\Phi(\Tt)\conc(Q,q)$,
there will in general  be a finite sequence $E^\Uu_0,\ldots,E^\Uu_{n-1}$ of extenders overlapping $\delta$
which are used first in $\Uu$, in order to reveal the least disagreement, and we set $E^\Vv_0=\ldots=E^\Vv_{n-1}$,
where $\Vv$ is the tree on $N$.
Then $E^\Uu_n,E^\Vv_n$ are
the extenders indexed at the
 least disagreement. (Maybe $E^\Uu_n=\emptyset$ or $E^\Vv_n=\emptyset$.
 If $E^\Uu_n\neq\emptyset$,
 we specify here that $E^\Uu_n$ is not a ``$\star$-overlapping type'', meaning that it does not get iterated away when forming the $\star$-translation. (Recall that ``$\star$-overlapping type'' is no longer the same thing as overlapping $\delta$.)
 It could be that $\lh(E^\Vv_n)=\lh(E^\Uu_{n-1})$,
 but then $E^\Uu_n=\emptyset$.\footnote{We are now working in Jensen indexing, another difference with the earlier section. This is because the Varsovian models papers work in Jensen indexing.}  Otherwise,
 $\lh(E^\Uu_{n-1})<\lh(E^\Uu_n)$ or $\lh(E^\Uu_{n-1})<\lh(E^\Vv_n)$, whichever is defined,
 and whichever of these might be non-empty (with at least one non-empty). For ``strongly overlapping'' extenders (those that overlap but are not of $\star$-overlapping type),
 we determine ``difference''
 modulo the appropriate conversion between $E$ and $F$ mentioned earlier. Then one can show that the comparison must terminate.
 But then we use the fact that the $\star$-translation of the final model of $\Uu$ is well-behaved up through the final model of $\Vv$
 (is a premouse, and has appropriate fine structure)
 to now reach the usual contradiction via fine structure etc.

 One more, minor, difference is that the base mouse will be presented with $\lambda$-indexing, so everything will be in $\lambda$-indexing.
 Thus, some of the details of $\star$-translation need some slight modification.

Now let us discuss more detail.
 Let $M$ be a  proper class mouse, with $\lambda$-indexing.
Assume that $M$ has  no strong cardinal $\kappa$ which is a limit of strong cardinals.
We assume that $M$ has
  the right sort of fine structure that allows us to analyse the kind of comparison argument which arises in the Varsovian model context \cite{vm2}, \cite{vmom};
  we will indeed be using precisely one of those kind of comparison arguments, just a little more abstractly, and with a fully developed $\star$-translation.\footnote{The details of that $\star$-translation are yet to be written down, but they are very close to those in the earlier section.} Assume that $M$ has only set-many measurable cardinals, and we have some ordinal $\lambda^M$ above all measurables of $M$ such that $M|\lambda^M\sub\Hull^M(\Gamma)$ for any proper class $\Gamma$ of ordinals.

 Let $\vec{\kappa}^M=\left<\kappa^M_\alpha\right>_{\alpha<\theta}$ enumerate the strong cardinals of $M$ below some point.
 Let $W$ be a proper class iterate of $M$, via its correct strategy, and $i:M\to W$ the iteration map,
 and $\kappa^W_\alpha=i(\kappa^M_\alpha)$. Let $\vec{j}=\left<j_\alpha\right>_{\alpha<\theta}$ be such that $j_\alpha:(\kappa_\alpha^M)^{+M}\to(\kappa_\alpha^W)^{+W}$ is a cofinal map for each $\alpha<\theta$. Note
 here that $j_\alpha$ need not agree with $i$, and moreover, $j_\alpha$ need not agree with $j_\beta$ for $\alpha<\beta$. In the Varsovian model papers \cite{vm2}, \cite{vmom},
 the relevant maps $j_\alpha$ arise as restrictions of iteration acting on a direct limit ``at'' $(\kappa_\alpha^M)^{+M}$ (for example,
 if $\alpha=0$ then we have the first direct limit $M_\infty$,
 and $j_0$ is an iteration map on $M_\infty$). We will not spell out just yet exactly what properties we need of the $j_\alpha$'s, but will just point out what they need to do when we get there. (They will play the usual role that they played in the corresponding arguments in \cite{vm2} and \cite{vmom}.)

 Let $\sup_{\alpha<\theta}(\kappa^W_\alpha)^{+W}\leq\lambda<\eta$,
 with $\zeta$ a regular cardinal in $W$,
 and such that $W$ has no Woodin cardinals $\delta$ such that $\lambda<\delta<\zeta$.
 Suppose that
 for all $(\kappa,\delta',E)$ such that
  $E\in\es^{W}$ and $E$ is $W$-total
  and $\crit(E)=\kappa<\zeta$
  and $\lambda<\lh(E)$
  and $\kappa<\delta'<\lh(E)$
 and $\delta'$ is Woodin in $W|\lh(E)$, we have that $\kappa=\kappa^W_\alpha$ for some $\alpha<\theta$.

 Suppose that $W|\zeta\in M$.
 Let $\Tt$ be a correct  normal tree on $W$, of limit length, which is above $\lambda$ and based on $W|\zeta$. Let $\delta=\delta(\Tt)$, and suppose also that $W|\zeta\in M|\delta$ and $\Tt\in M$.
 Suppose that there is $\eta<\delta$ such that $W|\zeta\in M|\eta$ and $\eta$ is a strong $\vec{\kappa}^M$-cutpoint of $M$, meaning that for each $E\in\es^M$,
 if $\crit(E)\leq\eta<\lh(E)$ then $E=\kappa_\alpha^M$ for some $\alpha<\theta$.
 Suppose that $\vec{j}\in M|\delta$. Suppose that $M|\delta\sats$ ``there is a largest cardinal, and it is $\leq\eta$''. Suppose that $\Tt'$,
 which is just $\Tt$ but as a tree on $W'=W|\zeta$, is definable from parameters over $M|\delta$,
 and $M|\delta$ is extender algebra generic over $M(\Tt)$.

 Let $Q$ be a premouse such that $M(\Tt)\ins Q$.
 We attempt to define the $\star$-translation $Q^\star$ of $Q$ (which depends also on $\Tt$,
 $\vec{\kappa}^M$, $\vec{\kappa}^W$, and $\vec{j}$) just like in \cite[\S8]{odle_v2}, except that we work with $\lambda$-indexing (as $Q$ is $\lambda$-indexed) and the following difference.
 For extenders $E\in\es_+^Q$ with $\crit(E)=\kappa_\alpha^W<\delta<\lh(E)$ for some $\alpha<\theta$,
 we say that $E$ is \emph{strongly overlapping $\delta$},
 and here we do not proceed by forming an ultrapower by $E$;
 instead we set
 $(Q|\lh(E))^\star$ to be the unique active premouse $N$ such that $N^{\passive}=(Q||\lh(E))^\star$
 and $\crit(F^N)=\kappa_\alpha^M$
 and $F^N\rest(\kappa_\alpha^M)^{+M}=E\com j_\alpha$, if this does determine a premouse; otherwise $(Q|\lh(E))^\star$ is simply $((Q||\lh(E))^\star,E\com j_\alpha)$ (which is not a premouse).

So, we compare the attempted $\star$-translation versus $M$, above $\delta$,
comparing extenders for their action on the ordinals, up to equivalence modulo composition with $j_\alpha$, in case we have strongly overlapping extenders on both sides.
Given this, suppose that the comparison ends up non-dropping on both sides, hence producing  proper class models $N_W$ and $N_M$ (recall that these are distinct, because we did not ``fully'' compare; they just ``agree'' ``above $\delta$''). Let  $\Gamma$ be a proper class of  ordinals which is fixed pointwise by both iteration maps
$j_W:W\to N_W$ and $j_M:M\to N_M$.
 We want to show that the main branches do not use extenders overlapping $\delta$; given this, the usual fine structural calculations coming from $\star$-translation yield a contradiction.

 Let $E$ be the first extender used along the $W$ side, and $F$ the first used along the $M$ side.
Let $\kappa_E=\crit(E)$ and $\kappa_F=\crit(F)$.
Note then that, by our anti-large cardinal assumption (that $M$ has no strong limit of strongs),
 $j_W(\kappa_E)$ is the least strong cardinal of $N_W$ which is $>\delta$, and likewise $j_M(\kappa_F)$ is the least strong cardinal of $N_M$ which is $>\delta$. But since $N_W$ agrees with $N_M$ above $\delta$ regarding extenders with critical point $>\delta$, it follows that $j_W(\kappa_E)=j_M(\kappa_F)$.

 Let $\alpha$ be  such that $\kappa_E=\kappa_\alpha^W$
  and $\beta$ such that $\kappa_F=\kappa_\beta^M$.
  We claim that $\alpha=\beta$.
  For $\alpha$ is the ordertype of the strongs of $N_W$ which are $<\delta$,
  and this is encoded into $\es^{N_W}\rest(\delta,j_W(\kappa_E))$.
  To see this, let $\gamma^W\in(\delta,j_W(\kappa_E))$
  be a cutpoint relative to
  $\vec{\kappa}^W\rest\alpha$;
  that is, the only extenders in $\es^{N_W}$ overlapping $\gamma^W$,
  have critical point among $\vec{\kappa}^W\rest\alpha$.
  (Such $\gamma$'s are cofinal in $j_W(\kappa_E)$.) Let $\gamma^M$
  be defined analogously for $M$ and $\beta$. Let $\gamma=\max(\gamma^W,\gamma^M)$. Then if $E\in\es^{N_W}$
  and $\crit(E)<\delta<\gamma$,
  then $E$ overlaps $\gamma^W$, so
$\crit(E)=\kappa^W_\xi$
for some $\xi<\alpha$;
similarly, letting $E'\in\es^{N_M}$ with $\lh(E')=E$,
we have $\crit(E')<\gamma^M<\lh(E')$,
so $\crit(E')=\kappa^M_{\xi'}$
for some ${\xi'}<\beta$.
But $\xi=\xi'$.
On the other hand, for each $\xi<\alpha$,
we can find cofinally many $E\in\es^{N_W}\rest j_W(\kappa_E)$
with $\crit(E)=\kappa_\xi^W$,
and likewise for $M$.
It follows that $\alpha=\beta$.

Now we have $i_E^W(\kappa_E)\leq j_W(\kappa_E)$ and $i_F^M(\kappa_F)\leq j_M(\kappa_F)$.
If $i_E^W(\kappa_E)=i_F^M(\kappa_F)$ then
 we used $E$ and $F$ in the comparison
 at the same stage, so it must have been that $E\com j_\alpha\neq F$.
 Now this is precisely where the properties of our maps need to provide a contradiction: in the present circumstances, we should know that we must have $E\com j_\alpha=F$.
 (That is, we have trees $\Tt$ on $M$,
 with last model $W$, followed by a normal continuation $\Uu$ (arising from comparison and resulting from the $\star$-translation iterability proof), and a normal tree $\Vv$ on $M$,
 with $\Tt$ within $M$, and extender algebra genericity as we started with,
 etc, and the final models with the agreement arising from comparison up to the indices of $E$ and $F$,
 which are \emph{both} indexed at the same point; under those circumstances, we assume that in fact $E\com j_\alpha=F\rest\OR$. This uniquely determines $F$ from $E$, or $E$ from $F$. This is exactly what results from the circumstances in  \cite{vm2} and \cite{vmom}.)

So either $\lambda(E)<\lambda(F)$
or vice versa.
Suppose $\lambda(E)<\lambda(F)$; otherwise it is symmetric. We claim that $F\rest \lambda(E)$ is a whole proper segment of $F$
which is not in $N_M$, a contradiction.
For
\[ \OR\cap\Hull^{N_W}(\Gamma\cup\lambda(E))=\OR\cap\Hull^{N_M}(\Gamma\cup\lambda(E)), \]
since $\delta<\lambda(E)$
and $N_M$ is generic over $N_W$
for the extender algebra at $\delta$.
But because $j_W(\kappa_E)=i_{UN_W}(\lambda(E))=j_M(\kappa_E)\geq\lambda(F)$
where $U=\Ult(W,E)$
and $i_{UN_W}:U\to N_W$
is the tail of the iteration map,
these hulls contain no ordinals in the interval $[\lambda(E),\lambda(F))$,
which implies that $F\rest\lambda(E)$
is whole.

\section*{Acknowledgements}

Funded by the Austrian Science Fund (FWF) [10.55776/Y1498].

\bibliographystyle{plain}
\bibliography{../bibliography/bibliography}

\end{document}